\theoremstyle{plain}
\newtheorem{thm}{Theorem}[section]
\newtheorem{lemma}[thm]{Lemma}
\newtheorem{pro}[thm]{Proposition}
\newtheorem{definition}[thm]{Definition}
\theoremstyle{remark}
\newcommand{\sz}{\scriptsize}
\newcommand{\fz}{\footnotesize}
\newcommand{\nz}{\normalsize}
\newcommand{\ti}{\tilde}
\newcommand{\al}{\alpha}
\newcommand{\be}{\beta}
\newcommand{\ga}{\gamma}
\newcommand{\GA}{\Gamma}
\newcommand{\dd}{\delta}
\newcommand{\DD}{\Delta}
\newcommand{\f}{\varphi}
\newcommand{\h}{\phi}
\newcommand{\lam}{\lambda}
\newcommand{\ra}{{\rightarrow}}
\newcommand{\lra}{{\longrightarrow}}
\newcommand{\Nn}{{\mathcal N}}
\newcommand{\EE}{{\mathbb E}}
\newcommand{\RR}{{\mathbb R}}
\newcommand{\ZZ}{{\mathbb Z}}
\numberwithin{equation}{section}
\begin{document}
\title[\ Real Bott Tower]
{Real Bott Tower }
\author{ADMI NAZRA}
\address{Department of Mathematics, Tokyo Metropolitan University.}
\email{nazra-admi@ed.tmu.ac.jp}
\keywords{Crystallographic group, Bieberbach group, Flat Riemannian Manifold}
\subjclass{}
\begin{abstract}
In this paper we proved that there exists four distinct diffeomorphism classes of three dimensional real Bott tower 
$M(A)=(S^1)^3/(\mathbb{Z}_2)^3$,
and 12 distinct diffeomorphism classes of four dimensional real Bott tower 
$M(A)=(S^1)^4/(\mathbb{Z}_2)^4$, where matrix $A$
corresponds to the action of $(\mathbb{Z}_2)^n$ on $(S^1)^n$ for n=3,4.
\end{abstract}
\maketitle

\tableofcontents
\section{Introduction}

In this paper we study free actions of the product of cyclic groups 
of order 2, $(\mathbb{Z}_2)^n$ on the n-dimensional Torus $T^n=(S^1)^n$ for n=3,4.
The action of $(\mathbb{Z}_2)^n$ can be expressed by an upper triangular matrix $A$ where 
the diagonal entries are one and the other entries are either one or zero. 
We call such a matrix $n$-th {\em Bott matrix} and 
the $n$-dimensional orbit space $M(A)=T^n/(\mathbb{Z}_2)^n$
is said to be a {\bf real Bott tower}.
By the definition, Bott matrices
consist of $8$ for $n=3$ and $64$ for $n=4$.

The main purpose of this paper is to determine the diffeomorphism classes of 
real Bott towers for dimension $n=3$ and $4$.
The original conjecture is whether {\em $A$ is conjugate to $A'$  if and only if$M(A)$ and $M(A')$ 
are diffeomorphic.}
However we found that 
there are non-diffeomorphic Bott towers $M(A), M(A')$ with
representative matrices $A,A'$ are conjugate.
So the necessary condition of the conjecture is not true. But   
the sufficient condition of the conjecture is not known so far.
On the other hand the real Bott tower $M(A)$ turns out to be an
$n$-dimensional compact euclidean space form (Riemannian flat manifold).
Then we can apply the Bieberbach theorems to classify Bott towers.

If $M$ is a compact flat Riemannian manifold
of dimension n, then $M$ is affinely diffeomorphic to $\mathbb{R}^n/\Gamma $, where $\Gamma $, 
the fundamental group of $M$, is a torsion-free discrete
uniform subgroup of the Euclidean group $\mathbb {E}(n)=\mathbb {R}^n\rtimes O(n)$. 
Such groups $\Gamma$ are called Bieberbach groups.

\section{Preliminaries}

Let us start with some definitions and theorems used throughout this paper.

\subsection{Groups Acting on Spaces}

\begin{definition}
Suppose that $X$ is a topological space and $G$ is a group. Then we say that X is a $G\hspace{-0.15cm}-\hspace{-0.15cm}space$ if $G$
 acts on $X$ such that the correspondence $G \times X\longrightarrow X$, $(g,x)\longrightarrow gx$, is continuous.
\end{definition}

\begin{thm}\label{T:1} 
If $G$ is a finite group acting freely on a Hausdorff space $X$ then $p: X \rightarrow X/G$ is a covering.
\end{thm}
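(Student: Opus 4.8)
The plan is to verify directly the two defining features of a covering map: that the quotient projection $p$ is continuous, surjective, and open, and that every point of $X/G$ has an \emph{evenly covered} neighborhood, i.e. an open neighborhood whose preimage is a disjoint union of open sets each carried homeomorphically onto it by $p$. Continuity and surjectivity of $p$ are immediate from the definition of the quotient topology. For openness I would run the standard saturation argument: if $U\subseteq X$ is open, then $p^{-1}(p(U))=\bigcup_{g\in G}gU$ is a union of open sets, since each $g$ acts as a homeomorphism of $X$, and hence $p(U)$ is open. This openness is precisely what will later promote the continuous bijections on the individual sheets to homeomorphisms.

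The core of the argument is the construction of an evenly covered neighborhood about an arbitrary point $x\in X$. First I would use freeness: since $gx=hx$ forces $g=h$, the orbit $Gx$ consists of exactly $|G|$ distinct points. Because $X$ is Hausdorff and this orbit is finite, I can pick pairwise disjoint open sets $V_g\ni gx$. The key device is then to \emph{symmetrize} these choices, setting $U=\bigcap_{g\in G}g^{-1}V_g$; this is a finite intersection, hence open, and it contains $x$ because $g^{-1}(gx)=x$ for every $g$. The point of the definition is that $hU\subseteq V_h$ for each $h$ (extract the factor indexed by $g=h$), so the translates $\{hU\}_{h\in G}$ inherit pairwise disjointness from the $V_h$.

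With such a $U$ in hand I would set $\bar U=p(U)$, open by the first paragraph, and note that $p^{-1}(\bar U)=\bigcup_{h\in G}hU$ is a disjoint union of open sets. It then remains only to check that each restriction $p|_{hU}\colon hU\to\bar U$ is a homeomorphism. Surjectivity and openness of $p|_{hU}$ are automatic, so the sole substantive point is injectivity, and here the disjointness of the translates does the work: if $p(hu_1)=p(hu_2)$ then $hu_2=k(hu_1)$ for some $k\in G$, placing $hu_2$ in $hU\cap(kh)U$; disjointness of distinct translates forces $kh=h$, hence $k=e$, and cancelling the homeomorphism $h$ gives $u_1=u_2$.

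I expect the main obstacle to be exactly the symmetrization step. The naive disjoint neighborhoods $V_g$ of the orbit points are not in general related by the group action, so their translates would overlap and fail to tile $p^{-1}(\bar U)$ cleanly. Replacing them by $U=\bigcap_{g\in G}g^{-1}V_g$ is what synchronizes the neighborhood with the action and renders the translates disjoint; everything else is a routine verification that uses only the finiteness of $G$ (so that the intersection remains open) and the Hausdorff hypothesis (so that the finite orbit can be separated in the first place).
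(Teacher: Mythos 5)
Your proposal is correct and follows essentially the same route as the paper's proof: the same symmetrized neighborhood $U=\bigcap_{g\in G}g^{-1}V_g$, the same disjointness of the translates $\{gU\}$, and the same use of openness of the quotient map to upgrade the restrictions $p|_{gU}$ to homeomorphisms. The only cosmetic difference is that you take the orbit neighborhoods pairwise disjoint from the start (reading off disjointness of translates directly from $hU\subseteq V_h$), whereas the paper separates only the neighborhood of $x$ from the others and recovers disjointness by translating with the group action; both are the same argument.
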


\begin{proof}
Let $G=\{ 1=g_0,g_1,g_2,...,g_n\}$. Since $X$ is Hausdorff there are open neighborhoods $U_0,U_1,...,U_n$
of $g_0x,g_1x,g_2x,...,g_nx$ respectively with $U_0\cap U_j=\emptyset $ for $j=1,2,...,n$.
Let $U$ be the intersection $\cap_{j=0}^{n}\,g^{-1}_jU_j$,
 which is clearly an open neighborhood of $x$
$(X$ is topological space). 

Now $g_iU=\cap_{j=0}^{n}\, g_i(g^{-1}_jU_j)\subseteq  U_i$ and
\begin{align*}
g_iU\cap g_jU&=g_j((g^{-1}_jg_iU)\cap  U)\\
               &=g_j(g_kU\cap  U)  \,\,\,\,\,\,\ \text {(for some k)}\\
		   &=\emptyset
\end{align*}
since $g_kU\subseteq U_k$ and $U\subseteq U_0$. 

Next note that $p:X\longrightarrow X/G$ is a continuous surjective open map. 
Since $p$ is an open map, $p(U)$ is an open neighborhood
of $Gx=p(x)$ and $p^{-1}(p(U))={\cup}_{g\in G} \,gU$ with $\{gU;g\in G\}$ being
disjoint open subsets of $X$. Furthermore $p|gU:gU\rightarrow p(U)$
is a continuous open bijective mapping and hence a homeomorphism.
\end{proof}
\newpage

We say $G$ is \textit {discontinuous} at $x\in X$ if, given sequence $\{\gamma_i\}\subset G $
of distinct elements, the sequence $\{\gamma_i x\}\subset X$ has no accumulation point.
$G$ is \textit {discontinuous} on $X$ if it is discontinuous at every point of $X$.
The action of $G$ is \textit {properly discontinuous} on $X$ if for every compact 
subset $K$ of $X$ the set of $\gamma\in G$ such that $\gamma K\cap K\ne \emptyset $ is finite.
If $\{\ga \in G|\gamma K\cap K\ne \emptyset \}$ is compact, then the action of $G$
is called \textit {proper} action.

\begin{lemma}\label{L1}
If $G$ acts properly discontinuously on $X$ then $G$ is discontinuous on $X$.
\end{lemma}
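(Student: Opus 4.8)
The plan is to argue by contradiction, converting a hypothetical accumulation point of an orbit sequence into a compact set that violates the properly discontinuous hypothesis. Fix a point $x \in X$ and an arbitrary sequence $\{\gamma_i\} \subset G$ of distinct elements; the goal is to show that $\{\gamma_i x\}$ has no accumulation point, which is exactly discontinuity at $x$. Suppose instead that some $y \in X$ were an accumulation point. Then every neighborhood of $y$ meets $\{\gamma_i x\}$ in infinitely many terms, so in the setting relevant here (the spaces in question being $\RR^n$ and tori, which are metric, hence first countable) I can extract a subsequence $\{\gamma_{i_k}\}$, still consisting of pairwise distinct elements, with $\gamma_{i_k} x \to y$.

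The next step is to manufacture a compact test set from this subsequence. Set $K = \{x\} \cup \{y\} \cup \{\gamma_{i_k} x : k \geq 1\}$. The crux of the argument is that $K$ is compact: a convergent sequence together with its limit is compact in any topological space, since any open cover has a member containing $y$, and convergence forces that member to contain all but finitely many of the $\gamma_{i_k} x$, leaving only finitely many terms to cover individually; adjoining the single point $x$ preserves compactness. I expect this compactness step, together with the implicit use of first countability to produce the convergent subsequence, to be the main obstacle, since it is the only place the topology of $X$ actually enters and it must be handled without casually invoking metrizability beyond what the ambient space provides.

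With $K$ in hand the contradiction is immediate. For every $k$ we have $x \in K$, hence $\gamma_{i_k} x \in \gamma_{i_k} K$, while also $\gamma_{i_k} x \in K$ by construction; therefore $\gamma_{i_k} x \in \gamma_{i_k} K \cap K$, and in particular $\gamma_{i_k} K \cap K \neq \emptyset$. Thus every $\gamma_{i_k}$ lies in the set $\{\gamma \in G \mid \gamma K \cap K \neq \emptyset\}$, which is therefore infinite because the $\gamma_{i_k}$ are distinct. This contradicts the assumption that the action is properly discontinuous. Hence $\{\gamma_i x\}$ has no accumulation point, so $G$ is discontinuous at $x$; since $x \in X$ was arbitrary, $G$ is discontinuous on $X$.
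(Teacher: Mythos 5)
Your proof is correct and takes essentially the same route as the paper: assume an orbit sequence accumulates at some $y\in X$, form the compact set $K=\{x,y\}\cup\{\gamma_{i_k}x : k\geq 1\}$, and contradict proper discontinuity because each $\gamma_{i_k}$ satisfies $\gamma_{i_k}K\cap K\neq\emptyset$. The only difference is one of care, not of method: you explicitly extract a convergent subsequence (invoking first countability of the relevant spaces) and verify compactness of $K$, two steps the paper's terser proof passes over by treating the accumulation point directly as a limit.
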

\begin{proof}
Suppose $G$ is not discontinuous on $X$. That is, given a sequence $\{\gamma_i\}$
of distinct elements of $G$ such that  $\{\gamma_i x: x\in X\}$ convergence to $y\in X$.
Then $K=\{x,y,\gamma_1x,\gamma_2x,....\}$ is compact subset of $X$. As $\gamma_ix\in K\cap \gamma_iK$
for each $i$, we have a contradiction.
\end{proof}

Let $G$ be a Hausdorff topological group. A \textit {discrete subgroup} is
a group with a countable number of elements and the discrete topology
(every point is an open set).

\begin{lemma}\label{L2}
Let $\Gamma$ and $K$ be subgroups of $G$ with $K$ compact and $G$ locally
compact.
These conditions are equivalent:
      \begin{itemize}
      \item[(i)]  $\Gamma$ is discontinuous at some point of $G/K$,
      \item[(ii)] $\Gamma$ is discontinuous on $G/K$,
      \item[(iii)] $\Gamma$ is properly discontinuous on $G/K$,
      \item[(iv)] $\Gamma$ is discrete in $G$.
      \end{itemize}
\end{lemma}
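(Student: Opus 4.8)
The plan is to establish the four-fold equivalence by proving the cyclic chain $(iv) \Rightarrow (iii) \Rightarrow (ii) \Rightarrow (i) \Rightarrow (iv)$, since a cycle through all four conditions forces them to be mutually equivalent. Two of these links are essentially free: $(iii) \Rightarrow (ii)$ is exactly Lemma \ref{L1} applied to the $\Gamma$-space $X = G/K$ (proper discontinuity implies discontinuity), and $(ii) \Rightarrow (i)$ is immediate because $G/K$ is nonempty, so discontinuity on all of $G/K$ specializes to discontinuity at the base point $eK$. This reduces the real work to the two implications $(i) \Rightarrow (iv)$ and $(iv) \Rightarrow (iii)$.

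For $(i) \Rightarrow (iv)$ I would argue by contraposition. Assume $\Gamma$ is not discrete in $G$. Then the identity $e$ fails to be isolated in $\Gamma$, so I can choose a sequence $\{\gamma_i\}$ of distinct elements of $\Gamma$ with $\gamma_i \to e$. Fix any point $gK \in G/K$. Continuity of the action $G \times G/K \to G/K$ gives $\gamma_i \cdot gK \to gK$, so $gK$ is an accumulation point of the sequence $\{\gamma_i \cdot gK\}$; hence $\Gamma$ is not discontinuous at $gK$. As $gK$ was arbitrary, $\Gamma$ is discontinuous at no point of $G/K$, which is the negation of $(i)$.

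The heart of the argument is $(iv) \Rightarrow (iii)$, and this is where I expect the difficulty to concentrate. Let $\pi \colon G \to G/K$ be the open continuous projection and let $C \subseteq G/K$ be compact; I must show the collision set $S = \{\gamma \in \Gamma : \gamma C \cap C \ne \emptyset\}$ is finite. The first step is to lift $C$ to a compact set upstairs: using local compactness of $G$ and openness of $\pi$, cover $C$ by finitely many sets $\pi(V_1), \dots, \pi(V_m)$ with each $V_j \subseteq G$ compact, and set $\tilde C = (V_1 \cup \dots \cup V_m) \cap \pi^{-1}(C)$, which is compact (a closed subset of a compact set) and satisfies $\pi(\tilde C) = C$. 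The second step is the key containment: if $\gamma C \cap C \ne \emptyset$ then there are $a, b \in \tilde C$ with $\gamma \cdot \pi(a) = \pi(b)$, i.e. $\gamma a K = b K$, so $\gamma = b k a^{-1}$ for some $k \in K$; thus $S \subseteq \Gamma \cap L$ where $L := \tilde C \, K \, \tilde C^{-1}$ is compact, being the continuous image of the compact set $\tilde C \times K \times \tilde C$.

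It then remains to show that $\Gamma \cap L$ is finite for $L$ compact, which is the subtlest point, because a discrete subspace of a compact set need not be finite in general. Here I would exploit the group structure: if $\Gamma \cap L$ were infinite I could pick distinct $\gamma_i \in \Gamma \cap L$ and, by compactness, pass to a convergent subsequence $\gamma_{i_k} \to x$; then $\gamma_{i_k}^{-1}\gamma_{i_{k+1}} \to e$ is a sequence of \emph{nonidentity} elements of $\Gamma$ converging to the identity, contradicting discreteness (which supplies a neighborhood $U$ of $e$ with $U \cap \Gamma = \{e\}$). Hence $S$ is finite and $\Gamma$ acts properly discontinuously, closing the cycle. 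The main obstacle throughout is managing the compact subgroup $K$ together with the local compactness of $G$ so that compactness can be transported across $\pi$; once the collision set is trapped inside a single compact set, discreteness finishes the job.
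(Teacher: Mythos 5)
Your proposal is correct, and it follows the same cyclic decomposition as the paper: both treat $(iii)\Rightarrow(ii)$ as an application of Lemma \ref{L1}, both dismiss $(ii)\Rightarrow(i)$ as trivial, and both concentrate the work in $(i)\Rightarrow(iv)$ and $(iv)\Rightarrow(iii)$. The differences lie in execution, and they are worth recording. For $(i)\Rightarrow(iv)$ the paper argues directly (a neighborhood $UK$ of $y=gK$ meeting the orbit in finitely many $\gamma y$ is translated back to a neighborhood $Ug^{-1}$ of $1$ meeting $\Gamma$ finitely, then shrunk using Hausdorffness), while you argue by contraposition; both are fine. For $(iv)\Rightarrow(iii)$ the core mechanism is identical --- trap the collision set inside a compact set of the form $(\text{compact})\cdot K\cdot(\text{compact})^{-1}$ and use discreteness plus compactness to conclude finiteness --- but your version is more complete: the paper fixes a neighborhood $U$ of $1$ with $\bar U$ compact, sets $V_g=gUKU^{-1}g^{-1}$, and verifies the finiteness condition only for compact sets of the special form $C=p(g\bar U)$, leaving unaddressed how an arbitrary compact subset of $G/K$ is handled; your construction of a compact lift $\tilde C\subseteq G$ with $\pi(\tilde C)=C$ and the containment $S\subseteq\tilde C K\tilde C^{-1}$ treats arbitrary compact sets in one stroke, closing that gap. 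Likewise, where the paper gets finiteness by asserting that $\Gamma\cap\bar V_g$ is closed (implicitly, that a discrete subgroup of a Hausdorff group is closed) and hence compact and finite, you reprove the needed consequence directly via the elements $\gamma_{i_k}^{-1}\gamma_{i_{k+1}}\to e$. One caveat on that last step: extracting a convergent subsequence from a compact set presumes sequential compactness, which a general locally compact group need not have; the net-free fix is to take a cluster point $x$ of the infinite set $\Gamma\cap L$, and for any neighborhood $U$ of $e$ choose a neighborhood $V$ of $x$ with $V^{-1}V\subseteq U$, so that two distinct elements of $\Gamma$ in $V$ yield a nonidentity element of $\Gamma$ in $U$. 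Since the paper's own definitions of discontinuity are sequence-based, your argument is at the paper's level of rigor, but the cluster-point formulation would make it airtight.
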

\begin{proof}$((i)\Rightarrow (iv))$.
Let $\Gamma$ be discontinuous at $y=gK\in G/K$, for some $g \in G$.
Then $y$ has a neighborhood $UK$, $U$ open in $G$ such that
$\{\gamma\in \Gamma|\gamma(y)\in UK \}$ is finite.
Now $Ug^{-1}$ is a neighborhood of $1$ in $G$ which
has finite intersection with $\Gamma$.
For this, let $Ug^{-1}\cap \GA=\{\ga_i=u_i g^{-1}|u_i\in U, \ga_i\in \GA \}$. 
Then $\ga_i g =u_i g^{-1}g=u_i$, $\ga_i gK=u_iK\in UK$, so $\ga_i y=u_iK\in UK$.
Thus  $Ug^{-1}\cap \GA$ is finite.
As $G$ is Hausdorff, a smaller neighborhood of $1$ meets $\Gamma$ only in $1$.
Let $Ug^{-1}=U'$.
Then for any $\gamma\in \Gamma$, $V=\gamma U'$ contains only the element $\gamma$
of $\Gamma$.
Thus $\Gamma$ is discrete in $G$.\\
$((iv)\Rightarrow (iii))$.
Let $\Gamma$ be discrete. Since $G$ locally compact, 
we choose a neighborhood $U$ of $1$ in $G$
with compact closure ($\bar U$ is compact). If $g\in G$, put $V_g=gUKU^{-1}g^{-1}$
has compact closure (i.e. $\bar V_g$ is compact). For if $\gamma\in V_g$, $\gamma g=g \Rightarrow \gamma gU=gUK $,
so $\forall \gamma\in V_g$, $\gamma\in gUKU^{-1}g^{-1}$.
Now $\gamma(p(gU))\cap p(gU)\ne \emptyset \Leftrightarrow \gamma \in V_g$. 
Note that $\GA \cap \bar V_g \subset \bar V_g.$ Since $\Gamma$ is discrete, then $\GA \cap \bar V_g$
should be closed. We know that a closed subset of compact set is compact. Therefore
$\GA \cap \bar V_g$ is compact. Hence $\GA \cap \bar V_g$ is finite.
So, for any compact subset $C=p(g \bar U)\subset G/K$, $\{\ga \in\GA| \ga C\cap C\not=\emptyset \}$
is finite. Thus $\Gamma$ is properly discontinuous.\\
$((iii)\Rightarrow (ii))$ by Lemma \ref {L1}, and $((ii)\Rightarrow (i))$ trivially.
\end{proof}

\subsection {Group Extension and 2-cocycle}

A group extension is a short exact sequence 
\[
\begin{CD}
1 @>>> \DD @>>> \pi @>p>> Q @>>> 1
\end{CD}
\]
where $\DD$ is a normal subgroup of $\pi$. There is a homomorphism 
\[
\mu:\pi \lra Aut(\DD)\,\,\,\,\, \text {by} \,\,g\mapsto \mu_g;\,\,\mu_g(n)=gng^{-1}\,\,(g\in \pi,\, n\in \DD).
\]
It is easily checked that 
\begin{equation}\label{Ps0}
\mu_{gh}(n)=\mu_g\mu_h(n). 
\end{equation}
Choose a section 
$s:Q\lra \pi$ such that $p\circ s=id$ and $s(1)=1$.
Consider the map $\h:Q\lra Aut(\DD)$ given by $\h(\al)=\mu_{s(\al)}$ $(\al \in Q).$
Then suppose
\[
\begin{CD}
1 @>>> Inn(\DD) @>>> Aut(\DD) @>\nu >> Out(\DD) @>>> 1
\end{CD}
\]
is the natural exact sequence.
Then $\h$ induces a homomorphism
\begin{equation}\label{Ps2}
\f:Q\lra Out(\DD)\,\,\ \al \mapsto \nu\circ \mu_{s(\al)}.
\end{equation}
Since $s(\al \be)$, $s(\al)s(\be)$ are mapped to $\al \be \in Q$, i.e 
\begin{align*}
ps(\al \be)&=\al \be=ps(\al)ps(\be)\\
       &=p(s(\al)s(\be)),
\end{align*}
so there exist an element $f:Q\times Q \ra \DD$,  $f(\al,\be)\in \DD$ such that
\begin{align*}
f(\al,\be)s(\al\be)=s(\al)s(\be).
\end{align*}
The function $f$ 
satisfies the following conditions.
\begin{itemize}
\item[(i)] $\phi(\al)(\phi(\be)(n))=f(\al,\be)\phi(\al\be)(n)f(\al,\be)^{-1}$
\item[(ii)] $f(\al,1)=f(1,\al)=1$
\item[(iii)] $\phi(\al)(f(\be,\ga))f(\al,\be\ga)=f(\al,\be)f(\al\be,\ga)$.
\end{itemize}

Conversely, given a function $\h:Q\ra Aut(\DD)$ and a function $f:Q\times Q\ra \DD$
satisfying (i)-(iii), we define a group law in the product $\DD \times Q:$
\begin{equation}\label{cycle1}
(n,\al)(m,\be)=(n\phi(\al)(m)f(\al,\be), \al\be).
\end{equation} 
By this product \eqref{cycle1},
we have a group $\pi=\Delta\times Q$.
Setting $p((n,\al))=\al$,
we obtain a group extension
$1\ra \Delta \ra \pi \stackrel {p}\ra Q\ra 1$.

Next let us consider
\begin{align*}
f(\al,\be)s(\al \be) n s(\al \be)^{-1}f(\al,\be)^{-1}&=s(\al)s(\be)n s(\al \be)^{-1}f(\al,\be)^{-1}\\
\mu_{f(\al,\be)s(\al \be)}(n)                  &=s(\al)s(\be)n s(\be)^{-1}s(\al)^{-1}\\
\mu_{f(\al,\be)} \mu_{s(\al \be)}(n)           &=\mu_{s(\al)s(\be)}(n)\,\,\,\,\,\,\,\,\text {by}\,\, \eqref{Ps0}\\
					           &=\mu_{s(\al)}\mu_{s(\be)}(n).
\end{align*}
Since $\nu(Inn(\DD))=1$, and let $a'=f(\al,\be)\in \DD$, $\mu_{f(\al,\be)}(n)=\mu_{a'}(n)=a'na'^{-1}\in Inn(\DD),$
then for any $n\in \DD$,
$\nu(\mu_{f(\al,\be)})=1$. So
\begin{align*}
\nu(\mu_{f(\al,\be)} \mu_{s(\al \be)})          &=\nu(\mu_{s(\al)}\mu_{s(\be)})\\
\nu(\mu_{f(\al,\be)}) \nu( \mu_{s(\al \be)})    &=\nu(\mu_{s(\al)}) \nu(\mu_{s(\be)})\\
                   \nu( \mu_{s(\al \be)})    &=\nu(\mu_{s(\al)}) \nu(\mu_{s(\be)})\\
                   \f(\al \be)               &=\f(\al) \f(\be).
\end{align*}
Hence $\f$ is homomorphism. We call $\f$ an \textit {abstract kernel}.

Next, let us consider the case where $\Delta=A$ is abelian. In other words, we try to classify
all group extensions of an abelian group $A$ by $Q$ with abstract kernel $\f:Q\lra Out(A)$.
Since $A$ is abelian, $Aut(A)=Out(A)$, so $\f=\h:Q\lra Aut(A)$. Moreover the condition function $f$ above, (i) holds automatically. 
A map $f:Q\times Q \lra A$ satisfying the two equalities (ii) and (iii) above is called a \textit {2-cocycle}
for the abstract kernel $\h$. The set of all 2-cocycles is denoted by $Z_{\h}^2(Q;A)$ or simply, by $Z^2(Q;A)$.

Let $\lam:Q\ra A$ be any map such that $\lam (1)=1$. Define $\dd' \lam:Q\times Q \ra A$ by
\begin{equation}
\dd' \lam (\al, \be)=\h(\al)(\lam (\be))\lam (\al) \lam (\al \be)^{-1}.
\end{equation}
It turns out that such a $\dd' \lam$ is a 2-cocycle and is called a \textit{2-coboundary}.
The set of all  2-coboundaries is denoted by $B_{\h}^2(Q;A)$. Clearly, $B_{\h}^2(Q;A)$ is 
a subgroup of $Z_{\h}^2(Q;A)$. Let $f_1, f_2: Q\times Q \ra A$ be 2-cocyles. We say 
$f_1$ is \textit{cohomologous} to $f_2$ if there is a map $\lam$ such that
$f_1(\al, \be)=\dd'\lam (\al, \be) f_2(\al, \be)$ $(\forall \al, \be \in Q)$.

We define the second cohomology group as the quotient group
\[
H_{\h}^2(Q;A)=Z_{\h}^2(Q;A)/B_{\h}^2(Q;A).
\]

From hypothesis 1 below, when $\Nn=\RR^n$, $H^2_{\bar\phi}(Q;\mathcal N)=0$
for any finite group $Q$.
Then there is a function
$\chi:Q\ra \mathcal N$ ($1$-chain) such that
$f=\delta'\chi$;
\begin{equation}\label{coboun}
f(\al,\be)=\bar\phi(\al)(\chi(\be))\chi(\al)\chi(\al\be)^{-1}
\  (\al,\be\in Q).
\end{equation}
Now let's check the property of $\bar\phi$.
As $\phi(\al)(n)=s(\al)ns(\al)^{-1}$, we see that
\begin{equation*}\begin{split}
\phi(\al)(\phi(\be)(n))&=s(\al)(s(\be)ns(\be)^{-1})s(\al)^{-1}\\
&=f(\al,\be)s(\al\be)ns(\al\be)^{-1}f(\al,\be)^{-1}\\
&=f(\al,\be)\phi(\al\be)(n)f(\al, \be)^{-1} 
\end{split}\end{equation*}
By the uniqueness of $\bar\phi$,
the following holds.

\begin{equation}\label{rel1}
\bar\phi(\al)(\bar\phi(\be)(x))=
f(\al,\be)\bar\phi(\al\be)(x)f(\al\be)^{-1} \ (x\in \mathcal N).
\end{equation}
\vspace{.2cm}

Now, we give three {\bf hypotheses} as follows:
\begin{itemize}
\item[{\bf 1.}]
Let $\Nn$ be connected Lie group contains $\Delta$ as a discrete uniform subgroup. 
$(\Delta, \Nn)$ has the unique extension property; If $\h(\al):\Delta\lra \Delta$ is 
an automorphism, then $\h$ extends to a unique automorphism $\bar \h(\al):\Nn \lra \Nn$.
\item[{\bf 2.}]
$\Nn$ acts properly and freely on $X$ such that there is a principal bundle
\[
\Nn \lra X\lra W \,\,\, \text{with}\,\, W=X/\Nn \,\, \text {is manifold}.
\]
\item[{\bf 3.}]
Let $(\pi,X)$ be a properly discontinuous action. 
\end{itemize}
\vspace{.3cm}

By hypothesis 1, we can consider the \textit {pushout} $\pi\Nn=\{(n,\ga)|n\in \Nn, \ga \in Q\}$;
\begin{equation}\label{it1}
\begin{CD}
1 @>>> \Delta @>>> \pi @>>> Q @>>> 1\\
@.     @VVV        @VVV      {||} @.\\
1@>>> \Nn @>>>\pi\Nn @>>> Q @>>> 1,
\end{CD}
\end{equation}
particularly, note that
\begin{equation}\label{it2}
 \pi \cap \Nn =\Delta,
\end{equation}
and
\begin{equation}\label{it3}
 \pi \,\,\text{normalizes}\,\, \Nn, 
\end{equation}
since for any $(x,\ga)\in \pi$ and $(n,1)\in \Nn$, $(x,\ga)(n,1)(x,\ga)^{-1}=$\\
$(x \bar \h (\ga)(n)x^{-1},1)\in \Nn.$

\begin{thm}\label{fibration}
There is a proper action $(\pi\Nn, X)$ such that the following is equivariant fibration;
\[
(\Nn,\Nn)\lra (\pi\Nn,X) \stackrel {p} \lra (Q,W).
\]
\end{thm}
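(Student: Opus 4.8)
The plan is to build the action of the pushout $\pi\Nn$ on $X$ out of the two actions already available: the proper free action of $\Nn$ on $X$ (Hypothesis 2) and the properly discontinuous action of $\pi$ on $X$ (Hypothesis 3), which I take to restrict to the \emph{same} action of the common subgroup $\Delta$. Writing a general element as $(n,\ga)=(n,1)(1,\ga)$ and setting $s(\ga)=(1,\ga)$, I define
\[
(n,\ga)\cdot x = n\cdot\bigl(s(\ga)\cdot x\bigr)\qquad(x\in X),
\]
where $n\cdot(-)$ is the $\Nn$-action and $s(\ga)\cdot(-)$ is the $\pi$-action of $s(\ga)\in\pi$. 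Because the two actions agree on $\Delta$, this formula restricts to the original actions of $\Nn$ and of $\pi$. First I would check it is a genuine action. The ingredients are the group law \eqref{cycle1}, which gives $(n,\ga)(m,\be)=(n\,\bar\phi(\ga)(m)f(\ga,\be),\ga\be)$; the normalizing relation \eqref{it3}, which at the level of actions reads $s(\ga)\cdot(m\cdot y)=\bar\phi(\ga)(m)\cdot\bigl(s(\ga)\cdot y\bigr)$; and the identity $s(\ga)s(\be)=f(\ga,\be)s(\ga\be)$ in $\pi$, which gives $s(\ga)\cdot\bigl(s(\be)\cdot y\bigr)=f(\ga,\be)\cdot\bigl(s(\ga\be)\cdot y\bigr)$. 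Substituting these into $(n,\ga)\cdot\bigl((m,\be)\cdot x\bigr)$ collapses it to $\bigl(n\,\bar\phi(\ga)(m)f(\ga,\be)\bigr)\cdot\bigl(s(\ga\be)\cdot x\bigr)$, which is exactly $\bigl((n,\ga)(m,\be)\bigr)\cdot x$, so the axioms hold.

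Next I would verify equivariance at the two ends of $\Nn\to X\to W$. For the fibre, the restriction of the $\pi\Nn$-action to $\Nn=\{(n,1)\}$ is by definition the original $\Nn$-action, which on a fibre is left translation, so $(\Nn,\Nn)\to(\pi\Nn,X)$ is equivariant automatically. For the base I define the $Q$-action on $W=X/\Nn$ by $\ga\cdot p(x)=p\bigl(s(\ga)\cdot x\bigr)$. The crucial point is that this is well defined and is an honest $Q$-action even though $s$ is not a homomorphism: since $\Nn$ (hence $\Delta$) acts trivially on $W$, the normalizing relation gives $p(s(\ga)\cdot nx)=p\bigl(\bar\phi(\ga)(n)\cdot s(\ga)x\bigr)=p(s(\ga)x)$, so the value is independent of the representative, and the cocycle factor $f(\ga,\be)\in\Delta\subset\Nn$ in $s(\ga)s(\be)=f(\ga,\be)s(\ga\be)$ likewise vanishes under $p$, giving $\ga\cdot\bigl(\be\cdot p(x)\bigr)=(\ga\be)\cdot p(x)$. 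As $\Nn$ acts trivially on $W$, we get $p\bigl((n,\ga)\cdot x\bigr)=p\bigl(s(\ga)\cdot x\bigr)=\ga\cdot p(x)$, which is exactly equivariance of $p$ over $\pi\Nn\to Q$.

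Finally I would prove properness, using that $Q$ is finite, so that $\pi\Nn=\bigcup_{\ga\in Q}\Nn\,s(\ga)$ is a finite union of $\Nn$-cosets. Given a compact $K\subset X$, each $s(\ga)K$ is compact, and since $\Nn$ acts properly on $X$ the set $\{n\in\Nn:\ n\cdot(s(\ga)K)\cap K\neq\emptyset\}$ has compact closure. Hence
\[
\{g\in\pi\Nn:\ gK\cap K\neq\emptyset\}=\bigcup_{\ga\in Q}\{(n,\ga):\ n\cdot(s(\ga)K)\cap K\neq\emptyset\}
\]
is a finite union of relatively compact sets, so it is relatively compact, and $(\pi\Nn,X)$ is proper.

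The main obstacle I anticipate is not properness, which is routine once finiteness of $Q$ is used, but the verification that $\ga\cdot p(x)=p\bigl(s(\ga)x\bigr)$ descends to a well-defined genuine $Q$-action on $W$. This is precisely where the failure of $s$ to be a homomorphism could cause trouble; it works only because the $2$-cocycle $f$ takes values in $\Delta\subset\Nn$ and $\Nn$ acts trivially on $W=X/\Nn$, so both the choice of fibre representative and the cocycle factor are annihilated by $p$.
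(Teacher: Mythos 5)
Your construction of the $\pi\Nn$-action on $X$ and your verification that $p$ is equivariant are sound (the paper takes these for granted from the pushout structure), but your properness argument has a genuine gap: it rests on the assumption that $Q$ is finite, and finiteness of $Q$ is nowhere among the hypotheses. Hypotheses 1--3 require only that $\Delta$ be a discrete \emph{uniform} (cocompact) subgroup of $\Nn$, that $\Nn$ act properly and freely on $X$, and that $\pi$ act properly discontinuously; the quotient $Q=\pi/\Delta$ may be infinite. Indeed, in the paper's own application of Theorem \ref{fibration} --- the equivariant fibration $(\ZZ^k,\RR^k)\lra(\GA,\RR^n)\lra(Q,W)$ with $\GA$ crystallographic and $k<n$ --- the quotient $Q=\GA/\ZZ^k$ is an infinite crystallographic group acting on $W=\RR^{n-k}$. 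In that situation your decomposition $\pi\Nn=\bigcup_{\ga\in Q}\Nn\,s(\ga)$ is an infinite union of cosets, and an infinite union of relatively compact sets need not be relatively compact, so the argument collapses exactly at the step that matters.

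The paper's proof uses the opposite decomposition, and this is the idea you are missing: cocompactness of $\Delta$ in $\Nn$ (hypothesis 1) produces a compact set $K\subset\Nn$ with $\Nn=\Delta K$, and since $\pi\cap\Nn=\Delta$ this gives $\pi\Nn=\pi K$ --- one ``$\pi$-translate of a compact set'' rather than a union of $\Nn$-cosets indexed by $Q$. Properness is then deduced from the proper discontinuity of $\pi$ (hypothesis 3): if $(n_i,\ga_i)x_i\to y$ with $x_i\to x$, write $(n_i,\ga_i)=s_ik_i$ with $s_i\in\pi$, $k_i\in K$; after passing to a subsequence $k_i\to k$, so $k_ix_i\to kx$, and properness of the $\pi$-action forces $\{s_i\}$ to consist of finitely many elements, whence $(n_i,\ga_i)=s_ik_i\to sk\in\pi\Nn$. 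So the roles of the hypotheses are the reverse of what you used: the proof is driven by $\pi$'s proper discontinuity together with $\Delta$'s cocompactness in $\Nn$, not by $\Nn$'s properness together with finiteness of $Q$. Your argument is correct in the special case where $Q$ happens to be finite (a finite-index extension of a proper action is proper), but that case covers neither the theorem as stated nor its use later in the paper.
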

\begin{proof}
We note that $\Nn$ acts properly and $\pi$ acts properly discontinuous on $X$ (hypothesis 2,3). 
Consider the natural projection $\Nn \lra \pi\Nn/\pi$ which induces a quotient diffeomorphism
$\Nn/\Nn\cap \pi=\pi\Nn/\pi$.
Since $\Nn\cap \pi=\Delta$ by \eqref{it2} and $\Nn/\Delta$ is compact ($\Delta$ is an uniform subgroup of $\Nn$ ),
we can choose a compact subset $K\subset \Nn$ such that $\pi\Nn=\pi K (\subset \pi\Nn)$.

Now we prove $\pi\Nn$ acts properly. 
Let $(n_i,\ga_i)\in \pi\Nn$ acts on $x_i\in X$ where lim $x_i=x\in X\,\,(i\ra \infty )$.
Suppose  $(n_i,\ga_i)x_i\ra y\in X (i\ra \infty )$. As above, $(n_i,\ga_i)=s_ik_i$ $(s_i\in \pi, \,\,k_i\in K)$.
Then $(n_i,\ga_i)x_i=s_i k_i x_i=s_i(k_i x_i)$. As $K$ is compact, we may assume $k_i \ra k\in K\,\,(i\ra \infty )$
so that $k_i x_i \ra kx\,\,(i\ra \infty )$ and as $s_i(k_i x_i)\ra y\,\,(i\ra \infty )$, by properness of $\pi$, $\{s_i\}$ consist of 
finitely many elements, in fact $s_i\ra s\in \pi$ for sufficiently larger $i$.
Hence $(n_i,\ga_i)=s_ik_i \ra sk\in \pi\Nn$.
\end{proof}

\subsection {Rigid Motions}

\begin{definition}
A rigid motion is an ordered pair $(s,M)$ with $M\in O(n)$ and $s\in\mathbb{R}^n$.
A rigid motion acts on $\mathbb{R}^n$ by
\begin{equation}\label{Ri1}
(s,M)x=Mx+s \,\,\,\, for \,\, x\in\mathbb{R}^n.
\end{equation}
\end{definition}

\noindent We multiply rigid motions by composing them, so
\begin{equation}\label{Ri2}
(s,M)(t,N)=(Mt+s,MN).
\end{equation}
We denote the group of rigid motions (of $\mathbb{R}^n$) by $\mathbb{E}(n)$.

\begin{definition}
An affine motion is an ordered pair $(s,M)$ with $M\in GL(n,\mathbb{R})$ and $s\in\mathbb{R}^n$.
\end{definition}
Affine motions act on $\mathbb{R}^n$ via \eqref{Ri1} and form a group, $\mathbb{A}(n)$, under
\eqref{Ri2}.

\begin{definition}
Define a homomorphism $L:\mathbb{E}(n) \rightarrow O(n)$ by
\[
L(s,M)=M.
\] 
If $\alpha\in\mathbb{E}(n)$, $L(\alpha)$ is called the rotational part of $\alpha$.
We can also define the translational part of $\alpha$ by 
\[
T(s,M)=s,
\] 
but the map $T:\mathbb{E}(n)\rightarrow \mathbb{R}^n$ is not a homomorphism.
\end{definition}

We say $(s,M)$ is a {\itshape pure translation} if $M=I,$ the identity matrix.
If $\Gamma $ is a subgroup of $\mathbb{E}(n)$, $\Gamma\cap \mathbb{R}^n$ will denote
the subgroup of $\Gamma$ of pure translations.

Recall that such a group $\Gamma$ is {\itshape torsionfree} if $\Gamma$
has no elements of finite order.


\begin{definition}
We say that a subgroup $\Gamma$ of $\mathbb{E}(n)$ is \textit {uniform} if $\mathbb{R}^n/\Gamma$ is compact
where the orbit space, $\mathbb{R}^n/\Gamma$, is the set of orbits with the quotient
(or identification) topology. 
We say $\GA$ is \textit {reducible} if we can find $\al \in \mathbb {A(}n)$, so that
$T(\al \GA \al^{-1})$ does not span $\RR^n$. In other words, $\GA$ is reducible if after
an affine change of coordinates, all the elements of $\GA$ have translational parts
in a proper subspace of $\RR^n$. If $\GA$ is not reducible, we say $\GA$ is \textit {irreducible}.
\end{definition}

\begin{pro}\label{P:1}
Let $\Gamma$ be a subgroup of $\mathbb{E}(n)$. Then if $\Gamma$ acts
freely, $\Gamma$ is torsionfree. Furthermore, if $\Gamma$ is discontinuous
and torsionfree, $\Gamma$ acts freely.
\end{pro}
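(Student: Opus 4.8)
The plan is to establish the two implications separately, arguing each by contraposition.

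For \emph{free $\Rightarrow$ torsionfree}, I would assume that $\Gamma$ has a nonidentity element of finite order and exhibit a fixed point, contradicting freeness. Suppose $\gamma=(s,M)\in\Gamma$ with $\gamma\ne 1$ and $\gamma^m=1$ for some minimal $m\ge 2$. The key observation is that a rigid motion is in particular an affine map $x\mapsto Mx+s$, and affine maps preserve barycenters, that is, affine combinations whose coefficients sum to $1$. Starting from any $x\in\RR^n$, form the orbit average
\[
\bar x=\frac{1}{m}\sum_{k=0}^{m-1}\gamma^k x.
\]
Because $\gamma^m x=x$, left multiplication by $\gamma$ merely cyclically permutes the points $\gamma^0 x,\dots,\gamma^{m-1}x$, so $\gamma\bar x=\frac{1}{m}\sum_{k=0}^{m-1}\gamma^{k+1}x=\bar x$. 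Thus $\bar x$ is fixed by $\gamma\ne 1$, contradicting the freeness of the action. Hence $\Gamma$ contains no nontrivial element of finite order, i.e.\ $\Gamma$ is torsionfree.

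For \emph{discontinuous and torsionfree $\Rightarrow$ free}, I would again argue by contraposition: suppose the action is not free, so some $\gamma\ne 1$ fixes a point $x_0\in\RR^n$. Since $\Gamma$ is torsionfree and $\gamma\ne 1$, the element $\gamma$ has infinite order, so the powers $\{\gamma^i\}_{i\ge 1}$ form an infinite family of distinct elements of $\Gamma$. But $\gamma^i x_0=x_0$ for all $i$, so the sequence $\{\gamma^i x_0\}$ is the constant sequence $x_0$, which obviously has $x_0$ as an accumulation point. This directly contradicts the assumption that $\Gamma$ is discontinuous at $x_0$, and hence on $\RR^n$. Therefore the action must be free.

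The only nonformal ingredient is the barycenter construction in the first implication; the remaining steps are immediate once one notes that rigid motions are affine and that discontinuity forbids a constant orbit sequence along an infinite cyclic subgroup. I expect the barycenter argument to be the main point to get right, in particular checking that the averaging is $\gamma$-equivariant precisely because $\gamma$ is affine and the coefficients sum to $1$.
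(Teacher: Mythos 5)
Your proof is correct, and it differs from the paper's in instructive ways on both halves. For the first implication (free implies torsionfree) you use the same underlying idea as the paper---averaging the orbit of a finite-order element to manufacture a fixed point---but your execution is the careful one. The paper sets $y=\sum_{i=0}^{k-1}\alpha^i x$ with no $\frac{1}{k}$ factor and asserts $\alpha y=y$; since $\alpha=(s,M)$ is affine rather than linear, one actually gets
\[
\alpha y=\sum_{i=0}^{k-1}\bigl(\alpha^{i+1}x-s\bigr)+s=y-(k-1)s,
\]
which equals $y$ only when $s=0$. The fixed-point property holds precisely for the barycenter $\bar x=\frac{1}{k}\sum_{i=0}^{k-1}\alpha^i x$, and your justification---affine maps commute with affine combinations whose coefficients sum to $1$---is exactly the observation that makes this step valid; your version repairs the paper's sloppy one. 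For the second implication (discontinuous and torsionfree implies free) you take a genuinely different and more elementary route. The paper upgrades discontinuity to proper discontinuity (via its Lemma \ref{L2} applied to $\mathbb{E}(n)$ acting on $\mathbb{E}(n)/O(n)=\mathbb{R}^n$), concludes that every stabilizer $\Gamma_x$ is finite, and then uses torsionfreeness to force $\Gamma_x=\{1\}$. You instead argue straight from the definition: a stabilizing element $\gamma\ne 1$ has infinite order by torsionfreeness, its powers $\{\gamma^i\}$ are pairwise distinct, and the constant sequence $\gamma^i x_0=x_0$ has $x_0$ as an accumulation point, contradicting discontinuity at $x_0$. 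This bypasses the properly-discontinuous machinery entirely; what the paper's longer route buys is the finite-stabilizer statement itself, which is structural information of independent use, whereas your argument is self-contained and needs nothing beyond the definition of discontinuity.
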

\begin{proof}
Suppose $\alpha\in\Gamma$, $\alpha\ne(0,I)$, and $\alpha^k=(0,I)$.
Let $x$ be arbitrary in $\mathbb{R}^n$, and set
\[
y=\sum_{i=0}^{k-1} \alpha^ix.
\]
Then $\alpha y=y$, so $\Gamma$ does not act freely.


If $\GA$ is discontinuous on $\RR^n$, then $\GA$ acts properly discontinuously on $\RR^n$. Then stabilizer
$\GA_x$ is finite $\forall x\in \RR^n$. Hence if $\GA$ is torsionfree, then $\GA_x=\{1\}$ $\forall x\in \RR^n$.
So $\GA$ acts freely on $\RR^n$.
\end{proof}

In this paper we will be concerned with discrete subgroups $\Gamma$ of $\mathbb{E}(n)$
which act freely on $\mathbb{R}^n$.

\begin{definition}
We say that $\Gamma$ is crystallographic if $\Gamma$ is uniform and discrete.
If $\Gamma$ is crystallographic and torsionfree in $\mathbb{E}(n)$, we say
$\Gamma$ is a Bieberbach subgroup of $\mathbb{E}(n)$.
\end{definition}

\begin{thm}\label{T4}
Let $\Gamma$ be a subgroup of the Euclidean group
 $\mathbb {E}(n)=\mathbb {R}^n\rtimes O(n)$.
Then $\Gamma$ is a torsionfree discrete uniform subgroup of $\mathbb {E}(n)$
 if and only if $\Gamma$ acts properly discontinuously and freely
on $\mathbb {R}^n$ with compact quotient.
\end{thm}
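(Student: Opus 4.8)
The plan is to observe that each side of the equivalence decomposes into three conditions that the preliminary results have already paired off, so that the theorem reduces to assembling Lemma \ref{L2} and Proposition \ref{P:1}. The one piece of setup I would record first is the identification of $\mathbb{R}^n$ with the homogeneous space $\mathbb{E}(n)/O(n)$. Because $\mathbb{E}(n)=\mathbb{R}^n\rtimes O(n)$, the coset $(s,M)\,O(n)$ depends only on $s$, so $g\,O(n)\mapsto T(g)$ defines a homeomorphism $\mathbb{E}(n)/O(n)\cong\mathbb{R}^n$; moreover a short computation shows that left translation by $\gamma=(t,P)\in\Gamma$ sends the coset represented by $s$ to the one represented by $Ps+t$, which is exactly the affine action \eqref{Ri1}. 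Thus the $\Gamma$-action on $\mathbb{E}(n)/O(n)$ is intertwined with the geometric action on $\mathbb{R}^n$, and since $G=\mathbb{E}(n)$ is locally compact and $K=O(n)$ is compact, Lemma \ref{L2} applies directly to this situation.

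Granting this, I would prove the forward implication as follows. Suppose $\Gamma$ is torsionfree, discrete, and uniform. Discreteness in $\mathbb{E}(n)$ is condition (iv) of Lemma \ref{L2}, so all of (i)--(iii) hold as well; in particular $\Gamma$ is both properly discontinuous and discontinuous on $\mathbb{E}(n)/O(n)=\mathbb{R}^n$. Feeding the torsionfreeness together with discontinuity into the second statement of Proposition \ref{P:1} shows that $\Gamma$ acts freely. Finally, uniformity is by definition the compactness of $\mathbb{R}^n/\Gamma$. This is precisely the right-hand side of the theorem.

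For the converse I would run the same machinery backwards. Assume $\Gamma$ acts properly discontinuously and freely with compact quotient. Proper discontinuity is condition (iii) of Lemma \ref{L2}, and the equivalence there forces (iv), namely that $\Gamma$ is discrete in $\mathbb{E}(n)$. Freeness of the action yields torsionfreeness by the first statement of Proposition \ref{P:1}, and compactness of the quotient is exactly uniformity. Hence $\Gamma$ is a torsionfree discrete uniform subgroup, which completes the equivalence.

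I do not expect a serious obstacle, since every implication is a citation of an earlier result. The only point that genuinely requires care---and the step I would check most carefully---is the compatibility of the two notions of proper discontinuity: the theorem refers to the action on $\mathbb{R}^n$, whereas Lemma \ref{L2} speaks of the action on $G/K$. This is settled entirely by the opening identification, provided one confirms both that $g\,O(n)\mapsto T(g)$ is a genuine homeomorphism and that it carries the coset action to the affine action. Once that compatibility is in hand, the remainder is bookkeeping.
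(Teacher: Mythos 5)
Your proposal is correct and follows essentially the same route as the paper's own proof: identify $\mathbb{R}^n$ with $\mathbb{E}(n)/O(n)$ (the paper does this via $p(a,A)=a$), then invoke Lemma \ref{L2} to pass between discreteness and proper discontinuity, and Proposition \ref{P:1} to pass between torsionfreeness and freeness, with uniformity matching compact quotient by definition. The only difference is that you spell out the homeomorphism $\mathbb{E}(n)/O(n)\cong\mathbb{R}^n$ and the compatibility of the coset action with the affine action, which the paper leaves implicit.
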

\begin{proof}
Let $X=\mathbb {E}(n)=\mathbb {R}^n\rtimes O(n)$. Let $K=O(n)$, compact subgroup.
Then $p:X\longrightarrow X/K=\mathbb {R}^n$  by $p(a, A)=a$.\\
$(\Rightarrow )$. Let $\Gamma$ is a torsionfree discrete uniform subgroup of $\mathbb {E}(n)$.
Then $\Gamma$ is discontinuous by Lemma \ref{L2}. Therefore $\Gamma$ acts freely by Proposition \ref{P:1}.
Also $\Gamma$ acts properly discontinuously by Lemma \ref{L2}. Finally, since $\Gamma$ is uniform
then $\mathbb {R}^n/\Gamma$ is compact.\\
$(\Leftarrow )$. Let $\Gamma$ acts properly discontinuously and freely
on $\mathbb {R}^n$ with compact quotient. By Proposition \ref{P:1}, $\Gamma$ is torsionfree.
Since $\Gamma$ acts properly discontinuously then $\Gamma$ is discrete in $\mathbb {E}(n)$ by Lemma \ref{L2}. 
$\Gamma$ is uniform, because $\mathbb {R}^n/\Gamma$ is compact.
\end{proof}

We have defined the group of rigid motion $\EE(n)$ which  acts on $\RR^n$.
Suppose torsionfree crystallographic subgroup $\GA\subset \EE(n) \subset \mathbb{A}(n)$
acts on $\RR^n$, and by first Bieberbach theorem there is an exact sequence
\[
1\lra \ZZ^n \lra \GA \lra F \lra 1, 
\]
where $F$ is a finite group.

Suppose $\mathcal {C}(\GA)$ the center of $\GA$ is non trivial. If $\al=(s,M)\in \mathcal {C}(\GA)$,
$\al n \al^{-1}=n$ \, $\forall n\in \ZZ^n (\ZZ^n \triangleleft \GA)$. Then 
$(n,I)=(s,M)(n,I)(s,M)^{-1}=(Mn,I)$  $\Rightarrow M=I$. 
Therefore $L(\mathcal {C}(\GA))=I$,  $\mathcal {C}(\GA)\subsetneqq \ZZ^n$. 
Since $\GA$ is torsionfree, then $\mathcal {C}(\GA)\cong \ZZ^k \subset \ZZ^n$, $1\leq k< n$.  
Hence we get
\[
1 \lra \ZZ^k \lra \GA \lra \GA/\ZZ^k =Q \lra 1.
\]

What is the form of $Q$ which acts on $\RR^{(n-k)}\,\,(k<n)?$.
Let 
\begin{align*}
\ga = \fz
\Bigl ( 
\left(\begin{array}{c}
a\\
b
\end{array}\right)
\left(\begin{array}{cc}
A_{k\times k}& B_{k\times (n-k)}\\
C_{(n-k)\times k}& D_{(n-k)\times (n-k)}
\end{array}\right)
\Bigr )\in \GA \,\, \nz \text {and } \,\, 
\fz \\
\left(\begin{array}{c}
x\\
y
\end{array}\right) \fz \in \RR^n=
\left(\begin{array}{c}
\RR^k\\
\RR^{n-k}
\end{array}\right) \text {where $x\in\RR^k, \, y\in \RR^{n-k}$}.
\end{align*}
\nz

Now consider $\h:Q \lra Aut(\ZZ^k)$ by $\h(\al)(x)=\ga x \ga^{-1}$ which is represented by matrix $A\in GL(k,\ZZ)$
such that  $\h(\al)(x)=Ax$ for $\al \in Q$.
Therefore $\h(\al)$ naturally extends to $\bar \h(\al):\RR^k \lra \RR^k$ ($\GA$ normalizes $\RR^k$ also, $\ga \RR^k \ga^{-1}=A\RR^k$).

Then we have for any $x\in \RR^k$, 
\begin{align*}
\Bigl ( \left(\begin{array}{c}
x\\
0
\end{array}\right), I \Bigr )=&
\ga
\Bigl ( \left(\begin{array}{c}
x\\
0
\end{array}\right), I \Bigr )\ga^{-1}= 
\Bigl (
\left(\begin{array}{cc}
A& B\\
C& D
\end{array}\right) \left(\begin{array}{c}
x\\
0
\end{array}\right), I \Bigr )\\
=&
\Bigl (\left(\begin{array}{c}
Ax\\
Cx
\end{array}\right),I \Bigr )\\
\Rightarrow  A=I, \,\, C=0.
\end{align*}

By Theorem \ref{fibration} there is an equivariant fibration
\[
(\ZZ^k,\RR^k)\lra(\GA,\RR^n)\stackrel {(\nu, p)}\lra(Q,W).
\]
For $\al\in Q$, $y\in W=\RR^{n-k}$
\begin{align*}
\al y &=
\al p
\left(\begin{array}{c}
x\\
y
\end{array}\right)
= \nu(\ga)p 
\left(\begin{array}{c}
x\\
y
\end{array}\right)
=
p \Bigl (\ga 
\left(\begin{array}{c}
x\\
y
\end{array}\right) \Bigr )\\
&=p \Bigl (\left(\begin{array}{c}
a+x+By\\
b+Dy
\end{array}\right)\Bigr )=b+Dy\\
 &=(b,D)y
\end{align*}
we get $\al=(b,D)$. 
If 
$\left(\begin{array}{cc}
I& B\\
0& D
\end{array}\right)\in O(n)$ then $D\in O(n-k)$. Thus $Q=\{(b,D)\subset \EE(n-k)\}$.

\subsection{Bieberbach Theorems}

Bieberbach theorems are about discrete, uniform subgroups of $\mathbb{E}(n)$, but for geometric
applications of his theorems (to flat manifolds), we require the subgroups to have an additional
property.

\begin{thm}\label{T:4} 
(First Bieberbach)
Let $\Gamma $ be a crystallographic  subgroup of $\mathbb {E}(n)$. Then
\begin{itemize}
\item[(i)]  $L(\Gamma )$ is finite, $L:\mathbb {E}(n)\rightarrow O(n).$
\item[(ii)] $\Gamma \cap \mathbb {R}^n$ is a lattice (finitely generated free abelian group) which spans $\mathbb {R}^n$.
(cf. \cite{C85} for proof)
\end{itemize}
\end{thm}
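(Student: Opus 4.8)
The plan is to prove (i) first and then obtain (ii) as a consequence of (i) together with uniformity. For (i) I would exploit the compactness of $O(n)$: since a discrete subgroup of a compact group is finite, it suffices to show that $I$ is isolated in $L(\GA)$, i.e.\ that there is a neighborhood $U$ of $I$ in $O(n)$ with $L(\GA)\cap U=\{I\}$. The only subtlety is that $L$ is \emph{not} injective on $\GA$ --- its kernel is precisely $\GA\cap\RR^n$ --- so discreteness of $\GA$ in $\EE(n)$ does not transfer to $L(\GA)$ for free; establishing this isolation is where essentially all the work lies.

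The central tool I would use is the commutator contraction estimate for orthogonal matrices: for $A,B\in O(n)$,
\[
\|ABA^{-1}B^{-1}-I\|\le 2\,\|A-I\|\,\|B-I\|,
\]
which follows from the identity $AB-BA=(A-I)(B-I)-(B-I)(A-I)$ and the fact that $A^{-1},B^{-1}$ are isometries. Fixing $\e<1/2$ and $U=\{M\in O(n):\|M-I\|<\e\}$, this estimate shows that if $\al,\be\in\GA$ have rotational parts in $U$, then the commutator $[\al,\be]\in\GA$ again has rotational part in $U$, now strictly closer to $I$; iterating drives the rotational parts of higher commutators to $I$.

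The main obstacle --- the heart of the Bieberbach argument --- is to convert this analytic contraction into the combinatorial conclusion $L(\GA)\cap U=\{I\}$. Here discreteness of $\GA$ enters decisively: because $O(n)$ is compact, $\GA$ is discrete in $\EE(n)$ exactly when, for each $R$, only finitely many $\ga=(a,A)\in\GA$ satisfy $|a|\le R$. I would argue by contradiction: if some $\ga\in\GA$ had $L(\ga)\in U\setminus\{I\}$, one builds from $\ga$ a sequence of commutators whose rotational parts are distinct and tend to $I$; controlling their translation parts within a bounded region --- this is where uniformity/cocompactness is used, via conjugation by translations $(x,I)\ga(x,I)^{-1}=(a+(I-A)x,A)$ to normalize the translation part --- would then produce infinitely many elements of $\GA$ in a bounded part of $\EE(n)$, contradicting discreteness. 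The genuinely delicate case is when all small-rotation elements happen to commute, so that the commutators give no contraction; this case must be handled separately by a direct analysis of the translation parts, and it is the reason the authors defer to \cite{C85}.

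Once (i) is in hand, (ii) is routine. Set $\Lambda=\GA\cap\RR^n=\ker(L|_\GA)$, so that $\GA/\Lambda\cong L(\GA)$ is finite and hence $[\GA:\Lambda]<\infty$. Discreteness of $\GA$ makes $\Lambda$ a discrete subgroup of $\RR^n$, therefore free abelian of some rank $k\le n$ spanning a $k$-dimensional subspace. Finally, since $\Lambda$ has finite index in $\GA$, the map $\RR^n/\Lambda\to\RR^n/\GA$ is a finite-sheeted covering of the compact space $\RR^n/\GA$, hence $\RR^n/\Lambda$ is itself compact; but $\RR^n/\Lambda\cong T^k\times\RR^{n-k}$ is compact only when $k=n$. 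Thus $\Lambda$ has rank $n$ and spans $\RR^n$, which is exactly (ii).
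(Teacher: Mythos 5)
First, note that the paper you are being compared against does not actually prove this theorem: the statement ends with ``(cf.\ \cite{C85} for proof)'' and no argument is given, so your proposal can only be measured against the classical proof it paraphrases. Judged that way, your part (ii) is essentially complete and correct: $\Lambda=\GA\cap\RR^n=\ker(L|_\GA)$ has finite index in $\GA$ once (i) is known, a discrete subgroup of $\RR^n$ is free abelian of rank $k\le n$ spanning a $k$-dimensional subspace, and cocompactness forces $k=n$. One small repair: $\RR^n/\Lambda\to\RR^n/\GA$ is in general \emph{not} a covering, because a crystallographic group may have torsion, so the finite group $\GA/\Lambda$ can act on $\RR^n/\Lambda$ with fixed points. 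The compactness transfer still holds, but argue it directly: pick a compact $K\subset\RR^n$ with $\GA K=\RR^n$ (possible since $\RR^n/\GA$ is compact and the projection is open), choose coset representatives $\ga_1,\dots,\ga_m$ of $\GA/\Lambda$, and observe that $K'=\ga_1K\cup\dots\cup\ga_mK$ is compact with $\Lambda K'=\RR^n$; alternatively invoke that a finite-to-one closed continuous surjection is perfect, so preimages of compact sets are compact.

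The genuine gap is in part (i), and you have located it yourself. The commutator estimate $\|ABA^{-1}B^{-1}-I\|\le 2\,\|A-I\|\,\|B-I\|$ is correct and is indeed the engine of the classical proof, but the contraction by itself proves nothing: the iterated commutators may simply become trivial (rotational part equal to $I$, or the identity element of $\GA$ outright), and then no sequence of distinct elements in a bounded region is produced and discreteness is never contradicted. This degenerate case is not a side issue to be ``handled separately'' --- it is the actual content of Bieberbach's theorem. The standard arguments (Charlap's, or Buser's geometric proof) must show that elements of $\GA$ whose rotational parts lie within $\e$ of $I$ have \emph{commuting} rotational parts, and then analyze the invariant splitting of $\RR^n$ into rotation planes and fixed subspace, using cocompactness to produce translations in directions genuinely moved by the rotation; only this yields $L(\ga)=I$ for all such $\ga$, hence $L(\GA)\cap U=\{I\}$ and finiteness. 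Since your proposal explicitly defers exactly this step to \cite{C85}, what you have for (i) is a correct strategy and a correct reduction, but not a proof --- though, to be fair, the paper itself defers the entire theorem to the same reference.
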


This means that for an n-dimensional crystallographic group $\Gamma$, the translation part
$\Gamma^*=\Gamma\cap \mathbb{R}^n$ of $\Gamma$ is a free abelian group isomorphic with $\mathbb{Z}^n$,
such that the vector space spanned by $\Gamma^*$ is the whole space $\mathbb{R}^n$. If $\Gamma^*=\Gamma$
then $\Gamma$ is torsionfree and the manifold $\mathbb{R}^n/\Gamma$ is an n-dimensional torus.

Recall that a sequence of groups and homomorphisms is \textit {exact} if the image of any of the
homomorphisms is equal to the kernel of the next one. By the first Bieberbach  theorem, if
$\Gamma$ is any crystallographic group, then $\Gamma$ satisfies an exact sequence
\begin{equation}\label{ESq}
0\longrightarrow \Gamma^*\longrightarrow \Gamma\longrightarrow \Phi \longrightarrow 1
\end{equation}
where $\Gamma^*=\Gamma\cap \mathbb{R}^n$ is a lattice of rank $n$, and $\Phi =L(\Gamma)$
is a finite group. We call $\Phi $, the \textit{holonomy group} of $\Gamma$.
We use the "0" at the left of \eqref{ESq} to indicate that we usually write $\Gamma^*$
additively, while the "1" at the right indicates that we usually write $\Phi $ multiplicatively.

\begin{pro} \label{T:5} 
Let $\Gamma $ be a crystallographic  subgroup of $\mathbb {E}(n)$. Then
$\Gamma \cap \mathbb {R}^n$ is the unique normal, maximal abelian subgroup of $\Gamma $.
\end{pro}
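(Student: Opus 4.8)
The plan is to establish three facts about $\GA^*:=\GA\cap\RR^n$: that it is abelian and normal, that it is maximal abelian, and that it is the \emph{only} subgroup of $\GA$ enjoying both properties. The abelianness is immediate since $\RR^n$ is abelian under the group law \eqref{Ri2}, and normality I would record exactly as in the center computation carried out earlier in the text: for $\ga=(s,M)\in\GA$ and $(t,I)\in\GA^*$ one computes $\ga(t,I)\ga^{-1}=(Mt,I)\in\GA\cap\RR^n=\GA^*$. The real content is the maximality and, above all, the uniqueness, and I would reduce the latter to a single lemma: \emph{every normal abelian subgroup $N\triangleleft\GA$ is contained in $\GA^*$.}

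First I would prove maximality directly. If $A\supseteq\GA^*$ is any abelian subgroup and $\al=(s,M)\in A$, then $\al$ commutes with every pure translation $(t,I)\in\GA^*$; since $(s,M)(t,I)=(Mt+s,M)$ while $(t,I)(s,M)=(s+t,M)$, commuting forces $Mt=t$ for all $t\in\GA^*$. By the First Bieberbach Theorem (Theorem \ref{T:4}(ii)) the lattice $\GA^*$ spans $\RR^n$, so $M=I$; thus $\al=(s,I)\in\GA\cap\RR^n=\GA^*$ and $A=\GA^*$. Hence $\GA^*$ is maximal abelian.

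The crux is the lemma, and I would prove it in two moves. Given a normal abelian $N\triangleleft\GA$ and $\al=(s,M)\in N$, the first move uses only normality: for any $(t,I)\in\GA^*$ the conjugate $(t,I)\al(t,I)^{-1}=(s+(I-M)t,M)$ lies in $N$, so left-multiplying by $\al^{-1}\in N$ produces the pure translation $((M^{-1}-I)t,I)\in N$. The second move invokes abelianness of $N$: the element $\al=(s,M)$ must commute with each of these pure translations, and $(s,M)(u,I)=(u,I)(s,M)$ forces $(M-I)u=0$ with $u=(M^{-1}-I)t$. As $t$ runs over the spanning lattice $\GA^*$, this yields the matrix identity $(M-I)(M^{-1}-I)=2I-M-M^{-1}=0$. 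This last step is where I expect the main obstacle to lie, and it is exactly where compactness enters decisively: because $M\in O(n)$, every eigenvalue $\lam$ of $M$ has $|\lam|=1$ and satisfies $\lam+\lam^{-1}=2$, forcing $\lam=1$ and hence $M=I$, so $\al\in\GA^*$. (For a merely affine $\GA\subset\mathbb{A}(n)$ the relation $M+M^{-1}=2I$ would not force $M=I$, so the orthogonality is essential.) Therefore $N\subseteq\GA^*$.

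Finally I would assemble the pieces. The lemma shows $\GA^*$ contains every normal abelian subgroup of $\GA$, and since $\GA^*$ is itself normal and abelian it is the unique maximal \emph{normal} abelian subgroup. For the statement as phrased, if $N$ is any normal, maximal abelian subgroup, then $N\subseteq\GA^*$ by the lemma, while $\GA^*$ is abelian and contains $N$; the maximality of $N$ then forces $N=\GA^*$. This establishes that $\GA\cap\RR^n$ is the unique normal, maximal abelian subgroup of $\GA$.
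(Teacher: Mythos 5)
Your proof is correct and follows essentially the same route as the paper: the heart of both arguments is the lemma that every normal abelian subgroup of $\Gamma$ consists of pure translations, obtained by conjugating a general element $(s,M)$ by lattice translations (normality), then commuting (abelianness) to deduce a quadratic identity on the spanning lattice --- your $(M-I)(M^{-1}-I)=0$ equals $-M^{-1}(M-I)^2$, so it is the same identity $(M-I)^2=0$ that the paper derives --- which the orthogonality of $M$ then upgrades to $M=I$. The only divergences are cosmetic: the paper finishes via the orthogonal splitting $\mathbb{R}^n=\ker(M-I)\oplus Y$ rather than your eigenvalue argument (both ultimately rest on $M\in O(n)$ being diagonalizable, which is what rules out unipotent counterexamples), and the paper leaves implicit the maximality of $\Gamma\cap\mathbb{R}^n$ and the final uniqueness bookkeeping that you spell out explicitly.
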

\begin{proof}
Let $\rho \subset \Gamma$ be a normal abelian subgroup. It suffices to show
$\rho\subset \mathbb{R}^n$, i.e. $L(\rho)=I$. Let $(s,M)\in\rho$. We can assume
Ms=s, because $(s,I)(s,M)(-s,I)=(s+s-Ms,M)=(s,M)$ $\Leftrightarrow Ms=s$.
Let $(v,I)$ be any element of $\Gamma^*$. Then $\rho$ normal implies
\[
(v,I)(s,M)(-v,I)=(v-Mv+s,M)\in\rho.
\]
But $\rho$ abelian $\Rightarrow $ $[(s,M),(v-Mv+s,M)]=(0,I)$ which yields 
\[
Mv-M^2v-v+Mv=0, 
\]
or $(M-I)^2v=0$.\\
Let $X=\{x\in\mathbb{R}^n: Mx=x\}$ and write $\mathbb{R}^n=X\oplus Y$ orthogonally.
Write $v=v_x+v_y$ with $v_x\in X$ and $v_y\in Y$, then
\[
0=(M-I)^2v=(M-I)^2(v_x+v_y)=(M-I)^2v_y.
\]
Now on $Y$, $(M-I)$ is non-singular, so $(M-I)^2v_y=0\Rightarrow v_y=0$. 
$\Rightarrow v=v_x$.
Hence $Mv=v$ $\forall v\in\Gamma^*$. Since $\Gamma^*$ spans $\mathbb{R}^n$, $Mx=x$ $\forall x\in\mathbb{R}^n$,
so $M=I$. Hence $(s,M)=(s,I)\in\mathbb{R}^n\Rightarrow \rho\subset\mathbb{R}^n$. 
\end{proof}

\begin{thm}\label{T:6}
(Second Bieberbach).
Suppose that $f:\Gamma_1\longrightarrow \Gamma_2$ is an isomorphism
between crystallographic group. Then there exist a diffeomorphism $h:\mathbb{R}^n \longrightarrow \mathbb{R}^n$
such that $h(\gamma x)=f(\gamma)h(x)$. Moreover, such a diffeomorphism can be taken as an affine transformation,
i.e., $h=(a,A)\in \mathbb{A}(n)$ and so $f(\gamma)=h\gamma h^{-1}$ $(\gamma \in \Gamma_1)$.
\end{thm}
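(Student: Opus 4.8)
The plan is to build the affine conjugator $h=(a,A)$ directly: I would read off the linear part $A$ from the action of $f$ on the translation lattices and then pin down the translation part $a$ by an averaging argument over the finite holonomy group. First I would pass to the lattices. By the First Bieberbach theorem (Theorem \ref{T:4}) each $\GA_i$ contains $\GA_i^*=\GA_i\cap\RR^n\cong\ZZ^n$ spanning $\RR^n$, with finite holonomy $\Phi_i=L(\GA_i)$. Proposition \ref{T:5} identifies $\GA_i^*$ as the \emph{unique} normal maximal abelian subgroup of $\GA_i$, so the isomorphism $f$ is forced to restrict to an isomorphism $\GA_1^*\to\GA_2^*$. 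Writing $f(v,I)=(\sigma(v),I)$, I would extend the lattice map $\sigma$ $\RR$-linearly to $A\in GL(n,\RR)$, which is invertible because a $\ZZ$-basis of $\GA_1^*$ is an $\RR$-basis of $\RR^n$ sent by $\sigma$ to a $\ZZ$-basis of $\GA_2^*$.

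Next I would check that the linear parts are intertwined. For $\gamma=(s,M)\in\GA_1$ write $f(\gamma)=(t,N)$ with $N\in\Phi_2$. Conjugating a translation gives $\gamma(v,I)\gamma^{-1}=(Mv,I)$; applying $f$ and comparing translation parts yields $AMv=NAv$ for all $v\in\GA_1^*$, hence $N=AMA^{-1}$ since $\GA_1^*$ spans $\RR^n$. Setting $\tilde A=(0,A)\in\mathbb{A}(n)$, this says $\tilde A\gamma\tilde A^{-1}=(As,N)$ has the \emph{same} linear part $N$ as $f(\gamma)$. Consequently the desired identity $h\gamma h^{-1}=f(\gamma)$ with $h=(a,I)\tilde A=(a,A)$ collapses to finding a single $a\in\RR^n$ with $(I-N)a=t-As$ for every $\gamma\in\GA_1$.

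The last and most delicate step is producing this $a$. I would set $d(\gamma)=t-As$ and observe that, since both $f$ and $\gamma\mapsto\tilde A\gamma\tilde A^{-1}$ are homomorphisms sharing the linear part $N(\gamma)=AMA^{-1}$, the map $d$ is a $1$-cocycle, $d(\gamma_1\gamma_2)=N(\gamma_1)d(\gamma_2)+d(\gamma_1)$. It vanishes on $\GA_1^*$ (both homomorphisms send $(v,I)$ to $(Av,I)$), hence descends to a cocycle $\bar d\colon\Phi_1\to\RR^n$. Finiteness of $\Phi_1$ then allows averaging: taking $a=\tfrac{1}{|\Phi_1|}\sum_{\bar g\in\Phi_1}\bar d(\bar g)$ and summing the cocycle identity over $\Phi_1$ gives $(I-N)a=\bar d$, exactly as needed. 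Then $h=(a,A)\in\mathbb{A}(n)$ is an affine diffeomorphism with $h\gamma h^{-1}=f(\gamma)$, equivalently $h(\gamma x)=f(\gamma)h(x)$.

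I expect the main obstacle to be precisely this construction of the translation part: one cannot simply invert $I-N$, which is singular whenever $N$ fixes a vector, so the existence of one $a$ valid for all $\gamma$ simultaneously must be extracted from the cocycle structure together with the finiteness of $\Phi_1$, and verifying the cocycle identity and its descent to $\Phi_1$ will need the most care. The conceptual heart, by contrast, is the rigidity input that $f$ must preserve the translation lattice, and that is already supplied by Proposition \ref{T:5}.
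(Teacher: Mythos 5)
Your proof is correct, but it takes a genuinely different route from the paper's. The paper models $\Gamma_1,\Gamma_2$ abstractly as group extensions $1\to\ZZ^n\to\Gamma_i\to Q_i\to 1$ with $2$-cocycles $f_i$, uses the long exact sequence coming from $0\to\ZZ^n\to\RR^n\to T^n\to 0$ together with the vanishing $H^2_{\h}(Q;\RR^n)=0$ to write $f_i=\delta'\chi_i$ over $\RR^n$ (the Seifert coordinates), derives the compatibility relations \eqref{Pe5}--\eqref{Pe6}, and then invokes $H^1(Q;\RR^n)=0$ to produce a vector $\mu$ making $h(x)=\bar g(x)\mu$ equivariant; it must finally check that this Seifert action coincides with the rigid-motion action. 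You instead work directly in $\EE(n)$: Proposition \ref{T:5} forces $f$ to preserve the translation lattices, the lattice map extends to $A\in GL(n,\RR)$, conjugation of translations gives the intertwining $N=AMA^{-1}$, and the remaining translation part $a$ is obtained by forming the difference $1$-cocycle $d(\gamma)=t-As$, descending it to the finite holonomy group $\Phi_1$, and averaging. Your averaging step is precisely an explicit, self-contained proof of the vanishing $H^1(\Phi_1;\RR^n)=0$ that the paper cites as a black box, and you bypass the entire $H^2$/Seifert layer because the rigid-motion presentation already hands you the translation parts $s$ that the paper has to reconstruct as $\chi_i$. What your approach buys is brevity and elementarity (no cohomological machinery beyond the averaging trick, with the role of finiteness of the holonomy made completely explicit); what the paper's approach buys is generality and reuse: the $2$-cocycle formalism applies to abstract extensions not presented inside $\EE(n)$, and the same machinery is recycled in the proof of the Third Bieberbach theorem (Theorem \ref{Bie3}). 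Both constructions ultimately yield the same $h=(a,A)$: linear part equal to the linear extension of $f|_{\Gamma_1\cap\RR^n}$, translation part produced by a cohomological vanishing argument.
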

\begin{proof}
$(\Rightarrow )$. 
Let $\mathbb{Z}^n$ be a maximal abelian subgroup of $\Gamma_1$ such that 
\[
1\longrightarrow \mathbb{Z}^n\longrightarrow \Gamma_1\longrightarrow Q_1\longrightarrow 1
\]
is a group extension. Since $\mathbb{Z}^n$ is characteristics, f maps isomorphism onto
$\mathbb{Z}^n$ of $\Gamma_2$ so that we have a commutative diagram
\begin{equation}\label{Pe1}
\begin{CD}
1 @>>> \mathbb{Z}^n @>Z_1>> \Gamma_1  @>p_1>>  Q_1   @>>> 1 \\
  @.                @VVgV             @VVfV             @VV\Phi V              \\
1 @>>> \mathbb{Z}^n @>Z_2>> \Gamma_2  @>p_2>> Q_2  @>>> 1 \\
\end{CD}
\end{equation}
where $g=f|_{\mathbb{Z}^n}$, $\Phi $ is an induced isomorphism.
We write $\Gamma_i$ as the set $\mathbb{Z}^n \times Q_i$ with group law
\begin{equation}\label{Pe2}
(n,\alpha)(m,\beta)=(n\h_i(\al)(m)f_i(\al,\be),\al\be) \,\,\,\,\,\,\, (i=1,2).
\end{equation}
Here we write $\ZZ^n$ multiplicatively, and homomorphism 
\[
\h_i:Q_i\lra Aut(\ZZ^n)=GL(n,\ZZ)
\]
is defined as follows; choose a section $s_i:Q_i\lra\GA_i$ so that $p_i\circ s_i=id$.
$\GA_i$ normalizes $\ZZ^n$, so 
\[
\h_i(\al):\ZZ^n\lra\ZZ^n
\]
is defined by
\[
\h_i(\al)(n)=s_i(\al)ns_i(\al)^{-1}.
\]
Then defined $f_i:Q_i\times Q_i\longrightarrow \ZZ^n$ by 
\[
f_i(\al,\be)s_i(\al\be)=s_i(\al)s_i(\be).
\] 
As $p_2f(1,\al)=\Phi p_1(1,\al)=\Phi (\al)$ and $p_2f(n,1)=\Phi p_1(n,1)=\Phi (1)=1$,
we write 
\begin{equation}\label{Pe3}
f(1,\al)=(\lam(\al),\Phi (\al))\\ \,\,\,\,\, ,
f(n,1)=(g(n),1)
\end{equation}
for some function $\lam:Q_1\lra\ZZ^n$.
Then
\begin{align*}
f(n,\al)&=f((n,1)(1,\al))=f(n,1)f(1,\al)\\
        &=(g(n),1)(\lam(\al),\Phi(\al)). 
\end{align*}
Note we choose a normalized 2-cocycle $f_i$, that is
\[
f_i(1,\al)=f_i(\be,1)=1 \,\,\,\,\,(\forall \al,\be\in Q_i).
\]
Therefore
\begin{equation}\label{Pe4}
f(n,\al)=(g(n)\h_i(1)(\lam(\al))f_i(1,\Phi(\al)),\Phi (\al))\\
=(g(n)\lam(\al),\Phi (\al)).
\end{equation}

Next consider the equalities
\begin{align*}
f(1,\al)f(g^{-1}(n),1)&=f(\h_1(\al)(g^{-1}(n)),\al)\\
(\lam(\al),\Phi (\al))(n,1)&=(g(\h_1(\al)(g^{-1}(n)))\lam(\al),\Phi (\al))\\
(\lam(\al)\h_2(\Phi(\al))(n),\Phi (\al))&=(g(\h_1(\al)(g^{-1}(n)))\lam(\al),\Phi (\al))
\end{align*}
imply that
\begin{equation}\label{Pe5}
g\h_1(\al)g^{-1}(n)=\lam(\al)\h_2(\Phi (\al))(n)\lam(\al)^{-1}.
\end{equation}
The equalities
\begin{align*}
f(1,\al)f(1,\be)&=f((1,\al)(1,\be))\\
                &=f(\h_i(\al)(1)f_1(\al,\be),\al\be)\\
		    &=f(f_1(\al,\be),\al\be)\\
(\lam(\al),\Phi (\al))(\lam(\be),\Phi (\be))&=(g(f_1(\al,\be))\lam(\al\be),\Phi (\al\be)) \,\,\,\,\ \text {by}\,\eqref {Pe3},\, \eqref {Pe4}
\end{align*}
and
\begin{align*}
&(\lam(\al),\Phi (\al))(\lam(\be),\Phi (\be))\\
&=(\lam(\al)\h_2(\Phi(\al))(\lam(\be))f_2(\Phi(\al),\Phi(\be)),\Phi (\al)\Phi(\be))\\
&=(\lam(\al)\h_2(\Phi(\al))(\lam(\be))\lam(\al)^{-1}\lam(\al)f_2(\Phi(\al),\Phi(\be)),\Phi (\al)\Phi(\be))\\
&=(g\h_1(\al)g^{-1}(\lam(\be))\lam(\al)f_2(\Phi(\al),\Phi(\be)),\Phi (\al)\Phi(\be))\,\,\,\, \text {by}\,\, \eqref {Pe5},
\end{align*}
then we have
\begin{equation}\label{Pe6}
g(f_1(\al,\be))=g\h_1(\al)g^{-1}(\lam(\be))\lam(\al)f_2(\Phi (\al),\Phi (\be))\lam(\al\be)^{-1}.
\end{equation}

Let us consider a group cohomology $H_\h^2(Q;\ZZ^n)$, where cocycle $[f_i]\in H_{\h_i}^2(Q;\ZZ^n)$
$(i=1,2)$. The exact sequence
\[
\begin{CD}
1 @>>> \ZZ^n @>i>> \RR^n @>j>> T^n @>>> 1
\end{CD}
\]
gives a cohomology exact sequence
\[
\begin{CD}
... @>>> H_\h^1(Q;\RR^n) @>j_*>> H_\h^1(Q;T^n) @>\delta >> H_\h^2(Q;\ZZ^n) \\
@>i_*>> H_\h^2(Q;\RR^n) @>j_*>> H_\h^2(Q;T^n) @>>> ...
\end{CD}
\]
If $Q$ is finite group, and $\RR^n$ is the $Q-$module, then $H_{\h}^i(Q;\RR^n)=0$ $(i\geq 1)$.
Using this, we have 
\[
\begin{CD}
H_{\h_i}^1(Q_i;T^n) @>\delta >> H_{\h_i}^2(Q_i;\ZZ^n)
\end{CD}
\]
which follows that $\delta [\hat \chi_i ]=[f_i]$, $[\hat \chi_i ]\in H_{\h_i}^1(Q_i;T^n)$.
The meaning of $\delta [\hat \chi_i ]=[f_i]$ is explained as follows;
\begin{align*}
\begin{CD}
0 @>>> C_\h^1(Q,\mathbb{Z}^n) @>i>> C_\h^1(Q,\RR^n)  @>j>>  C_\h^1(Q,T^n)   @>>> 0 \\
  @.                @VV\delta'V             @VV\delta'V             @VV\delta'V              \\
0 @>>> C_\h^2(Q,\mathbb{Z}^n) @>i>> C_\h^2(Q,\RR^n)  @>j>>  C_\h^2(Q,T^n)   @>>> 0 \\
  @.                          @.    \chi_i           @>j>>  \hat \chi_i     @>>> 0 \\
  @.                          @.    @VV\delta'V             @VV\delta'V              \\
  @.          f_i             @>i>> \delta' \chi_i   @>j>>  0       @.  
\end{CD}
\end{align*}
$f_i=if_i=\delta'\chi_i$ $(i=1,2)$. Thus
\begin{equation}\label{Pe21}
f_1(\al,\be)=\delta'_{\bar\h_1 (\al)}\chi_1(\al,\be)=\bar \h_1(\al)(\chi_1(\be))\chi_1(\al)\chi_1(\al\be)^{-1}
\end{equation}
where $\chi_i:Q_i\lra\RR^n$ are functions, and $\delta'_{\bar\h_i (\al)}\chi_i:Q_i\times Q_i\lra\RR^n$
the coboundary of 1-cochains.

We note that $\h_i(\al):\ZZ^n\lra\ZZ^n$ is represented by (an integral) matrix so that $\h_i(\al)$
extends to $\RR^n$ onto itself. We write it as $\bar\h_i(\al)$ (= the same matrix).
So $\h_i:Q_i\lra Aut(\ZZ^n)$ extends uniquely to $\bar\h_i:Q_i\lra Aut(\RR^n)=GL(n,\RR)$.

Put $g(f_1(\al,\be))=g_*f_1(\al,\be)$ and $(\bar g_*\chi_1)(\al)=\bar g(\chi_1(\al))$.
Here the isomorphism $g:\ZZ^n\lra \ZZ^n$ extends uniquely to an isomorphism
$\bar g:\RR^n\lra \RR^n$ by the same reason as before.
Also we put $\Phi^*f_2(\al,\be)=f_2(\Phi(\al),\Phi(\be))$ and $\Phi^*\chi_2(\al)=\chi_2(\Phi(\al))$.
Then \eqref{Pe21} implies
\begin{align*}
gf_1(\al,\be)&=\bar g(\bar \h_1(\al)(\chi_1(\be))\chi_1(\al)\chi_1(\al\be)^{-1})\\
             &=\bar g\bar \h_1(\al)\bar g^{-1}(\bar g\chi_1(\be))\bar g\chi_1(\al)\bar g\chi_1(\al\be)^{-1}\\
             &=\bar g\bar \h_1(\al)\bar g^{-1}(\bar g_*\chi_1(\be))\bar g_*\chi_1(\al)\bar g_*\chi_1(\al\be)^{-1}\\
             &=\delta'_{\bar g\bar \h_1\bar g^{-1}}\bar g_*\chi_1(\al,\be),
\end{align*}
hence
\begin{equation}\label{Pe22}
g_*f_1=\delta'_{\bar g\bar \h_1\bar g^{-1}}\bar g_*\chi_1.
\end{equation}
Here we can write $\overline {g\h_1g^{-1}}=\bar g\bar \h_1(\al)\bar g^{-1}$.
Also, $f_2=\delta'\chi_2$ shows 
\begin{equation}\label{Pe23}
\Phi ^*f_2=\delta'\Phi^*\chi_2.
\end{equation}

Next we calculate
\begin{align*}
&\delta'_{\bar g\bar \h_1\bar g^{-1}}(\lam.\Phi^*\chi_2)(\al,\be)\\
&= \bar g\bar \h_1(\al)\bar g^{-1}(\lam.\Phi^*\chi_2(\be)).(\lam.\Phi^*\chi_2(\al)).(\lam.\Phi^*\chi_2(\al\be))^{-1}\,\,\, \text {by}\,\, \eqref{Pe21}\\
&= \bar g\bar \h_1(\al)\bar g^{-1}(\lam(\be).\chi_2(\Phi(\be))).\lam(\al)\chi_2(\Phi(\al)).\chi_2(\Phi(\al\be))^{-1}\lam(\al\be)^{-1}\\
&= \bar g\bar \h_1(\al)(\bar g^{-1}(\lam(\be)))  \,\,\, \bar g\bar \h_1(\al)\bar g^{-1}(\chi_2(\Phi(\be))) \lam(\al)\,\,\\
& \,\,\,\,\,\,\, \chi_2(\Phi(\al)) \chi_2(\Phi(\al\be))^{-1}\lam(\al\be)^{-1}\\
&= \bar g\bar \h_1(\al)(\bar g^{-1}(\lam(\be)))  \,\,\, \lam(\al) \bar\h_2(\Phi(\al))(\chi_2(\Phi(\be)))\,\,\\
& \,\,\,\,\,\,\, \chi_2(\Phi(\al)) \chi_2(\Phi(\al)\Phi(\be))^{-1}\lam(\al\be)^{-1} \,\,\,\,\ \text {by}\,\, \eqref{Pe5}\\
&= \bar g\bar \h_1(\al)(\bar g^{-1}(\lam(\be)) ) \,\,\, \lam(\al) \,\,\delta'\chi_2(\Phi(\al),\Phi(\be))\lam(\al\be)^{-1} \,\,\,\,\ \text {by}\,\, \eqref{Pe21}\\
&= \bar g\bar \h_1(\al)(\bar g^{-1}(\lam(\be)) ) \,\,\, \lam(\al) \,\,f_2(\Phi(\al),\Phi(\be))\lam(\al\be)^{-1} \,\,\,\,\,\,\, \text {by}\,\, \eqref{Pe21}\\
&=g(f_1(\al,\be)) \,\,\, \text {by}\,\, \eqref{Pe6}.
\end{align*}
Hence 
\begin{equation}\label{Pe24}
\delta'_{\bar g\bar \h_1\bar g^{-1}}(\lam.\Phi^*\chi_2)=g_*f_1.
\end{equation}
Since $g_*f_1=\delta'_{\bar g\bar \h_1\bar g^{-1}}\bar g_*\chi_1$ by \eqref{Pe22}, 
\begin{align*}
\delta'_{\bar g\bar \h_1\bar g^{-1}}(\lam.\Phi^*\chi_2)&=g_*f_1\\
                                                            &=\delta'_{\bar g\bar \h_1\bar g^{-1}}(\bar g_*\chi_1)\\
 [\bar g_*\chi_1(\lam \Phi^*\chi_2)^{-1}]&\in H_{\bar g\bar \h_1\bar g^{-1}}^1(Q;\RR^n)
\end{align*}
On the other hand, using the fact that  $H_{\bar g\bar \h_1\bar g^{-1}}^1(Q;\RR^n)=0$, so there exist
$\mu \in \RR^n$ such that 
\begin{align*}
\delta'_{\bar g\bar \h_1\bar g^{-1}}\mu&=\bar g_*\chi_1.(\lam \Phi^*\chi_2)^{-1}\\
\delta'_{\bar g\bar \h_1\bar g^{-1}}\mu.(\lam \Phi^*\chi_2)&=\bar g_*\chi_1\\
(\delta'_{\bar g\bar \h_1\bar g^{-1}}\mu.\lam \Phi^*\chi_2)(\al)&=\bar g_*\chi_1(\al)\\
(\delta'_{\bar g\bar \h_1\bar g^{-1}}\mu(\al)).(\lam \Phi^*\chi_2)(\al)&=\bar g\chi_1(\al)\\
\bar g\bar \h_1(\al)\bar g^{-1}(\mu)\mu^{-1}\,\,((\lam. \Phi^*\chi_2)(\al))&=\\
\bar g\bar \h_1(\al)\bar g^{-1}(\mu)\,((\lam. \Phi^*\chi_2)(\al))\,\mu^{-1}&=\\
\bar g\bar \h_1(\al)\bar g^{-1}(\mu)\,(\lam(\al). \chi_2(\Phi(\al)))\,\mu^{-1}&=\\
\bar g\bar \h_1(\al) \bar g^{-1}(\mu)\lam(\al)\chi_2(\Phi(\al))\,\mu^{-1}&=
\end{align*}
Hence by \eqref{Pe5}
\begin{equation}\label{Pe25}
\bar g(\chi_1(\al))=\lam(\al)\bar \h_2(\Phi(\al))(\mu)\chi_2(\Phi(\al))\,\mu^{-1}.
\end{equation}

Next we consider  a map
\begin{equation}\label{Pe31}
h:\RR^n\lra\RR^n \,\,\,\, \text {by} \,\,\, h(x)=\bar g(x)\mu.
\end{equation}
Since $\bar g$ is an isomorphism, so is $h$, then $h$ is a diffeomorphism. Now we show that $h$
is $(\GA_1,\GA_2)-$equivariant. 
\\

We first assume the rigid motion of $\GA_i$ on $\RR^n$ is the Seifert action. 
Note that $\GA_1$ acts on $\RR^n$ as follows.
\begin{equation}\label{Pe32}
(n,\al)x=n\bar\h_1(\al)(x)\chi_1(\al),\,\,\,\,
(n,\al)x=n\bar\h_2(\al)(x)\chi_2(\al).
\end{equation}
Then
\begin{align*}
&h((n,\al)x)\\
&=h(n\bar\h_1(\al)(x)\chi_1(\al))\\
           &=\bar g(n\bar\h_1(\al)(x)\chi_1(\al))\mu \,\,\,\,\, \text {by} \, \eqref{Pe31}\\
           &=g(n)\bar g \bar\h_1(\al)(x) \,\,\bar g \chi_1(\al)\,\,\mu \,\,\,\,\, (\bar g \,\text {is an isomorphism)}\\
           &=g(n)\bar g \bar\h_1(\al)\bar g^{-1}(\bar g(x)) 
             \lam(\al)\bar\h_2(\Phi(\al))(\mu) \chi_2(\Phi(\al))\mu^{-1}\mu \,\,\,\,\,\text {by} \, \eqref{Pe25}\\
           &=g(n) \lam(\al) \bar\h_2(\Phi(\al))(\bar g(x)) 
             \bar\h_2(\Phi(\al))(\mu) \chi_2(\Phi(\al)) \,\,\,\,\,\text {by} \, \eqref{Pe5}\\
           &=g(n) \lam(\al). \,\,\bar\h_2(\Phi(\al))(\bar g(x).\mu) \,\,
             \chi_2(\Phi(\al)) \\
           &=(g(n) \lam(\al),\Phi(\al)) \bar g(x)\mu  \,\,\,\,\,\text {(by \eqref{Pe32}\,\,and note $g(n) \lam(\al)\in \ZZ^n$)}\\
           &=f(n,\al)\bar g(x)\mu \,\,\,\,\,\text {by} \, \eqref{Pe4}.\\
           &=f(n,\al)h(x).
\end{align*}
Thus
\begin{equation}\label{Pe33}
h((n,\al)x)=f(n,\al)h(x).
\end{equation}

Now, we shall see that the Seifert action coincides with the rigid motions.

Given a crystallographic group, we have the following extension:

\begin{equation*}
1\ra \ZZ^n\ra \GA\ra Q\ra 1.
\end{equation*}
As usual an element $\gamma$ of $\GA$ is described as
$(n,\al)$.
We wrote the Seifert action on $\RR^n$ additively as follows.
\begin{equation}\label{Seifert}
(n,\al)x=n+\bar\h(\al)(x)+\chi(\al) \  \ ((n,\al)\in\GA).
\end{equation} We may check it is a group action, namely
it satisfies that
\[
(n,\al)((m,\be)x)=((n,\al)\cdot (m,\be))x.
\]

Suppose that $\GA$ is a crystallographic group of ${\EE}(n)$.
Note that $(n,1)x=n+x$, that is, $\ZZ^n$ acts as translations.
So its span $\RR^n$, viewed as a group, acts on $\RR^n$ as translations;
\[
(y,1)x=y+x\ (\forall\ (y,1)\in \RR^n).
\]
Since $\GA$ normalizes $\ZZ^n$, it  normalizes its span, $\RR^n$.
Noting that $(1,\al)$ $(n,1)(1,\al)^{-1}=(\h(\al)(n),1)$,
we can write
\[
(1,\al)(x,1)(1,\al)^{-1}=(\bar\h(\al)(x),1).
\]
For the origin $0\in \RR^n$,
we put
\[
(1,\al)0=\chi(\al)\in\RR^n.
\]

Noting $(x,1)0=x$ as above,
we can interpret the above Seifert action \eqref{Seifert},
\begin{equation*}\begin{split}
(n,\al)x&=((n,1)(1,\al))x=(n,1)((1,\al)x)\\
        &=(n,1)((1,\al)(x,1)0)\\
        &=(n,1)(((1,\al)\cdot (x,1)\cdot(1,\al)^{-1})\cdot(1,\al)0)\\
        &=(n,1)((\bar\h(\al)(x),1)(1,\al)0)\\
        &=(n,1)((\bar\h(\al)(x),1) \chi(\al))\\
        &=(n,1)(\bar\h(\al)(x)+\chi(\al))\\
        &=n+\bar\h(\al)(x)+\chi(\al).
\end{split}\end{equation*}

On the other hand, when $\GA\subset\EE(n)$,
let 
\[(n,\al)=\ga=(a,A)\in \EE(n).\]

As $\ga x=a+Ax$ $(\forall\ x\in \RR^n)$,
taking $x=0$, compare the above equation so that
\[
n+\chi(\al)=a.\]
Since $(n,\al)x=\ga x=a+Ax$,
it follows that $\bar\h(\al)(x)=Ax$.
This holds for all $x\in \RR^n$, it implies that
$\bar\h(\al)=A$.

Hence identifying $(n+\chi(\al),\bar\h(\al))=
(a,A)$, the Seifert action of $\GA$ coincides with the rigid motions.
\[
(n,\al)=(n+\chi(\al),\bar\h(\al))\in \EE(n) \ (\forall (n,\al)\in \GA).
\]
\end{proof}

\begin{thm}(Third Bieberbach).\label{Bie3}
For any given n, there are only a finite number of n-dimensional crystallographic groups, up to affine equivalence.
\end{thm}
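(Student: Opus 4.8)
The plan is to reduce the statement to a counting problem for the finite data attached to each crystallographic group, and then to feed in two classical finiteness theorems from integral representation theory. The first reduction is immediate: by the Second Bieberbach theorem (Theorem~\ref{T:6}), two crystallographic groups are isomorphic as abstract groups if and only if they are conjugate by an affine transformation, and the converse implication is trivial. Hence affine equivalence classes coincide with abstract isomorphism classes, and it suffices to bound the number of isomorphism classes of $n$-dimensional crystallographic groups.

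To each crystallographic group $\GA\subset\EE(n)$ I would attach the extension furnished by the First Bieberbach theorem (Theorem~\ref{T:4}),
\[
1\lra \ZZ^n\lra \GA\lra \Phi\lra 1,
\]
where $\Phi=L(\GA)$ is the finite holonomy group and $\ZZ^n=\GA\cap\RR^n$ is the translation lattice, which spans $\RR^n$. Choosing a basis of the lattice identifies the holonomy representation $\h:\Phi\lra\Aut(\ZZ^n)=GL(n,\ZZ)$ with a faithful representation (faithfulness because an $M\in\Phi$ fixing the spanning lattice is the identity). As in the proof of Theorem~\ref{T:6}, the isomorphism type of $\GA$ is completely determined by the triple consisting of (i) the finite group $\Phi$, (ii) the conjugacy class of $\h$ in $GL(n,\ZZ)$, and (iii) the cohomology class $[f]\in H^2_\h(\Phi;\ZZ^n)$ classifying the extension: the relations \eqref{Pe5}--\eqref{Pe6} express precisely that an abstract isomorphism $\GA_1\cong\GA_2$ is the same as an isomorphism $\Phi_1\cong\Phi_2$ intertwining $\h_1$ and $\h_2$ up to conjugacy by some $g\in GL(n,\ZZ)$ and carrying $[f_1]$ to $[f_2]$. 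Thus counting isomorphism classes of $\GA$ amounts to counting admissible triples up to this equivalence.

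I would then prove that each of the three pieces ranges over a finite set. For (i) and (ii), the essential point is that $\Phi$ is realized as a finite subgroup of $GL(n,\ZZ)$: by Minkowski's bound the order $|\Phi|$ is bounded by a function of $n$ alone, so only finitely many isomorphism types of $\Phi$ arise, and for each such $\Phi$ the Jordan--Zassenhaus theorem guarantees only finitely many $\ZZ\Phi$-module structures on $\ZZ^n$ up to isomorphism, i.e. finitely many holonomy representations $\h$ up to conjugacy in $GL(n,\ZZ)$. For (iii), with $\Phi$ finite and $\ZZ^n$ finitely generated, the group $H^2_\h(\Phi;\ZZ^n)$ is itself finite: it is finitely generated, and it is annihilated by $|\Phi|$ by the standard restriction--transfer argument, so a finitely generated abelian group killed by $|\Phi|$ is finite. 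Combining these three statements bounds the number of admissible triples, hence the number of isomorphism classes of $\GA$, and therefore by Theorem~\ref{T:6} the number of affine equivalence classes.

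The genuinely hard part is the finiteness underlying (ii), namely the Jordan--Zassenhaus theorem on the finiteness of integral representations of a fixed finite group in fixed rank, together with Minkowski's bound on the orders of finite subgroups of $GL(n,\ZZ)$; these are the deep arithmetic inputs, which I would quote rather than reprove. By contrast, the passage from groups to triples is a bookkeeping consequence of the proof of Theorem~\ref{T:6}, and the finiteness of $H^2_\h(\Phi;\ZZ^n)$ is an elementary cohomological fact. I would therefore concentrate the write-up on the bijection between isomorphism classes and equivalence classes of triples and on the verification that $H^2_\h(\Phi;\ZZ^n)$ is finite, treating the two classical finiteness results as black boxes.
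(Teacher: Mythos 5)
Your proposal is correct and follows essentially the same route as the paper: both parametrize crystallographic groups by triples $((F,\phi),[f])$ consisting of a finite group, a faithful representation into $GL(n,\ZZ)$ taken up to conjugacy, and a class in $H^2_{\phi}(F;\ZZ^n)$, and both reduce the theorem to the finiteness of the set of such triples. The only differences are in what is quoted versus proved: where the paper cites Theorem \ref{Thurs} (finiteness of finite subgroups of $GL(n,\ZZ)$ up to conjugation) as a single black box, you decompose it into Minkowski's bound plus the Jordan--Zassenhaus theorem, which is precisely the standard proof of that cited result, and you supply the restriction--transfer argument for the finiteness of $H^2_{\phi}(F;\ZZ^n)$, which the paper merely asserts.
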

\noindent This implies that there are only finitely many n-dimensional flat Riemannian manifolds.

To prove the third Bieberbach theorem, first of all, we assume the theorem in \cite{Th97} that

\begin{thm}\label{Thurs}
The number of non-isomorphic finite groups in \\$GL(n,\ZZ)$
up to conjugation is finite.
\end{thm}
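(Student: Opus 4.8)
The plan is to prove the finiteness by the geometry of numbers, realizing each finite subgroup as the stabilizer of a point in the space of positive definite quadratic forms and then pushing that point into a Minkowski fundamental set. Write $\mathcal P_n$ for the open cone of real symmetric positive definite $n\times n$ matrices, regarded as the space of positive definite quadratic forms on $\RR^n$, and let $\GL(n,\ZZ)$ act on it by $S\mapsto A^{T}SA$. The key structural input is Minkowski's reduction theory: there is a set $\mathfrak S\subset\mathcal P_n$ (a Minkowski fundamental set) such that (a) every form is $\GL(n,\ZZ)$-equivalent to one in $\mathfrak S$, and (b) the \emph{neighbour set} $F=\{A\in\GL(n,\ZZ):\mathfrak S\cap A^{T}\mathfrak S A\neq\emptyset\}$ is \emph{finite}, where $A^{T}\mathfrak S A$ denotes $\{A^{T}SA:S\in\mathfrak S\}$. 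Property (b), the Siegel finiteness property, is the engine of the whole argument.

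First I would attach to each finite subgroup $G\subset\GL(n,\ZZ)$ an invariant form by averaging: set $S_G=\sum_{g\in G}g^{T}g$. Then $S_G$ is symmetric, positive definite (the summand $g=I$ already contributes $I$, and every summand is positive semidefinite), has integer entries, and is $G$-invariant, since for each $h\in G$
\[
h^{T}S_Gh=\sum_{g\in G}(gh)^{T}(gh)=\sum_{g\in G}g^{T}g=S_G,
\]
the substitution $g\mapsto gh$ being a bijection of $G$. Thus every $h\in G$ stabilizes the point $S_G\in\mathcal P_n$ under the action above; equivalently $G\subseteq\Aut(S_G)=\{A:A^{T}S_GA=S_G\}$.

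Next I would use reduction to normalize the invariant point. By property (a) choose $U\in\GL(n,\ZZ)$ with $P:=U^{T}S_GU\in\mathfrak S$, and replace $G$ by its conjugate $G'=U^{-1}GU$. A direct computation shows $G'$ stabilizes $P$: for $g\in G$,
\[
(U^{-1}gU)^{T}P\,(U^{-1}gU)=U^{T}g^{T}S_Gg\,U=U^{T}S_GU=P.
\]
Now fix any $g'\in G'$. Since $g'^{T}Pg'=P$ and $P\in\mathfrak S$, the form $P$ lies in $\mathfrak S\cap g'^{T}\mathfrak S g'$, which is therefore nonempty; hence $g'\in F$. As $g'$ was arbitrary, $G'\subseteq F$. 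Because $F$ is finite, $G'$ is one of the finitely many subsets of $F$, so there are only finitely many possibilities for $G'$. Since every finite subgroup is $\GL(n,\ZZ)$-conjugate to such a $G'$, the number of conjugacy classes of finite subgroups of $\GL(n,\ZZ)$ is finite, and in particular only finitely many isomorphism types occur. As a by-product one recovers the order bound $|G|\le|F|$.

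The main obstacle is property (b), the finiteness of the neighbour set $F$: this is precisely Minkowski's theorem that his reduced domain has only finitely many translates meeting it, and it is the one genuinely nontrivial ingredient, which I would import from reduction theory rather than reprove. If one instead wishes to be more self-contained, an alternative route first bounds $|G|$ by reducing modulo an odd prime $p$ (the principal congruence subgroup $\Ker(\GL(n,\ZZ)\ra\GL(n,\FF_p))$ is torsionfree, so $G$ embeds in the finite group $\GL(n,\FF_p)$), and then invokes the Jordan--Zassenhaus theorem that a fixed finite group admits only finitely many integral representations of degree $n$ up to equivalence; but that representation-theoretic finiteness rests on the same reduction-theoretic circle of ideas, so the Siegel property remains the crux.
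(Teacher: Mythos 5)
Your proposal is correct, and it is worth noting that it cannot be compared against an internal proof, because the paper offers none: Theorem \ref{Thurs} is explicitly \emph{assumed}, imported from \cite{Th97}, and used as a black box in the proof of the Third Bieberbach theorem. What you have done is supply the standard reduction-theoretic argument that fills this gap. Each step checks out: the averaged form $S_G=\sum_{g\in G}g^{T}g$ is indeed symmetric, positive definite and $G$-invariant; the normalization $P=U^{T}S_GU\in\mathfrak S$ transfers the invariance to the conjugate $G'=U^{-1}GU$ by the computation you display; and since $P\in\mathfrak S$ while also $P=g'^{T}Pg'\in g'^{T}\mathfrak S g'$ for every $g'\in G'$, every element of $G'$ lies in the neighbour set $F$, so $G'$ is one of the finitely many subsets of the fixed finite set $F$. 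This yields finiteness of the number of conjugacy classes of finite subgroups together with the uniform order bound $|G|\le|F|$ --- in fact slightly more than the theorem's literal statement, and exactly the form of finiteness the paper's proof of Theorem \ref{Bie3} actually needs, since there the count runs over conjugacy classes of pairs $(F,\phi)$ with $\phi:F\ra\GL(n,\ZZ)$ faithful. You are right to isolate Siegel's finiteness of $\{A\in\GL(n,\ZZ):\mathfrak S\cap A^{T}\mathfrak S A\neq\emptyset\}$ as the one genuinely nontrivial imported ingredient; that is where the substance hides, just as it hides behind the paper's citation. Your sketched alternative --- bounding $|G|$ via the torsion-free principal congruence subgroup $\Ker(\GL(n,\ZZ)\ra\GL(n,\FF_p))$ for an odd prime $p$, then invoking Jordan--Zassenhaus --- is likewise a valid route, and your remark that it ultimately rests on the same reduction-theoretic circle of ideas is accurate.
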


\begin{proof} \textit {of Theorem} \ref{Bie3}.
Given a finite group $F$
 and a faithful representation $\phi:F\ra GL(n,\ZZ)$,
$\ZZ^n$ is viewed as a $F$-module through $\phi$.
Then each equivalence class of cocycle of
the second cohomology $H^2_{\phi}(F,\ZZ^n)$ defines
a group extension
\begin{equation}\label{group-ex1}
1\ra \ZZ^n\ra \Gamma\stackrel{L}\lra F\ra 1.
\end{equation}
Then we have proved (see the proof of Theorem \ref{T:6})
that
$\Gamma$ acts on $\RR^n$ by
\[
(n,\al)x=n+\phi(\al)(x)+\chi(\al).
\]
Then note that $\ZZ^n$ is a maximal abelian group of $\Gamma$.
For this, let $\ZZ^n\subset \Delta$ be an abelian subgroup of $\Gamma$.
Choose $(n,\al)\in \Delta$ so that
\[
(n,\al)(m,1)(n,\al)^{-1}=(\phi(\al)(m),1).
\]
Note 
\[(n,\al)^{-1}=(\phi(\al^{-1})(n^{-1}\cdot f(\al,\al^{-1})^{-1}),\al^{-1}).\]
Since $\Delta$ is abelian, $(n,\al)(m,1)(n,\al)^{-1}=(m,1)$,
$\phi(\al)(m)=m$ $(\forall \ m\in \ZZ^n)$.
It follows that $\phi(\al)={\rm id}$.
As our assumption is that
$\phi:$ $F\ra $ $GL(n,\ZZ)$ is a faithful representation,
this implies that $\al=1$ which shows that
$\Delta\ni(n,\al)=(n,1)$ so that $\Delta\subset \ZZ^n$.

Identify $(n,\al)=(n+\chi(\al),\phi(\al))\in \mathbb{A}(n)$.
Moreover since $\phi(F)$ is a finite group in 
$GL(n,\ZZ)\subset GL(n,\RR)$, there is an element 
$B\in GL(n,\RR)$ such that
$B\cdot\phi(F)\cdot B^{-1}\subset O(n)$.
Then the correspondence
\[
(n,\al)\ra (B(n+\chi(\al)),B\cdot\phi(\al)\cdot B^{-1})
\] is the injective homomorphism from $\Gamma$ to
$\EE(n)$.
Now $\Gamma\subset \EE(n)$ where $\ZZ^n$ is a maximal abelian subgroup,
by the definition $\Gamma$ is a crystallographic group.

So, each equivalence class $((F,\phi),[f])$ of
$\{(F,\phi), H^2_{\phi}(F,\ZZ^n)\}$ corresponds to the equivalence class of
crystallographic groups.

Conversely,
given a crystallographic group $\Gamma$ of $\EE(n)$, then there is a 
group extension as in \eqref{group-ex1} such that
$\ZZ^n$ is the maximal free abelian group
and $F$ is a finite group. A group extension $\Gamma$
defines
a unique homomorphism $\phi:F\ra {\rm Aut}(\ZZ^n)=GL(n,\ZZ)$
induced by the conjugation of $\Gamma$.
Note that this is a faithful homomorphism, that is, if $\phi(\al)={\rm id}$,
letting $\ga=(a,A)$ such that $L(\ga)=\al=A$,
\[
An=\phi(\al)(n)={\rm id}(n)=n.
\]Since $n\in \ZZ^n$ is the (integral) basis of $\RR^n$, this implies that
$A={\rm I}$. So $\al=1$.
In particular, we have a pair $(F,\phi)$.
Since $\ZZ^n$ is viewed as a $F$-module through $\phi$,
$\Gamma$ defines a $2$-cocycle $[f]\in H^2_{\phi}(F,\ZZ^n)$.
An equivalence class of crystallographic group $\Gamma$ defines
 an element $((F,\phi),[f])$ which implies
that 
$\{(F,\phi), H^2_{\phi}(F,\ZZ^n)\}$ is in one-to-one
correspondence with the equivalence classes of
crystallographic groups.

As $H^2_{\phi}(F,\ZZ^n)$ is of finite order (since $F$ is finite),
the finiteness of group extensions comes from that
of $(F,\phi)$ where $\phi:F\ra GL(n,\ZZ)$ is a faithful representation.
For each $n$ fixed, this is the number of non-isomorphic
conjugacy classes of a finite group $\phi(F)$ in
$GL(n,\ZZ)$. By Theorem \ref{Thurs}, such 
$(F,\phi)$ is finite.
This proves the theorem.
\end{proof}

Bieberbach theorems imply many results in the theory of compact flat manifolds some of which we state below.

\begin {thm}\label{T:8}
Let $X$ and $Y$ be compact flat manifolds. Then $\pi_1(X)$ is isomorphic to $\pi_1(Y)$
if and only if $X$ and $Y$ are diffeomorphic.
\end {thm}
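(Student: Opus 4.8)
The plan is to split the biconditional into its two implications, with essentially all of the content living in the forward direction, since the converse is a formal consequence of homotopy invariance. For the easy direction, if $X$ and $Y$ are diffeomorphic then they are in particular homotopy equivalent, so $\pi_1(X)\cong\pi_1(Y)$; no flatness is used here. For the substantive direction I would first realize the manifolds group-theoretically: by the Bieberbach theory developed above, write $X=\RR^n/\GA_1$ and $Y=\RR^m/\GA_2$, where $\GA_1,\GA_2$ are Bieberbach groups acting freely and properly discontinuously by rigid motions. Since $\RR^n$ is simply connected and the action of $\GA_i$ is free and properly discontinuous, the projection $\RR^n\lra X$ is the universal cover, whence $\pi_1(X)\cong\GA_1$ and likewise $\pi_1(Y)\cong\GA_2$. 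Thus the hypothesis $\pi_1(X)\cong\pi_1(Y)$ supplies an isomorphism $f:\GA_1\lra\GA_2$.

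The first genuine point to settle is that $n=m$. By Proposition \ref{T:5}, the subgroup $\GA_i\cap\RR^{n_i}\cong\ZZ^{n_i}$ is the \emph{unique} maximal normal abelian subgroup of $\GA_i$, hence an intrinsic group-theoretic invariant. Therefore the isomorphism $f$ must carry the maximal normal abelian subgroup of $\GA_1$ onto that of $\GA_2$, forcing $\ZZ^n\cong\ZZ^m$ and so $n=m$. After this identification, $\GA_1$ and $\GA_2$ are both crystallographic subgroups of one and the same Euclidean group $\EE(n)$, which is exactly the setting required to apply the second Bieberbach theorem.

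The remaining work is to promote the abstract isomorphism to a geometric one. Invoking Theorem \ref{T:6}, the isomorphism $f$ is realized by an affine diffeomorphism $h\in\mathbb{A}(n)$ satisfying $h(\ga x)=f(\ga)h(x)$ for all $\ga\in\GA_1$ and $x\in\RR^n$. Because $h$ intertwines the two actions, it carries $\GA_1$-orbits to $\GA_2$-orbits and therefore descends to a well-defined map $\bar h:X\lra Y$; this $\bar h$ is a bijection, its surjectivity following from that of $h$ and its injectivity from the injectivity of $h$ together with the surjectivity of $f$. Finally, since $h$ is a diffeomorphism of $\RR^n$ and both covering projections are local diffeomorphisms, $\bar h$ is itself a diffeomorphism, which completes the argument.

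I expect the only delicate step to be the dimension-matching, which rests entirely on the intrinsic characterization of $\ZZ^n\subset\GA$ furnished by Proposition \ref{T:5}: without first knowing $n=m$ one could not even place $\GA_1$ and $\GA_2$ inside a common $\EE(n)$, and the second Bieberbach theorem would be inapplicable. The heavy analytic and cohomological lifting has already been discharged inside Theorem \ref{T:6}, so once the dimensions agree the descent to quotients is purely organizational.
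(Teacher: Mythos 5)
Your proposal is correct and follows essentially the same route as the paper: the forward implication rests on Theorem \ref{T:6} supplying an affine map that intertwines the two deck-group actions and hence descends to a diffeomorphism of the quotients, and the converse is just homotopy invariance of $\pi_1$. The only place you go beyond the paper is the explicit dimension-matching via the intrinsic characterization of $\Gamma\cap\mathbb{R}^n$ in Proposition \ref{T:5}; the paper tacitly places both fundamental groups inside the same $\mathbb{E}(n)$, so your extra step closes a small gap rather than constituting a different approach.
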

\begin{proof}
$(\Rightarrow )$ We consider $\pi_1(X)$ and $\pi_1(Y)$ as Bieberbach subgroup of $\mathbb{E}(n)$. 
By Theorem \ref{T:6}, there is $\alpha\in\mathbb{R}^n$ s.t if $F:\pi_1(X)\longrightarrow \pi_1(Y)$
is an isomorphism, $F(\beta)=\alpha\beta\alpha^{-1}$ \,\,$\forall \beta\in\pi_1(X)$.
Let
\[
p_x:\mathbb{R}^n\longrightarrow X=\mathbb{R}^n/\pi_1(X) \,\,\, \text {and} \,\,\,
p_y:\mathbb{R}^n\longrightarrow Y=\mathbb{R}^n/\pi_1(Y)
\]
be the projection (or covering maps). Define $f:X\longrightarrow Y$ by 
\[
f(x)=p_y\circ \alpha\circ p^{-1}_x(x), \,\,\, \text{for} \,\,\, x\in X.
\]
To see that this is well-defined, let $\bar x\in p^{-1}_x(x)$ and $\beta\in\pi_1(X)$. 
We must show $p_y\circ \alpha(\beta\bar x)=p_y\circ \alpha(\bar x)$. 
But $\alpha\beta\alpha^{-1}=\gamma\in\pi_1(Y)$, so $\alpha\beta=\gamma\alpha$
and 
\begin{align*}
p_y\circ \alpha(\beta\bar x)&=p_y(\alpha\beta\bar x)=p_y(\gamma\alpha\bar x)\\
                             &=p_y(\alpha\bar x)\\
				     &=p_y\circ \alpha(\bar x),
\end{align*}
so f is well-defined.

To show that $f$ is diffeomorphism, we need merely to show $f$ is bijective.
Define $g:Y\longrightarrow X$ by
\begin{align*}
g(y)=p_x\circ \alpha^{-1}\circ p^{-1}_y(y), \,\,\, \text{for} \,\,\, y\in Y.
\end{align*}
Similarly, $g$ is also well-defined. For $\bar y\in p^{-1}_y(y)$ and $\gamma\in\pi_1(Y)$. 
We show $p_x\circ \alpha^{-1}(\gamma\bar y)=p_x\circ \alpha^{-1}(\bar y)$. 
But $\alpha\beta\alpha^{-1}=\gamma\in\pi_1(Y)$, so $\beta\alpha^{-1}=\alpha^{-1}\gamma$
and 
\begin{align*}
p_x\circ \alpha^{-1}(\gamma\bar y)&=p_x(\alpha^{-1}\gamma\bar y)=p_x(\beta\alpha^{-1}\bar y)\\
                             &=p_x(\alpha^{-1}\bar y)\\
				     &=p_x\circ \alpha^{-1}(\bar y).
\end{align*}
Now, 
\begin{align*}
f\circ g(y)&=f(p_x\circ \alpha^{-1}\circ p^{-1}_y(y))\\
          &=p_y\circ \alpha\circ p^{-1}_x(p_x\circ \alpha^{-1}\circ p^{-1}_y(y))\\
	    &=y.
\end{align*}
Similarly, $g\circ f(x)=x$. So, $g=f^{-1}$, f is bijective.

$(\Leftarrow )$. Let $f:X\longrightarrow Y$ is a diffeomorphism. So f is a homeomorphism. We show that
$F:\pi_1(X)\longrightarrow \pi_1(Y)$
is an isomorphism.

Since $f$ is a homeomorphism, there exists the inverse map $f^{-1}:Y\longrightarrow X$ has the property that
\[
f^{-1}\circ f=I_X, \,\,\,\, f\circ f^{-1}=I_Y.
\]
Let us consider the induced homomorphisms
\[
f_*:\pi_1(X)\longrightarrow \pi_1(Y), \,\,\,\, f^{-1}_*:\pi_1(Y)\longrightarrow \pi_1(X)
\]
in the fundamental groups. Since $(f^{-1}\circ f)_*=f^{-1}_*\circ f_*$, and $(I_X)_*:\pi_1(X)\longrightarrow \pi_1(X)$,
then $f^{-1}_*\circ f_*$ is the identity map on $\pi_1(X)$. Similarly, we can see that $f_*\circ f^{-1}_*$
is the identity map on $\pi_1(Y)$. Therefore, $f_*$ and $f^{-1}_*$ are inverses of each other, i.e., 
$F=f_*:\pi_1(X)\longrightarrow \pi_1(Y)$ is an isomorphism.
\end{proof}

\begin{thm}\label{T:9}
Let $\Gamma$ be a subgroup of $\mathbb{E}(n)$. Then the orbit space $\mathbb{R}^n/\Gamma$ is 
a compact flat n-dimensional manifold if and only if $\Gamma$ is torsionfree, discrete and irreducible.
\end{thm}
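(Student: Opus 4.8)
The plan is to strip the geometric hypotheses on $\RR^n/\GA$ down to purely group-theoretic ones and then feed them into the machinery already assembled. Since every element of $\EE(n)$ is an isometry of the flat metric on $\RR^n$, the quotient $\RR^n/\GA$ is automatically flat once it is a manifold, and it is $n$-dimensional as soon as $\GA$ acts discretely; so the content of ``compact flat $n$-manifold'' is exactly that $\GA$ act freely and properly discontinuously with compact quotient. First I would record that a free, properly discontinuous action yielding a Hausdorff manifold is equivalent, by Proposition \ref{P:1} together with Lemma \ref{L2}, to $\GA$ being torsionfree and discrete, while compactness of the quotient is the uniformity condition. Applying Theorem \ref{T4} then reduces the statement to the single equivalence, for torsionfree discrete $\GA\subset\EE(n)$,
\[
\GA \text{ is uniform} \Llra \GA \text{ is irreducible}.
\]

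For the forward implication I would use the First Bieberbach Theorem \ref{T:4}: if $\GA$ is uniform then $\GA^*=\GA\cap\RR^n$ is a lattice spanning $\RR^n$. For any affine $\al=(a,A)\in\mathbb{A}(n)$ a pure translation $(v,I)$ is sent to $\al(v,I)\al^{-1}=(Av,I)$, again a pure translation, so $T(\al\GA\al^{-1})$ contains $A\cdot T(\GA^*)$; as $A$ is invertible and $T(\GA^*)$ spans, $T(\al\GA\al^{-1})$ spans $\RR^n$ for every $\al$, which is precisely irreducibility. It is convenient to also record the converse bookkeeping: if $\GA^*$ is a full lattice then it is preserved by every rotational part $A\in L(\GA)$, so $L(\GA)$ lies in the stabilizer of a lattice inside $O(n)$, a discrete subgroup of a compact group and hence finite; thus $\RR^n/\GA$ is a finite quotient of the torus $\RR^n/\GA^*$ and is compact. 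In particular uniformity is equivalent to $\GA^*$ spanning $\RR^n$, a reformulation I would use throughout.

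The reverse implication is the heart of the matter, and I would argue its contrapositive: if $\GA$ is not uniform I must produce an affine change of coordinates confining all translational parts to a proper subspace. By the previous paragraph non-uniformity means $V:=\mathrm{span}(\GA^*)$ is proper; since $\GA^*$ is normal, conjugation shows every rotational part preserves $V$ and hence $V^\perp$, so each $\ga=(a,A)$ splits as $A=A_V\oplus A_{V^\perp}$, where $A_V$ preserves the full lattice $\GA^*\subset V$ and is therefore of finite order. Restricting to $V^\perp$ gives an induced group $\bar\GA\subset\EE(V^\perp)$, and using that each $A_V$ has finite order one checks that $\bar\GA$ has no nonzero pure translations, so its rotational part is injective. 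The decisive step is then to show that the translational parts of $\bar\GA$ cannot span $V^\perp$: granting this, $\mathrm{span}(T(\GA))=V\oplus\mathrm{span}(T(\bar\GA))$ is a proper subspace and $\GA$ is reducible already in the original coordinates (take $\al=\mathrm{id}$). When $\bar\GA$ happens to be finite this is immediate, since averaging an orbit produces a common fixed point on $V^\perp$ and conjugating by the corresponding translation kills the $V^\perp$-components of all translational parts.

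The main obstacle is exactly this last step, because $\bar\GA$ need not be finite: a screw motion shows its rotational part can be an infinite subgroup of $O(V^\perp)$ with no common fixed point. What rescues the argument is the discreteness of $\GA$: an accumulation of rotational parts in the compact group $O(V^\perp)$ would, for the discrete group upstairs, force the accompanying translations to escape to infinity, and tracking these escaping directions shows they cannot fill out $V^\perp$ while $\GA$ remains discrete. Making this bookkeeping rigorous is essentially the structural analysis underlying the First Bieberbach Theorem, and I would either carry it out by induction on $\dim V^\perp$ or simply invoke it from \cite{C85}; the former is where I expect all the real work to lie.
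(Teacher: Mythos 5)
Your skeleton coincides with the paper's: the manifold part is handled exactly as the paper handles it (Proposition \ref{P:1} for torsionfree versus free, Lemma \ref{L2} and Theorem \ref{T4} for discrete versus properly discontinuous), and like the paper you reduce the theorem to the equivalence, for torsionfree discrete $\GA$, between uniformity and irreducibility. Your forward direction is in fact more self-contained than the paper's: where the paper disposes of ``compact $\Rightarrow$ irreducible'' by citing Lemma 7 of \cite{K11}, you actually prove it, via Theorem \ref{T:4} together with the computation $\al(v,I)\al^{-1}=(Av,I)$, so that the translation lattice of any affine conjugate still spans. Your side remark that a spanning translation lattice conversely forces compactness (the torus $\RR^n/\GA^*$ maps onto $\RR^n/\GA$) is also correct, and it is the mechanism the paper implicitly relies on.

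The gap is in the reverse direction, and you have located it precisely but not closed it: in your contrapositive everything turns on showing that when $\GA^*$ spans only a proper subspace $V$, discreteness of $\GA$ prevents the translational parts of the induced group $\bar\GA$ on $V^\perp$ from spanning $V^\perp$. That claim is the entire content of ``irreducible $\Rightarrow$ uniform''; it is not bookkeeping, and neither the unexecuted induction nor the phrase ``invoke it from \cite{C85}'' supplies it — as your own screw-motion example shows, it needs a genuine accumulation argument in $O(V^\perp)$. You should know, however, that the paper's own proof has the same hole in a worse form: its reverse direction obtains compactness by applying the First Bieberbach Theorem \ref{T:4} to $\GA$, but that theorem presupposes $\GA$ is crystallographic, i.e.\ uniform, which is exactly what is to be proved, and the irreducibility hypothesis is never used at all. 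As written, the paper's argument would show that every torsionfree discrete subgroup of $\EE(n)$ is cocompact, which is false: $\GA=\ZZ$ acting by translations on $\RR^2$ is torsionfree and discrete with noncompact quotient (and, consistently with the theorem, reducible). So your proposal correctly isolates the one nontrivial step, which both you and the paper leave unproven; closing it requires the structure theory of non-cocompact discrete Euclidean groups (as in \cite{W67} or \cite{C85}), not merely the results assembled in this paper.
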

\begin{proof}
$(\Rightarrow )$ If $M=\mathbb{R}^n/\Gamma$ is a compact flat n-manifold, then $\pi_1(M)=\Gamma$ and
hence $\Gamma$ acts freely on $\mathbb{R}^n$, so $\Gamma$ must be torsionfree (by Proposition \ref{P:1}). Since $M$ is an n-manifold,
$\Gamma$ must be discrete. Since $\mathbb{R}^n/\Gamma$ is compact, by Lemma 7 \cite{K11}, $\Gamma$ is irreducible\\
$(\Leftarrow )$ Since $\Gamma$ is a torsionfree discrete subgroup of $\mathbb{E}(n)$, then by Proposition \ref{P:1}
$\Gamma$ acts freely on $\mathbb{R}^n$ and discrete subgroup of $\mathbb{E}(n)$ acts properly discontinuously
on $\mathbb{R}^n$, so $\mathbb{R}^n/\Gamma$ is an n-manifold. By first Bieberbach theorem, $\mathbb{R}^n/\Gamma$ 
is covered by $\mathbb{R}^n/(\Gamma\cap \mathbb{R}^n)$ which is the n-torus. Hence $\mathbb{R}^n/\Gamma$
is compact. 
\end{proof}


According to some theorems above, Bieberbach proved that, $\Gamma $ and $\Gamma '$ are two isomorphic 
discrete compact subgroups (crystallographic  subgroups)
of $\mathbb{E}(n)$ if and only if  $\exists$ $\gamma \in \mathbb{A}(n)$ such that $\gamma \Gamma\gamma ^{-1}=\Gamma'$.
This implies that, if $M$ and $M'$ are compact flat Riemannian manifolds with isomorphic 
fundamental groups $\Gamma $ and $\Gamma'$ respectively then,
$M$ is diffeomorphic to $M'$ if and only if $\exists$ $\gamma \in \mathbb{A}(n)$ such that $\gamma \Gamma\gamma ^{-1}=\Gamma'$.
\\

To show that $M(A)=(S^1)^n/(\mathbb{Z}_2)^n$ is aspherical, by Theorems \ref{T:1}
then $p: (S^1)^n \rightarrow (S^1)^n/(\mathbb{Z}_2)^n$ is covering. Since $q:\mathbb{R}^n \rightarrow (S^1)^n$
is a covering with $\mathbb{R}^n$ its universal covering space, then $p\circ q:\mathbb{R}^n \rightarrow (S^1)^n/(\mathbb{Z}_2)^n$
is covering. Hence $(S^1)^n/(\mathbb{Z}_2)^n$ is aspherical.

\section{Three Dimensional Real Bott Tower}

We begin this section with defining the action of cyclic group $\mathbb {(Z}_2)^3$ on $(S^1)^3$.

Let 
\begin{equation}\label {matrix}
A=\left(\begin{array}{lcr}
1& a_{12} & a_{13}\\
0& 1 & a_{23}\\
0& 0 & 1
\end{array}\right)
\end{equation}
be one of the 3rd Bott matrices.
Then each $a_{ij}$ represents either $0$ or $1$. 
We use the following notation for $a\in\{0,1\}$
\[
{\bar z}^a=\left\{\begin{array}{lr} \bar z & \mbox{if}\ a=1\\
z& \mbox{if}\ a=0
\end{array}\right.\]where $\bar z$ is the conjugate of the complex number $z$.
If $(g_1,g_2,g_3)$ $\in (\mathbb{Z}_2)^3$ are
generators, then $(\mathbb{Z}_2)^3$ acts on $T^3$ as
\begin{align*}
\begin{split}
g_1(z_1,z_2,z_3)&=(-z_1,{\bar z_2}^{a_{12}},{\bar z_3}^{a_{13}})\\
g_2(z_1,z_2,z_3)&=(z_1,-z_2,{\bar z_3}^{a_{23}})\\
g_3(z_1,z_2,z_3)&=(z_1,z_2,-z_3).
\end{split}
\end{align*}It is easy to see that 
$(\mathbb{Z}_2)^3$ acts freely on $T^3$ such that
the orbit space $M(A)=T^3/(\mathbb{Z}_2)^3$ is a smooth compact manifold.
In this way, taking an $n$-th Bott matrix $A$, 
we obtain a free action of $(\mathbb{Z}_2)^n$ on $T^n$.



Let $Pr(x,y,z)=(e^{2\pi {\bf i}x},e^{2\pi {\bf i}y},
e^{2\pi {\bf i}z})=(z_1,z_2,z_3)$ be the canonical covering  map of $\RR^3$ onto $T^3$. Given $A$ as before, 
each generator $g_i$ of $(\mathbb{Z}_2)^3$ acting on $T^3$ lifts to a map
$\tilde {g}_i:\mathbb{R}^3\rightarrow \mathbb{R}^3$
so that this diagram is commutative
\[
\begin{CD}
\mathbb{R}^3 @>\tilde {g}_i>> \mathbb{R}^3\\
@VPrVV  @VVPrV\\
T^3 @>g_i>> T^3.
\end{CD}
\]
Then the fundamental group of $M(A)$ is
isomorphic to the group $\Gamma(A)$
generated by $\{\tilde g_1,\tilde g_2,\tilde g_3\}$.
Moreover, it forms a torsion-free discrete
uniform subgroup of the Euclidean group 
$\mathbb {E}(n)=\mathbb {R}^n\rtimes {\rm O}(n)$.
$\Gamma(A)$ is called {\em Bieberbach group} 
(torsion free crystallographic group).
We call $\Gamma(A)$ a Bott group associated to the Bott matrix
$A$, for brevity.

For example, let
\begin{equation} \label{ex}
A=\left(\begin{array}{lcr}
1& 0 & 1\\
0& 1 & 1\\
0& 0 & 1
\end{array}\right),
\end{equation}
and by definition we have
\begin{align*}\label{klein}
\begin{split}
g_1(z_1,z_2,z_3)&=(-z_1, z_2,\bar z_3)\\
g_2(z_1,z_2,z_3)&=(z_1,-z_2,\bar z_3)\\
g_3(z_1,z_2,z_3)&=(z_1,z_2,-z_3).
\end{split}
\end{align*}
Thus we get $\tilde {g}_1 \fz
\left(\begin{array}{c}
x\\
y\\
z
\end{array}\right)=
\left(\begin{array}{ccc}
1& 0 & 0\\
0& 1 & 0\\
0& 0 & -1
\end{array}\right)
\left(\begin{array}{c}
x\\
y\\
z
\end{array}\right)+
\left(\begin{array}{c}
\frac{1}{2}\\
0\\
0
\end{array}\right)$. \nz
Therefore we may write the generator $\tilde {g}_1= \fz
\Bigl(
\left(\begin{array}{c}
\frac{1}{2}\\
0\\
0
\end{array}\right),
\left(\begin{array}{ccc}
1& 0 & 0\\
0& 1 & 0\\
0& 0 & -1
\end{array}\right)\Bigr)=(s_1, M_1)\in \Gamma(A)$ \nz
where $s_1\in\RR^3, \, M_1\in O(3)$.
Similarly, we have \\
$\tilde {g}_2= $ $\fz
\Bigl(
\left(\begin{array}{c}
0\\
\frac{1}{2}\\
0
\end{array}\right),
\left(\begin{array}{ccc}
1& 0 & 0\\
0& 1 & 0\\
0& 0 & -1
\end{array}\right)\Bigr)=$ $(s_2, M_2)$, \nz
$\tilde {g}_3= \fz
\Bigl(
\left(\begin{array}{c}
0\\
0\\
\frac{1}{2}
\end{array}\right),
\left(\begin{array}{ccc}
1& 0 & 0\\
0& 1 & 0\\
0& 0 & 1
\end{array}\right)\Bigr)$  $=(s_3, I_3)$, where $s_2, s_3 \in \RR^3$ and 
$M_2\in O(3)$. \nz
Hence  \small
$\Gamma(A) =<\ti  g_1, \ti  g_2, \ti  g_3>$. \normalsize
\\
By this explanation we see that action $\mathbb{(Z}_2)^3$ on $(S^1)^3$ coincides with action $\Gamma $ on $\mathbb{R}^3$.

Given a Bott matrix $A$ as (\ref{matrix}), $M(A)$ admits a maximal $T^k$-action ($k=1,2,3$) which is obtained as follows.\\
Let $t_i\in T^3$, and define $T^3$-action on $(S^1)^3$ by
\begin{align*}
t_1(z_1,z_2,z_3)=(e^{2\pi i\theta  }z_1,z_2,z_3)\\
t_2(z_1,z_2,z_3)=(z_1,e^{2\pi i\theta  }z_2,z_3)\\
t_3(z_1,z_2,z_3)=(z_1,z_2,e^{2\pi i\theta  }z_3)
\end{align*}
where $0\leq \theta\leq 1 $.\\
If $t_ig(z_1,z_2,z_3)=gt_i(z_1,z_2,z_3)$ for  $t_i\in T^k$ and  $g\in(\ZZ_2)^3$
then $M(A)$ admits a $T^k$-action ($k=1,2,3$) which is maximal..

The above example, it is easy to check that $t_1g_i=g_it_1$, $t_2g_i=g_it_2$ for $i=1,2,3$, but $t_3g_1\not=g_1t_3$.
Therefore $M(A)$ with representative matrix (\ref{ex}) admits $T^2$-action which is maximal.

According to Bieberbach theorem, fundamental group $\pi _1(M(A))=\Gamma=\langle \tilde{g}_1,\tilde{g}_2,\tilde{g}_3 \rangle$ 
is isomorphic to 
$\pi _1(M(A'))=\Gamma '=\langle \tilde{h}_1,\tilde{h}_2,\tilde{h}_3 \rangle$ if and only if 
$\exists$ $\gamma \in \mathbb{A}(3)=\mathbb{R}^3\rtimes GL(3,\mathbb{R})$ such that $\gamma \Gamma\gamma ^{-1}=\Gamma'$. 
So, if we can find $\tilde \varphi =\gamma \in \mathbb{A}(3)$ such that
$\gamma \Gamma\gamma ^{-1}=\Gamma'$, then $M(A)$ is diffeomorphic to  $M(A')$.
This means, we have to define map $\tilde \varphi$ or $\varphi$ such that the diagrams below are commutative
\[
\begin{CD}
\mathbb{R}^3 @>\tilde {\varphi }>> \mathbb{R}^3\\
@V\tilde {g}_iVV  @VV\tilde {h}_jV\\
\mathbb{R}^3 @>\tilde {\varphi }>> \mathbb{R}^3
\end{CD} \hspace{1cm}
\begin{CD}
T^3 @>\varphi>> T^3\\
@Vg_iVV  @VVh_jV\\
T^3 @>\varphi>> T^3
\end{CD}
\] 
for $i=j=1,2,3.$
This is a tool that we will use later to determine the diffeomorphism classes of $M(A)$.
\\

If $M(A)$ is a $3$-dimensional real Bott tower, then it is
classified as follows.

\begin{thm} \label {T:3} Let $M(A)$ be a $3$-dimensional real Bott tower.
Then $M(A)$ admits a maximal $T^k$-action $(k=1,2,3)$.
There exist exactly $4$ distinct diffeomorphism classes of $M(A)$
in which all $8$ Bott matrices fall into the following classes:
\begin{itemize}
\item[(i)](orientable)($T^3$-action.) The Identity matrix $I_3.$
\item [(ii)](nonorientable)($T^2$-action.)\\
\fz $A_1=\left(\begin{array}{lcr}
1& 1 & 0\\
0& 1 & 0\\
0& 0 & 1
\end{array}\right)$,
$A_2=\left(\begin{array}{lcr}
1& 0 & 1\\
0& 1 & 1\\
0& 0 & 1
\end{array}\right)$,
$A_3=\left(\begin{array}{lcr}
1& 0 & 0\\
0& 1 & 1\\
0& 0 & 1
\end{array}\right)$,\\
\item[\ ] 
$A_4=\left(\begin{array}{lcr}
1& 0 & 1\\
0& 1 & 0\\
0& 0 & 1
\end{array}\right)$. \nz
\item[(iii)](nonorientable)($S^1$-action.) \\
\fz $A_5=\left(\begin{array}{lcr}
1& 1 & 0\\
0& 1 & 1\\
0& 0 & 1
\end{array}\right)$,
$A_6=\left(\begin{array}{lcr}
1& 1 & 1\\
0& 1 & 1\\
0& 0 & 1
\end{array}\right)$. \nz
\item[(iv)](orientable)($S^1$-action.)\, \fz  $A_7=\left(\begin{array}{lcr}
1& 1& 1\\
0& 1 & 0\\
0& 0 & 1
\end{array}\right)$.
\end{itemize} \nz
Moreover, Bott matrices in each class are conjugate.
\end{thm}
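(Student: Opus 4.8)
The plan is to reduce the entire statement to a comparison of the Bieberbach groups $\GA(A)$ and then separate the classes by two affine invariants. First I would record the lifted generators explicitly: writing $z_j=e^{2\pi{\bf i}x_j}$, the substitution $z_j\mapsto -z_j$ becomes the half-translation $x_j\mapsto x_j+\tfrac12$ (rotational part $+1$), while $z_j\mapsto\bar z_j$ becomes $x_j\mapsto -x_j$ (rotational part $-1$). Hence each $\ti g_i=(s_i,M_i)\in\EE(3)$ has $s_i=\tfrac12e_i$ and diagonal rotational part $M_1=\mathrm{diag}(1,(-1)^{a_{12}},(-1)^{a_{13}})$, $M_2=\mathrm{diag}(1,1,(-1)^{a_{23}})$, $M_3=I$. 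Since $\ti g_i^{\,2}=(e_i,I)\in\ZZ^3$, the group $\GA(A)=\langle\ti g_1,\ti g_2,\ti g_3\rangle$ contains $\ZZ^3$ and is the Bieberbach group of $M(A)$. By Theorems \ref{T:6} and \ref{T:8}, $M(A)$ and $M(A')$ are diffeomorphic if and only if $\GA(A)\cong\GA(A')$, equivalently $\gamma\GA(A)\gamma^{-1}=\GA(A')$ for some $\gamma\in\mathbb{A}(3)$; this is the only tool I would use to establish diffeomorphism.

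For the separation into four classes I would compute the holonomy $\Phi(A)=L(\GA(A))=\langle M_1,M_2\rangle\subset O(3)$ for all eight matrices. By the first Bieberbach theorem (Theorem \ref{T:4}) together with Proposition \ref{T:5}, the lattice $\GA^*=\GA\cap\RR^3$ is characteristic, so $\Phi\cong\GA/\GA^*$ is a diffeomorphism invariant; moreover $M(A)$ is orientable precisely when $\Phi\subset\SO(3)$. A direct tabulation gives $\Phi=\{I\}$ for $I_3$; $\Phi=\langle\mathrm{diag}(1,-1,1)\rangle$ or $\langle\mathrm{diag}(1,1,-1)\rangle$ (order $2$, determinant $-1$) for $A_1,A_2,A_3,A_4$; $\Phi=\langle\mathrm{diag}(1,-1,1),\mathrm{diag}(1,1,-1)\rangle$ (order $4$) for $A_5,A_6$; and $\Phi=\langle\mathrm{diag}(1,-1,-1)\rangle$ (order $2$, determinant $+1$) for $A_7$. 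Thus the pair $(|\Phi|,\text{orientability})$ takes the four distinct values $(1,\text{or.})$, $(2,\text{nonor.})$, $(4,\text{nonor.})$, $(2,\text{or.})$ on classes (i)--(iv), proving that the four classes are pairwise non-diffeomorphic.

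It remains to show that the matrices inside each class give diffeomorphic manifolds. For $A_1$ and $A_3$ I would conjugate the $T^3$-action by the coordinate transpositions swapping the second and third, respectively the first and second, circle factors; a direct check shows each carries the generating set of the $A_i$-action onto that of the $A_4$-action (merely permuting the generators), so the two Bieberbach groups are conjugate in $\mathbb{A}(3)$. The identifications $A_2\sim A_4$ and $A_5\sim A_6$ are the main obstacle, since there a bare coordinate permutation fails: in $A_2$ both $\ti g_1,\ti g_2$ already reflect the third axis, and in $A_6$ the reflections are stacked relative to $A_5$. I would handle these by a change of generators realizing an abstract isomorphism: setting $h_1=\ti g_1$, $h_2=\ti g_1\ti g_2^{-1}$, $h_3=\ti g_3$ in $\GA(A_2)$ yields the relations $[h_1,h_2]=[h_2,h_3]=1$ and $h_1h_3h_1^{-1}=h_3^{-1}$, which is exactly the presentation of $\GA(A_4)$ (the Klein-bottle group times $\ZZ$); similarly the substitution $\ti g_1\mapsto\ti g_1\ti g_2$ converts the rotational datum $M_1=\mathrm{diag}(1,-1,1)$ of $A_5$ into $\mathrm{diag}(1,-1,-1)$, that of $A_6$. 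Invoking Theorem \ref{T:6} then promotes each abstract isomorphism to an affine conjugacy, hence a diffeomorphism, giving exactly four classes.

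Finally I would dispatch the two auxiliary assertions. For the maximal torus, the rotation $t_j$ on the $j$-th circle descends to $M(A)$ iff it commutes with every $g_i$; a short commutator computation shows $t_1$ always descends, $t_2$ descends iff $a_{12}=0$, and $t_3$ descends iff $a_{13}=a_{23}=0$, which produces $k=3,2,1,1$ on classes (i)--(iv) and pins down the maximal $T^k$-action. For the conjugacy of the representative Bott matrices I would observe that $A-I$ is nilpotent and classify it over $\ZZ$: the four matrices of class (ii) give rank-one nilpotents $v\otimes\xi^{\,*}$ with $v,\xi$ primitive and $\xi(v)=0$, all $GL(3,\ZZ)$-conjugate to $e_1\otimes e_2^{\,*}$, while $A_5-I$ and $A_6-I$ are regular nilpotents admitting a cyclic integral vector and so are conjugate to the standard Jordan block. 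This yields the asserted conjugacy of the matrices within each class and completes the proof.
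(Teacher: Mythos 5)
Your overall strategy coincides with the paper's: reduce diffeomorphism of the $M(A)$ to affine conjugacy of the Bieberbach groups $\GA(A)$ via Theorems \ref{T:6} and \ref{T:8}, separate the four classes by holonomy invariants, and then identify the manifolds within each class. Your separation step is correct and marginally cleaner than the paper's: where the paper distinguishes class (ii) from class (iv) by comparing eigenvalues of the holonomy generators under a $GL(3,\RR)$-conjugation, you use the determinant (orientability); these amount to the same thing, since conjugate subgroups of $GL(3,\RR)$ have equal determinant sets. Your transposition conjugacies for $A_1,A_3\sim A_4$, the maximal-torus count, and the $GL(3,\ZZ)$-classification of the nilpotents $A-I$ (a genuine improvement over the paper's Maple computation) are all correct. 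The presentation argument for $\GA(A_2)\cong\GA(A_4)\cong K\times \ZZ$ also works, but note that verifying the relations only gives a surjection $K\times\ZZ\to\GA(A_2)$; you still need the (routine) step that this surjection is injective, e.g.\ because both groups have Hirsch length $3$ and $K\times\ZZ$ is torsion-free, so the kernel is finite, hence trivial.

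The genuine gap is the identification $M(A_5)\approx M(A_6)$. You observe that the substitution $\ti g_1\mapsto\ti g_1\ti g_2$ makes the rotational parts of a generating set of $\GA(A_5)$ agree with those of $\GA(A_6)$, and then invoke Theorem \ref{T:6} to ``promote the abstract isomorphism''; but at that point no abstract isomorphism has been produced. After the substitution the translational parts still disagree: the new generator of $\GA(A_5)$ is $\bigl((\tfrac12,-\tfrac12,0),\mathrm{diag}(1,-1,-1)\bigr)$ while the corresponding generator of $\GA(A_6)$ is $\bigl((\tfrac12,0,0),\mathrm{diag}(1,-1,-1)\bigr)$, and matching rotational data of generating sets does \emph{not} in general imply isomorphism of the groups --- the extension class in $H^2_{\h}(Q;\ZZ^n)$ must also match. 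Indeed, the failure of exactly this principle is what makes the paper's four-dimensional negative results nontrivial: for instance $\Gamma_{a10}$ and $\Gamma_{a14}$ have literally the same holonomy subgroup of $O(4)$, yet $M(A_{a10})$ and $M(A_{a14})$ are shown there to be non-diffeomorphic. So your step, as written, would ``prove'' too much. The missing ingredient is precisely the paper's device: conjugation by the translation $(0,\tfrac14,0)$ (equivalently, the map $\varphi(z_1,z_2,z_3)=(z_1,iz_2,z_3)$ on $T^3$) carries your new generating set $\{\ti g_1\ti g_2,\ \ti g_2,\ \ti g_3\}$ of $\GA(A_5)$ exactly onto the standard generating set of $\GA(A_6)$, which exhibits the isomorphism and realizes it affinely in one stroke. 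With that conjugation (or an equivalent cocycle or presentation verification) inserted, your proof is complete.
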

\begin{proof} For simplicity, notation "$\approx$" indicates diffeomorphic. 
The proof is organized by the following steps.
           \begin{enumerate}
	     \item Determine conjugacy classes of 8 Bott matrices 
			(we used Maple to determine this).
           \item For each conjugacy class, we check whether 
			$M(A), M(A')$ with representative matrices $A,A'$ are diffeomorphic or not
			by using Bieberbach theorems.
           \end{enumerate}
A detailed proof is as follows:
\begin{itemize}
\item[a)] $M(I_3)$ is not diffeomorphic to any $M(A_i)$ for $i=1,2,3,4,5,6,7.$ 
\item[b)] $M(A_1)\approx M(A_2)\approx M(A_3)\approx M(A_4)$
\item[c)] $M(A_5)\approx M(A_6)$
\item[d)] $M(A_1)$ is not diffeomorphic to $M(A_5)$
\item[e)] $M(A_1)$ is not diffeomorphic to $M(A_7)$
\item[f)] $M(A_5)$ is not diffeomorphic to $M(A_7)$
\end{itemize}
\noindent {\bf a)}. This is clear, because holonomy group of $I_3$ is trivial.
\\

\noindent {\bf b)}.\\
\noindent $\bullet$ $ M(A_{1})\approx  M(A_{2})$.\\
For  
\footnotesize
\begin{align*}
&A_{1}=
\left(\begin{array}{ccc}
1& 1 & 0  \\
0& 1 & 0 \\
0& 0 & 1 
\end{array}\right)
\hspace{.1cm}
&A_{2}=
\left(\begin{array}{ccc}
1& 0 & 1 \\
0& 1 & 1 \\
0& 0 & 1 \\
\end{array}\right)
\\
&g_1(z_1,z_2,z_3)=(-z_1,\bar z_2,z_3) 			 &h_1h_2(z_1,z_2,z_3)=(-z_1,-z_2,z_3)\\
&g_2(z_1,z_2,z_3)=(z_1,-z_2,z_3)          		 &h_2(z_1,z_2,z_3)=(z_1,-z_2,\bar z_3)\\
&g_3(z_1,z_2,z_3)=(z_1,z_2,-z_3)                       &h_3(z_1,z_2,z_3)=(z_1, z_2,-z_3)
\end{align*}
\normalsize then \footnotesize
\begin{align*}
\Gamma_{1}=
\Bigl < \left(\begin{array}{ccc}
\frac{1}{2}\\
0\\
0
\end{array}\right)
\left(\begin{array}{ccc}
1& 0 & 0 \\
0& -1 & 0\\
0& 0 & 1 
\end{array}\right),
\left(\begin{array}{ccc}
0\\
\frac{1}{2}\\
0
\end{array}\right)
\left(\begin{array}{ccc}
1& 0 & 0 \\
0& 1 & 0 \\
0& 0 & 1
\end{array}\right),
\left(\begin{array}{ccc}
0\\
0\\
\frac{1}{2}
\end{array}\right)
\left(\begin{array}{ccc}
1& 0 & 0 \\
0& 1 & 0 \\
0& 0 & 1 
\end{array}\right)
\Bigr >
\end{align*}

\begin{align*}
\Gamma_{2}=
\Bigl < \left(\begin{array}{ccc}
\frac{1}{2}\\
\frac{1}{2}\\
0
\end{array}\right)
\left(\begin{array}{ccc}
1& 0 & 0 \\
0& 1 & 0\\
0& 0 & 1 
\end{array}\right),
\left(\begin{array}{ccc}
0\\
\frac{1}{2}\\
0
\end{array}\right)
\left(\begin{array}{ccc}
1& 0 & 0 \\
0& 1 & 0 \\
0& 0 & -1
\end{array}\right),
\left(\begin{array}{ccc}
0\\
0\\
\frac{1}{2}
\end{array}\right)
\left(\begin{array}{ccc}
1& 0 & 0 \\
0& 1 & 0 \\
0& 0 & 1 
\end{array}\right)
\Bigr >.
\end{align*}

\normalsize
\noindent
Let $\varphi(z_1,z_2,z_3)=(z_3,z_1z_3,z_2)$, we get these commutative diagrams

\footnotesize
\[
\begin{CD}
(z_1,z_2,z_3) @>\varphi>>(z_3,z_1z_3,z_2)\\
@Vg_1VV @Vh_2 VV\\
(-z_1,\bar z_2,z_3) @>\varphi>> (z_3,-z_1z_3,\bar z_2)
\end{CD}
\hspace{1cm}
\begin{CD}
(z_1,z_2,z_3) @>\varphi>>(z_3,z_1z_3,z_2)\\
@Vg_3VV @Vh_1h_2 VV\\
(z_1,z_2,-z_3) @>\varphi>> (-z_3,-z_1z_3,z_2)
\end{CD}
\]
\[ 
\begin{CD}
(z_1,z_2,z_3) @>\varphi>>(z_3,z_1z_3,z_2)\\
@Vg_2VV @Vh_3 VV\\
(z_1,-z_2,z_3) @>\varphi>> (z_3,z_1z_3,-z_2).
\end{CD}
\]

\normalsize
\noindent 
Therefore $\exists$ 
$\gamma 
\footnotesize
=
\Bigl( \left(\begin{array}{ccc}
0\\
0\\
0
\end{array}\right)
\left(\begin{array}{ccc}
0& 0 & 1 \\
1& 0 & 1 \\
0& 1 & 0
\end{array}\right)
\Bigr)
\normalsize
\in \mathbb{A}(n)$ 
s.t. $\gamma \Gamma_{1}\gamma ^{-1}=\Gamma_{2}.$\\

\noindent
$\bullet  M(A_{1})\approx  M(A_{3})$.\\
For
\footnotesize
\begin{align*}
&A_{1}=
\left(\begin{array}{ccc}
1& 1 & 0  \\
0& 1 & 0 \\
0& 0 & 1 
\end{array}\right)
\hspace{.1cm}
&A_{3}=
\left(\begin{array}{ccc}
1& 0 & 0 \\
0& 1 & 1 \\
0& 0 & 1 \\
\end{array}\right)
\\
&g_1(z_1,z_2,z_3)=(-z_1,\bar z_2,z_3) 			 &h_1(z_1,z_2,z_3)=(-z_1,z_2,z_3)\\
&g_2(z_1,z_2,z_3)=(z_1,-z_2,z_3)          		 &h_2(z_1,z_2,z_3)=(z_1,-z_2,\bar z_3)\\
&g_3(z_1,z_2,z_3)=(z_1,z_2,-z_3)                       &h_3(z_1,z_2,z_3)=(z_1, z_2,-z_3)
\end{align*}
\normalsize then \footnotesize
\begin{align*}
\Gamma_{1}=
\Bigl < \left(\begin{array}{ccc}
\frac{1}{2}\\
0\\
0
\end{array}\right)
\left(\begin{array}{ccc}
1& 0 & 0 \\
0& -1 & 0\\
0& 0 & 1 
\end{array}\right),
\left(\begin{array}{ccc}
0\\
\frac{1}{2}\\
0
\end{array}\right)
\left(\begin{array}{ccc}
1& 0 & 0 \\
0& 1 & 0 \\
0& 0 & 1
\end{array}\right),
\left(\begin{array}{ccc}
0\\
0\\
\frac{1}{2}
\end{array}\right)
\left(\begin{array}{ccc}
1& 0 & 0 \\
0& 1 & 0 \\
0& 0 & 1 
\end{array}\right)
\Bigr >
\end{align*}

\begin{align*}
\Gamma_{3}=
\Bigl < \left(\begin{array}{ccc}
\frac{1}{2}\\
0\\
0
\end{array}\right)
\left(\begin{array}{ccc}
1& 0 & 0 \\
0& 1 & 0\\
0& 0 & 1 
\end{array}\right),
\left(\begin{array}{ccc}
0\\
\frac{1}{2}\\
0
\end{array}\right)
\left(\begin{array}{ccc}
1& 0 & 0 \\
0& 1 & 0 \\
0& 0 & -1
\end{array}\right),
\left(\begin{array}{ccc}
0\\
0\\
\frac{1}{2}
\end{array}\right)
\left(\begin{array}{ccc}
1& 0 & 0 \\
0& 1 & 0 \\
0& 0 & 1 
\end{array}\right)
\Bigr >.
\end{align*}

\normalsize
\noindent
Let $\varphi(z_1,z_2,z_3)=(z_3,z_1,z_2)$, we get these commutative diagrams

\footnotesize
\[
\begin{CD}
(z_1,z_2,z_3) @>\varphi>>(z_3,z_1,z_2)\\
@Vg_1VV @Vh_2 VV\\
(-z_1,\bar z_2,z_3) @>\varphi>> (z_3,-z_1,\bar z_2)
\end{CD}
\hspace{1cm}
\begin{CD}
(z_1,z_2,z_3) @>\varphi>>(z_3,z_1,z_2)\\
@Vg_3VV @Vh_1 VV\\
(z_1,z_2,-z_3) @>\varphi>> (-z_3,z_1,z_2)
\end{CD}
\]
\[ 
\begin{CD}
(z_1,z_2,z_3) @>\varphi>>(z_3,z_1,z_2)\\
@Vg_2VV @Vh_3 VV\\
(z_1,-z_2,z_3) @>\varphi>> (z_3,z_1,-z_2).
\end{CD}
\]

\normalsize
\noindent 
Therefore $\exists$ 
$\gamma 
\footnotesize
=
\Bigl( \left(\begin{array}{ccc}
0\\
0\\
0
\end{array}\right)
\left(\begin{array}{ccc}
0& 0 & 1 \\
1& 0 & 0 \\
0& 1 & 0
\end{array}\right)
\Bigr)
\normalsize
\in \mathbb{A}(n)$ 
s.t. $\gamma \Gamma_{1}\gamma ^{-1}=\Gamma_{3}$.\\

\noindent
$\bullet  M(A_{1})\approx  M(A_{4})$. \\
For
\footnotesize
\begin{align*}
&A_{1}=
\left(\begin{array}{ccc}
1& 1 & 0  \\
0& 1 & 0 \\
0& 0 & 1 
\end{array}\right)
\hspace{.1cm}
&A_{4}=
\left(\begin{array}{ccc}
1& 0 & 1 \\
0& 1 & 0 \\
0& 0 & 1 \\
\end{array}\right)
\\
&g_1(z_1,z_2,z_3)=(-z_1,\bar z_2,z_3) 			 &h_1(z_1,z_2,z_3)=(-z_1,z_2,\bar z_3)\\
&g_2(z_1,z_2,z_3)=(z_1,-z_2,z_3)          		 &h_2(z_1,z_2,z_3)=(z_1,-z_2,z_3)\\
&g_3(z_1,z_2,z_3)=(z_1,z_2,-z_3)                       &h_3(z_1,z_2,z_3)=(z_1, z_2,-z_3)
\end{align*}
\normalsize then \footnotesize
\begin{align*}
\Gamma_{1}=
\Bigl < \left(\begin{array}{ccc}
\frac{1}{2}\\
0\\
0
\end{array}\right)
\left(\begin{array}{ccc}
1& 0 & 0 \\
0& -1 & 0\\
0& 0 & 1 
\end{array}\right),
\left(\begin{array}{ccc}
0\\
\frac{1}{2}\\
0
\end{array}\right)
\left(\begin{array}{ccc}
1& 0 & 0 \\
0& 1 & 0 \\
0& 0 & 1
\end{array}\right),
\left(\begin{array}{ccc}
0\\
0\\
\frac{1}{2}
\end{array}\right)
\left(\begin{array}{ccc}
1& 0 & 0 \\
0& 1 & 0 \\
0& 0 & 1 
\end{array}\right)
\Bigr >
\end{align*}
\begin{align*}
\Gamma_{4}=
\Bigl < \left(\begin{array}{ccc}
\frac{1}{2}\\
0\\
0
\end{array}\right)
\left(\begin{array}{ccc}
1& 0 & 0 \\
0& 1 & 0\\
0& 0 & -1 
\end{array}\right),
\left(\begin{array}{ccc}
0\\
\frac{1}{2}\\
0
\end{array}\right)
\left(\begin{array}{ccc}
1& 0 & 0 \\
0& 1 & 0 \\
0& 0 & 1
\end{array}\right),
\left(\begin{array}{ccc}
0\\
0\\
\frac{1}{2}
\end{array}\right)
\left(\begin{array}{ccc}
1& 0 & 0 \\
0& 1 & 0 \\
0& 0 & 1 
\end{array}\right)
\Bigr >.
\end{align*}

\normalsize
\noindent
Let $\varphi(z_1,z_2,z_3)=(z_1,z_3,z_2)$, we get these commutative diagrams

\footnotesize
\[
\begin{CD}
(z_1,z_2,z_3) @>\varphi>>(z_1,z_3,z_2)\\
@Vg_1VV @Vh_1 VV\\
(-z_1,\bar z_2,z_3) @>\varphi>> (-z_1,z_3,\bar z_2)
\end{CD}
\hspace{1cm}
\begin{CD}
(z_1,z_2,z_3) @>\varphi>>(z_1,z_3,z_2)\\
@Vg_3VV @Vh_2 VV\\
(z_1,z_2,-z_3) @>\varphi>> (z_1,-z_3,z_2)
\end{CD}
\]
\[ 
\begin{CD}
(z_1,z_2,z_3) @>\varphi>>(z_1,z_3,z_2)\\
@Vg_2VV @Vh_3 VV\\
(z_1,-z_2,z_3) @>\varphi>> (z_1,z_3,-z_2).
\end{CD}
\]

\normalsize
\noindent 
Therefore $\exists$ 
$\gamma 
\footnotesize
=
\Bigl( \left(\begin{array}{ccc}
0\\
0\\
0
\end{array}\right)
\left(\begin{array}{ccc}
1& 0 & 0 \\
0& 0 & 1 \\
0& 1 & 0
\end{array}\right)
\Bigr)
\normalsize
\in \mathbb{A}(n)$ 
s.t. $\gamma \Gamma_{1}\gamma ^{-1}=\Gamma_{4}$.\\

\noindent {\bf c)}. \\
\noindent $\bullet  M(A_{5})\approx  M(A_{6})$.\\
For
\footnotesize
\begin{align*}
&A_{5}=
\left(\begin{array}{ccc}
1& 1 & 0  \\
0& 1 & 1 \\
0& 0 & 1 
\end{array}\right)
\hspace{.1cm}
&A_{6}=
\left(\begin{array}{ccc}
1& 1 & 1 \\
0& 1 & 1 \\
0& 0 & 1 \\
\end{array}\right)
\\
&g_1(z_1,z_2,z_3)=(-z_1,\bar z_2,z_3) 			 &h_1h_2(z_1,z_2,z_3)=(-z_1,-\bar z_2,z_3)\\
&g_2(z_1,z_2,z_3)=(z_1,-z_2,\bar z_3)          		 &h_2(z_1,z_2,z_3)=(z_1,-z_2,\bar z_3)\\
&g_3(z_1,z_2,z_3)=(z_1,z_2,-z_3)                       &h_3(z_1,z_2,z_3)=(z_1, z_2,-z_3)
\end{align*}
\normalsize then \footnotesize
\begin{align*}
\Gamma_{5}=
\Bigl < \left(\begin{array}{ccc}
\frac{1}{2}\\
0\\
0
\end{array}\right)
\left(\begin{array}{ccc}
1& 0 & 0 \\
0& -1 & 0\\
0& 0 & 1 
\end{array}\right),
\left(\begin{array}{ccc}
0\\
\frac{1}{2}\\
0
\end{array}\right)
\left(\begin{array}{ccc}
1& 0 & 0 \\
0& 1 & 0 \\
0& 0 & -1
\end{array}\right),
\left(\begin{array}{ccc}
0\\
0\\
\frac{1}{2}
\end{array}\right)
\left(\begin{array}{ccc}
1& 0 & 0 \\
0& 1 & 0 \\
0& 0 & 1 
\end{array}\right)
\Bigr >
\end{align*}
\begin{align*}
\Gamma_{6}=
\Bigl < \left(\begin{array}{ccc}
\frac{1}{2}\\
\frac{1}{2}\\
0
\end{array}\right)
\left(\begin{array}{ccc}
1& 0 & 0 \\
0& -1 & 0\\
0& 0 & 1 
\end{array}\right),
\left(\begin{array}{ccc}
0\\
\frac{1}{2}\\
0
\end{array}\right)
\left(\begin{array}{ccc}
1& 0 & 0 \\
0& 1 & 0 \\
0& 0 & -1
\end{array}\right),
\left(\begin{array}{ccc}
0\\
0\\
\frac{1}{2}
\end{array}\right)
\left(\begin{array}{ccc}
1& 0 & 0 \\
0& 1 & 0 \\
0& 0 & 1 
\end{array}\right)
\Bigr >.
\end{align*}

\normalsize
\noindent
Let $\varphi(z_1,z_2,z_3)=(z_1,iz_2,z_3)$, we get these commutative diagrams

\footnotesize
\[
\begin{CD}
(z_1,z_2,z_3) @>\varphi>>(z_1,iz_2,z_3)\\
@Vg_1VV @Vh_1h_2 VV\\
(-z_1,\bar z_2,z_3) @>\varphi>> (-z_1,-\bar {iz}_2,z_3)
\end{CD}
\hspace{1cm}
\begin{CD}
(z_1,z_2,z_3) @>\varphi>>(z_1,iz_2,z_3)\\
@Vg_3VV @Vh_3 VV\\
(z_1,z_2,-z_3) @>\varphi>> (z_1,iz_2,-z_3)
\end{CD}
\]
\[ 
\begin{CD}
(z_1,z_2,z_3) @>\varphi>>(z_1,iz_2,z_3)\\
@Vg_2VV @Vh_2 VV\\
(z_1,-z_2,\bar z_3) @>\varphi>> (z_1,-iz_2,\bar z_3).
\end{CD}
\]

\normalsize
\noindent 
Therefore $\exists$ 
$\gamma 
\footnotesize
=
\Bigl( \left(\begin{array}{ccc}
0\\
\frac{1}{4}\\
0
\end{array}\right)
\left(\begin{array}{ccc}
1& 0 & 0 \\
0& 1 & 0 \\
0& 0 & 1
\end{array}\right)
\Bigr)
\normalsize
\in \mathbb{A}(n)$ 
s.t. $\gamma \Gamma_{5}\gamma ^{-1}=\Gamma_{6}$.
\\

\noindent {\bf d)}. 
First, we determine the holonomy groups of $\Gamma_1$ and $\Gamma_5$. If $L:\mathbb{E(}3)\longrightarrow O(3) $ is the projection,
then $L(\Gamma )=\Phi $ is called the holonomy group of $\Gamma $. Hence 
\footnotesize
\begin{align*}
\Phi_1=&
\Bigl <I,
\left(\begin{array}{ccc}
1& 0 & 0 \\
0& -1 & 0\\
0& 0 & 1 
\end{array}\right)\Bigr>=\mathbb{Z}_2\\
\Phi_5=&
\Bigl<I,
\left(\begin{array}{ccc}
1& 0 & 0 \\
0& -1 & 0 \\
0& 0 & 1
\end{array}\right),
\left(\begin{array}{ccc}
1& 0 & 0 \\
0& 1 & 0 \\
0& 0 & -1 
\end{array}\right)
\Bigr >=\mathbb{Z}_2 \times \mathbb{Z}_2.
\end{align*} \normalsize
If $M(A_1)$ is diffeomorphic to $M(A_5)$, by Bieberbach's theorem, then $\exists$ $B\in GL(3,\mathbb{R})$
with $(b,B)=\gamma \in \mathbb{A}(3)$ such that $L(\gamma \Gamma_1 \gamma^{-1} )=BL(\Gamma_1)B^{-1}=L(\Gamma_5)$. 
This is impossible because $L(\Gamma_1)=\mathbb{Z}_2$ and $L(\Gamma_5)=\mathbb{Z}_2 \times \mathbb{Z}_2$.
\\

\noindent {\bf e)}. Let \footnotesize
$\Phi_1=
\Bigl <I,
\left(\begin{array}{ccc}
1& 0 & 0 \\
0& -1 & 0\\
0& 0 & 1 
\end{array}\right)=P \Bigr> \hspace{0.5cm}
\Phi_7=
\Bigl<I,
\left(\begin{array}{ccc}
1& 0 & 0 \\
0& -1 & 0 \\
0& 0 & -1
\end{array}\right)=Q \Bigr>$. \normalsize
If $M(A_1)$ is difffeomorphic to $M(A_7)$ then $\exists $ $B$ such that $BPB^{-1}=Q$. 
Consider $|BPB^{-1}|=|Q|$ or $|B||P-\lambda I||B^{-1}|=|Q-\lambda I|$. It means the eigenvalues of $P$ and $Q$
are the same. This is a contradiction, because the eigenvalues of $P$ are 1,1 and -1, but the eigenvalues of $Q$
are -1,-1 and 1.
\\

\noindent {\bf f}). The argument is similar with d), because the holonomy of $\Gamma_7$ is $\mathbb{Z}_2$.
\\

We observed that the sufficient condition of conjecture is true for $n=3$.
\end{proof}

\section{Four Dimensional Real Bott Tower}

In this case there are 64 Bott matrices $A$ that correspond to action of $\mathbb {(Z}_2)^4$ on $(S^1)^4$.
Definition of the action is similar to the three dimensional real Bott tower.

\begin{thm}
Let $M(A)$ be a $4$-dimensional real Bott tower.
Then $M(A)$ admits a maximal $T^k$-action $(k=1,2,3,4)$.
There exists exactly 12-diffeomorphism classes of $M(A)$. The Bott
matrices among $64$ fall into the
following classes
\begin{itemize}
\item[{\bf (a)}](orientable) ($T^4$-action.) The Identity matrix $I_4.$
\item[{\bf (b)}](orientable) ($T^2$-action.)  \sz
\begin{align*}
\hspace{1cm}
A_{11}=&
\left(\begin{array}{cccc}
1& 0 & 0 & 0 \\
0& 1 & 1 & 1\\
0& 0 & 1 & 0\\
0& 0 & 0 & 1
\end{array}\right),
A_{21}=
\left(\begin{array}{cccc}
1& 0 & 1 & 1 \\
0& 1 & 0 & 0\\
0& 0 & 1 & 0\\
0& 0 & 0 & 1
\end{array}\right),
A_{31}=
\left(\begin{array}{cccc}
1& 0 & 1 & 1 \\
0& 1 & 1 & 1\\
0& 0 & 1 & 0\\
0& 0 & 0 & 1
\end{array}\right)\\
A_{a5}=&
\left(\begin{array}{cccc}
1& 1 & 0 & 1 \\
0& 1 & 0 & 0\\
0& 0 & 1 & 0\\
0& 0 & 0 & 1
\end{array}\right),
A_{a17}=
\left(\begin{array}{cccc}
1& 1 & 1 & 0 \\
0& 1 & 0 & 0\\
0& 0 & 1 & 0\\
0& 0 & 0 & 1
\end{array}\right) .
\end{align*} \normalsize
\item [{\bf (c)}](orientable) ($S^1$-action.)  \sz
\begin{align*}
A_{a15}=
\left(\begin{array}{cccc}
1& 1 & 0 & 1 \\
0& 1 & 1 & 1\\
0& 0 & 1 & 0\\
0& 0 & 0 & 1
\end{array}\right),
A_{a27}=
\left(\begin{array}{cccc}
1& 1 & 1 & 0 \\
0& 1 & 1 & 1\\
0& 0 & 1 & 0\\
0& 0 & 0 & 1
\end{array}\right) .
\end{align*} \normalsize
\item[{\bf (d)}](nonorientable) ($T^3$-action.)  \sz
\begin{align*}
\hspace{1cm}
A_{2}=&
\left(\begin{array}{cccc}
1& 0 & 0 & 0 \\
0& 1 & 0 & 0\\
0& 0 & 1 & 1\\
0& 0 & 0 & 1
\end{array}\right),
A_{3}=
\left(\begin{array}{cccc}
1& 0 & 0 & 0 \\
0& 1 & 0 & 1\\
0& 0 & 1 & 0\\
0& 0 & 0 & 1
\end{array}\right),
A_{4}=
\left(\begin{array}{cccc}
1& 0 & 0 & 0 \\
0& 1 & 0 & 1\\
0& 0 & 1 & 1\\
0& 0 & 0 & 1
\end{array}\right)\\
A_{5}=&
\left(\begin{array}{cccc}
1& 0 & 0 & 1 \\
0& 1 & 0 & 0\\
0& 0 & 1 & 0\\
0& 0 & 0 & 1
\end{array}\right),
A_{6}=
\left(\begin{array}{cccc}
1& 0 & 0 & 1 \\
0& 1 & 0 & 0\\
0& 0 & 1 & 1\\
0& 0 & 0 & 1
\end{array}\right), 
A_{7}=
\left(\begin{array}{cccc}
1& 0 & 0 & 1 \\
0& 1 & 0 & 1\\
0& 0 & 1 & 0\\
0& 0 & 0 & 1
\end{array}\right)\\
A_{8}=&
\left(\begin{array}{cccc}
1& 0 & 0 & 1 \\
0& 1 & 0 & 1\\
0& 0 & 1 & 1\\
0& 0 & 0 & 1
\end{array}\right), 
A_{9}=
\left(\begin{array}{cccc}
1& 0 & 0 & 0 \\
0& 1 & 1 & 0\\
0& 0 & 1 & 0\\
0& 0 & 0 & 1
\end{array}\right),
A_{17}=
\left(\begin{array}{cccc}
1& 0 & 1 & 0 \\
0& 1 & 0 & 0\\
0& 0 & 1 & 0\\
0& 0 & 0 & 1
\end{array}\right)\\
A_{25}=&
\left(\begin{array}{cccc}
1& 0 & 1 & 0 \\
0& 1 & 1 & 0\\
0& 0 & 1 & 0\\
0& 0 & 0 & 1
\end{array}\right),
A_{a1}=
\left(\begin{array}{cccc}
1& 1 & 0 & 0 \\
0& 1 & 0 & 0\\
0& 0 & 1 & 0\\
0& 0 & 0 & 1
\end{array}\right).
\end{align*} \normalsize
\item[{\bf (e)}](nonorientable) ($S^1$-action.)  \sz
$A_{a21}=\left(\begin{array}{cccc}
1& 1 & 1 & 1 \\
0& 1 & 0 & 0\\
0& 0 & 1 & 0\\
0& 0 & 0 & 1
\end{array}\right)$. \normalsize
\item[{\bf (f)}](nonorientable) ($T^2$-action.) \sz
\begin{align*}
\hspace{1cm}
A_{a3}=&
\left(\begin{array}{cccc}
1& 1 & 0 & 0 \\
0& 1 & 0 & 1\\
0& 0 & 1 & 0\\
0& 0 & 0 & 1
\end{array}\right),
A_{a7}=
\left(\begin{array}{cccc}
1& 1 & 0 & 1 \\
0& 1 & 0 & 1\\
0& 0 & 1 & 0\\
0& 0 & 0 & 1
\end{array}\right),
A_{10}=
\left(\begin{array}{cccc}
1& 0 & 0 & 0 \\
0& 1 & 1 & 0\\
0& 0 & 1 & 1\\
0& 0 & 0 & 1
\end{array}\right)\\
A_{12}=&
\left(\begin{array}{cccc}
1& 0 & 0 & 0 \\
0& 1 & 1 & 1\\
0& 0 & 1 & 1\\
0& 0 & 0 & 1
\end{array}\right),
A_{18}=
\left(\begin{array}{cccc}
1& 0 & 1 & 0 \\
0& 1 & 0 & 0\\
0& 0 & 1 & 1\\
0& 0 & 0 & 1
\end{array}\right), 
A_{22}=
\left(\begin{array}{cccc}
1& 0 & 1 & 1 \\
0& 1 & 0 & 0\\
0& 0 & 1 & 1\\
0& 0 & 0 & 1
\end{array}\right)\\
A_{26}=&
\left(\begin{array}{cccc}
1& 0 & 1 & 0 \\
0& 1 & 1 & 0\\
0& 0 & 1 & 1\\
0& 0 & 0 & 1
\end{array}\right), 
A_{32}=
\left(\begin{array}{cccc}
1& 0 & 1 & 1 \\
0& 1 & 1 & 1\\
0& 0 & 1 & 1\\
0& 0 & 0 & 1
\end{array}\right),
A_{a9}=
\left(\begin{array}{cccc}
1& 1 & 0 & 0 \\
0& 1 & 1 & 0\\
0& 0 & 1 & 0\\
0& 0 & 0 & 1
\end{array}\right)\\
A_{a25}=&
\left(\begin{array}{cccc}
1& 1 & 1 & 0 \\
0& 1 & 1 & 0\\
0& 0 & 1 & 0\\
0& 0 & 0 & 1
\end{array}\right).
\end{align*} \normalsize
\item[{\bf (g)}](nonorientable)($T^2$-action.) \sz
\begin{align*}
\hspace{1cm}
A_{a4}=&
\left(\begin{array}{cccc}
1& 1 & 0 & 0 \\
0& 1 & 0 & 1\\
0& 0 & 1 & 1\\
0& 0 & 0 & 1
\end{array}\right),
A_{a8}=
\left(\begin{array}{cccc}
1& 1 & 0 & 1 \\
0& 1 & 0 & 1\\
0& 0 & 1 & 1\\
0& 0 & 0 & 1
\end{array}\right),
A_{14}=
\left(\begin{array}{cccc}
1& 0 & 0 & 1 \\
0& 1 & 1 & 0\\
0& 0 & 1 & 1\\
0& 0 & 0 & 1
\end{array}\right)\\
A_{16}=&
\left(\begin{array}{cccc}
1& 0 & 0 & 1 \\
0& 1 & 1 & 1\\
0& 0 & 1 & 1\\
0& 0 & 0 & 1
\end{array}\right),
A_{20}=
\left(\begin{array}{cccc}
1& 0 & 1 & 0 \\
0& 1 & 0 & 1\\
0& 0 & 1 & 1\\
0& 0 & 0 & 1
\end{array}\right), 
A_{24}=
\left(\begin{array}{cccc}
1& 0 & 1 & 1 \\
0& 1 & 0 & 1\\
0& 0 & 1 & 1\\
0& 0 & 0 & 1
\end{array}\right)\\
A_{28}=&
\left(\begin{array}{cccc}
1& 0 & 1 & 0 \\
0& 1 & 1 & 1\\
0& 0 & 1 & 1\\
0& 0 & 0 & 1
\end{array}\right), 
A_{30}=
\left(\begin{array}{cccc}
1& 0 & 1 & 1 \\
0& 1 & 1 & 0\\
0& 0 & 1 & 1\\
0& 0 & 0 & 1
\end{array}\right).
\end{align*} \normalsize
\item[{\bf (h)}](nonorientable) ($S^1$-action.) \sz
\begin{align*}
\hspace{1.3cm}
A_{a13}=&
\left(\begin{array}{cccc}
1& 1 & 0 & 1 \\
0& 1 & 1 & 0\\
0& 0 & 1 & 0\\
0& 0 & 0 & 1
\end{array}\right),
A_{a29}=
\left(\begin{array}{cccc}
1& 1 & 1 & 1 \\
0& 1 & 1 & 0\\
0& 0 & 1 & 0\\
0& 0 & 0 & 1
\end{array}\right),
A_{a18}=
\left(\begin{array}{cccc}
1& 1 & 1 & 0 \\
0& 1 & 0 & 0\\
0& 0 & 1 & 1\\
0& 0 & 0 & 1
\end{array}\right)\\
A_{a19}=&
\left(\begin{array}{cccc}
1& 1 & 1 & 0 \\
0& 1 & 0 & 1\\
0& 0 & 1 & 0\\
0& 0 & 0 & 1
\end{array}\right),
A_{a20}=
\left(\begin{array}{cccc}
1& 1 & 1 & 0 \\
0& 1 & 0 & 1\\
0& 0 & 1 & 1\\
0& 0 & 0 & 1
\end{array}\right), 
A_{a22}=
\left(\begin{array}{cccc}
1& 1 & 1 & 1 \\
0& 1 & 0 & 0\\
0& 0 & 1 & 1\\
0& 0 & 0 & 1
\end{array}\right)\\
A_{a23}=&
\left(\begin{array}{cccc}
1& 1 & 1 & 1 \\
0& 1 & 0 & 1\\
0& 0 & 1 & 0\\
0& 0 & 0 & 1
\end{array}\right), 
A_{a24}=
\left(\begin{array}{cccc}
1& 1 & 1 & 1 \\
0& 1 & 0 & 1\\
0& 0 & 1 & 1\\
0& 0 & 0 & 1
\end{array}\right).
\end{align*} \normalsize
\item[{\bf (i)}] (nonorientable)($S^1$-action.) \\ 
\sz
$A_{a11}=
\left(\begin{array}{cccc}
1& 1 & 0 & 0\\
0& 1 & 1 & 1\\
0& 0 & 1 & 0\\
0& 0 & 0 & 1
\end{array}\right),
A_{a31}=
\left(\begin{array}{cccc}
1& 1 & 1 & 1\\
0& 1 & 1 & 1\\
0& 0 & 1 & 0\\
0& 0 & 0 & 1
\end{array}\right).$ \vspace{0.2cm}
\normalsize
\item[{\bf (j)}](nonorientable) ($T^2$-action.) \sz
\begin{align*}
\hspace{1cm}
A_{13}=&
\left(\begin{array}{cccc}
1& 0 & 0 & 1 \\
0& 1 & 1 & 0\\
0& 0 & 1 & 0\\
0& 0 & 0 & 1
\end{array}\right),
A_{15}=
\left(\begin{array}{cccc}
1& 0 & 0 & 1 \\
0& 1 & 1 & 1\\
0& 0 & 1 & 0\\
0& 0 & 0 & 1
\end{array}\right),
A_{19}=
\left(\begin{array}{cccc}
1& 0 & 1 & 0 \\
0& 1 & 0 & 1\\
0& 0 & 1 & 0\\
0& 0 & 0 & 1
\end{array}\right)\\
A_{23}=&
\left(\begin{array}{cccc}
1& 0 & 1 & 1 \\
0& 1 & 0 & 1\\
0& 0 & 1 & 0\\
0& 0 & 0 & 1
\end{array}\right),
A_{27}=
\left(\begin{array}{cccc}
1& 0 & 1 & 0 \\
0& 1 & 1 & 1\\
0& 0 & 1 & 0\\
0& 0 & 0 & 1
\end{array}\right), 
A_{29}=
\left(\begin{array}{cccc}
1& 0 & 1 & 1 \\
0& 1 & 1 & 0\\
0& 0 & 1 & 0\\
0& 0 & 0 & 1
\end{array}\right)\\
A_{a2}=&
\left(\begin{array}{cccc}
1& 1 & 0 & 0 \\
0& 1 & 0 & 0\\
0& 0 & 1 & 1\\
0& 0 & 0 & 1
\end{array}\right), 
A_{a6}=
\left(\begin{array}{cccc}
1& 1 & 0 & 1 \\
0& 1 & 0 & 0\\
0& 0 & 1 & 1\\
0& 0 & 0 & 1
\end{array}\right).
\end{align*} \normalsize
\item[{\bf (k)}](nonorientable) ($S^1$-action.) \sz
\begin{align*}
\hspace{1.3cm}
A_{a10}=&
\left(\begin{array}{cccc}
1& 1 & 0 & 0 \\
0& 1 & 1 & 0\\
0& 0 & 1 & 1\\
0& 0 & 0 & 1
\end{array}\right),
A_{a12}=
\left(\begin{array}{cccc}
1& 1 & 0 & 0 \\
0& 1 & 1 & 1\\
0& 0 & 1 & 1\\
0& 0 & 0 & 1
\end{array}\right),
A_{a26}=
\left(\begin{array}{cccc}
1& 1 & 1 & 0 \\
0& 1 & 1 & 0\\
0& 0 & 1 & 1\\
0& 0 & 0 & 1
\end{array}\right)\\
A_{a32}=&
\left(\begin{array}{cccc}
1& 1 & 1 & 1 \\
0& 1 & 1 & 1\\
0& 0 & 1 & 1\\
0& 0 & 0 & 1
\end{array}\right).
\end{align*} \normalsize
\item[{\bf (l)}](nonorientable)($S^1$-action.) \sz
\begin{align*} 
\hspace{1.3cm}
A_{a14}=&
\left(\begin{array}{cccc}
1& 1 & 0 & 1 \\
0& 1 & 1 & 0\\
0& 0 & 1 & 1\\
0& 0 & 0 & 1
\end{array}\right),
A_{a16}=
\left(\begin{array}{cccc}
1& 1 & 0 & 1 \\
0& 1 & 1 & 1\\
0& 0 & 1 & 1\\
0& 0 & 0 & 1
\end{array}\right),
A_{a28}=
\left(\begin{array}{cccc}
1& 1 & 1 & 0 \\
0& 1 & 1 & 1\\
0& 0 & 1 & 1\\
0& 0 & 0 & 1
\end{array}\right)\\
A_{a30}=&
\left(\begin{array}{cccc}
1& 1 & 1 & 1 \\
0& 1 & 1 & 0\\
0& 0 & 1 & 1\\
0& 0 & 0 & 1
\end{array}\right).
\end{align*} \normalsize
\end{itemize}
Moreover, Bott matrices in each class are conjugate.
\end{thm}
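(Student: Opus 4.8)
The plan is to follow the template of the three--dimensional classification in Theorem~\ref{T:3}: reduce the problem to the affine--conjugacy classification of the Bieberbach groups $\Gamma(A)$, separate the classes by invariants, and exhibit explicit diffeomorphisms inside each class. By Theorem~\ref{T:8}, $M(A)\approx M(A')$ if and only if $\Gamma(A)\cong\Gamma(A')$, and by the Second Bieberbach Theorem~\ref{T:6} this holds if and only if there is $\gamma\in\mathbb{A}(4)$ with $\gamma\,\Gamma(A)\,\gamma^{-1}=\Gamma(A')$. So the whole problem becomes the determination of affine--conjugacy classes among the $64$ groups $\Gamma(A)$. As a first step I would record, for each Bott matrix, the generators $\tilde g_j=(s_j,M_j)$: the rotational part $M_j$ is the diagonal sign matrix carrying $-1$ precisely in the coordinates $i>j$ with $a_{ji}=1$ (those to which $g_j$ applies conjugation), while each sign flip $z_j\mapsto -z_j$ contributes the half--translation $s_j=\tfrac12 e_j$ and leaves the $j$--th diagonal entry of $M_j$ equal to $+1$.

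From these data two invariants are immediate and are preserved under diffeomorphism. The first is \emph{orientability}: $M(A)$ is orientable exactly when the holonomy group $\Phi(A)=L(\Gamma(A))$ lies in $SO(4)$, and since $\det M_j=(-1)^{w_j}$ with $w_j$ the off--diagonal weight of row $j$, this happens if and only if every row of $A$ has even off--diagonal weight. The second is the \emph{dimension $k$ of the maximal torus action}: the rotation of the $i$--th circle descends to $M(A)$ precisely when no generator conjugates $z_i$, i.e.\ when the $i$--th column of $A$ carries no $1$ above the diagonal, so $k$ equals the number of such columns. These two combinatorial quantities produce exactly the orientable/nonorientable and $T^k$ labels attached to the twelve classes and give a first, coarse partition of the $64$ matrices.

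For the positive direction, inside each listed class I would proceed as in parts b) and c) of Theorem~\ref{T:3}: for each pair $A,A'$ in the class, exhibit a map $\varphi$ of $T^4$—a coordinate permutation composed with sign changes, conjugations, and, where a $\tfrac14$--translation is required, a multiplication by $i$—whose affine lift $\gamma\in\mathbb{A}(4)$ realizes $\gamma\,\Gamma(A)\,\gamma^{-1}=\Gamma(A')$; commutativity against the four generators is then a finite verification. The coarse partition is, however, not fine enough to separate the classes: the nonorientable $T^2$ classes (f), (g), (j) share orientability and $k$, and a short computation shows that their holonomy representations $\Phi(A)\hookrightarrow GL(4,\ZZ)$ are all conjugate (each is $\ZZ_2\times\ZZ_2$ acting by two commuting reflections on a $2$--plane and trivially on its complement), so even the integral holonomy representation fails to distinguish them; the same coincidence occurs among several of the nonorientable $S^1$ classes. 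To separate these I would pass to invariants of the extension itself, the most practical being the first integral homology $H_1(M(A))=\Gamma(A)^{\mathrm{ab}}$, computed from the relations $t_j=\tilde g_j^{\,2}$ and $\tilde g_k\,t_i\,\tilde g_k^{-1}=t_i^{\pm1}$; its torsion records which coordinates are conjugated and by which generators, and this differs across (f), (g), (j) even though their holonomy representations agree.

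The main obstacle is exactly this separation among classes with identical $(\text{orientability},k)$ and even identical holonomy representation: here no numerical invariant suffices, and one must either read off distinct torsion in $H_1$ or, in the borderline cases, argue directly that no $\gamma\in\mathbb{A}(4)$ can intertwine the two groups. The secondary difficulty is organizational—keeping $64$ matrices and $12$ target classes under control—which I would manage by sorting first on the two coarse invariants and only then running the pairwise affine constructions and the $H_1$--comparisons within each block. Finally, the closing assertion that the Bott matrices in each class are conjugate is the direct (Maple--assisted) matrix computation that produces the partition; in line with the Introduction, since distinct classes may nevertheless contain matrices that are conjugate, the twelve diffeomorphism classes form a \emph{strict} refinement of the matrix--conjugacy classes, so this conjugacy accompanies but does not by itself force the diffeomorphism grouping.
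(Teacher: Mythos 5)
Your overall framework coincides with the paper's: reduce to affine conjugacy of the groups $\Gamma(A)$ via Theorems \ref{T:8} and \ref{T:6}, exhibit explicit conjugating maps $\varphi$ (coordinate permutations, multiplication by $i$, monomial maps such as $z_1z_2$) inside each class, and use orientability together with the maximal torus dimension $k$ as coarse invariants. The gap is in the one step you rely on to finish: separating classes with the same orientability, the same $k$, and conjugate holonomy representations, namely (f), (g), (j), and likewise (h) versus (i) and (k) versus (l). You propose the integral homology $H_1(M(A))=\Gamma(A)^{\mathrm{ab}}$ and claim its torsion ``records which coordinates are conjugated and by which generators,'' hence differs across (f), (g), (j). This is false. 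Writing $\Gamma(A)=\langle \tilde g_1,\dots,\tilde g_4\rangle$ with $t_j=\tilde g_j^{\,2}$, a direct computation with the rigid motions gives $[\tilde g_i,\tilde g_j]=t_j^{-a_{ij}}$ for $i<j$, and these commutator relations (together with the resulting semidirect-product normal form) present $\Gamma(A)$. Abelianizing kills every commutator, so the only relations imposed are $2x_j=0$ for each column $j$ of $A$ containing a $1$ above the diagonal, and therefore $H_1(M(A))\cong \mathbb{Z}^k\oplus\mathbb{Z}_2^{\,4-k}$ where $k$ is the number of trivial columns. In other words, $H_1$ carries exactly the same information as your torus-dimension invariant and nothing more: all of (f), (g), (j) have $H_1\cong\mathbb{Z}^2\oplus\mathbb{Z}_2^2$, and all of (e), (h), (i), (k), (l) have $H_1\cong\mathbb{Z}\oplus\mathbb{Z}_2^3$. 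The information about \emph{which} generator conjugates \emph{which} coordinate lives in the commutator structure that abelianization discards.

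This is not a minor omission, because these coincidences are exactly where the real work of the paper lies. The paper separates (f), (g), (j) in items (15), (26), (27) by passing to the quotient of $\Gamma$ by its center, obtaining lower-dimensional crystallographic groups $\Delta$, and deriving a contradiction from the behaviour of torsion elements under a hypothetical induced isomorphism $\hat\varphi:\Delta\rightarrow\Delta'$ (a torsion element would have to be the image of an element that the relations force to be central and non-torsion, or the holonomies of the $\Delta$'s would have to be conjugate when they are not); item (22) is a similar center-and-torsion analysis, and item (30) distinguishes (k) from (l) by computing the fixed-point sets of the involutions induced on $\mathbb{R}^3/\Delta^f$, one of which contains a $T^2$ component while the other consists only of circles and isolated points. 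Your fallback remark that in ``borderline cases'' one must argue directly that no $\gamma\in\mathbb{A}(4)$ intertwines the groups is the right instinct, but it is precisely the content that has to be supplied, and your proposal gives no mechanism for it. To repair the argument you would need either the center/fixed-point arguments above or a genuinely finer algebraic invariant (for instance the isomorphism type of the central quotients, or the $\mathbb{Z}_2$-cohomology ring), rather than $H_1$.
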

\begin{proof} Similar with 3 dimensional real Bott tower, the proof is organized by the following steps.
           \begin{enumerate}
	     \item Determine conjugacy classes of 64 Bott matrices 
			(we used Maple to determine this).
           \item For each conjugacy class, we check whether 
			$M(A), M(A')$ with representative matrices $A,A'$ are diffeomorphic or not
			by using Bieberbach theorems.
           \end{enumerate}
A detailed proof is as follows:
\begin{itemize}
\item[(1).] $I_4$ is not diffeomorphic to any $M(A)$.
\item[(2).] $M(A_{11})\approx M(A_{21})\approx M(A_{31})\approx M(A_{a5})\approx M(A_{a17})$.
\item[(3).] $M(A_{a15})\approx M(A_{a27})$.
\item[(4).] $M(A_{11})$ is not diffeomorphic to $M(A_{a15})$.
\item[(5).] $M(A_{2})\approx M(A_{3})\approx M(A_{4})\approx M(A_{5})\approx M(A_{6})
             \approx M(A_{7})\approx M(A_{8})\approx M(A_{9})\approx M(A_{17})\approx M(A_{25})\approx M(A_{a1})$.
\item[(6).] $M(A_{11})$ is not diffeomorphic to $M(A_{2})$ and $M(A_{a21})$.
\item[(7).] $M(A_{a15})$ is not diffeomorphic to $M(A_{2})$ and $M(A_{a21})$.
\item[(8).] $M(A_{2})$ is not diffeomorphic to $M(A_{a21})$.
\item[(9).] $M(A_{a3})\approx M(A_{a7})\approx M(A_{10})\approx M(A_{12})\approx M(A_{18})
             \approx M(A_{22})\approx M(A_{26})\approx M(A_{32})\approx M(A_{a9})\approx M(A_{a25})$.
\item[(10).] $M(A_{a3})$ is not diffeomorphic to $M(A_{11})$, $M(A_{2})$ and $M(A_{a21})$.
\item[(11).] $M(A_{a3})$ is not diffeomorphic to $M(A_{a15})$.
\item[(12).] $M(A_{a4})\approx M(A_{a8})\approx M(A_{14})\approx M(A_{16})\approx M(A_{20})
             \approx M(A_{24})\approx M(A_{28})\approx M(A_{30})$.
\item[(13).] $M(A_{a4})$ is not diffeomorphic to $M(A_{11})$, $M(A_{2})$ and $M(A_{a21})$.
\item[(14).] $M(A_{a4})$ is not diffeomorphic to $M(A_{a15})$.
\item[(15).] $M(A_{14})$ is not diffeomorphic to $M(A_{a3})$.
\item[(16).] $M(A_{a13})\approx M(A_{a29})\approx M(A_{a18})\approx M(A_{a19})\approx M(A_{a20})
             \approx M(A_{a22})\approx M(A_{a23})\approx M(A_{a24})$.
\item[(17).] $M(A_{a13})$ is not diffeomorphic to $M(A_{11})$, $M(A_{2})$ and $M(A_{a21})$.
\item[(18).] $M(A_{a13})$ is not diffeomorphic to $M(A_{a15})$, $M(A_{a3})$ and $M(A_{a4})$.
\item[(19).] $M(A_{a11})\approx M(A_{a31})$.
\item[(20).] $M(A_{a11})$ is not diffeomorphic to $M(A_{11})$, $M(A_{2})$ and $M(A_{a21})$.
\item[(21).] $M(A_{a11})$ is not diffeomorphic to $M(A_{a15})$, $M(A_{a3})$ and $M(A_{a4})$ .
\item[(22).] $M(A_{a11})$ is not diffeomorphic to $M(A_{a13})$.
\item[(23).] $M(A_{13})\approx M(A_{15})\approx M(A_{19})\approx M(A_{23})\approx M(A_{27})
             \approx M(A_{29})\approx M(A_{a2})\approx M(A_{a6})$.
\item[(24).] $M(A_{13})$ is not diffeomorphic to $M(A_{11})$, $M(A_{2})$ and $M(A_{a21})$.
\item[(25).] $M(A_{13})$ is not diffeomorphic to $M(A_{a15})$, $M(A_{a13})$ and $M(A_{a11})$.
\item[(26).] $M(A_{13})$ is not diffeomorphic to $M(A_{a3})$.
\item[(27).] $M(A_{13})$ is not diffeomorphic to $M(A_{a4})$.
\item[(28).] $M(A_{a10})\approx M(A_{a12})\approx M(A_{a26})\approx M(A_{a32})$. and \\
             $M(A_{a14})\approx M(A_{a16})\approx M(A_{a28})\approx M(A_{a30})$.
\item[(29).] $M(A_{a10})$ and $M(A_{a14})$ are not diffeomorphic to any $M(A)$ in classes {\bf a)-j)} as written in theorem.
\item[(30).] $M(A_{a10})$ is not diffeomorphic to $M(A_{a14})$.
\end{itemize}
\noindent {\bf (1).} This is clear because the holonomy of $I_4$ is trivial.\\

\noindent {\bf (2)}.\\
$\bullet$ $M(A_{11})\approx  M(A_{21})$.\\
For \fz
\begin{align*}
&A_{11}=
\left(\begin{array}{cccc}
1& 0 & 0 & 0 \\
0& 1 & 1 & 1\\
0& 0 & 1 & 0\\
0& 0 & 0 & 1
\end{array}\right)
\hspace{.1cm}
&A_{21}=
\left(\begin{array}{cccc}
1& 0 & 1 & 1 \\
0& 1 & 0 & 0\\
0& 0 & 1 & 0\\
0& 0 & 0 & 1
\end{array}\right)
\end{align*} \fz
\begin{align*}
&g_1(z_1,z_2,z_3,z_4)=(-z_1, z_2,z_3,z_4). 			 &h_1(z_1,z_2,z_3,z_4)=(-z_1, z_2,\bar z_3,\bar z_4)\\
&g_2(z_1,z_2,z_3,z_4)=(z_1,-z_2,\bar z_3,\bar z_4).          &h_2(z_1,z_2,z_3,z_4)=(z_1, -z_2,z_3,z_4)\\
&g_3(z_1,z_2,z_3,z_4)=(z_1,z_2,-z_3,z_4).                    &h_3(z_1,z_2,z_3,z_4)=(z_1, z_2,-z_3,z_4)\\
&g_4(z_1,z_2,z_3,z_4)=(z_1,z_2,z_3,-z_4).                    &h_4(z_1,z_2,z_3,z_4)=(z_1, z_2,z_3,-z_4)
\end{align*}
\normalsize then \fz
\begin{align*}
\Gamma_{11}=
\Bigl < &\left(\begin{array}{cccc}
\frac{1}{2}\\
0\\
0\\
0
\end{array}\right)
\left(\begin{array}{cccc}
1& 0 & 0 & 0\\
0& 1 & 0 & 0\\
0& 0 & 1 & 0\\
0& 0 & 0 & 1
\end{array}\right),
\left(\begin{array}{cccc}
0\\
\frac{1}{2}\\
0\\
0
\end{array}\right)
\left(\begin{array}{cccc}
1& 0 & 0 & 0\\
0& 1 & 0 & 0\\
0& 0 & -1 & 0\\
0& 0 & 0 & -1
\end{array}\right),
\\
&\left(\begin{array}{cccc}
0\\
0\\
\frac{1}{2}\\
0
\end{array}\right)
\left(\begin{array}{cccc}
1& 0 & 0 & 0\\
0& 1 & 0 & 0\\
0& 0 & 1 & 0\\
0& 0 & 0 & 1
\end{array}\right),
\left(\begin{array}{cccc}
0\\
0\\
0\\
\frac{1}{2}
\end{array}\right)
\left(\begin{array}{cccc}
1& 0 & 0 & 0\\
0& 1 & 0 & 0\\
0& 0 & 1 & 0\\
0& 0 & 0 & 1
\end{array}\right)
\Bigr >
\end{align*}

\begin{align*}
\Gamma_{21}= 
\Bigl < & \left(\begin{array}{cccc}
\frac{1}{2}\\
0\\
0\\
0
\end{array}\right)
\left(\begin{array}{cccc}
1& 0 & 0 & 0\\
0& 1 & 0 & 0\\
0& 0 & -1 & 0\\
0& 0 & 0 & -1
\end{array}\right),
\left(\begin{array}{cccc}
0\\
\frac{1}{2}\\
0\\
0
\end{array}\right)
\left(\begin{array}{cccc}
1& 0 & 0 & 0\\
0& 1 & 0 & 0\\
0& 0 & 1 & 0\\
0& 0 & 0 & 1
\end{array}\right),
\\
&\left(\begin{array}{cccc}
0\\
0\\
\frac{1}{2}\\
0
\end{array}\right)
\left(\begin{array}{cccc}
1& 0 & 0 & 0\\
0& 1 & 0 & 0\\
0& 0 & 1 & 0\\
0& 0 & 0 & 1
\end{array}\right),
\left(\begin{array}{cccc}
0\\
0\\
0\\
\frac{1}{2}
\end{array}\right)
\left(\begin{array}{cccc}
1& 0 & 0 & 0\\
0& 1 & 0 & 0\\
0& 0 & 1 & 0\\
0& 0 & 0 & 1
\end{array}\right)
\Bigr >.
\end{align*}

\normalsize
\noindent
Let $\varphi(z_1,z_2,z_3,z_4)=(z_2,z_1,z_3,z_4)$, we get these commutative diagrams

\footnotesize
\[
\begin{CD}
(z_1,z_2,z_3,z_4) @>\varphi>>(z_2,z_1,z_3,z_4)\\
@Vg_1VV @Vh_2 VV\\
(-z_1,z_2,z_3,z_4) @>\varphi>> (z_2,-z_1,z_3,z_4)
\end{CD}
\hspace{.5cm}
\begin{CD}
(z_1,z_2,z_3,z_4) @>\varphi>>(z_2,z_1,z_3,z_4)\\
@Vg_3VV @Vh_3 VV\\
(z_1,z_2,-z_3,z_4) @>\varphi>> (z_2,z_1,-z_3,z_4)
\end{CD}
\]
\[ 
\begin{CD}
(z_1,z_2,z_3,z_4) @>\varphi>>(z_2,z_1,z_3,z_4)\\
@Vg_2VV @Vh_1 VV\\
(z_1,-z_2,\bar z_3,\bar z_4) @>\varphi>> (-z_2,z_1,\bar z_3,\bar z_4)
\end{CD}
\hspace{.5cm}
\begin{CD}
(z_1,z_2,z_3,z_4) @>\varphi>>(z_2,z_1,z_3,z_4)\\
@Vg_4VV @Vh_4 VV\\
(z_1,z_2,z_3,-z_4) @>\varphi>> (z_2,z_1,z_3,-z_4).
\end{CD}
\]

\normalsize
\noindent 
Therefore $\exists$ 
$\gamma 
\fz
=
\Bigl( \left(\begin{array}{cccc}
0\\
0\\
0\\
0
\end{array}\right)
\left(\begin{array}{cccc}
0& 1 & 0 & 0\\
1& 0 & 0 & 0\\
0& 0 & 1 & 0\\
0& 0 & 0 & 1
\end{array}\right)
\Bigr)
\normalsize
\in \mathbb{A}(n)$ 
s.t. $\gamma \Gamma_{11}\gamma ^{-1}=\Gamma_{21}$.\\

\noindent 
$\bullet M(A_{11}) \approx M(A_{31})$.\\
For
\footnotesize
\begin{align*}
&A_{11}=
\left(\begin{array}{cccc}
1& 0 & 0 & 0 \\
0& 1 & 1 & 1\\
0& 0 & 1 & 0\\
0& 0 & 0 & 1
\end{array}\right)
\hspace{.1cm}
&A_{31}&=
\left(\begin{array}{cccc}
1& 0 & 1 & 1 \\
0& 1 & 1 & 1\\
0& 0 & 1 & 0\\
0& 0 & 0 & 1
\end{array}\right)&
\\
&g_1(z_1,z_2,z_3,z_4)=(-z_1, z_2,z_3,z_4). \hspace{.1cm}	 &h_1(z_1,z_2,z_3,z_4)&=(-z_1, z_2,\bar z_3,\bar z_4)\\
&g_2(z_1,z_2,z_3,z_4)=(z_1,-z_2,\bar z_3,\bar z_4).          &h_2h_1(z_1,z_2,z_3,z_4)&=(-z_1, -z_2,z_3,z_4)\\
&g_3(z_1,z_2,z_3,z_4)=(z_1,z_2,-z_3,z_4).                    &h_3(z_1,z_2,z_3,z_4)&=(z_1, z_2,-z_3,z_4)\\
&g_4(z_1,z_2,z_3,z_4)=(z_1,z_2,z_3,-z_4).                    &h_4(z_1,z_2,z_3,z_4)&=(z_1, z_2,z_3,-z_4)
\end{align*}
\normalsize then \footnotesize
\begin{align*}
\Gamma_{11}=
\Bigl < &\left(\begin{array}{cccc}
\frac{1}{2}\\
0\\
0\\
0
\end{array}\right)
\left(\begin{array}{cccc}
1& 0 & 0 & 0\\
0& 1 & 0 & 0\\
0& 0 & 1 & 0\\
0& 0 & 0 & 1
\end{array}\right),
\left(\begin{array}{cccc}
0\\
\frac{1}{2}\\
0\\
0
\end{array}\right)
\left(\begin{array}{cccc}
1& 0 & 0 & 0\\
0& 1 & 0 & 0\\
0& 0 & -1 & 0\\
0& 0 & 0 & -1
\end{array}\right),
\\
&\left(\begin{array}{cccc}
0\\
0\\
\frac{1}{2}\\
0
\end{array}\right)
\left(\begin{array}{cccc}
1& 0 & 0 & 0\\
0& 1 & 0 & 0\\
0& 0 & 1 & 0\\
0& 0 & 0 & 1
\end{array}\right),
\left(\begin{array}{cccc}
0\\
0\\
0\\
\frac{1}{2}
\end{array}\right)
\left(\begin{array}{cccc}
1& 0 & 0 & 0\\
0& 1 & 0 & 0\\
0& 0 & 1 & 0\\
0& 0 & 0 & 1
\end{array}\right)
\Bigr >
\end{align*}

\begin{align*}
\Gamma_{31}= 
\Bigl < & \left(\begin{array}{cccc}
\frac{1}{2}\\
0\\
0\\
0
\end{array}\right)
\left(\begin{array}{cccc}
1& 0 & 0 & 0\\
0& 1 & 0 & 0\\
0& 0 & -1 & 0\\
0& 0 & 0 & -1
\end{array}\right),
\left(\begin{array}{cccc}
\frac{1}{2}\\
\frac{1}{2}\\
0\\
0
\end{array}\right)
\left(\begin{array}{cccc}
1& 0 & 0 & 0\\
0& 1 & 0 & 0\\
0& 0 & 1 & 0\\
0& 0 & 0 & 1
\end{array}\right),
\\
&\left(\begin{array}{cccc}
0\\
0\\
\frac{1}{2}\\
0
\end{array}\right)
\left(\begin{array}{cccc}
1& 0 & 0 & 0\\
0& 1 & 0 & 0\\
0& 0 & 1 & 0\\
0& 0 & 0 & 1
\end{array}\right),
\left(\begin{array}{cccc}
0\\
0\\
0\\
\frac{1}{2}
\end{array}\right)
\left(\begin{array}{cccc}
1& 0 & 0 & 0\\
0& 1 & 0 & 0\\
0& 0 & 1 & 0\\
0& 0 & 0 & 1
\end{array}\right)
\Bigr >.
\end{align*}

\normalsize
\noindent
Let $\varphi(z_1,z_2,z_3,z_4)=(z_1z_2,z_1,z_3,z_4)$, we get these commutative diagrams

\footnotesize
\[
\begin{CD}
(z_1,z_2,z_3,z_4) @>\varphi>>(z_1z_2,z_1,z_3,z_4)\\
@Vg_1VV @Vh_2h_1 VV\\
(-z_1,z_2,z_3,z_4) @>\varphi>> (-z_1z_2,-z_1,z_3,z_4)
\end{CD}
\hspace{.1cm}
\begin{CD}
(z_1,z_2,z_3,z_4) @>\varphi>>(z_1z_2,z_1,z_3,z_4)\\
@Vg_3VV @Vh_3 VV\\
(z_1,z_2,-z_3,z_4) @>\varphi>> (z_1z_2,z_1,-z_3,z_4)
\end{CD}
\]
\[ 
\begin{CD}
(z_1,z_2,z_3,z_4) @>\varphi>>(z_1z_2,z_1,z_3,z_4)\\
@Vg_2VV @Vh_1 VV\\
(z_1,-z_2,\bar z_3,\bar z_4) @>\varphi>> (-z_1z_2,z_1,\bar z_3,\bar z_4)
\end{CD}
\hspace{.1cm}
\begin{CD}
(z_1,z_2,z_3,z_4) @>\varphi>>(z_1z_2,z_1,z_3,z_4)\\
@Vg_4VV @Vh_4 VV\\
(z_1,z_2,z_3,-z_4) @>\varphi>> (z_1z_2,z_1,z_3,-z_4).
\end{CD}
\]

\normalsize
\noindent 
Therefore $\exists$ 
$\gamma 
\footnotesize
=
\Bigl( \left(\begin{array}{cccc}
0\\
0\\
0\\
0
\end{array}\right)
\left(\begin{array}{cccc}
1& 1 & 0 & 0\\
1& 0 & 0 & 0\\
0& 0 & 1 & 0\\
0& 0 & 0 & 1
\end{array}\right)
\Bigr)
\normalsize
\in \mathbb{A}(n)$ 
s.t. $\gamma \Gamma_{11}\gamma ^{-1}=\Gamma_{31}$.\\

\noindent 
$\bullet M(A_{11})\approx  M(A_{a5})$.\\
For
\footnotesize
\begin{align*}
&A_{11}=
\left(\begin{array}{cccc}
1& 0 & 0 & 0 \\
0& 1 & 1 & 1\\
0& 0 & 1 & 0\\
0& 0 & 0 & 1
\end{array}\right)
\hspace{.1cm}
&A_{a5}=
\left(\begin{array}{cccc}
1& 1 & 0 & 1 \\
0& 1 & 0 & 0\\
0& 0 & 1 & 0\\
0& 0 & 0 & 1
\end{array}\right)
\\
&g_1(z_1,z_2,z_3,z_4)=(-z_1, z_2,z_3,z_4). \hspace{.1cm}	 &h_1(z_1,z_2,z_3,z_4)=(-z_1,\bar  z_2,z_3,\bar z_4)\\
&g_2(z_1,z_2,z_3,z_4)=(z_1,-z_2,\bar z_3,\bar z_4).          &h_2(z_1,z_2,z_3,z_4)=(z_1, -z_2,z_3,z_4)\\
&g_3(z_1,z_2,z_3,z_4)=(z_1,z_2,-z_3,z_4).                    &h_3(z_1,z_2,z_3,z_4)=(z_1, z_2,-z_3,z_4)\\
&g_4(z_1,z_2,z_3,z_4)=(z_1,z_2,z_3,-z_4).                    &h_4(z_1,z_2,z_3,z_4)=(z_1, z_2,z_3,-z_4)
\end{align*}
\normalsize then \footnotesize
\begin{align*}
\Gamma_{11}=
\Bigl < &\left(\begin{array}{cccc}
\frac{1}{2}\\
0\\
0\\
0
\end{array}\right)
\left(\begin{array}{cccc}
1& 0 & 0 & 0\\
0& 1 & 0 & 0\\
0& 0 & 1 & 0\\
0& 0 & 0 & 1
\end{array}\right),
\left(\begin{array}{cccc}
0\\
\frac{1}{2}\\
0\\
0
\end{array}\right)
\left(\begin{array}{cccc}
1& 0 & 0 & 0\\
0& 1 & 0 & 0\\
0& 0 & -1 & 0\\
0& 0 & 0 & -1
\end{array}\right),
\\
&\left(\begin{array}{cccc}
0\\
0\\
\frac{1}{2}\\
0
\end{array}\right)
\left(\begin{array}{cccc}
1& 0 & 0 & 0\\
0& 1 & 0 & 0\\
0& 0 & 1 & 0\\
0& 0 & 0 & 1
\end{array}\right),
\left(\begin{array}{cccc}
0\\
0\\
0\\
\frac{1}{2}
\end{array}\right)
\left(\begin{array}{cccc}
1& 0 & 0 & 0\\
0& 1 & 0 & 0\\
0& 0 & 1 & 0\\
0& 0 & 0 & 1
\end{array}\right)
\Bigr >
\end{align*}

\begin{align*}
\Gamma_{a5}= 
\Bigl < & \left(\begin{array}{cccc}
\frac{1}{2}\\
0\\
0\\
0
\end{array}\right)
\left(\begin{array}{cccc}
1& 0 & 0 & 0\\
0& -1 & 0 & 0\\
0& 0 & 1 & 0\\
0& 0 & 0 & -1
\end{array}\right),
\left(\begin{array}{cccc}
0\\
\frac{1}{2}\\
0\\
0
\end{array}\right)
\left(\begin{array}{cccc}
1& 0 & 0 & 0\\
0& 1 & 0 & 0\\
0& 0 & 1 & 0\\
0& 0 & 0 & 1
\end{array}\right),
\\
&\left(\begin{array}{cccc}
0\\
0\\
\frac{1}{2}\\
0
\end{array}\right)
\left(\begin{array}{cccc}
1& 0 & 0 & 0\\
0& 1 & 0 & 0\\
0& 0 & 1 & 0\\
0& 0 & 0 & 1
\end{array}\right),
\left(\begin{array}{cccc}
0\\
0\\
0\\
\frac{1}{2}
\end{array}\right)
\left(\begin{array}{cccc}
1& 0 & 0 & 0\\
0& 1 & 0 & 0\\
0& 0 & 1 & 0\\
0& 0 & 0 & 1
\end{array}\right)
\Bigr >.
\end{align*}

\normalsize
\noindent
Let $\varphi(z_1,z_2,z_3,z_4)=(z_2,z_3,z_1,z_4)$, we get these commutative diagrams

\footnotesize
\[
\begin{CD}
(z_1,z_2,z_3,z_4) @>\varphi>>(z_2,z_3,z_1,z_4)\\
@Vg_1VV @Vh_3 VV\\
(-z_1,z_2,z_3,z_4) @>\varphi>> (z_2,z_3,-z_1,z_4)
\end{CD}
\hspace{.1cm}
\begin{CD}
(z_1,z_2,z_3,z_4) @>\varphi>>(z_2,z_3,z_1,z_4)\\
@Vg_3VV @Vh_2 VV\\
(z_1,z_2,-z_3,z_4) @>\varphi>> (z_2,-z_3,z_1,z_4)
\end{CD}
\]
\[ 
\begin{CD}
(z_1,z_2,z_3,z_4) @>\varphi>>(z_2,z_3,z_1,z_4)\\
@Vg_2VV @Vh_1 VV\\
(z_1,-z_2,\bar z_3,\bar z_4) @>\varphi>> (-z_2,\bar z_3,z_1,\bar z_4)
\end{CD}
\hspace{.1cm}
\begin{CD}
(z_1,z_2,z_3,z_4) @>\varphi>>(z_2,z_3,z_1,z_4)\\
@Vg_4VV @Vh_4 VV\\
(z_1,z_2,z_3,-z_4) @>\varphi>> (z_2,z_3,z_1,-z_4).
\end{CD}
\]

\normalsize
\noindent 
Therefore $\exists$ 
$\gamma 
\footnotesize
=
\Bigl( \left(\begin{array}{cccc}
0\\
0\\
0\\
0
\end{array}\right)
\left(\begin{array}{cccc}
0& 1 & 0 & 0\\
0& 0 & 1 & 0\\
1& 0 & 0 & 0\\
0& 0 & 0 & 1
\end{array}\right)
\Bigr)
\normalsize
\in \mathbb{A}(n)$ 
s.t. $\gamma \Gamma_{11}\gamma ^{-1}=\Gamma_{a5}$.\\

\noindent 
$\bullet M(A_{11})\approx  M(A_{a17})$.\\
For
\footnotesize
\begin{align*}
&A_{11}=
\left(\begin{array}{cccc}
1& 0 & 0 & 0 \\
0& 1 & 1 & 1\\
0& 0 & 1 & 0\\
0& 0 & 0 & 1
\end{array}\right)
\hspace{.1cm}
&A_{a17}=
\left(\begin{array}{cccc}
1& 1 & 1 & 0 \\
0& 1 & 0 & 0\\
0& 0 & 1 & 0\\
0& 0 & 0 & 1
\end{array}\right)
\\
&g_1(z_1,z_2,z_3,z_4)=(-z_1, z_2,z_3,z_4). \hspace{.1cm}	 &h_1(z_1,z_2,z_3,z_4)=(-z_1,\bar  z_2,\bar z_3,z_4)\\
&g_2(z_1,z_2,z_3,z_4)=(z_1,-z_2,\bar z_3,\bar z_4).          &h_2(z_1,z_2,z_3,z_4)=(z_1, -z_2,z_3,z_4)\\
&g_3(z_1,z_2,z_3,z_4)=(z_1,z_2,-z_3,z_4).                    &h_3(z_1,z_2,z_3,z_4)=(z_1, z_2,-z_3,z_4)\\
&g_4(z_1,z_2,z_3,z_4)=(z_1,z_2,z_3,-z_4).                    &h_4(z_1,z_2,z_3,z_4)=(z_1, z_2,z_3,-z_4)
\end{align*}
\normalsize then \footnotesize
\begin{align*}
\Gamma_{11}=
\Bigl < &\left(\begin{array}{cccc}
\frac{1}{2}\\
0\\
0\\
0
\end{array}\right)
\left(\begin{array}{cccc}
1& 0 & 0 & 0\\
0& 1 & 0 & 0\\
0& 0 & 1 & 0\\
0& 0 & 0 & 1
\end{array}\right),
\left(\begin{array}{cccc}
0\\
\frac{1}{2}\\
0\\
0
\end{array}\right)
\left(\begin{array}{cccc}
1& 0 & 0 & 0\\
0& 1 & 0 & 0\\
0& 0 & -1 & 0\\
0& 0 & 0 & -1
\end{array}\right),
\\
&\left(\begin{array}{cccc}
0\\
0\\
\frac{1}{2}\\
0
\end{array}\right)
\left(\begin{array}{cccc}
1& 0 & 0 & 0\\
0& 1 & 0 & 0\\
0& 0 & 1 & 0\\
0& 0 & 0 & 1
\end{array}\right),
\left(\begin{array}{cccc}
0\\
0\\
0\\
\frac{1}{2}
\end{array}\right)
\left(\begin{array}{cccc}
1& 0 & 0 & 0\\
0& 1 & 0 & 0\\
0& 0 & 1 & 0\\
0& 0 & 0 & 1
\end{array}\right)
\Bigr >
\end{align*}

\begin{align*}
\Gamma_{a17}= 
\Bigl < & \left(\begin{array}{cccc}
\frac{1}{2}\\
0\\
0\\
0
\end{array}\right)
\left(\begin{array}{cccc}
1& 0 & 0 & 0\\
0& -1 & 0 & 0\\
0& 0 & -1 & 0\\
0& 0 & 0 & 1
\end{array}\right),
\left(\begin{array}{cccc}
0\\
\frac{1}{2}\\
0\\
0
\end{array}\right)
\left(\begin{array}{cccc}
1& 0 & 0 & 0\\
0& 1 & 0 & 0\\
0& 0 & 1 & 0\\
0& 0 & 0 & 1
\end{array}\right),
\\
&\left(\begin{array}{cccc}
0\\
0\\
\frac{1}{2}\\
0
\end{array}\right)
\left(\begin{array}{cccc}
1& 0 & 0 & 0\\
0& 1 & 0 & 0\\
0& 0 & 1 & 0\\
0& 0 & 0 & 1
\end{array}\right),
\left(\begin{array}{cccc}
0\\
0\\
0\\
\frac{1}{2}
\end{array}\right)
\left(\begin{array}{cccc}
1& 0 & 0 & 0\\
0& 1 & 0 & 0\\
0& 0 & 1 & 0\\
0& 0 & 0 & 1
\end{array}\right)
\Bigr >.
\end{align*}

\normalsize
\noindent
Let $\varphi(z_1,z_2,z_3,z_4)=(z_2,z_3,z_4,z_1)$, we get these commutative diagrams

\footnotesize
\[
\begin{CD}
(z_1,z_2,z_3,z_4) @>\varphi>>(z_2,z_3,z_4,z_1)\\
@Vg_1VV @Vh_4 VV\\
(-z_1,z_2,z_3,z_4) @>\varphi>> (z_2,z_3,z_4,-z_1)
\end{CD}
\hspace{.1cm}
\begin{CD}
(z_1,z_2,z_3,z_4) @>\varphi>>(z_2,z_3,z_4,z_1)\\
@Vg_3VV @Vh_2 VV\\
(z_1,z_2,-z_3,z_4) @>\varphi>> (z_2,-z_3,z_4,z_1)
\end{CD}
\]
\[ 
\begin{CD}
(z_1,z_2,z_3,z_4) @>\varphi>>(z_2,z_3,z_4,z_1)\\
@Vg_2VV @Vh_1 VV\\
(z_1,-z_2,\bar z_3,\bar z_4) @>\varphi>> (-z_2,\bar z_3,\bar z_4,z_1)
\end{CD}
\hspace{.1cm}
\begin{CD}
(z_1,z_2,z_3,z_4) @>\varphi>>(z_2,z_3,z_4,z_1)\\
@Vg_4VV @Vh_3 VV\\
(z_1,z_2,z_3,-z_4) @>\varphi>> (z_2,z_3,-z_4,z_1).
\end{CD}
\]

\normalsize
\noindent 
Therefore $\exists$ 
$\gamma 
\footnotesize
=
\Bigl( \left(\begin{array}{cccc}
0\\
0\\
0\\
0
\end{array}\right)
\left(\begin{array}{cccc}
0& 1 & 0 & 0\\
0& 0 & 1 & 0\\
0& 0 & 0 & 1\\
1& 0 & 0 & 0
\end{array}\right)
\Bigr)
\normalsize
\in \mathbb{A}(n)$ 
s.t. $\gamma \Gamma_{11}\gamma ^{-1}=\Gamma_{a17}$.\\

\noindent 
{\bf (3).}\\
\noindent 
$\bullet M(A_{a15})\approx  M(A_{a27})$.\\
For
\footnotesize
\begin{align*}
&A_{a15}=
\left(\begin{array}{cccc}
1& 1 & 0 & 1 \\
0& 1 & 1 & 1\\
0& 0 & 1 & 0\\
0& 0 & 0 & 1
\end{array}\right)
\hspace{.1cm}
&A_{a27}=
\left(\begin{array}{cccc}
1& 1 & 1 & 0 \\
0& 1 & 1 & 1\\
0& 0 & 1 & 0\\
0& 0 & 0 & 1
\end{array}\right)
\\
&g_1(z_1,z_2,z_3,z_4)=(-z_1,\bar z_2,z_3,\bar z_4).\hspace{.1cm}	 &h_1(z_1,z_2,z_3,z_4)=(-z_1,\bar  z_2,\bar z_3,z_4)\\
&g_2(z_1,z_2,z_3,z_4)=(z_1,-z_2,\bar z_3,\bar z_4).                &h_2(z_1,z_2,z_3,z_4)=(z_1,-z_2,\bar z_3,\bar z_4)\\
&g_3(z_1,z_2,z_3,z_4)=(z_1,z_2,-z_3,z_4).                          &h_3(z_1,z_2,z_3,z_4)=(z_1, z_2,-z_3,z_4)\\
&g_4(z_1,z_2,z_3,z_4)=(z_1,z_2,z_3,-z_4).                          &h_4(z_1,z_2,z_3,z_4)=(z_1, z_2,z_3,-z_4)
\end{align*}
\normalsize then \footnotesize
\begin{align*}
\Gamma_{a15}=
\Bigl < &\left(\begin{array}{cccc}
\frac{1}{2}\\
0\\
0\\
0
\end{array}\right)
\left(\begin{array}{cccc}
1& 0 & 0 & 0\\
0& -1 & 0 & 0\\
0& 0 & 1 & 0\\
0& 0 & 0 & -1
\end{array}\right),
\left(\begin{array}{cccc}
0\\
\frac{1}{2}\\
0\\
0
\end{array}\right)
\left(\begin{array}{cccc}
1& 0 & 0 & 0\\
0& 1 & 0 & 0\\
0& 0 & -1 & 0\\
0& 0 & 0 & -1
\end{array}\right),
\\
&\left(\begin{array}{cccc}
0\\
0\\
\frac{1}{2}\\
0
\end{array}\right)
\left(\begin{array}{cccc}
1& 0 & 0 & 0\\
0& 1 & 0 & 0\\
0& 0 & 1 & 0\\
0& 0 & 0 & 1
\end{array}\right),
\left(\begin{array}{cccc}
0\\
0\\
0\\
\frac{1}{2}
\end{array}\right)
\left(\begin{array}{cccc}
1& 0 & 0 & 0\\
0& 1 & 0 & 0\\
0& 0 & 1 & 0\\
0& 0 & 0 & 1
\end{array}\right)
\Bigr >
\end{align*}

\begin{align*}
\Gamma_{a27}= 
\Bigl < & \left(\begin{array}{cccc}
\frac{1}{2}\\
0\\
0\\
0
\end{array}\right)
\left(\begin{array}{cccc}
1& 0 & 0 & 0\\
0& -1 & 0 & 0\\
0& 0 & -1 & 0\\
0& 0 & 0 & 1
\end{array}\right),
\left(\begin{array}{cccc}
0\\
\frac{1}{2}\\
0\\
0
\end{array}\right)
\left(\begin{array}{cccc}
1& 0 & 0 & 0\\
0& 1 & 0 & 0\\
0& 0 & -1 & 0\\
0& 0 & 0 & -1
\end{array}\right),
\\
&\left(\begin{array}{cccc}
0\\
0\\
\frac{1}{2}\\
0
\end{array}\right)
\left(\begin{array}{cccc}
1& 0 & 0 & 0\\
0& 1 & 0 & 0\\
0& 0 & 1 & 0\\
0& 0 & 0 & 1
\end{array}\right),
\left(\begin{array}{cccc}
0\\
0\\
0\\
\frac{1}{2}
\end{array}\right)
\left(\begin{array}{cccc}
1& 0 & 0 & 0\\
0& 1 & 0 & 0\\
0& 0 & 1 & 0\\
0& 0 & 0 & 1
\end{array}\right)
\Bigr >.
\end{align*}

\normalsize
\noindent
Let $\varphi(z_1,z_2,z_3,z_4)=(z_1,z_2,z_4,z_3)$, we get these commutative diagrams

\footnotesize
\[
\begin{CD}
(z_1,z_2,z_3,z_4) @>\varphi>>(z_1,z_2,z_4,z_3)\\
@Vg_1VV @Vh_1 VV\\
(-z_1,\bar z_2,z_3,\bar z_4) @>\varphi>> (-z_1,\bar z_2,\bar z_4,z_3)
\end{CD}
\hspace{.1cm}
\begin{CD}
(z_1,z_2,z_3,z_4) @>\varphi>>(z_1,z_2,z_4,z_3)\\
@Vg_3VV @Vh_4 VV\\
(z_1,z_2,-z_3,z_4) @>\varphi>> (z_1,z_2,z_4,-z_3)
\end{CD}
\]
\[ 
\begin{CD}
(z_1,z_2,z_3,z_4) @>\varphi>>(z_1,z_2,z_4,z_3)\\
@Vg_2VV @Vh_2 VV\\
(z_1,-z_2,\bar z_3,\bar z_4) @>\varphi>> (z_1,-z_2,\bar z_4,\bar z_3)
\end{CD}
\hspace{.1cm}
\begin{CD}
(z_1,z_2,z_3,z_4) @>\varphi>>(z_1,z_2,z_4,z_3)\\
@Vg_4VV @Vh_3 VV\\
(z_1,z_2,z_3,-z_4) @>\varphi>> (z_1,z_2,-z_4,z_3).
\end{CD}
\]

\normalsize
\noindent 
Therefore $\exists$ 
$\gamma 
\footnotesize
=
\Bigl( \left(\begin{array}{cccc}
0\\
0\\
0\\
0
\end{array}\right)
\left(\begin{array}{cccc}
1& 0 & 0 & 0\\
0& 1 & 0 & 0\\
0& 0 & 0 & 1\\
0& 0 & 1 & 0
\end{array}\right)
\Bigr)
\normalsize
\in \mathbb{A}(n)$ 
s.t. $\gamma \Gamma_{a15}\gamma ^{-1}=\Gamma_{a27}$.\\

\noindent {\bf (4).} They are not diffeomorphic, because the holonomy of $\Gamma _{11}$ and
                      $\Gamma _{a15}$ are $\mathbb{Z}_2$ and $\mathbb{Z}_2 \times \mathbb{Z}_2$ respectively.\\

\noindent {\bf (5).}\\
\noindent $\bullet$ $M(A_{3})\approx  M(A_{2})$.\\
For
\footnotesize
\begin{align*}
&A_{3}=
\left(\begin{array}{cccc}
1& 0 & 0 & 0 \\
0& 1 & 0 & 1\\
0& 0 & 1 & 0\\
0& 0 & 0 & 1
\end{array}\right)
\hspace{.1cm}
&A_{2}=
\left(\begin{array}{cccc}
1& 0 & 0 & 0 \\
0& 1 & 0 & 0\\
0& 0 & 1 & 1\\
0& 0 & 0 & 1
\end{array}\right)
\\
&g_1(z_1,z_2,z_3,z_4)=(-z_1,z_2,z_3,z_4).\hspace{.1cm}   &h_1(z_1,z_2,z_3,z_4)=(-z_1,z_2,z_3,z_4)\\
&g_2(z_1,z_2,z_3,z_4)=(z_1,-z_2,z_3,\bar z_4).           &h_2(z_1,z_2,z_3,z_4)=(z_1,-z_2,z_3,z_4)\\
&g_3(z_1,z_2,z_3,z_4)=(z_1,z_2,-z_3,z_4).                &h_3(z_1,z_2,z_3,z_4)=(z_1, z_2,-z_3,\bar z_4)\\
&g_4(z_1,z_2,z_3,z_4)=(z_1,z_2,z_3,-z_4).                &h_4(z_1,z_2,z_3,z_4)=(z_1, z_2,z_3,-z_4)
\end{align*}
\normalsize then \footnotesize
\begin{align*}
\Gamma_{3}=
\Bigl < &\left(\begin{array}{cccc}
\frac{1}{2}\\
0\\
0\\
0
\end{array}\right)
\left(\begin{array}{cccc}
1& 0 & 0 & 0\\
0& 1 & 0 & 0\\
0& 0 & 1 & 0\\
0& 0 & 0 & 1
\end{array}\right),
\left(\begin{array}{cccc}
0\\
\frac{1}{2}\\
0\\
0
\end{array}\right)
\left(\begin{array}{cccc}
1& 0 & 0 & 0\\
0& 1 & 0 & 0\\
0& 0 & 1 & 0\\
0& 0 & 0 & -1
\end{array}\right),
\\
&\left(\begin{array}{cccc}
0\\
0\\
\frac{1}{2}\\
0
\end{array}\right)
\left(\begin{array}{cccc}
1& 0 & 0 & 0\\
0& 1 & 0 & 0\\
0& 0 & 1 & 0\\
0& 0 & 0 & 1
\end{array}\right),
\left(\begin{array}{cccc}
0\\
0\\
0\\
\frac{1}{2}
\end{array}\right)
\left(\begin{array}{cccc}
1& 0 & 0 & 0\\
0& 1 & 0 & 0\\
0& 0 & 1 & 0\\
0& 0 & 0 & 1
\end{array}\right)
\Bigr >
\end{align*}

\begin{align*}
\Gamma_{2}= 
\Bigl < & \left(\begin{array}{cccc}
\frac{1}{2}\\
0\\
0\\
0
\end{array}\right)
\left(\begin{array}{cccc}
1& 0 & 0 & 0\\
0& 1 & 0 & 0\\
0& 0 & 1 & 0\\
0& 0 & 0 & 1
\end{array}\right),
\left(\begin{array}{cccc}
0\\
\frac{1}{2}\\
0\\
0
\end{array}\right)
\left(\begin{array}{cccc}
1& 0 & 0 & 0\\
0& 1 & 0 & 0\\
0& 0 & 1 & 0\\
0& 0 & 0 & 1
\end{array}\right),
\\
&\left(\begin{array}{cccc}
0\\
0\\
\frac{1}{2}\\
0
\end{array}\right)
\left(\begin{array}{cccc}
1& 0 & 0 & 0\\
0& 1 & 0 & 0\\
0& 0 & 1 & 0\\
0& 0 & 0 & -1
\end{array}\right),
\left(\begin{array}{cccc}
0\\
0\\
0\\
\frac{1}{2}
\end{array}\right)
\left(\begin{array}{cccc}
1& 0 & 0 & 0\\
0& 1 & 0 & 0\\
0& 0 & 1 & 0\\
0& 0 & 0 & 1
\end{array}\right)
\Bigr >.
\end{align*}

\normalsize
\noindent
Let $\varphi(z_1,z_2,z_3,z_4)=(z_1,z_3,z_2,z_4)$, we get these commutative diagrams

\footnotesize
\[
\begin{CD}
(z_1,z_2,z_3,z_4) @>\varphi>>(z_1,z_3,z_2,z_4)\\
@Vg_1VV @Vh_1 VV\\
(-z_1,z_2,z_3,z_4) @>\varphi>> (-z_1,z_3,z_2,z_4)
\end{CD}
\hspace{.1cm}
\begin{CD}
(z_1,z_2,z_3,z_4) @>\varphi>>(z_1,z_3,z_2,z_4)\\
@Vg_3VV @Vh_2 VV\\
(z_1,z_2,-z_3,z_4) @>\varphi>> (z_1,-z_3,z_2,z_4)
\end{CD}
\]
\[ 
\begin{CD}
(z_1,z_2,z_3,z_4) @>\varphi>>(z_1,z_3,z_2,z_4)\\
@Vg_2VV @Vh_3 VV\\
(z_1,-z_2,z_3,\bar z_4) @>\varphi>> (z_1,z_3,-z_2,\bar z_4)
\end{CD}
\hspace{.1cm}
\begin{CD}
(z_1,z_2,z_3,z_4) @>\varphi>>(z_1,z_3,z_2,z_4)\\
@Vg_4VV @Vh_4 VV\\
(z_1,z_2,z_3,-z_4) @>\varphi>> (z_1,z_3,z_2,-z_4).
\end{CD}
\]

\normalsize
\noindent 
Therefore $\exists$ 
$\gamma 
\footnotesize
=
\Bigl( \left(\begin{array}{cccc}
0\\
0\\
0\\
0
\end{array}\right)
\left(\begin{array}{cccc}
1& 0 & 0 & 0\\
0& 0 & 1 & 0\\
0& 1 & 0 & 0\\
0& 0 & 0 & 1
\end{array}\right)
\Bigr)
\normalsize
\in \mathbb{A}(n)$ 
s.t. $\gamma \Gamma_{3}\gamma ^{-1}=\Gamma_{2}$.\\

\noindent 
$\bullet$ $M(A_{2})\approx  M(A_{4})$.\\
For
\footnotesize
\begin{align*}
&A_{2}=
\left(\begin{array}{cccc}
1& 0 & 0 & 0 \\
0& 1 & 0 & 0\\
0& 0 & 1 & 1\\
0& 0 & 0 & 1
\end{array}\right)
\hspace{.1cm}
&A_{4}=
\left(\begin{array}{cccc}
1& 0 & 0 & 0 \\
0& 1 & 0 & 1\\
0& 0 & 1 & 1\\
0& 0 & 0 & 1
\end{array}\right)
\\
&g_1(z_1,z_2,z_3,z_4)=(-z_1,z_2,z_3,z_4).\hspace{.1cm}   &h_1(z_1,z_2,z_3,z_4)=(-z_1,z_2,z_3,z_4)\\
&g_2(z_1,z_2,z_3,z_4)=(z_1,-z_2,z_3,z_4).                &h_2h_3(z_1,z_2,z_3,z_4)=(z_1,-z_2,-z_3,z_4)\\
&g_3(z_1,z_2,z_3,z_4)=(z_1,z_2,-z_3,\bar z_4).           &h_3(z_1,z_2,z_3,z_4)=(z_1, z_2,-z_3,\bar z_4)\\
&g_4(z_1,z_2,z_3,z_4)=(z_1,z_2,z_3,-z_4).                &h_4(z_1,z_2,z_3,z_4)=(z_1, z_2,z_3,-z_4)
\end{align*}
\normalsize then \footnotesize
\begin{align*}
\Gamma_{2}=
\Bigl < &\left(\begin{array}{cccc}
\frac{1}{2}\\
0\\
0\\
0
\end{array}\right)
\left(\begin{array}{cccc}
1& 0 & 0 & 0\\
0& 1 & 0 & 0\\
0& 0 & 1 & 0\\
0& 0 & 0 & 1
\end{array}\right),
\left(\begin{array}{cccc}
0\\
\frac{1}{2}\\
0\\
0
\end{array}\right)
\left(\begin{array}{cccc}
1& 0 & 0 & 0\\
0& 1 & 0 & 0\\
0& 0 & 1 & 0\\
0& 0 & 0 & 1
\end{array}\right),
\\
&\left(\begin{array}{cccc}
0\\
0\\
\frac{1}{2}\\
0
\end{array}\right)
\left(\begin{array}{cccc}
1& 0 & 0 & 0\\
0& 1 & 0 & 0\\
0& 0 & 1 & 0\\
0& 0 & 0 & -1
\end{array}\right),
\left(\begin{array}{cccc}
0\\
0\\
0\\
\frac{1}{2}
\end{array}\right)
\left(\begin{array}{cccc}
1& 0 & 0 & 0\\
0& 1 & 0 & 0\\
0& 0 & 1 & 0\\
0& 0 & 0 & 1
\end{array}\right)
\Bigr >
\end{align*}

\begin{align*}
\Gamma_{4}= 
\Bigl < & \left(\begin{array}{cccc}
\frac{1}{2}\\
0\\
0\\
0
\end{array}\right)
\left(\begin{array}{cccc}
1& 0 & 0 & 0\\
0& 1 & 0 & 0\\
0& 0 & 1 & 0\\
0& 0 & 0 & 1
\end{array}\right),
\left(\begin{array}{cccc}
0\\
\frac{1}{2}\\
\frac{1}{2}\\
0
\end{array}\right)
\left(\begin{array}{cccc}
1& 0 & 0 & 0\\
0& 1 & 0 & 0\\
0& 0 & 1 & 0\\
0& 0 & 0 & 1
\end{array}\right),
\\
&\left(\begin{array}{cccc}
0\\
0\\
\frac{1}{2}\\
0
\end{array}\right)
\left(\begin{array}{cccc}
1& 0 & 0 & 0\\
0& 1 & 0 & 0\\
0& 0 & 1 & 0\\
0& 0 & 0 & -1
\end{array}\right),
\left(\begin{array}{cccc}
0\\
0\\
0\\
\frac{1}{2}
\end{array}\right)
\left(\begin{array}{cccc}
1& 0 & 0 & 0\\
0& 1 & 0 & 0\\
0& 0 & 1 & 0\\
0& 0 & 0 & 1
\end{array}\right)
\Bigr >.
\end{align*}

\normalsize
\noindent
Let $\varphi(z_1,z_2,z_3,z_4)=(z_1,z_2,z_2z_3,z_4)$, we get these commutative diagrams

\footnotesize
\[
\begin{CD}
(z_1,z_2,z_3,z_4) @>\varphi>>(z_1,z_2,z_2z_3,z_4)\\
@Vg_1VV @Vh_1 VV\\
(-z_1,z_2,z_3,z_4) @>\varphi>> (-z_1,z_2,z_2z_3,z_4)
\end{CD}
\hspace{.1cm}
\begin{CD}
(z_1,z_2,z_3,z_4) @>\varphi>>(z_1,z_2,z_2z_3,z_4)\\
@Vg_3VV @Vh_3 VV\\
(z_1,z_2,-z_3,\bar z_4) @>\varphi>> (z_1,z_2,-z_2z_3,\bar z_4)
\end{CD}
\]
\[ 
\begin{CD}
(z_1,z_2,z_3,z_4) @>\varphi>>(z_1,z_2,z_2z_3,z_4)\\
@Vg_2VV @Vh_2h_3 VV\\
(z_1,-z_2,z_3,z_4) @>\varphi>> (z_1,-z_2,-z_2z_3,z_4)
\end{CD}
\hspace{.1cm}
\begin{CD}
(z_1,z_2,z_3,z_4) @>\varphi>>(z_1,z_2,z_2z_3,z_4)\\
@Vg_4VV @Vh_4 VV\\
(z_1,z_2,z_3,-z_4) @>\varphi>> (z_1,z_2,z_2z_3,-z_4).
\end{CD}
\]

\normalsize
\noindent 
Therefore $\exists$ 
$\gamma 
\footnotesize
=
\Bigl( \left(\begin{array}{cccc}
0\\
0\\
0\\
0
\end{array}\right)
\left(\begin{array}{cccc}
1& 0 & 0 & 0\\
0& 1 & 0 & 0\\
0& 1 & 1 & 0\\
0& 0 & 0 & 1
\end{array}\right)
\Bigr)
\normalsize
\in \mathbb{A}(n)$ 
s.t. $\gamma \Gamma_{2}\gamma ^{-1}=\Gamma_{4}$.\\

\noindent 
$\bullet$ $M(A_{2})\approx  M(A_{5})$.\\
For
\footnotesize
\begin{align*}
&A_{2}=
\left(\begin{array}{cccc}
1& 0 & 0 & 0 \\
0& 1 & 0 & 0\\
0& 0 & 1 & 1\\
0& 0 & 0 & 1
\end{array}\right)
\hspace{.1cm}
&A_{5}=
\left(\begin{array}{cccc}
1& 0 & 0 & 1 \\
0& 1 & 0 & 0\\
0& 0 & 1 & 0\\
0& 0 & 0 & 1
\end{array}\right)
\\
&g_1(z_1,z_2,z_3,z_4)=(-z_1,z_2,z_3,z_4).\hspace{.1cm}   &h_1(z_1,z_2,z_3,z_4)=(-z_1,z_2,z_3,\bar z_4)\\
&g_2(z_1,z_2,z_3,z_4)=(z_1,-z_2,z_3,z_4).                &h_2(z_1,z_2,z_3,z_4)=(z_1,-z_2,z_3,z_4)\\
&g_3(z_1,z_2,z_3,z_4)=(z_1,z_2,-z_3,\bar z_4).           &h_3(z_1,z_2,z_3,z_4)=(z_1, z_2,-z_3,z_4)\\
&g_4(z_1,z_2,z_3,z_4)=(z_1,z_2,z_3,-z_4).                &h_4(z_1,z_2,z_3,z_4)=(z_1, z_2,z_3,-z_4)
\end{align*}
\normalsize then \footnotesize
\begin{align*}
\Gamma_{2}=
\Bigl < &\left(\begin{array}{cccc}
\frac{1}{2}\\
0\\
0\\
0
\end{array}\right)
\left(\begin{array}{cccc}
1& 0 & 0 & 0\\
0& 1 & 0 & 0\\
0& 0 & 1 & 0\\
0& 0 & 0 & 1
\end{array}\right),
\left(\begin{array}{cccc}
0\\
\frac{1}{2}\\
0\\
0
\end{array}\right)
\left(\begin{array}{cccc}
1& 0 & 0 & 0\\
0& 1 & 0 & 0\\
0& 0 & 1 & 0\\
0& 0 & 0 & 1
\end{array}\right),
\\
&\left(\begin{array}{cccc}
0\\
0\\
\frac{1}{2}\\
0
\end{array}\right)
\left(\begin{array}{cccc}
1& 0 & 0 & 0\\
0& 1 & 0 & 0\\
0& 0 & 1 & 0\\
0& 0 & 0 & -1
\end{array}\right),
\left(\begin{array}{cccc}
0\\
0\\
0\\
\frac{1}{2}
\end{array}\right)
\left(\begin{array}{cccc}
1& 0 & 0 & 0\\
0& 1 & 0 & 0\\
0& 0 & 1 & 0\\
0& 0 & 0 & 1
\end{array}\right)
\Bigr >
\end{align*}

\begin{align*}
\Gamma_{5}= 
\Bigl < & \left(\begin{array}{cccc}
\frac{1}{2}\\
0\\
0\\
0
\end{array}\right)
\left(\begin{array}{cccc}
1& 0 & 0 & 0\\
0& 1 & 0 & 0\\
0& 0 & 1 & 0\\
0& 0 & 0 & -1
\end{array}\right),
\left(\begin{array}{cccc}
0\\
\frac{1}{2}\\
0\\
0
\end{array}\right)
\left(\begin{array}{cccc}
1& 0 & 0 & 0\\
0& 1 & 0 & 0\\
0& 0 & 1 & 0\\
0& 0 & 0 & 1
\end{array}\right),
\\
&\left(\begin{array}{cccc}
0\\
0\\
\frac{1}{2}\\
0
\end{array}\right)
\left(\begin{array}{cccc}
1& 0 & 0 & 0\\
0& 1 & 0 & 0\\
0& 0 & 1 & 0\\
0& 0 & 0 & 1
\end{array}\right),
\left(\begin{array}{cccc}
0\\
0\\
0\\
\frac{1}{2}
\end{array}\right)
\left(\begin{array}{cccc}
1& 0 & 0 & 0\\
0& 1 & 0 & 0\\
0& 0 & 1 & 0\\
0& 0 & 0 & 1
\end{array}\right)
\Bigr >.
\end{align*}

\normalsize
\noindent
Let $\varphi(z_1,z_2,z_3,z_4)=(z_3,z_2,z_1,z_4)$, we get these commutative diagrams

\footnotesize
\[
\begin{CD}
(z_1,z_2,z_3,z_4) @>\varphi>>(z_3,z_2,z_1,z_4)\\
@Vg_1VV @Vh_3 VV\\
(-z_1,z_2,z_3,z_4) @>\varphi>> (z_3,z_2,-z_1,z_4)
\end{CD}
\hspace{.1cm}
\begin{CD}
(z_1,z_2,z_3,z_4) @>\varphi>>(z_3,z_2,z_1,z_4)\\
@Vg_3VV @Vh_1 VV\\
(z_1,z_2,-z_3,\bar z_4) @>\varphi>> (-z_3,z_2,z_1,\bar z_4)
\end{CD}
\]
\[ 
\begin{CD}
(z_1,z_2,z_3,z_4) @>\varphi>>(z_3,z_2,z_1,z_4)\\
@Vg_2VV @Vh_2 VV\\
(z_1,-z_2,z_3,z_4) @>\varphi>> (z_3,-z_2,z_1,z_4)
\end{CD}
\hspace{.1cm}
\begin{CD}
(z_1,z_2,z_3,z_4) @>\varphi>>(z_3,z_2,z_1,z_4)\\
@Vg_4VV @Vh_4 VV\\
(z_1,z_2,z_3,-z_4) @>\varphi>> (z_3,z_2,z_1,-z_4).
\end{CD}
\]

\normalsize
\noindent 
Therefore $\exists$ 
$\gamma 
\footnotesize
=
\Bigl( \left(\begin{array}{cccc}
0\\
0\\
0\\
0
\end{array}\right)
\left(\begin{array}{cccc}
0& 0 & 1 & 0\\
0& 1 & 0 & 0\\
1& 0 & 0 & 0\\
0& 0 & 0 & 1
\end{array}\right)
\Bigr)
\normalsize
\in \mathbb{A}(n)$ 
s.t. $\gamma \Gamma_{2}\gamma ^{-1}=\Gamma_{5}$.\\

\noindent 
$\bullet$ $M(A_{2})\approx  M(A_{6})$.\\
For 
\footnotesize
\begin{align*}
&A_{2}=
\left(\begin{array}{cccc}
1& 0 & 0 & 0 \\
0& 1 & 0 & 0\\
0& 0 & 1 & 1\\
0& 0 & 0 & 1
\end{array}\right)
\hspace{.1cm}
&A_{6}=
\left(\begin{array}{cccc}
1& 0 & 0 & 1 \\
0& 1 & 0 & 0\\
0& 0 & 1 & 1\\
0& 0 & 0 & 1
\end{array}\right)
\\
&g_1(z_1,z_2,z_3,z_4)=(-z_1,z_2,z_3,z_4).\hspace{.1cm}   &h_1h_3(z_1,z_2,z_3,z_4)=(-z_1,z_2,-z_3,z_4)\\
&g_2(z_1,z_2,z_3,z_4)=(z_1,-z_2,z_3,z_4).                &h_2(z_1,z_2,z_3,z_4)=(z_1,-z_2,z_3,z_4)\\
&g_3(z_1,z_2,z_3,z_4)=(z_1,z_2,-z_3,\bar z_4).           &h_3(z_1,z_2,z_3,z_4)=(z_1, z_2,-z_3,\bar z_4)\\
&g_4(z_1,z_2,z_3,z_4)=(z_1,z_2,z_3,-z_4).                &h_4(z_1,z_2,z_3,z_4)=(z_1, z_2,z_3,-z_4)
\end{align*}
\normalsize then \footnotesize
\begin{align*}
\Gamma_{2}=
\Bigl < &\left(\begin{array}{cccc}
\frac{1}{2}\\
0\\
0\\
0
\end{array}\right)
\left(\begin{array}{cccc}
1& 0 & 0 & 0\\
0& 1 & 0 & 0\\
0& 0 & 1 & 0\\
0& 0 & 0 & 1
\end{array}\right),
\left(\begin{array}{cccc}
0\\
\frac{1}{2}\\
0\\
0
\end{array}\right)
\left(\begin{array}{cccc}
1& 0 & 0 & 0\\
0& 1 & 0 & 0\\
0& 0 & 1 & 0\\
0& 0 & 0 & 1
\end{array}\right),
\\
&\left(\begin{array}{cccc}
0\\
0\\
\frac{1}{2}\\
0
\end{array}\right)
\left(\begin{array}{cccc}
1& 0 & 0 & 0\\
0& 1 & 0 & 0\\
0& 0 & 1 & 0\\
0& 0 & 0 & -1
\end{array}\right),
\left(\begin{array}{cccc}
0\\
0\\
0\\
\frac{1}{2}
\end{array}\right)
\left(\begin{array}{cccc}
1& 0 & 0 & 0\\
0& 1 & 0 & 0\\
0& 0 & 1 & 0\\
0& 0 & 0 & 1
\end{array}\right)
\Bigr >
\end{align*}

\begin{align*}
\Gamma_{6}= 
\Bigl < & \left(\begin{array}{cccc}
\frac{1}{2}\\
0\\
\frac{1}{2}\\
0
\end{array}\right)
\left(\begin{array}{cccc}
1& 0 & 0 & 0\\
0& 1 & 0 & 0\\
0& 0 & 1 & 0\\
0& 0 & 0 & 1
\end{array}\right),
\left(\begin{array}{cccc}
0\\
\frac{1}{2}\\
0\\
0
\end{array}\right)
\left(\begin{array}{cccc}
1& 0 & 0 & 0\\
0& 1 & 0 & 0\\
0& 0 & 1 & 0\\
0& 0 & 0 & 1
\end{array}\right),
\\
&\left(\begin{array}{cccc}
0\\
0\\
\frac{1}{2}\\
0
\end{array}\right)
\left(\begin{array}{cccc}
1& 0 & 0 & 0\\
0& 1 & 0 & 0\\
0& 0 & 1 & 0\\
0& 0 & 0 & -1
\end{array}\right),
\left(\begin{array}{cccc}
0\\
0\\
0\\
\frac{1}{2}
\end{array}\right)
\left(\begin{array}{cccc}
1& 0 & 0 & 0\\
0& 1 & 0 & 0\\
0& 0 & 1 & 0\\
0& 0 & 0 & 1
\end{array}\right)
\Bigr >.
\end{align*}

\normalsize
\noindent
Let $\varphi(z_1,z_2,z_3,z_4)=(z_1,z_2,z_1z_3,z_4)$, we get these commutative diagrams

\footnotesize
\[
\begin{CD}
(z_1,z_2,z_3,z_4) @>\varphi>>(z_1,z_2,z_1z_3,z_4)\\
@Vg_1VV @Vh_1h_3 VV\\
(-z_1,z_2,z_3,z_4) @>\varphi>> (-z_1,z_2,-z_1z_3,z_4)
\end{CD}
\hspace{.1cm}
\begin{CD}
(z_1,z_2,z_3,z_4) @>\varphi>>(z_1,z_2,z_1z_3,\bar z_4)\\
@Vg_3VV @Vh_3 VV\\
(z_1,z_2,-z_3,\bar z_4) @>\varphi>> (z_1,z_2,-z_1z_3,\bar z_4)
\end{CD}
\]
\[ 
\begin{CD}
(z_1,z_2,z_3,z_4) @>\varphi>>(z_1,z_2,z_1z_3,z_4)\\
@Vg_2VV @Vh_2 VV\\
(z_1,-z_2,z_3,z_4) @>\varphi>> (z_1,-z_2,z_1z_3,z_4)
\end{CD}
\hspace{.1cm}
\begin{CD}
(z_1,z_2,z_3,z_4) @>\varphi>>(z_1,z_2,z_1z_3,z_4)\\
@Vg_4VV @Vh_4 VV\\
(z_1,z_2,z_3,-z_4) @>\varphi>> (z_1,z_2,z_1z_3,-z_4).
\end{CD}
\]

\normalsize
\noindent 
Therefore $\exists$ 
$\gamma 
\footnotesize
=
\Bigl( \left(\begin{array}{cccc}
0\\
0\\
0\\
0
\end{array}\right)
\left(\begin{array}{cccc}
1& 0 & 0 & 0\\
0& 1 & 0 & 0\\
1& 0 & 1 & 0\\
0& 0 & 0 & 1
\end{array}\right)
\Bigr)
\normalsize
\in \mathbb{A}(n)$ 
s.t. $\gamma \Gamma_{2}\gamma ^{-1}=\Gamma_{6}$.\\

\noindent 
$\bullet$ $M(A_{3})\approx  M(A_{7})$.\\
For
\footnotesize
\begin{align*}
&A_{3}=
\left(\begin{array}{cccc}
1& 0 & 0 & 0 \\
0& 1 & 0 & 1\\
0& 0 & 1 & 0\\
0& 0 & 0 & 1
\end{array}\right)
\hspace{.1cm}
&A_{7}=
\left(\begin{array}{cccc}
1& 0 & 0 & 1 \\
0& 1 & 0 & 1\\
0& 0 & 1 & 0\\
0& 0 & 0 & 1
\end{array}\right)
\\
&g_1(z_1,z_2,z_3,z_4)=(-z_1,z_2,z_3,z_4).\hspace{.1cm}   &h_1h_2(z_1,z_2,z_3,z_4)=(-z_1,-z_2,z_3,z_4)\\
&g_2(z_1,z_2,z_3,z_4)=(z_1,-z_2,z_3,\bar z_4).           &h_2(z_1,z_2,z_3,z_4)=(z_1,-z_2,z_3,\bar z_4)\\
&g_3(z_1,z_2,z_3,z_4)=(z_1,z_2,-z_3,z_4).                &h_3(z_1,z_2,z_3,z_4)=(z_1, z_2,-z_3,z_4)\\
&g_4(z_1,z_2,z_3,z_4)=(z_1,z_2,z_3,-z_4).                &h_4(z_1,z_2,z_3,z_4)=(z_1, z_2,z_3,-z_4)
\end{align*}
\normalsize then \footnotesize
\begin{align*}
\Gamma_{3}=
\Bigl < &\left(\begin{array}{cccc}
\frac{1}{2}\\
0\\
0\\
0
\end{array}\right)
\left(\begin{array}{cccc}
1& 0 & 0 & 0\\
0& 1 & 0 & 0\\
0& 0 & 1 & 0\\
0& 0 & 0 & 1
\end{array}\right),
\left(\begin{array}{cccc}
0\\
\frac{1}{2}\\
0\\
0
\end{array}\right)
\left(\begin{array}{cccc}
1& 0 & 0 & 0\\
0& 1 & 0 & 0\\
0& 0 & 1 & 0\\
0& 0 & 0 & -1
\end{array}\right),
\\
&\left(\begin{array}{cccc}
0\\
0\\
\frac{1}{2}\\
0
\end{array}\right)
\left(\begin{array}{cccc}
1& 0 & 0 & 0\\
0& 1 & 0 & 0\\
0& 0 & 1 & 0\\
0& 0 & 0 & 1
\end{array}\right),
\left(\begin{array}{cccc}
0\\
0\\
0\\
\frac{1}{2}
\end{array}\right)
\left(\begin{array}{cccc}
1& 0 & 0 & 0\\
0& 1 & 0 & 0\\
0& 0 & 1 & 0\\
0& 0 & 0 & 1
\end{array}\right)
\Bigr >
\end{align*}

\begin{align*}
\Gamma_{7}= 
\Bigl < & \left(\begin{array}{cccc}
\frac{1}{2}\\
\frac{1}{2}\\
0\\
0
\end{array}\right)
\left(\begin{array}{cccc}
1& 0 & 0 & 0\\
0& 1 & 0 & 0\\
0& 0 & 1 & 0\\
0& 0 & 0 & 1
\end{array}\right),
\left(\begin{array}{cccc}
0\\
\frac{1}{2}\\
0\\
0
\end{array}\right)
\left(\begin{array}{cccc}
1& 0 & 0 & 0\\
0& 1 & 0 & 0\\
0& 0 & 1 & 0\\
0& 0 & 0 & -1
\end{array}\right),
\\
&\left(\begin{array}{cccc}
0\\
0\\
\frac{1}{2}\\
0
\end{array}\right)
\left(\begin{array}{cccc}
1& 0 & 0 & 0\\
0& 1 & 0 & 0\\
0& 0 & 1 & 0\\
0& 0 & 0 & 1
\end{array}\right),
\left(\begin{array}{cccc}
0\\
0\\
0\\
\frac{1}{2}
\end{array}\right)
\left(\begin{array}{cccc}
1& 0 & 0 & 0\\
0& 1 & 0 & 0\\
0& 0 & 1 & 0\\
0& 0 & 0 & 1
\end{array}\right)
\Bigr >.
\end{align*}

\normalsize
\noindent
Let $\varphi(z_1,z_2,z_3,z_4)=(z_1,z_1z_2,z_3,z_4)$, we get these commutative diagrams

\footnotesize
\[
\begin{CD}
(z_1,z_2,z_3,z_4) @>\varphi>>(z_1,z_1z_2,z_3,z_4)\\
@Vg_1VV @Vh_1h_2 VV\\
(-z_1,z_2,z_3,z_4) @>\varphi>> (-z_1,-z_1z_2,z_3,z_4)
\end{CD}
\hspace{.1cm}
\begin{CD}
(z_1,z_2,z_3,z_4) @>\varphi>>(z_1,z_1z_2,z_3,z_4)\\
@Vg_3VV @Vh_3 VV\\
(z_1,z_2,-z_3,z_4) @>\varphi>> (z_1,z_1z_2,-z_3,z_4)
\end{CD}
\]
\[ 
\begin{CD}
(z_1,z_2,z_3,z_4) @>\varphi>>(z_1,z_1z_2,z_3,z_4)\\
@Vg_2VV @Vh_2 VV\\
(z_1,-z_2,z_3,\bar z_4) @>\varphi>> (z_1,-z_1z_2,z_3,\bar z_4)
\end{CD}
\hspace{.1cm}
\begin{CD}
(z_1,z_2,z_3,z_4) @>\varphi>>(z_1,z_1z_2,z_3,z_4)\\
@Vg_4VV @Vh_4 VV\\
(z_1,z_2,z_3,-z_4) @>\varphi>> (z_1,z_1z_2,z_3,-z_4).
\end{CD}
\]

\normalsize
\noindent 
Therefore $\exists$ 
$\gamma 
\footnotesize
=
\Bigl( \left(\begin{array}{cccc}
0\\
0\\
0\\
0
\end{array}\right)
\left(\begin{array}{cccc}
1& 0 & 0 & 0\\
1& 1 & 0 & 0\\
0& 0 & 1 & 0\\
0& 0 & 0 & 1
\end{array}\right)
\Bigr)
\normalsize
\in \mathbb{A}(n)$ 
s.t. $\gamma \Gamma_{3}\gamma ^{-1}=\Gamma_{7}$.\\

\noindent 
$\bullet$ $M(A_{2})\approx M(A_{8})$.\\
For
\footnotesize
\begin{align*}
&A_{2}=
\left(\begin{array}{cccc}
1& 0 & 0 & 0 \\
0& 1 & 0 & 0\\
0& 0 & 1 & 1\\
0& 0 & 0 & 1
\end{array}\right)
\hspace{.1cm}
&A_{8}=
\left(\begin{array}{cccc}
1& 0 & 0 & 1 \\
0& 1 & 0 & 1\\
0& 0 & 1 & 1\\
0& 0 & 0 & 1
\end{array}\right)
\\
&g_1(z_1,z_2,z_3,z_4)=(-z_1,z_2,z_3,z_4).\hspace{.1cm}   &h_1h_2(z_1,z_2,z_3,z_4)=(-z_1,-z_2,z_3,z_4)\\
&g_2(z_1,z_2,z_3,z_4)=(z_1,-z_2,z_3,z_4).                &h_2h_3(z_1,z_2,z_3,z_4)=(z_1,-z_2,-z_3,z_4)\\
&g_3(z_1,z_2,z_3,z_4)=(z_1,z_2,-z_3,\bar z_4).           &h_3(z_1,z_2,z_3,z_4)=(z_1, z_2,-z_3,\bar z_4)\\
&g_4(z_1,z_2,z_3,z_4)=(z_1,z_2,z_3,-z_4).                &h_4(z_1,z_2,z_3,z_4)=(z_1, z_2,z_3,-z_4)
\end{align*}
\normalsize then \footnotesize
\begin{align*}
\Gamma_{2}=
\Bigl < &\left(\begin{array}{cccc}
\frac{1}{2}\\
0\\
0\\
0
\end{array}\right)
\left(\begin{array}{cccc}
1& 0 & 0 & 0\\
0& 1 & 0 & 0\\
0& 0 & 1 & 0\\
0& 0 & 0 & 1
\end{array}\right),
\left(\begin{array}{cccc}
0\\
\frac{1}{2}\\
0\\
0
\end{array}\right)
\left(\begin{array}{cccc}
1& 0 & 0 & 0\\
0& 1 & 0 & 0\\
0& 0 & 1 & 0\\
0& 0 & 0 & 1
\end{array}\right),
\\
&\left(\begin{array}{cccc}
0\\
0\\
\frac{1}{2}\\
0
\end{array}\right)
\left(\begin{array}{cccc}
1& 0 & 0 & 0\\
0& 1 & 0 & 0\\
0& 0 & 1 & 0\\
0& 0 & 0 & -1
\end{array}\right),
\left(\begin{array}{cccc}
0\\
0\\
0\\
\frac{1}{2}
\end{array}\right)
\left(\begin{array}{cccc}
1& 0 & 0 & 0\\
0& 1 & 0 & 0\\
0& 0 & 1 & 0\\
0& 0 & 0 & 1
\end{array}\right)
\Bigr >
\end{align*}

\begin{align*}
\Gamma_{8}= 
\Bigl < & \left(\begin{array}{cccc}
\frac{1}{2}\\
\frac{1}{2}\\
0\\
0
\end{array}\right)
\left(\begin{array}{cccc}
1& 0 & 0 & 0\\
0& 1 & 0 & 0\\
0& 0 & 1 & 0\\
0& 0 & 0 & 1
\end{array}\right),
\left(\begin{array}{cccc}
0\\
\frac{1}{2}\\
\frac{1}{2}\\
0
\end{array}\right)
\left(\begin{array}{cccc}
1& 0 & 0 & 0\\
0& 1 & 0 & 0\\
0& 0 & 1 & 0\\
0& 0 & 0 & 1
\end{array}\right),
\\
&\left(\begin{array}{cccc}
0\\
0\\
\frac{1}{2}\\
0
\end{array}\right)
\left(\begin{array}{cccc}
1& 0 & 0 & 0\\
0& 1 & 0 & 0\\
0& 0 & 1 & 0\\
0& 0 & 0 & -1
\end{array}\right),
\left(\begin{array}{cccc}
0\\
0\\
0\\
\frac{1}{2}
\end{array}\right)
\left(\begin{array}{cccc}
1& 0 & 0 & 0\\
0& 1 & 0 & 0\\
0& 0 & 1 & 0\\
0& 0 & 0 & 1
\end{array}\right)
\Bigr >.
\end{align*}

\normalsize
\noindent
Let $\varphi(z_1,z_2,z_3,z_4)=(z_1,z_1z_2,z_2z_3,z_4)$, we get these commutative diagrams

\footnotesize
\[
\begin{CD}
(z_1,z_2,z_3,z_4) @>\varphi>>(z_1,z_1z_2,z_2z_3,z_4)\\
@Vg_1VV @Vh_1h_2 VV\\
(-z_1,z_2,z_3,z_4) @>\varphi>> (-z_1,-z_1z_2,z_2z_3,z_4)
\end{CD}
\hspace{.1cm}
\begin{CD}
(z_1,z_2,z_3,z_4) @>\varphi>>(z_1,z_1z_2,z_2z_3,z_4)\\
@Vg_3VV @Vh_3 VV\\
(z_1,z_2,-z_3,\bar z_4) @>\varphi>> (z_1,z_1z_2,-z_2z_3,\bar z_4)
\end{CD}
\]
\[ 
\begin{CD}
(z_1,z_2,z_3,z_4) @>\varphi>>(z_1,z_1z_2,z_2z_3,z_4)\\
@Vg_2VV @Vh_2h_3 VV\\
(z_1,-z_2,z_3,z_4) @>\varphi>> (z_1,-z_1z_2,-z_2z_3,z_4)
\end{CD}
\hspace{.1cm}
\begin{CD}
(z_1,z_2,z_3,z_4) @>\varphi>>(z_1,z_1z_2,z_2z_3,z_4)\\
@Vg_4VV @Vh_4 VV\\
(z_1,z_2,z_3,-z_4) @>\varphi>> (z_1,z_1z_2,z_2z_3,-z_4).
\end{CD}
\]

\normalsize
\noindent 
Therefore $\exists$ 
$\gamma 
\footnotesize
=
\Bigl( \left(\begin{array}{cccc}
0\\
0\\
0\\
0
\end{array}\right)
\left(\begin{array}{cccc}
1& 0 & 0 & 0\\
1& 1 & 0 & 0\\
0& 1 & 1 & 0\\
0& 0 & 0 & 1
\end{array}\right)
\Bigr)
\normalsize
\in \mathbb{A}(n)$ 
s.t. $\gamma \Gamma_{2}\gamma ^{-1}=\Gamma_{8}$.\\

\noindent 
$\bullet$ $M(A_{2})\approx  M(A_{9})$.\\
For
\footnotesize
\begin{align*}
&A_{2}=
\left(\begin{array}{cccc}
1& 0 & 0 & 0 \\
0& 1 & 0 & 0\\
0& 0 & 1 & 1\\
0& 0 & 0 & 1
\end{array}\right)
\hspace{.1cm}
&A_{9}=
\left(\begin{array}{cccc}
1& 0 & 0 & 0 \\
0& 1 & 1 & 0\\
0& 0 & 1 & 0\\
0& 0 & 0 & 1
\end{array}\right)
\\
&g_1(z_1,z_2,z_3,z_4)=(-z_1,z_2,z_3,z_4).\hspace{.1cm}   &h_1(z_1,z_2,z_3,z_4)=(-z_1,z_2,z_3,z_4)\\
&g_2(z_1,z_2,z_3,z_4)=(z_1,-z_2,z_3,z_4).                &h_2(z_1,z_2,z_3,z_4)=(z_1,-z_2,\bar z_3,z_4)\\
&g_3(z_1,z_2,z_3,z_4)=(z_1,z_2,-z_3,\bar z_4).           &h_3(z_1,z_2,z_3,z_4)=(z_1, z_2,-z_3,z_4)\\
&g_4(z_1,z_2,z_3,z_4)=(z_1,z_2,z_3,-z_4).                &h_4(z_1,z_2,z_3,z_4)=(z_1, z_2,z_3,-z_4)
\end{align*}
\normalsize then \footnotesize
\begin{align*}
\Gamma_{2}=
\Bigl < &\left(\begin{array}{cccc}
\frac{1}{2}\\
0\\
0\\
0
\end{array}\right)
\left(\begin{array}{cccc}
1& 0 & 0 & 0\\
0& 1 & 0 & 0\\
0& 0 & 1 & 0\\
0& 0 & 0 & 1
\end{array}\right),
\left(\begin{array}{cccc}
0\\
\frac{1}{2}\\
0\\
0
\end{array}\right)
\left(\begin{array}{cccc}
1& 0 & 0 & 0\\
0& 1 & 0 & 0\\
0& 0 & 1 & 0\\
0& 0 & 0 & 1
\end{array}\right),
\\
&\left(\begin{array}{cccc}
0\\
0\\
\frac{1}{2}\\
0
\end{array}\right)
\left(\begin{array}{cccc}
1& 0 & 0 & 0\\
0& 1 & 0 & 0\\
0& 0 & 1 & 0\\
0& 0 & 0 & -1
\end{array}\right),
\left(\begin{array}{cccc}
0\\
0\\
0\\
\frac{1}{2}
\end{array}\right)
\left(\begin{array}{cccc}
1& 0 & 0 & 0\\
0& 1 & 0 & 0\\
0& 0 & 1 & 0\\
0& 0 & 0 & 1
\end{array}\right)
\Bigr >
\end{align*}

\begin{align*}
\Gamma_{9}= 
\Bigl < & \left(\begin{array}{cccc}
\frac{1}{2}\\
0\\
0\\
0
\end{array}\right)
\left(\begin{array}{cccc}
1& 0 & 0 & 0\\
0& 1 & 0 & 0\\
0& 0 & 1 & 0\\
0& 0 & 0 & 1
\end{array}\right),
\left(\begin{array}{cccc}
0\\
\frac{1}{2}\\
0\\
0
\end{array}\right)
\left(\begin{array}{cccc}
1& 0 & 0 & 0\\
0& 1 & 0 & 0\\
0& 0 & -1 & 0\\
0& 0 & 0 & 1
\end{array}\right),
\\
&\left(\begin{array}{cccc}
0\\
0\\
\frac{1}{2}\\
0
\end{array}\right)
\left(\begin{array}{cccc}
1& 0 & 0 & 0\\
0& 1 & 0 & 0\\
0& 0 & 1 & 0\\
0& 0 & 0 & 1
\end{array}\right),
\left(\begin{array}{cccc}
0\\
0\\
0\\
\frac{1}{2}
\end{array}\right)
\left(\begin{array}{cccc}
1& 0 & 0 & 0\\
0& 1 & 0 & 0\\
0& 0 & 1 & 0\\
0& 0 & 0 & 1
\end{array}\right)
\Bigr >.
\end{align*}

\normalsize
\noindent
Let $\varphi(z_1,z_2,z_3,z_4)=(z_1,z_3,z_4,z_2)$, we get these commutative diagrams

\footnotesize
\[
\begin{CD}
(z_1,z_2,z_3,z_4) @>\varphi>>(z_1,z_3,z_4,z_2)\\
@Vg_1VV @Vh_1 VV\\
(-z_1,z_2,z_3,z_4) @>\varphi>> (-z_1,z_3,z_4,z_2)
\end{CD}
\hspace{.1cm}
\begin{CD}
(z_1,z_2,z_3,z_4) @>\varphi>>(z_1,z_3,z_4,z_2)\\
@Vg_3VV @Vh_2 VV\\
(z_1,z_2,-z_3,\bar z_4) @>\varphi>> (z_1,-z_3,\bar z_4,z_2)
\end{CD}
\]
\[ 
\begin{CD}
(z_1,z_2,z_3,z_4) @>\varphi>>(z_1,z_3,z_4,z_2)\\
@Vg_2VV @Vh_4 VV\\
(z_1,-z_2,z_3,z_4) @>\varphi>> (z_1,z_3,z_4,-z_2)
\end{CD}
\hspace{.1cm}
\begin{CD}
(z_1,z_2,z_3,z_4) @>\varphi>>(z_1,z_3,z_4,z_2)\\
@Vg_4VV @Vh_3 VV\\
(z_1,z_2,z_3,-z_4) @>\varphi>> (z_1,z_3,-z_4,z_2).
\end{CD}
\]

\normalsize
\noindent 
Therefore $\exists$ 
$\gamma 
\footnotesize
=
\Bigl( \left(\begin{array}{cccc}
0\\
0\\
0\\
0
\end{array}\right)
\left(\begin{array}{cccc}
1& 0 & 0 & 0\\
0& 0 & 1 & 0\\
0& 0 & 0 & 1\\
0& 1 & 0 & 0
\end{array}\right)
\Bigr)
\normalsize
\in \mathbb{A}(n)$ 
s.t. $\gamma \Gamma_{2}\gamma ^{-1}=\Gamma_{9}$.\\

\noindent 
$\bullet$ $M(A_{2})\approx  M(A_{17})$.\\
For
\footnotesize
\begin{align*}
&A_{2}=
\left(\begin{array}{cccc}
1& 0 & 0 & 0 \\
0& 1 & 0 & 0\\
0& 0 & 1 & 1\\
0& 0 & 0 & 1
\end{array}\right)
\hspace{.1cm}
&A_{17}=
\left(\begin{array}{cccc}
1& 0 & 1 & 0 \\
0& 1 & 0 & 0\\
0& 0 & 1 & 0\\
0& 0 & 0 & 1
\end{array}\right)
\\
&g_1(z_1,z_2,z_3,z_4)=(-z_1,z_2,z_3,z_4).\hspace{.1cm}   &h_1(z_1,z_2,z_3,z_4)=(-z_1,z_2,\bar z_3,z_4)\\
&g_2(z_1,z_2,z_3,z_4)=(z_1,-z_2,z_3,z_4).                &h_2(z_1,z_2,z_3,z_4)=(z_1,-z_2,z_3,z_4)\\
&g_3(z_1,z_2,z_3,z_4)=(z_1,z_2,-z_3,\bar z_4).           &h_3(z_1,z_2,z_3,z_4)=(z_1, z_2,-z_3,z_4)\\
&g_4(z_1,z_2,z_3,z_4)=(z_1,z_2,z_3,-z_4).                &h_4(z_1,z_2,z_3,z_4)=(z_1, z_2,z_3,-z_4)
\end{align*}
\normalsize then \footnotesize
\begin{align*}
\Gamma_{2}=
\Bigl < &\left(\begin{array}{cccc}
\frac{1}{2}\\
0\\
0\\
0
\end{array}\right)
\left(\begin{array}{cccc}
1& 0 & 0 & 0\\
0& 1 & 0 & 0\\
0& 0 & 1 & 0\\
0& 0 & 0 & 1
\end{array}\right),
\left(\begin{array}{cccc}
0\\
\frac{1}{2}\\
0\\
0
\end{array}\right)
\left(\begin{array}{cccc}
1& 0 & 0 & 0\\
0& 1 & 0 & 0\\
0& 0 & 1 & 0\\
0& 0 & 0 & 1
\end{array}\right),
\\
&\left(\begin{array}{cccc}
0\\
0\\
\frac{1}{2}\\
0
\end{array}\right)
\left(\begin{array}{cccc}
1& 0 & 0 & 0\\
0& 1 & 0 & 0\\
0& 0 & 1 & 0\\
0& 0 & 0 & -1
\end{array}\right),
\left(\begin{array}{cccc}
0\\
0\\
0\\
\frac{1}{2}
\end{array}\right)
\left(\begin{array}{cccc}
1& 0 & 0 & 0\\
0& 1 & 0 & 0\\
0& 0 & 1 & 0\\
0& 0 & 0 & 1
\end{array}\right)
\Bigr >
\end{align*}

\begin{align*}
\Gamma_{17}= 
\Bigl < & \left(\begin{array}{cccc}
\frac{1}{2}\\
0\\
0\\
0
\end{array}\right)
\left(\begin{array}{cccc}
1& 0 & 0 & 0\\
0& 1 & 0 & 0\\
0& 0 & -1 & 0\\
0& 0 & 0 & 1
\end{array}\right),
\left(\begin{array}{cccc}
0\\
\frac{1}{2}\\
0\\
0
\end{array}\right)
\left(\begin{array}{cccc}
1& 0 & 0 & 0\\
0& 1 & 0 & 0\\
0& 0 & 1 & 0\\
0& 0 & 0 & 1
\end{array}\right),
\\
&\left(\begin{array}{cccc}
0\\
0\\
\frac{1}{2}\\
0
\end{array}\right)
\left(\begin{array}{cccc}
1& 0 & 0 & 0\\
0& 1 & 0 & 0\\
0& 0 & 1 & 0\\
0& 0 & 0 & 1
\end{array}\right),
\left(\begin{array}{cccc}
0\\
0\\
0\\
\frac{1}{2}
\end{array}\right)
\left(\begin{array}{cccc}
1& 0 & 0 & 0\\
0& 1 & 0 & 0\\
0& 0 & 1 & 0\\
0& 0 & 0 & 1
\end{array}\right)
\Bigr >.
\end{align*}

\normalsize
\noindent
Let $\varphi(z_1,z_2,z_3,z_4)=(z_3,z_1,z_4,z_2)$, we get these commutative diagrams

\footnotesize
\[
\begin{CD}
(z_1,z_2,z_3,z_4) @>\varphi>>(z_3,z_1,z_4,z_2)\\
@Vg_1VV @Vh_2 VV\\
(-z_1,z_2,z_3,z_4) @>\varphi>> (z_3,-z_1,z_4,z_2)
\end{CD}
\hspace{.1cm}
\begin{CD}
(z_1,z_2,z_3,z_4) @>\varphi>>(z_3,z_1,z_4,z_2)\\
@Vg_3VV @Vh_1 VV\\
(z_1,z_2,-z_3,\bar z_4) @>\varphi>> (-z_3,z_1,\bar z_4,z_2)
\end{CD}
\]
\[ 
\begin{CD}
(z_1,z_2,z_3,z_4) @>\varphi>>(z_3,z_1,z_4,z_2)\\
@Vg_2VV @Vh_4 VV\\
(z_1,-z_2,z_3,z_4) @>\varphi>> (z_3,z_1,z_4,-z_2)
\end{CD}
\hspace{.1cm}
\begin{CD}
(z_1,z_2,z_3,z_4) @>\varphi>>(z_3,z_1,z_4,z_2)\\
@Vg_4VV @Vh_3 VV\\
(z_1,z_2,z_3,-z_4) @>\varphi>> (z_3,z_1,-z_4,z_2).
\end{CD}
\]

\normalsize
\noindent 
Therefore $\exists$ 
$\gamma 
\footnotesize
=
\Bigl( \left(\begin{array}{cccc}
0\\
0\\
0\\
0
\end{array}\right)
\left(\begin{array}{cccc}
0& 0 & 1 & 0\\
1& 0 & 0 & 0\\
0& 0 & 0 & 1\\
0& 1 & 0 & 0
\end{array}\right)
\Bigr)
\normalsize
\in \mathbb{A}(n)$ 
s.t. $\gamma \Gamma_{2}\gamma ^{-1}=\Gamma_{17}$.\\

\noindent 
$\bullet$ $M(A_{2})\approx  M(A_{25})$.\\
For
\footnotesize
\begin{align*}
&A_{2}=
\left(\begin{array}{cccc}
1& 0 & 0 & 0 \\
0& 1 & 0 & 0\\
0& 0 & 1 & 1\\
0& 0 & 0 & 1
\end{array}\right)
\hspace{.1cm}
&A_{25}=
\left(\begin{array}{cccc}
1& 0 & 1 & 0 \\
0& 1 & 1 & 0\\
0& 0 & 1 & 0\\
0& 0 & 0 & 1
\end{array}\right)
\\
&g_1(z_1,z_2,z_3,z_4)=(-z_1,z_2,z_3,z_4).\hspace{.1cm}   &h_1h_2(z_1,z_2,z_3,z_4)=(-z_1,-z_2,z_3,z_4)\\
&g_2(z_1,z_2,z_3,z_4)=(z_1,-z_2,z_3,z_4).                &h_2(z_1,z_2,z_3,z_4)=(z_1,-z_2,\bar z_3,z_4)\\
&g_3(z_1,z_2,z_3,z_4)=(z_1,z_2,-z_3,\bar z_4).           &h_3(z_1,z_2,z_3,z_4)=(z_1, z_2,-z_3,z_4)\\
&g_4(z_1,z_2,z_3,z_4)=(z_1,z_2,z_3,-z_4).                &h_4(z_1,z_2,z_3,z_4)=(z_1, z_2,z_3,-z_4)
\end{align*}
\normalsize then \footnotesize
\begin{align*}
\Gamma_{2}=
\Bigl < &\left(\begin{array}{cccc}
\frac{1}{2}\\
0\\
0\\
0
\end{array}\right)
\left(\begin{array}{cccc}
1& 0 & 0 & 0\\
0& 1 & 0 & 0\\
0& 0 & 1 & 0\\
0& 0 & 0 & 1
\end{array}\right),
\left(\begin{array}{cccc}
0\\
\frac{1}{2}\\
0\\
0
\end{array}\right)
\left(\begin{array}{cccc}
1& 0 & 0 & 0\\
0& 1 & 0 & 0\\
0& 0 & 1 & 0\\
0& 0 & 0 & 1
\end{array}\right),
\\
&\left(\begin{array}{cccc}
0\\
0\\
\frac{1}{2}\\
0
\end{array}\right)
\left(\begin{array}{cccc}
1& 0 & 0 & 0\\
0& 1 & 0 & 0\\
0& 0 & 1 & 0\\
0& 0 & 0 & -1
\end{array}\right),
\left(\begin{array}{cccc}
0\\
0\\
0\\
\frac{1}{2}
\end{array}\right)
\left(\begin{array}{cccc}
1& 0 & 0 & 0\\
0& 1 & 0 & 0\\
0& 0 & 1 & 0\\
0& 0 & 0 & 1
\end{array}\right)
\Bigr >
\end{align*}

\begin{align*}
\Gamma_{25}= 
\Bigl < & \left(\begin{array}{cccc}
\frac{1}{2}\\
\frac{1}{2}\\
0\\
0
\end{array}\right)
\left(\begin{array}{cccc}
1& 0 & 0 & 0\\
0& 1 & 0 & 0\\
0& 0 & 1 & 0\\
0& 0 & 0 & 1
\end{array}\right),
\left(\begin{array}{cccc}
0\\
\frac{1}{2}\\
0\\
0
\end{array}\right)
\left(\begin{array}{cccc}
1& 0 & 0 & 0\\
0& 1 & 0 & 0\\
0& 0 & -1 & 0\\
0& 0 & 0 & 1
\end{array}\right),
\\
&\left(\begin{array}{cccc}
0\\
0\\
\frac{1}{2}\\
0
\end{array}\right)
\left(\begin{array}{cccc}
1& 0 & 0 & 0\\
0& 1 & 0 & 0\\
0& 0 & 1 & 0\\
0& 0 & 0 & 1
\end{array}\right),
\left(\begin{array}{cccc}
0\\
0\\
0\\
\frac{1}{2}
\end{array}\right)
\left(\begin{array}{cccc}
1& 0 & 0 & 0\\
0& 1 & 0 & 0\\
0& 0 & 1 & 0\\
0& 0 & 0 & 1
\end{array}\right)
\Bigr >.
\end{align*}

\normalsize
\noindent
Let $\varphi(z_1,z_2,z_3,z_4)=(z_2,z_2z_3,z_4,z_1)$, we get these commutative diagrams

\footnotesize
\[
\begin{CD}
(z_1,z_2,z_3,z_4) @>\varphi>>(z_2,z_2z_3,z_4,z_1)\\
@Vg_1VV @Vh_4 VV\\
(-z_1,z_2,z_3,z_4) @>\varphi>> (z_2,z_2z_3,z_4,-z_1)
\end{CD}
\hspace{.1cm}
\begin{CD}
(z_1,z_2,z_3,z_4) @>\varphi>>(z_2,z_2z_3,z_4,z_1)\\
@Vg_3VV @Vh_2 VV\\
(z_1,z_2,-z_3,\bar z_4) @>\varphi>> (z_2,-z_2z_3,\bar z_4,z_1)
\end{CD}
\]
\[ 
\begin{CD}
(z_1,z_2,z_3,z_4) @>\varphi>>(z_2,z_2z_3,z_4,z_1)\\
@Vg_2VV @Vh_1h_2 VV\\
(z_1,-z_2,z_3,z_4) @>\varphi>> (-z_2,-z_2z_3,z_4,z_1)
\end{CD}
\hspace{.1cm}
\begin{CD}
(z_1,z_2,z_3,z_4) @>\varphi>>(-z_2,-z_2z_3,z_4,z_1)\\
@Vg_4VV @Vh_3 VV\\
(z_1,z_2,z_3,-z_4) @>\varphi>> (z_2,z_2z_3,-z_4,z_1).
\end{CD}
\]

\normalsize
\noindent 
Therefore $\exists$ 
$\gamma 
\footnotesize
=
\Bigl( \left(\begin{array}{cccc}
0\\
0\\
0\\
0
\end{array}\right)
\left(\begin{array}{cccc}
0& 1 & 0 & 0\\
0& 1 & 1 & 0\\
0& 0 & 0 & 1\\
1& 0 & 0 & 0
\end{array}\right)
\Bigr)
\normalsize
\in \mathbb{A}(n)$ 
s.t. $\gamma \Gamma_{2}\gamma ^{-1}=\Gamma_{25}$.\\

\noindent 
$\bullet$ $M(A_{2})\approx  M(A_{a1})$.\\
For
\footnotesize
\begin{align*}
&A_{2}=
\left(\begin{array}{cccc}
1& 0 & 0 & 0 \\
0& 1 & 0 & 0\\
0& 0 & 1 & 1\\
0& 0 & 0 & 1
\end{array}\right)
\hspace{.1cm}
&A_{a1}=
\left(\begin{array}{cccc}
1& 1 & 0 & 0 \\
0& 1 & 0 & 0\\
0& 0 & 1 & 0\\
0& 0 & 0 & 1
\end{array}\right)
\\
&g_1(z_1,z_2,z_3,z_4)=(-z_1,z_2,z_3,z_4).\hspace{.1cm}   &h_1(z_1,z_2,z_3,z_4)=(-z_1,\bar z_2,z_3,z_4)\\
&g_2(z_1,z_2,z_3,z_4)=(z_1,-z_2,z_3,z_4).                &h_2(z_1,z_2,z_3,z_4)=(z_1,-z_2,z_3,z_4)\\
&g_3(z_1,z_2,z_3,z_4)=(z_1,z_2,-z_3,\bar z_4).           &h_3(z_1,z_2,z_3,z_4)=(z_1, z_2,-z_3,z_4)\\
&g_4(z_1,z_2,z_3,z_4)=(z_1,z_2,z_3,-z_4).                &h_4(z_1,z_2,z_3,z_4)=(z_1, z_2,z_3,-z_4)
\end{align*}
\normalsize then \footnotesize
\begin{align*}
\Gamma_{2}=
\Bigl < &\left(\begin{array}{cccc}
\frac{1}{2}\\
0\\
0\\
0
\end{array}\right)
\left(\begin{array}{cccc}
1& 0 & 0 & 0\\
0& 1 & 0 & 0\\
0& 0 & 1 & 0\\
0& 0 & 0 & 1
\end{array}\right),
\left(\begin{array}{cccc}
0\\
\frac{1}{2}\\
0\\
0
\end{array}\right)
\left(\begin{array}{cccc}
1& 0 & 0 & 0\\
0& 1 & 0 & 0\\
0& 0 & 1 & 0\\
0& 0 & 0 & 1
\end{array}\right),
\\
&\left(\begin{array}{cccc}
0\\
0\\
\frac{1}{2}\\
0
\end{array}\right)
\left(\begin{array}{cccc}
1& 0 & 0 & 0\\
0& 1 & 0 & 0\\
0& 0 & 1 & 0\\
0& 0 & 0 & -1
\end{array}\right),
\left(\begin{array}{cccc}
0\\
0\\
0\\
\frac{1}{2}
\end{array}\right)
\left(\begin{array}{cccc}
1& 0 & 0 & 0\\
0& 1 & 0 & 0\\
0& 0 & 1 & 0\\
0& 0 & 0 & 1
\end{array}\right)
\Bigr >
\end{align*}

\begin{align*}
\Gamma_{a1}= 
\Bigl < & \left(\begin{array}{cccc}
\frac{1}{2}\\
0\\
0\\
0
\end{array}\right)
\left(\begin{array}{cccc}
1& 0 & 0 & 0\\
0& -1 & 0 & 0\\
0& 0 & 1 & 0\\
0& 0 & 0 & 1
\end{array}\right),
\left(\begin{array}{cccc}
0\\
\frac{1}{2}\\
0\\
0
\end{array}\right)
\left(\begin{array}{cccc}
1& 0 & 0 & 0\\
0& 1 & 0 & 0\\
0& 0 & 1 & 0\\
0& 0 & 0 & 1
\end{array}\right),
\\
&\left(\begin{array}{cccc}
0\\
0\\
\frac{1}{2}\\
0
\end{array}\right)
\left(\begin{array}{cccc}
1& 0 & 0 & 0\\
0& 1 & 0 & 0\\
0& 0 & 1 & 0\\
0& 0 & 0 & 1
\end{array}\right),
\left(\begin{array}{cccc}
0\\
0\\
0\\
\frac{1}{2}
\end{array}\right)
\left(\begin{array}{cccc}
1& 0 & 0 & 0\\
0& 1 & 0 & 0\\
0& 0 & 1 & 0\\
0& 0 & 0 & 1
\end{array}\right)
\Bigr >.
\end{align*}

\normalsize
\noindent
Let $\varphi(z_1,z_2,z_3,z_4)=(z_3,z_4,z_1,z_2)$, we get these commutative diagrams

\footnotesize
\[
\begin{CD}
(z_1,z_2,z_3,z_4) @>\varphi>>(z_3,z_4,z_1,z_2)\\
@Vg_1VV @Vh_3 VV\\
(-z_1,z_2,z_3,z_4) @>\varphi>> (z_3,z_4,-z_1,z_2)
\end{CD}
\hspace{.1cm}
\begin{CD}
(z_1,z_2,z_3,z_4) @>\varphi>>(z_3,z_4,z_1,z_2)\\
@Vg_3VV @Vh_1 VV\\
(z_1,z_2,-z_3,\bar z_4) @>\varphi>> (-z_3,\bar z_4,z_1,z_2)
\end{CD}
\]
\[ 
\begin{CD}
(z_1,z_2,z_3,z_4) @>\varphi>>(z_3,z_4,z_1,z_2)\\
@Vg_2VV @Vh_4 VV\\
(z_1,-z_2,z_3,z_4) @>\varphi>> (z_3,z_4,z_1,-z_2)
\end{CD}
\hspace{.1cm}
\begin{CD}
(z_1,z_2,z_3,z_4) @>\varphi>>(z_3,z_4,z_1,z_2)\\
@Vg_4VV @Vh_2 VV\\
(z_1,z_2,z_3,-z_4) @>\varphi>> (z_3,-z_4,z_1,z_2).
\end{CD}
\]

\normalsize
\noindent 
Therefore $\exists$ 
$\gamma 
\footnotesize
=
\Bigl( \left(\begin{array}{cccc}
0\\
0\\
0\\
0
\end{array}\right)
\left(\begin{array}{cccc}
0& 0 & 1 & 0\\
0& 0 & 0 & 1\\
1& 0 & 0 & 0\\
0& 1 & 0 & 0
\end{array}\right)
\Bigr)
\normalsize
\in \mathbb{A}(n)$ 
s.t. $\gamma \Gamma_{2}\gamma ^{-1}=\Gamma_{a1}$.\\

\noindent {\bf (6).} 
If $M(A_{11})$ is  diffeomorphic to $M(A_{2})$ and $M(A_{a21})$ respectively, then by Bieberbach's theorem $\exists B\in GL(4,\mathbb{R})$ 
such that $BPB^{-1}=Q$ and $BPB^{-1}=R$ respectively, where 
$\Phi_{11}=\footnotesize   
\Bigl <I, P=\left(\begin{array}{cccc}
1& 0 & 0 & 0\\
0& 1 & 0 & 0\\
0& 0 & -1 & 0\\
0& 0 & 0 & -1
\end{array}\right)\Bigr>$, \normalsize
$\Phi_{2}=\footnotesize 
\Bigl <I, Q=\left(\begin{array}{cccc}
1& 0 & 0 & 0\\
0& 1 & 0 & 0\\
0& 0 & 1 & 0\\
0& 0 & 0 & -1
\end{array}\right)\Bigr>$, \normalsize
$\Phi_{a21}=\footnotesize 
\Bigl <I, R=\left(\begin{array}{cccc}
1& 0 & 0 & 0\\
0& -1 & 0 & 0\\
0& 0 & -1 & 0\\
0& 0 & 0 & -1
\end{array}\right)\Bigr>$. \normalsize
Similar to the proof of Theorem \ref{T:3}(e), by argument of eigenvalue, 
there is a contradiction, where the eigenvalues of $P$ are 1,1,-1 and -1, but the eigenvalues of $Q$
are 1,1,1 and -1, and the eigenvalues of $R$ are 1,-1,-1 and -1.\\

\noindent {\bf (7).} Similar to the proof of (4) above.\\ 

\noindent {\bf (8).} Similar to the proof of (6) above.\\

\noindent {\bf (9).}\\
\noindent 
$\bullet$ $M(A_{a3})\approx  M(A_{a7})$.\\
For
\footnotesize
\begin{align*}
&A_{a3}=
\left(\begin{array}{cccc}
1& 1 & 0 & 0 \\
0& 1 & 0 & 1\\
0& 0 & 1 & 0\\
0& 0 & 0 & 1
\end{array}\right)
\hspace{.1cm}
&A_{a7}=
\left(\begin{array}{cccc}
1& 1 & 0 & 1 \\
0& 1 & 0 & 1\\
0& 0 & 1 & 0\\
0& 0 & 0 & 1
\end{array}\right)
\\
&g_1(z_1,z_2,z_3,z_4)=(-z_1,\bar z_2,z_3,z_4).\hspace{.1cm}   &h_1h_2(z_1,z_2,z_3,z_4)=(-z_1,-\bar z_2,z_3,z_4)\\
&g_2(z_1,z_2,z_3,z_4)=(z_1,-z_2,z_3,\bar z_4).                &h_2(z_1,z_2,z_3,z_4)=(z_1,-z_2,z_3,\bar z_4)\\
&g_3(z_1,z_2,z_3,z_4)=(z_1,z_2,-z_3,z_4).                     &h_3(z_1,z_2,z_3,z_4)=(z_1, z_2,-z_3,z_4)\\
&g_4(z_1,z_2,z_3,z_4)=(z_1,z_2,z_3,-z_4).                     &h_4(z_1,z_2,z_3,z_4)=(z_1, z_2,z_3,-z_4)
\end{align*}
\normalsize then \footnotesize
\begin{align*}
\Gamma_{a3}=
\Bigl < &\left(\begin{array}{cccc}
\frac{1}{2}\\
0\\
0\\
0
\end{array}\right)
\left(\begin{array}{cccc}
1& 0 & 0 & 0\\
0& -1 & 0 & 0\\
0& 0 & 1 & 0\\
0& 0 & 0 & 1
\end{array}\right),
\left(\begin{array}{cccc}
0\\
\frac{1}{2}\\
0\\
0
\end{array}\right)
\left(\begin{array}{cccc}
1& 0 & 0 & 0\\
0& 1 & 0 & 0\\
0& 0 & 1 & 0\\
0& 0 & 0 & -1
\end{array}\right),
\\
&\left(\begin{array}{cccc}
0\\
0\\
\frac{1}{2}\\
0
\end{array}\right)
\left(\begin{array}{cccc}
1& 0 & 0 & 0\\
0& 1 & 0 & 0\\
0& 0 & 1 & 0\\
0& 0 & 0 & 1
\end{array}\right),
\left(\begin{array}{cccc}
0\\
0\\
0\\
\frac{1}{2}
\end{array}\right)
\left(\begin{array}{cccc}
1& 0 & 0 & 0\\
0& 1 & 0 & 0\\
0& 0 & 1 & 0\\
0& 0 & 0 & 1
\end{array}\right)
\Bigr >
\end{align*}

\begin{align*}
\Gamma_{a7}= 
\Bigl < & \left(\begin{array}{cccc}
\frac{1}{2}\\
\frac{1}{2}\\
0\\
0
\end{array}\right)
\left(\begin{array}{cccc}
1& 0 & 0 & 0\\
0& -1 & 0 & 0\\
0& 0 & 1 & 0\\
0& 0 & 0 & 1
\end{array}\right),
\left(\begin{array}{cccc}
0\\
\frac{1}{2}\\
0\\
0
\end{array}\right)
\left(\begin{array}{cccc}
1& 0 & 0 & 0\\
0& 1 & 0 & 0\\
0& 0 & 1 & 0\\
0& 0 & 0 & -1
\end{array}\right),
\\
&\left(\begin{array}{cccc}
0\\
0\\
\frac{1}{2}\\
0
\end{array}\right)
\left(\begin{array}{cccc}
1& 0 & 0 & 0\\
0& 1 & 0 & 0\\
0& 0 & 1 & 0\\
0& 0 & 0 & 1
\end{array}\right),
\left(\begin{array}{cccc}
0\\
0\\
0\\
\frac{1}{2}
\end{array}\right)
\left(\begin{array}{cccc}
1& 0 & 0 & 0\\
0& 1 & 0 & 0\\
0& 0 & 1 & 0\\
0& 0 & 0 & 1
\end{array}\right)
\Bigr >.
\end{align*}

\normalsize
\noindent
Let $\varphi(z_1,z_2,z_3,z_4)=(z_1,iz_2,z_3,z_4)$, we get these commutative diagrams

\footnotesize
\[
\begin{CD}
(z_1,z_2,z_3,z_4) @>\varphi>>(z_1,iz_2,z_3,z_4)\\
@Vg_1VV @Vh_1h_2 VV\\
(-z_1,\bar z_2,z_3,z_4) @>\varphi>> (-z_1,-\bar {iz}_2,z_3,z_4)
\end{CD}
\hspace{.1cm}
\begin{CD}
(z_1,z_2,z_3,z_4) @>\varphi>>(z_1,iz_2,z_3,z_4)\\
@Vg_3VV @Vh_3 VV\\
(z_1,z_2,-z_3,z_4) @>\varphi>> (z_1,iz_2,-z_3,z_4)
\end{CD}
\]
\[ 
\begin{CD}
(z_1,z_2,z_3,z_4) @>\varphi>>(z_1,iz_2,z_3,z_4)\\
@Vg_2VV @Vh_2 VV\\
(z_1,-z_2,z_3,\bar z_4) @>\varphi>> (z_1,-iz_2,z_3,\bar z_4)
\end{CD}
\hspace{.1cm}
\begin{CD}
(z_1,z_2,z_3,z_4) @>\varphi>>(z_1,iz_2,z_3,z_4)\\
@Vg_4VV @Vh_4 VV\\
(z_1,z_2,z_3,-z_4) @>\varphi>> (z_1,iz_2,z_3,-z_4).
\end{CD}
\]

\normalsize
\noindent 
Therefore $\exists$ 
$\gamma 
\footnotesize
=
\Bigl( \left(\begin{array}{cccc}
0\\
\frac{1}{4}\\
0\\
0
\end{array}\right)
\left(\begin{array}{cccc}
1& 0 & 0 & 0\\
0& 1 & 0 & 0\\
0& 0 & 1 & 0\\
0& 0 & 0 & 1
\end{array}\right)
\Bigr)
\normalsize
\in \mathbb{A}(n)$ 
s.t. $\gamma \Gamma_{a3}\gamma ^{-1}=\Gamma_{a7}.$\\

\noindent 
$\bullet$ $M(A_{10})\approx  M(A_{a3})$.\\
For
\footnotesize
\begin{align*}
&A_{10}=
\left(\begin{array}{cccc}
1& 0 & 0 & 0 \\
0& 1 & 1 & 0\\
0& 0 & 1 & 1\\
0& 0 & 0 & 1
\end{array}\right)
\hspace{.1cm}
&A_{a3}=
\left(\begin{array}{cccc}
1& 1 & 0 & 0 \\
0& 1 & 0 & 1\\
0& 0 & 1 & 0\\
0& 0 & 0 & 1
\end{array}\right)
\\
&g_1(z_1,z_2,z_3,z_4)=(-z_1,z_2,z_3,z_4).\hspace{.1cm}   &h_1(z_1,z_2,z_3,z_4)=(-z_1,\bar z_2,z_3,z_4)\\
&g_2(z_1,z_2,z_3,z_4)=(z_1,-z_2,\bar z_3,z_4).           &h_2(z_1,z_2,z_3,z_4)=(z_1,-z_2,z_3,\bar z_4)\\
&g_3(z_1,z_2,z_3,z_4)=(z_1,z_2,-z_3,\bar z_4).           &h_3(z_1,z_2,z_3,z_4)=(z_1,z_2,-z_3,z_4)\\
&g_4(z_1,z_2,z_3,z_4)=(z_1,z_2,z_3,-z_4).                &h_4(z_1,z_2,z_3,z_4)=(z_1,z_2,z_3,-z_4)
\end{align*}
\normalsize then \footnotesize
\begin{align*}
\Gamma_{10}=
\Bigl < &\left(\begin{array}{cccc}
\frac{1}{2}\\
0\\
0\\
0
\end{array}\right)
\left(\begin{array}{cccc}
1& 0 & 0 & 0\\
0& 1 & 0 & 0\\
0& 0 & 1 & 0\\
0& 0 & 0 & 1
\end{array}\right),
\left(\begin{array}{cccc}
0\\
\frac{1}{2}\\
0\\
0
\end{array}\right)
\left(\begin{array}{cccc}
1& 0 & 0 & 0\\
0& 1 & 0 & 0\\
0& 0 & -1 & 0\\
0& 0 & 0 & 1
\end{array}\right),
\\
&\left(\begin{array}{cccc}
0\\
0\\
\frac{1}{2}\\
0
\end{array}\right)
\left(\begin{array}{cccc}
1& 0 & 0 & 0\\
0& 1 & 0 & 0\\
0& 0 & 1 & 0\\
0& 0 & 0 & -1
\end{array}\right),
\left(\begin{array}{cccc}
0\\
0\\
0\\
\frac{1}{2}
\end{array}\right)
\left(\begin{array}{cccc}
1& 0 & 0 & 0\\
0& 1 & 0 & 0\\
0& 0 & 1 & 0\\
0& 0 & 0 & 1
\end{array}\right)
\Bigr >
\end{align*}

\begin{align*}
\Gamma_{a3}= 
\Bigl < & \left(\begin{array}{cccc}
\frac{1}{2}\\
0\\
0\\
0
\end{array}\right)
\left(\begin{array}{cccc}
1& 0 & 0 & 0\\
0& -1 & 0 & 0\\
0& 0 & 1 & 0\\
0& 0 & 0 & 1
\end{array}\right),
\left(\begin{array}{cccc}
0\\
\frac{1}{2}\\
0\\
0
\end{array}\right)
\left(\begin{array}{cccc}
1& 0 & 0 & 0\\
0& 1 & 0 & 0\\
0& 0 & 1 & 0\\
0& 0 & 0 & -1
\end{array}\right),
\\
&\left(\begin{array}{cccc}
0\\
0\\
\frac{1}{2}\\
0
\end{array}\right)
\left(\begin{array}{cccc}
1& 0 & 0 & 0\\
0& 1 & 0 & 0\\
0& 0 & 1 & 0\\
0& 0 & 0 & 1
\end{array}\right),
\left(\begin{array}{cccc}
0\\
0\\
0\\
\frac{1}{2}
\end{array}\right)
\left(\begin{array}{cccc}
1& 0 & 0 & 0\\
0& 1 & 0 & 0\\
0& 0 & 1 & 0\\
0& 0 & 0 & 1
\end{array}\right)
\Bigr >.
\end{align*}

\normalsize
\noindent
Let $\varphi(z_1,z_2,z_3,z_4)=(z_2,z_3,z_1,z_4)$, we get these commutative diagrams

\footnotesize
\[
\begin{CD}
(z_1,z_2,z_3,z_4) @>\varphi>>(z_2,z_3,z_1,z_4)\\
@Vg_1VV @Vh_3 VV\\
(-z_1,z_2,z_3,z_4) @>\varphi>> (z_2,z_3,-z_1,z_4)
\end{CD}
\hspace{.1cm}
\begin{CD}
(z_1,z_2,z_3,z_4) @>\varphi>>(z_2,z_3,z_1,z_4)\\
@Vg_3VV @Vh_2 VV\\
(z_1,z_2,-z_3,\bar z_4) @>\varphi>> (z_2,-z_3,z_1,\bar z_4)
\end{CD}
\]
\[ 
\begin{CD}
(z_1,z_2,z_3,z_4) @>\varphi>>(z_2,z_3,z_1,z_4)\\
@Vg_2VV @Vh_1 VV\\
(z_1,-z_2,\bar z_3,z_4) @>\varphi>> (-z_2,\bar z_3,z_1,z_4)
\end{CD}
\hspace{.1cm}
\begin{CD}
(z_1,z_2,z_3,z_4) @>\varphi>>(z_2,z_3,z_1,z_4)\\
@Vg_4VV @Vh_4 VV\\
(z_1,z_2,z_3,-z_4) @>\varphi>> (z_2,z_3,z_1,-z_4).
\end{CD}
\]

\normalsize
\noindent 
Therefore $\exists$ 
$\gamma 
\footnotesize
=
\Bigl( \left(\begin{array}{cccc}
0\\
0\\
0\\
0
\end{array}\right)
\left(\begin{array}{cccc}
0& 1 & 0 & 0\\
0& 0 & 1 & 0\\
1& 0 & 0 & 0\\
0& 0 & 0 & 1
\end{array}\right)
\Bigr)
\normalsize
\in \mathbb{A}(n)$ 
s.t. $\gamma \Gamma_{10}\gamma ^{-1}=\Gamma_{a3}$.\\

\noindent 
$\bullet$ $M(A_{a9})\approx  M(A_{a7})$.\\
For
\footnotesize
\begin{align*}
&A_{a9}=
\left(\begin{array}{cccc}
1& 1 & 0 & 0 \\
0& 1 & 1 & 0\\
0& 0 & 1 & 0\\
0& 0 & 0 & 1
\end{array}\right)
\hspace{.1cm}
&A_{a7}=
\left(\begin{array}{cccc}
1& 1 & 0 & 1 \\
0& 1 & 0 & 1\\
0& 0 & 1 & 0\\
0& 0 & 0 & 1
\end{array}\right)
\\
&g_1(z_1,z_2,z_3,z_4)=(-z_1,\bar z_2,z_3,z_4).\hspace{.1cm}   &h_1h_2(z_1,z_2,z_3,z_4)=(-z_1,-\bar z_2,z_3,z_4)\\
&g_2(z_1,z_2,z_3,z_4)=(z_1,-z_2,\bar z_3,z_4).                &h_2(z_1,z_2,z_3,z_4)=(z_1,-z_2,z_3,\bar z_4)\\
&g_3(z_1,z_2,z_3,z_4)=(z_1,z_2,-z_3,z_4).                     &h_3(z_1,z_2,z_3,z_4)=(z_1, z_2,-z_3,z_4)\\
&g_4(z_1,z_2,z_3,z_4)=(z_1,z_2,z_3,-z_4).                     &h_4(z_1,z_2,z_3,z_4)=(z_1, z_2,z_3,-z_4)
\end{align*}
\normalsize then \footnotesize
\begin{align*}
\Gamma_{a9}=
\Bigl < &\left(\begin{array}{cccc}
\frac{1}{2}\\
0\\
0\\
0
\end{array}\right)
\left(\begin{array}{cccc}
1& 0 & 0 & 0\\
0& -1 & 0 & 0\\
0& 0 & 1 & 0\\
0& 0 & 0 & 1
\end{array}\right),
\left(\begin{array}{cccc}
0\\
\frac{1}{2}\\
0\\
0
\end{array}\right)
\left(\begin{array}{cccc}
1& 0 & 0 & 0\\
0& 1 & 0 & 0\\
0& 0 & -1 & 0\\
0& 0 & 0 & 1
\end{array}\right),
\\
&\left(\begin{array}{cccc}
0\\
0\\
\frac{1}{2}\\
0
\end{array}\right)
\left(\begin{array}{cccc}
1& 0 & 0 & 0\\
0& 1 & 0 & 0\\
0& 0 & 1 & 0\\
0& 0 & 0 & 1
\end{array}\right),
\left(\begin{array}{cccc}
0\\
0\\
0\\
\frac{1}{2}
\end{array}\right)
\left(\begin{array}{cccc}
1& 0 & 0 & 0\\
0& 1 & 0 & 0\\
0& 0 & 1 & 0\\
0& 0 & 0 & 1
\end{array}\right)
\Bigr >
\end{align*}

\begin{align*}
\Gamma_{a7}= 
\Bigl < & \left(\begin{array}{cccc}
\frac{1}{2}\\
\frac{1}{2}\\
0\\
0
\end{array}\right)
\left(\begin{array}{cccc}
1& 0 & 0 & 0\\
0& -1 & 0 & 0\\
0& 0 & 1 & 0\\
0& 0 & 0 & 1
\end{array}\right),
\left(\begin{array}{cccc}
0\\
\frac{1}{2}\\
0\\
0
\end{array}\right)
\left(\begin{array}{cccc}
1& 0 & 0 & 0\\
0& 1 & 0 & 0\\
0& 0 & 1 & 0\\
0& 0 & 0 & -1
\end{array}\right),
\\
&\left(\begin{array}{cccc}
0\\
0\\
\frac{1}{2}\\
0
\end{array}\right)
\left(\begin{array}{cccc}
1& 0 & 0 & 0\\
0& 1 & 0 & 0\\
0& 0 & 1 & 0\\
0& 0 & 0 & 1
\end{array}\right),
\left(\begin{array}{cccc}
0\\
0\\
0\\
\frac{1}{2}
\end{array}\right)
\left(\begin{array}{cccc}
1& 0 & 0 & 0\\
0& 1 & 0 & 0\\
0& 0 & 1 & 0\\
0& 0 & 0 & 1
\end{array}\right)
\Bigr >.
\end{align*}

\normalsize
\noindent
Let $\varphi(z_1,z_2,z_3,z_4)=(z_1,iz_2,z_4,z_3)$, we get these commutative diagrams

\footnotesize
\[
\begin{CD}
(z_1,z_2,z_3,z_4) @>\varphi>>(z_1,iz_2,z_4,z_3)\\
@Vg_1VV @Vh_1h_2 VV\\
(-z_1,\bar z_2,z_3,z_4) @>\varphi>> (-z_1,-\bar {iz}_2,z_4,z_3)
\end{CD}
\hspace{.1cm}
\begin{CD}
(z_1,z_2,z_3,z_4) @>\varphi>>(z_1,iz_2,z_4,z_3)\\
@Vg_3VV @Vh_4 VV\\
(z_1,z_2,-z_3,z_4) @>\varphi>> (z_1,iz_2,z_4,-z_3)
\end{CD}
\]
\[ 
\begin{CD}
(z_1,z_2,z_3,z_4) @>\varphi>>(z_1,iz_2,z_4,z_3)\\
@Vg_2VV @Vh_2 VV\\
(z_1,-z_2,\bar z_3,z_4) @>\varphi>> (z_1,-iz_2,z_4,\bar z_3)
\end{CD}
\hspace{.1cm}
\begin{CD}
(z_1,z_2,z_3,z_4) @>\varphi>>(z_1,iz_2,z_4,z_3)\\
@Vg_4VV @Vh_3 VV\\
(z_1,z_2,z_3,-z_4) @>\varphi>> (z_1,iz_2,-z_4,z_3).
\end{CD}
\]

\normalsize
\noindent 
Therefore $\exists$ 
$\gamma 
\footnotesize
=
\Bigl( \left(\begin{array}{cccc}
0\\
\frac{1}{4}\\
0\\
0
\end{array}\right)
\left(\begin{array}{cccc}
1& 0 & 0 & 0\\
0& 1 & 0 & 0\\
0& 0 & 0 & 1\\
0& 0 & 1 & 0
\end{array}\right)
\Bigr)
\normalsize
\in \mathbb{A}(n)$ 
s.t. $\gamma \Gamma_{a9}\gamma ^{-1}=\Gamma_{a7}.$\\

\noindent 
$\bullet$ $M(A_{a9})\approx  M(A_{a25})$.\\
For
\footnotesize
\begin{align*}
&A_{a9}=
\left(\begin{array}{cccc}
1& 1 & 0 & 0 \\
0& 1 & 1 & 0\\
0& 0 & 1 & 0\\
0& 0 & 0 & 1
\end{array}\right)
\hspace{.1cm}
&A_{a25}=
\left(\begin{array}{cccc}
1& 1 & 1 & 0 \\
0& 1 & 1 & 0\\
0& 0 & 1 & 0\\
0& 0 & 0 & 1
\end{array}\right)
\\
&g_1(z_1,z_2,z_3,z_4)=(-z_1,\bar z_2,z_3,z_4).\hspace{.1cm}   &h_1h_2(z_1,z_2,z_3,z_4)=(-z_1,-\bar z_2,z_3,z_4)\\
&g_2(z_1,z_2,z_3,z_4)=(z_1,-z_2,\bar z_3,z_4).                &h_2(z_1,z_2,z_3,z_4)=(z_1,-z_2,\bar z_3,z_4)\\
&g_3(z_1,z_2,z_3,z_4)=(z_1,z_2,-z_3,z_4).                     &h_3(z_1,z_2,z_3,z_4)=(z_1, z_2,-z_3,z_4)\\
&g_4(z_1,z_2,z_3,z_4)=(z_1,z_2,z_3,-z_4).                     &h_4(z_1,z_2,z_3,z_4)=(z_1, z_2,z_3,-z_4)
\end{align*}
\normalsize then \footnotesize
\begin{align*}
\Gamma_{a9}=
\Bigl < &\left(\begin{array}{cccc}
\frac{1}{2}\\
0\\
0\\
0
\end{array}\right)
\left(\begin{array}{cccc}
1& 0 & 0 & 0\\
0& -1 & 0 & 0\\
0& 0 & 1 & 0\\
0& 0 & 0 & 1
\end{array}\right),
\left(\begin{array}{cccc}
0\\
\frac{1}{2}\\
0\\
0
\end{array}\right)
\left(\begin{array}{cccc}
1& 0 & 0 & 0\\
0& 1 & 0 & 0\\
0& 0 & -1 & 0\\
0& 0 & 0 & 1
\end{array}\right),
\\
&\left(\begin{array}{cccc}
0\\
0\\
\frac{1}{2}\\
0
\end{array}\right)
\left(\begin{array}{cccc}
1& 0 & 0 & 0\\
0& 1 & 0 & 0\\
0& 0 & 1 & 0\\
0& 0 & 0 & 1
\end{array}\right),
\left(\begin{array}{cccc}
0\\
0\\
0\\
\frac{1}{2}
\end{array}\right)
\left(\begin{array}{cccc}
1& 0 & 0 & 0\\
0& 1 & 0 & 0\\
0& 0 & 1 & 0\\
0& 0 & 0 & 1
\end{array}\right)
\Bigr >
\end{align*}

\begin{align*}
\Gamma_{a25}= 
\Bigl < & \left(\begin{array}{cccc}
\frac{1}{2}\\
\frac{1}{2}\\
0\\
0
\end{array}\right)
\left(\begin{array}{cccc}
1& 0 & 0 & 0\\
0& -1 & 0 & 0\\
0& 0 & 1 & 0\\
0& 0 & 0 & 1
\end{array}\right),
\left(\begin{array}{cccc}
0\\
\frac{1}{2}\\
0\\
0
\end{array}\right)
\left(\begin{array}{cccc}
1& 0 & 0 & 0\\
0& 1 & 0 & 0\\
0& 0 & -1 & 0\\
0& 0 & 0 & 1
\end{array}\right),
\\
&\left(\begin{array}{cccc}
0\\
0\\
\frac{1}{2}\\
0
\end{array}\right)
\left(\begin{array}{cccc}
1& 0 & 0 & 0\\
0& 1 & 0 & 0\\
0& 0 & 1 & 0\\
0& 0 & 0 & 1
\end{array}\right),
\left(\begin{array}{cccc}
0\\
0\\
0\\
\frac{1}{2}
\end{array}\right)
\left(\begin{array}{cccc}
1& 0 & 0 & 0\\
0& 1 & 0 & 0\\
0& 0 & 1 & 0\\
0& 0 & 0 & 1
\end{array}\right)
\Bigr >.
\end{align*}

\normalsize
\noindent
Let $\varphi(z_1,z_2,z_3,z_4)=(z_1,iz_2,z_3,z_4)$, we get these commutative diagrams

\footnotesize
\[
\begin{CD}
(z_1,z_2,z_3,z_4) @>\varphi>>(z_1,iz_2,z_3,z_4)\\
@Vg_1VV @Vh_1h_2 VV\\
(-z_1,\bar z_2,z_3,z_4) @>\varphi>> (-z_1,-\bar {iz}_2,z_3,z_4)
\end{CD}
\hspace{.1cm}
\begin{CD}
(z_1,z_2,z_3,z_4) @>\varphi>>(z_1,iz_2,z_3,z_4)\\
@Vg_3VV @Vh_3 VV\\
(z_1,z_2,-z_3,z_4) @>\varphi>> (z_1,iz_2,-z_3,z_4)
\end{CD}
\]
\[ 
\begin{CD}
(z_1,z_2,z_3,z_4) @>\varphi>>(z_1,iz_2,z_3,z_4)\\
@Vg_2VV @Vh_2 VV\\
(z_1,-z_2,\bar z_3,z_4) @>\varphi>> (z_1,-iz_2,\bar z_3,z_4)
\end{CD}
\hspace{.1cm}
\begin{CD}
(z_1,z_2,z_3,z_4) @>\varphi>>(z_1,iz_2,z_3,z_4)\\
@Vg_4VV @Vh_4 VV\\
(z_1,z_2,z_3,-z_4) @>\varphi>> (z_1,iz_2,z_3,-z_4).
\end{CD}
\]

\normalsize
\noindent 
Therefore $\exists$ 
$\gamma 
\footnotesize
=
\Bigl( \left(\begin{array}{cccc}
0\\
\frac{1}{4}\\
0\\
0
\end{array}\right)
\left(\begin{array}{cccc}
1& 0 & 0 & 0\\
0& 1 & 0 & 0\\
0& 0 & 1 & 0\\
0& 0 & 0 & 1
\end{array}\right)
\Bigr)
\normalsize
\in \mathbb{A}(n)$ 
s.t. $\gamma \Gamma_{a9}\gamma ^{-1}=\Gamma_{a25}.$\\

\noindent 
$\bullet$ $M(A_{10})\approx  M(A_{12})$.\\
For 
\footnotesize
\begin{align*}
&A_{10}=
\left(\begin{array}{cccc}
1& 0 & 0 & 0 \\
0& 1 & 1 & 0\\
0& 0 & 1 & 1\\
0& 0 & 0 & 1
\end{array}\right)
\hspace{.1cm}
&A_{12}=
\left(\begin{array}{cccc}
1& 0 & 0 & 0 \\
0& 1 & 1 & 1\\
0& 0 & 1 & 1\\
0& 0 & 0 & 1
\end{array}\right)
\\
&g_1(z_1,z_2,z_3,z_4)=(-z_1,z_2,z_3,z_4).\hspace{.1cm}   &h_1(z_1,z_2,z_3,z_4)=(-z_1,z_2,z_3,z_4)\\
&g_2(z_1,z_2,z_3,z_4)=(z_1,-z_2,\bar z_3,z_4).           &h_2h_3(z_1,z_2,z_3,z_4)=(z_1,-z_2,-\bar z_3,z_4)\\
&g_3(z_1,z_2,z_3,z_4)=(z_1,z_2,-z_3,\bar z_4).           &h_3(z_1,z_2,z_3,z_4)=(z_1,z_2,-z_3,\bar z_4)\\
&g_4(z_1,z_2,z_3,z_4)=(z_1,z_2,z_3,-z_4).                &h_4(z_1,z_2,z_3,z_4)=(z_1,z_2,z_3,-z_4)
\end{align*}
\normalsize then \footnotesize
\begin{align*}
\Gamma_{10}=
\Bigl < &\left(\begin{array}{cccc}
\frac{1}{2}\\
0\\
0\\
0
\end{array}\right)
\left(\begin{array}{cccc}
1& 0 & 0 & 0\\
0& 1 & 0 & 0\\
0& 0 & 1 & 0\\
0& 0 & 0 & 1
\end{array}\right),
\left(\begin{array}{cccc}
0\\
\frac{1}{2}\\
0\\
0
\end{array}\right)
\left(\begin{array}{cccc}
1& 0 & 0 & 0\\
0& 1 & 0 & 0\\
0& 0 & -1 & 0\\
0& 0 & 0 & 1
\end{array}\right),
\\
&\left(\begin{array}{cccc}
0\\
0\\
\frac{1}{2}\\
0
\end{array}\right)
\left(\begin{array}{cccc}
1& 0 & 0 & 0\\
0& 1 & 0 & 0\\
0& 0 & 1 & 0\\
0& 0 & 0 & -1
\end{array}\right),
\left(\begin{array}{cccc}
0\\
0\\
0\\
\frac{1}{2}
\end{array}\right)
\left(\begin{array}{cccc}
1& 0 & 0 & 0\\
0& 1 & 0 & 0\\
0& 0 & 1 & 0\\
0& 0 & 0 & 1
\end{array}\right)
\Bigr >
\end{align*}

\begin{align*}
\Gamma_{12}= 
\Bigl < & \left(\begin{array}{cccc}
\frac{1}{2}\\
0\\
0\\
0
\end{array}\right)
\left(\begin{array}{cccc}
1& 0 & 0 & 0\\
0& 1 & 0 & 0\\
0& 0 & 1 & 0\\
0& 0 & 0 & 1
\end{array}\right),
\left(\begin{array}{cccc}
0\\
\frac{1}{2}\\
\frac{1}{2}\\
0
\end{array}\right)
\left(\begin{array}{cccc}
1& 0 & 0 & 0\\
0& 1 & 0 & 0\\
0& 0 & -1 & 0\\
0& 0 & 0 & 1
\end{array}\right),
\\
&\left(\begin{array}{cccc}
0\\
0\\
\frac{1}{2}\\
0
\end{array}\right)
\left(\begin{array}{cccc}
1& 0 & 0 & 0\\
0& 1 & 0 & 0\\
0& 0 & 1 & 0\\
0& 0 & 0 & -1
\end{array}\right),
\left(\begin{array}{cccc}
0\\
0\\
0\\
\frac{1}{2}
\end{array}\right)
\left(\begin{array}{cccc}
1& 0 & 0 & 0\\
0& 1 & 0 & 0\\
0& 0 & 1 & 0\\
0& 0 & 0 & 1
\end{array}\right)
\Bigr >.
\end{align*}

\normalsize
\noindent
Let $\varphi(z_1,z_2,z_3,z_4)=(z_1,z_2,iz_3,z_4)$, we get these commutative diagrams

\footnotesize
\[
\begin{CD}
(z_1,z_2,z_3,z_4) @>\varphi>>(z_1,z_2,iz_3,z_4)\\
@Vg_1VV @Vh_1 VV\\
(-z_1,z_2,z_3,z_4) @>\varphi>> (-z_1,z_2,iz_3,z_4)
\end{CD}
\hspace{.1cm}
\begin{CD}
(z_1,z_2,z_3,z_4) @>\varphi>>(z_1,z_2,iz_3,z_4)\\
@Vg_3VV @Vh_3 VV\\
(z_1,z_2,-z_3,\bar z_4) @>\varphi>> (z_1,z_2,-iz_3,\bar z_4)
\end{CD}
\]
\[ 
\begin{CD}
(z_1,z_2,z_3,z_4) @>\varphi>>(z_1,z_2,iz_3,z_4)\\
@Vg_2VV @Vh_2h_3 VV\\
(z_1,-z_2,\bar z_3,z_4) @>\varphi>> (z_1,-z_2,-\bar {iz}_3,z_4)
\end{CD}
\hspace{.1cm}
\begin{CD}
(z_1,z_2,z_3,z_4) @>\varphi>>(z_1,z_2,iz_3,z_4)\\
@Vg_4VV @Vh_4 VV\\
(z_1,z_2,z_3,-z_4) @>\varphi>> (z_1,z_2,iz_3,-z_4).
\end{CD}
\]

\normalsize
\noindent 
Therefore $\exists$ 
$\gamma 
\footnotesize
=
\Bigl( \left(\begin{array}{cccc}
0\\
0\\
\frac{1}{4}\\
0
\end{array}\right)
\left(\begin{array}{cccc}
1& 0 & 0 & 0\\
0& 1 & 0 & 0\\
0& 0 & 1 & 0\\
0& 0 & 0 & 1
\end{array}\right)
\Bigr)
\normalsize
\in \mathbb{A}(n)$ 
s.t. $\gamma \Gamma_{10}\gamma ^{-1}=\Gamma_{12}.$\\

\noindent 
$\bullet$ $M(A_{10})\approx  M(A_{18})$.\\
For
\footnotesize
\begin{align*}
&A_{10}=
\left(\begin{array}{cccc}
1& 0 & 0 & 0 \\
0& 1 & 1 & 0\\
0& 0 & 1 & 1\\
0& 0 & 0 & 1
\end{array}\right)
\hspace{.1cm}
&A_{18}=
\left(\begin{array}{cccc}
1& 0 & 1 & 0 \\
0& 1 & 0 & 0\\
0& 0 & 1 & 1\\
0& 0 & 0 & 1
\end{array}\right)
\\
&g_1(z_1,z_2,z_3,z_4)=(-z_1,z_2,z_3,z_4).\hspace{.1cm}   &h_1(z_1,z_2,z_3,z_4)=(-z_1,z_2,\bar z_3,z_4)\\
&g_2(z_1,z_2,z_3,z_4)=(z_1,-z_2,\bar z_3,z_4).           &h_2(z_1,z_2,z_3,z_4)=(z_1,-z_2,z_3,z_4)\\
&g_3(z_1,z_2,z_3,z_4)=(z_1,z_2,-z_3,\bar z_4).           &h_3(z_1,z_2,z_3,z_4)=(z_1,z_2,-z_3,\bar z_4)\\
&g_4(z_1,z_2,z_3,z_4)=(z_1,z_2,z_3,-z_4).                &h_4(z_1,z_2,z_3,z_4)=(z_1,z_2,z_3,-z_4)
\end{align*}
\normalsize then \footnotesize
\begin{align*}
\Gamma_{10}=
\Bigl < &\left(\begin{array}{cccc}
\frac{1}{2}\\
0\\
0\\
0
\end{array}\right)
\left(\begin{array}{cccc}
1& 0 & 0 & 0\\
0& 1 & 0 & 0\\
0& 0 & 1 & 0\\
0& 0 & 0 & 1
\end{array}\right),
\left(\begin{array}{cccc}
0\\
\frac{1}{2}\\
0\\
0
\end{array}\right)
\left(\begin{array}{cccc}
1& 0 & 0 & 0\\
0& 1 & 0 & 0\\
0& 0 & -1 & 0\\
0& 0 & 0 & 1
\end{array}\right),
\\
&\left(\begin{array}{cccc}
0\\
0\\
\frac{1}{2}\\
0
\end{array}\right)
\left(\begin{array}{cccc}
1& 0 & 0 & 0\\
0& 1 & 0 & 0\\
0& 0 & 1 & 0\\
0& 0 & 0 & -1
\end{array}\right),
\left(\begin{array}{cccc}
0\\
0\\
0\\
\frac{1}{2}
\end{array}\right)
\left(\begin{array}{cccc}
1& 0 & 0 & 0\\
0& 1 & 0 & 0\\
0& 0 & 1 & 0\\
0& 0 & 0 & 1
\end{array}\right)
\Bigr >
\end{align*}

\begin{align*}
\Gamma_{18}= 
\Bigl < & \left(\begin{array}{cccc}
\frac{1}{2}\\
0\\
0\\
0
\end{array}\right)
\left(\begin{array}{cccc}
1& 0 & 0 & 0\\
0& 1 & 0 & 0\\
0& 0 & -1 & 0\\
0& 0 & 0 & 1
\end{array}\right),
\left(\begin{array}{cccc}
0\\
\frac{1}{2}\\
0\\
0
\end{array}\right)
\left(\begin{array}{cccc}
1& 0 & 0 & 0\\
0& 1 & 0 & 0\\
0& 0 & 1 & 0\\
0& 0 & 0 & 1
\end{array}\right),
\\
&\left(\begin{array}{cccc}
0\\
0\\
\frac{1}{2}\\
0
\end{array}\right)
\left(\begin{array}{cccc}
1& 0 & 0 & 0\\
0& 1 & 0 & 0\\
0& 0 & 1 & 0\\
0& 0 & 0 & -1
\end{array}\right),
\left(\begin{array}{cccc}
0\\
0\\
0\\
\frac{1}{2}
\end{array}\right)
\left(\begin{array}{cccc}
1& 0 & 0 & 0\\
0& 1 & 0 & 0\\
0& 0 & 1 & 0\\
0& 0 & 0 & 1
\end{array}\right)
\Bigr >.
\end{align*}

\normalsize
\noindent
Let $\varphi(z_1,z_2,z_3,z_4)=(z_2,z_1,z_3,z_4)$, we get these commutative diagrams

\footnotesize
\[
\begin{CD}
(z_1,z_2,z_3,z_4) @>\varphi>>(z_2,z_1,z_3,z_4)\\
@Vg_1VV @Vh_2 VV\\
(-z_1,z_2,z_3,z_4) @>\varphi>> (z_2,-z_1,z_3,z_4)
\end{CD}
\hspace{.1cm}
\begin{CD}
(z_1,z_2,z_3,z_4) @>\varphi>>(z_2,z_1,z_3,z_4)\\
@Vg_3VV @Vh_3 VV\\
(z_1,z_2,-z_3,\bar z_4) @>\varphi>> (z_2,z_1,-z_3,\bar z_4)
\end{CD}
\]
\[ 
\begin{CD}
(z_1,z_2,z_3,z_4) @>\varphi>>(z_2,z_1,z_3,z_4)\\
@Vg_2VV @Vh_1 VV\\
(z_1,-z_2,\bar z_3,z_4) @>\varphi>> (-z_2,z_1,\bar z_3,z_4)
\end{CD}
\hspace{.1cm}
\begin{CD}
(z_1,z_2,z_3,z_4) @>\varphi>>(z_2,z_1,z_3,z_4)\\
@Vg_4VV @Vh_4 VV\\
(z_1,z_2,z_3,-z_4) @>\varphi>> (z_2,z_1,z_3,-z_4).
\end{CD}
\]

\normalsize
\noindent 
Therefore $\exists$ 
$\gamma 
\footnotesize
=
\Bigl( \left(\begin{array}{cccc}
0\\
0\\
0\\
0
\end{array}\right)
\left(\begin{array}{cccc}
0& 1 & 0 & 0\\
1& 0 & 0 & 0\\
0& 0 & 1 & 0\\
0& 0 & 0 & 1
\end{array}\right)
\Bigr)
\normalsize
\in \mathbb{A}(n)$ 
s.t. $\gamma \Gamma_{10}\gamma ^{-1}=\Gamma_{18}$.\\

\noindent 
$\bullet$ $M(A_{10})\approx  M(A_{22})$.\\
For
\footnotesize
\begin{align*}
&A_{10}=
\left(\begin{array}{cccc}
1& 0 & 0 & 0 \\
0& 1 & 1 & 0\\
0& 0 & 1 & 1\\
0& 0 & 0 & 1
\end{array}\right)
\hspace{.1cm}
&A_{22}=
\left(\begin{array}{cccc}
1& 0 & 1 & 1 \\
0& 1 & 0 & 0\\
0& 0 & 1 & 1\\
0& 0 & 0 & 1
\end{array}\right)
\\
&g_1(z_1,z_2,z_3,z_4)=(-z_1,z_2,z_3,z_4).\hspace{.1cm}   &h_1h_3(z_1,z_2,z_3,z_4)=(-z_1,z_2,-\bar z_3,z_4)\\
&g_2(z_1,z_2,z_3,z_4)=(z_1,-z_2,\bar z_3,z_4).           &h_2(z_1,z_2,z_3,z_4)=(z_1,-z_2,z_3,z_4)\\
&g_3(z_1,z_2,z_3,z_4)=(z_1,z_2,-z_3,\bar z_4).           &h_3(z_1,z_2,z_3,z_4)=(z_1,z_2,-z_3,\bar z_4)\\
&g_4(z_1,z_2,z_3,z_4)=(z_1,z_2,z_3,-z_4).                &h_4(z_1,z_2,z_3,z_4)=(z_1,z_2,z_3,-z_4)
\end{align*}
\normalsize then \footnotesize
\begin{align*}
\Gamma_{10}=
\Bigl < &\left(\begin{array}{cccc}
\frac{1}{2}\\
0\\
0\\
0
\end{array}\right)
\left(\begin{array}{cccc}
1& 0 & 0 & 0\\
0& 1 & 0 & 0\\
0& 0 & 1 & 0\\
0& 0 & 0 & 1
\end{array}\right),
\left(\begin{array}{cccc}
0\\
\frac{1}{2}\\
0\\
0
\end{array}\right)
\left(\begin{array}{cccc}
1& 0 & 0 & 0\\
0& 1 & 0 & 0\\
0& 0 & -1 & 0\\
0& 0 & 0 & 1
\end{array}\right),
\\
&\left(\begin{array}{cccc}
0\\
0\\
\frac{1}{2}\\
0
\end{array}\right)
\left(\begin{array}{cccc}
1& 0 & 0 & 0\\
0& 1 & 0 & 0\\
0& 0 & 1 & 0\\
0& 0 & 0 & -1
\end{array}\right),
\left(\begin{array}{cccc}
0\\
0\\
0\\
\frac{1}{2}
\end{array}\right)
\left(\begin{array}{cccc}
1& 0 & 0 & 0\\
0& 1 & 0 & 0\\
0& 0 & 1 & 0\\
0& 0 & 0 & 1
\end{array}\right)
\Bigr >
\end{align*}

\begin{align*}
\Gamma_{22}= 
\Bigl < & \left(\begin{array}{cccc}
\frac{1}{2}\\
0\\
\frac{1}{2}\\
0
\end{array}\right)
\left(\begin{array}{cccc}
1& 0 & 0 & 0\\
0& 1 & 0 & 0\\
0& 0 & -1 & 0\\
0& 0 & 0 & 1
\end{array}\right),
\left(\begin{array}{cccc}
0\\
\frac{1}{2}\\
0\\
0
\end{array}\right)
\left(\begin{array}{cccc}
1& 0 & 0 & 0\\
0& 1 & 0 & 0\\
0& 0 & 1 & 0\\
0& 0 & 0 & 1
\end{array}\right),
\\
&\left(\begin{array}{cccc}
0\\
0\\
\frac{1}{2}\\
0
\end{array}\right)
\left(\begin{array}{cccc}
1& 0 & 0 & 0\\
0& 1 & 0 & 0\\
0& 0 & 1 & 0\\
0& 0 & 0 & -1
\end{array}\right),
\left(\begin{array}{cccc}
0\\
0\\
0\\
\frac{1}{2}
\end{array}\right)
\left(\begin{array}{cccc}
1& 0 & 0 & 0\\
0& 1 & 0 & 0\\
0& 0 & 1 & 0\\
0& 0 & 0 & 1
\end{array}\right)
\Bigr >.
\end{align*}

\normalsize
\noindent
Let $\varphi(z_1,z_2,z_3,z_4)=(z_2,z_1,iz_3,z_4)$, we get these commutative diagrams

\footnotesize
\[
\begin{CD}
(z_1,z_2,z_3,z_4) @>\varphi>>(z_2,z_1,iz_3,z_4)\\
@Vg_1VV @Vh_2 VV\\
(-z_1,z_2,z_3,z_4) @>\varphi>> (z_2,-z_1,iz_3,z_4)
\end{CD}
\hspace{.1cm}
\begin{CD}
(z_1,z_2,z_3,z_4) @>\varphi>>(z_2,z_1,iz_3,z_4)\\
@Vg_3VV @Vh_3 VV\\
(z_1,z_2,-z_3,\bar z_4) @>\varphi>> (z_2,z_1,-iz_3,\bar z_4)
\end{CD}
\]
\[ 
\begin{CD}
(z_1,z_2,z_3,z_4) @>\varphi>>(z_2,z_1,iz_3,z_4)\\
@Vg_2VV @Vh_1h_3 VV\\
(z_1,-z_2,\bar z_3,z_4) @>\varphi>> (-z_2,z_1,-\bar {iz}_3,z_4)
\end{CD}
\hspace{.1cm}
\begin{CD}
(z_1,z_2,z_3,z_4) @>\varphi>>(z_2,z_1,iz_3,z_4)\\
@Vg_4VV @Vh_4 VV\\
(z_1,z_2,z_3,-z_4) @>\varphi>> (z_2,z_1,iz_3,-z_4).
\end{CD}
\]

\normalsize
\noindent 
Therefore $\exists$ 
$\gamma 
\footnotesize
=
\Bigl( \left(\begin{array}{cccc}
0\\
0\\
\frac{1}{4}\\
0
\end{array}\right)
\left(\begin{array}{cccc}
0& 1 & 0 & 0\\
1& 0 & 0 & 0\\
0& 0 & 1 & 0\\
0& 0 & 0 & 1
\end{array}\right)
\Bigr)
\normalsize
\in \mathbb{A}(n)$ 
s.t. $\gamma \Gamma_{10}\gamma ^{-1}=\Gamma_{22}$.\\

\noindent 
$\bullet$ $M(A_{10})\approx  M(A_{26})$.\\
For
\footnotesize
\begin{align*}
&A_{10}=
\left(\begin{array}{cccc}
1& 0 & 0 & 0 \\
0& 1 & 1 & 0\\
0& 0 & 1 & 1\\
0& 0 & 0 & 1
\end{array}\right)
\hspace{.1cm}
&A_{26}=
\left(\begin{array}{cccc}
1& 0 & 1 & 0 \\
0& 1 & 1 & 0\\
0& 0 & 1 & 1\\
0& 0 & 0 & 1
\end{array}\right)
\\
&g_1(z_1,z_2,z_3,z_4)=(-z_1,z_2,z_3,z_4).\hspace{.1cm}   &h_1h_2(z_1,z_2,z_3,z_4)=(-z_1,-z_2,z_3,z_4)\\
&g_2(z_1,z_2,z_3,z_4)=(z_1,-z_2,\bar z_3,z_4).           &h_2(z_1,z_2,z_3,z_4)=(z_1,-z_2,\bar z_3,z_4)\\
&g_3(z_1,z_2,z_3,z_4)=(z_1,z_2,-z_3,\bar z_4).           &h_3(z_1,z_2,z_3,z_4)=(z_1,z_2,-z_3,\bar z_4)\\
&g_4(z_1,z_2,z_3,z_4)=(z_1,z_2,z_3,-z_4).                &h_4(z_1,z_2,z_3,z_4)=(z_1,z_2,z_3,-z_4)
\end{align*}
\normalsize then \footnotesize
\begin{align*}
\Gamma_{10}=
\Bigl < &\left(\begin{array}{cccc}
\frac{1}{2}\\
0\\
0\\
0
\end{array}\right)
\left(\begin{array}{cccc}
1& 0 & 0 & 0\\
0& 1 & 0 & 0\\
0& 0 & 1 & 0\\
0& 0 & 0 & 1
\end{array}\right),
\left(\begin{array}{cccc}
0\\
\frac{1}{2}\\
0\\
0
\end{array}\right)
\left(\begin{array}{cccc}
1& 0 & 0 & 0\\
0& 1 & 0 & 0\\
0& 0 & -1 & 0\\
0& 0 & 0 & 1
\end{array}\right),
\\
&\left(\begin{array}{cccc}
0\\
0\\
\frac{1}{2}\\
0
\end{array}\right)
\left(\begin{array}{cccc}
1& 0 & 0 & 0\\
0& 1 & 0 & 0\\
0& 0 & 1 & 0\\
0& 0 & 0 & -1
\end{array}\right),
\left(\begin{array}{cccc}
0\\
0\\
0\\
\frac{1}{2}
\end{array}\right)
\left(\begin{array}{cccc}
1& 0 & 0 & 0\\
0& 1 & 0 & 0\\
0& 0 & 1 & 0\\
0& 0 & 0 & 1
\end{array}\right)
\Bigr >
\end{align*}

\begin{align*}
\Gamma_{26}= 
\Bigl < & \left(\begin{array}{cccc}
\frac{1}{2}\\
\frac{1}{2}\\
0\\
0
\end{array}\right)
\left(\begin{array}{cccc}
1& 0 & 0 & 0\\
0& 1 & 0 & 0\\
0& 0 & 1 & 0\\
0& 0 & 0 & 1
\end{array}\right),
\left(\begin{array}{cccc}
0\\
\frac{1}{2}\\
0\\
0
\end{array}\right)
\left(\begin{array}{cccc}
1& 0 & 0 & 0\\
0& 1 & 0 & 0\\
0& 0 & -1 & 0\\
0& 0 & 0 & 1
\end{array}\right),
\\
&\left(\begin{array}{cccc}
0\\
0\\
\frac{1}{2}\\
0
\end{array}\right)
\left(\begin{array}{cccc}
1& 0 & 0 & 0\\
0& 1 & 0 & 0\\
0& 0 & 1 & 0\\
0& 0 & 0 & -1
\end{array}\right),
\left(\begin{array}{cccc}
0\\
0\\
0\\
\frac{1}{2}
\end{array}\right)
\left(\begin{array}{cccc}
1& 0 & 0 & 0\\
0& 1 & 0 & 0\\
0& 0 & 1 & 0\\
0& 0 & 0 & 1
\end{array}\right)
\Bigr >.
\end{align*}

\normalsize
\noindent
Let $\varphi(z_1,z_2,z_3,z_4)=(z_1,z_1z_2,z_3,z_4)$, we get these commtative diagrams

\footnotesize
\[
\begin{CD}
(z_1,z_2,z_3,z_4) @>\varphi>>(z_1,z_1z_2,z_3,z_4)\\
@Vg_1VV @Vh_1h_2 VV\\
(-z_1,z_2,z_3,z_4) @>\varphi>> (-z_1,-z_1z_2,z_3,z_4)
\end{CD}
\hspace{.1cm}
\begin{CD}
(z_1,z_2,z_3,z_4) @>\varphi>>(z_1,z_1z_2,z_3,z_4)\\
@Vg_3VV @Vh_3 VV\\
(z_1,z_2,-z_3,\bar z_4) @>\varphi>> (z_1,z_1z_2,-z_3,\bar z_4)
\end{CD}
\]
\[ 
\begin{CD}
(z_1,z_2,z_3,z_4) @>\varphi>>(z_1,z_1z_2,z_3,z_4)\\
@Vg_2VV @Vh_2 VV\\
(z_1,-z_2,\bar z_3,z_4) @>\varphi>> (z_1,-z_1z_2,\bar z_3,z_4)
\end{CD}
\hspace{.1cm}
\begin{CD}
(z_1,z_2,z_3,z_4) @>\varphi>>(z_1,z_1z_2,z_3,z_4)\\
@Vg_4VV @Vh_4 VV\\
(z_1,z_2,z_3,-z_4) @>\varphi>> (z_1,z_1z_2,z_3,-z_4).
\end{CD}
\]

\normalsize
\noindent 
Therefore $\exists$ 
$\gamma 
\footnotesize
=
\Bigl( \left(\begin{array}{cccc}
0\\
0\\
0\\
0
\end{array}\right)
\left(\begin{array}{cccc}
1& 0 & 0 & 0\\
1& 1 & 0 & 0\\
0& 0 & 1 & 0\\
0& 0 & 0 & 1
\end{array}\right)
\Bigr)
\normalsize
\in \mathbb{A}(n)$ 
s.t. $\gamma \Gamma_{10}\gamma ^{-1}=\Gamma_{26}$.\\

\noindent 
$\bullet$ $M(A_{12})\approx  M(A_{32})$.\\
For
\footnotesize
\begin{align*}
&A_{12}=
\left(\begin{array}{cccc}
1& 0 & 0 & 0 \\
0& 1 & 1 & 1\\
0& 0 & 1 & 1\\
0& 0 & 0 & 1
\end{array}\right)
\hspace{.1cm}
&A_{32}=
\left(\begin{array}{cccc}
1& 0 & 1 & 1 \\
0& 1 & 1 & 1\\
0& 0 & 1 & 1\\
0& 0 & 0 & 1
\end{array}\right)
\\
&g_1(z_1,z_2,z_3,z_4)=(-z_1,z_2,z_3,z_4).\hspace{.1cm}   &h_1h_2(z_1,z_2,z_3,z_4)=(-z_1,-z_2,z_3,z_4)\\
&g_2(z_1,z_2,z_3,z_4)=(z_1,-z_2,\bar z_3,\bar z_4).      &h_2(z_1,z_2,z_3,z_4)=(z_1,-z_2,\bar z_3,\bar z_4)\\
&g_3(z_1,z_2,z_3,z_4)=(z_1,z_2,-z_3,\bar z_4).           &h_3(z_1,z_2,z_3,z_4)=(z_1,z_2,-z_3,\bar z_4)\\
&g_4(z_1,z_2,z_3,z_4)=(z_1,z_2,z_3,-z_4).                &h_4(z_1,z_2,z_3,z_4)=(z_1,z_2,z_3,-z_4)
\end{align*}
\normalsize then \footnotesize
\begin{align*}
\Gamma_{12}= 
\Bigl < & \left(\begin{array}{cccc}
\frac{1}{2}\\
0\\
0\\
0
\end{array}\right)
\left(\begin{array}{cccc}
1& 0 & 0 & 0\\
0& 1 & 0 & 0\\
0& 0 & 1 & 0\\
0& 0 & 0 & 1
\end{array}\right),
\left(\begin{array}{cccc}
0\\
\frac{1}{2}\\
0\\
0
\end{array}\right)
\left(\begin{array}{cccc}
1& 0 & 0 & 0\\
0& 1 & 0 & 0\\
0& 0 & -1 & 0\\
0& 0 & 0 & -1
\end{array}\right),
\\
&\left(\begin{array}{cccc}
0\\
0\\
\frac{1}{2}\\
0
\end{array}\right)
\left(\begin{array}{cccc}
1& 0 & 0 & 0\\
0& 1 & 0 & 0\\
0& 0 & 1 & 0\\
0& 0 & 0 & -1
\end{array}\right),
\left(\begin{array}{cccc}
0\\
0\\
0\\
\frac{1}{2}
\end{array}\right)
\left(\begin{array}{cccc}
1& 0 & 0 & 0\\
0& 1 & 0 & 0\\
0& 0 & 1 & 0\\
0& 0 & 0 & 1
\end{array}\right)
\Bigr >
\end{align*}

\begin{align*}
\Gamma_{32}=
\Bigl < &\left(\begin{array}{cccc}
\frac{1}{2}\\
\frac{1}{2}\\
0\\
0
\end{array}\right)
\left(\begin{array}{cccc}
1& 0 & 0 & 0\\
0& 1 & 0 & 0\\
0& 0 & 1 & 0\\
0& 0 & 0 & 1
\end{array}\right),
\left(\begin{array}{cccc}
0\\
\frac{1}{2}\\
0\\
0
\end{array}\right)
\left(\begin{array}{cccc}
1& 0 & 0 & 0\\
0& 1 & 0 & 0\\
0& 0 & -1 & 0\\
0& 0 & 0 & -1
\end{array}\right),
\\
&\left(\begin{array}{cccc}
0\\
0\\
\frac{1}{2}\\
0
\end{array}\right)
\left(\begin{array}{cccc}
1& 0 & 0 & 0\\
0& 1 & 0 & 0\\
0& 0 & 1 & 0\\
0& 0 & 0 & -1
\end{array}\right),
\left(\begin{array}{cccc}
0\\
0\\
0\\
\frac{1}{2}
\end{array}\right)
\left(\begin{array}{cccc}
1& 0 & 0 & 0\\
0& 1 & 0 & 0\\
0& 0 & 1 & 0\\
0& 0 & 0 & 1
\end{array}\right)
\Bigr >.
\end{align*}

\normalsize
\noindent
Let $\varphi(z_1,z_2,z_3,z_4)=(z_1,z_1z_2,z_3,z_4)$, we get these commutative diagrams

\footnotesize
\[
\begin{CD}
(z_1,z_2,z_3,z_4) @>\varphi>>(z_1,z_1z_2,z_3,z_4)\\
@Vg_1VV @Vh_1h_2 VV\\
(-z_1,z_2,z_3,z_4) @>\varphi>> (-z_1,-z_1z_2,z_3,z_4)
\end{CD}
\hspace{.1cm}
\begin{CD}
(z_1,z_2,z_3,z_4) @>\varphi>>(z_1,z_1z_2,z_3,z_4)\\
@Vg_3VV @Vh_3 VV\\
(z_1,z_2,-z_3,\bar z_4) @>\varphi>> (z_1,z_1z_2,-z_3,\bar z_4)
\end{CD}
\]
\[ 
\begin{CD}
(z_1,z_2,z_3,z_4) @>\varphi>>(z_1,z_1z_2,z_3,z_4)\\
@Vg_2VV @Vh_2 VV\\
(z_1,-z_2,\bar z_3,\bar z_4) @>\varphi>> (z_1,-z_1z_2,\bar z_3,\bar z_4)
\end{CD}
\hspace{.1cm}
\begin{CD}
(z_1,z_2,z_3,z_4) @>\varphi>>(z_1,z_1z_2,z_3,z_4)\\
@Vg_4VV @Vh_4 VV\\
(z_1,z_2,z_3,-z_4) @>\varphi>> (z_1,z_1z_2,z_3,-z_4).
\end{CD}
\]

\normalsize
\noindent 
Therefore $\exists$ 
$\gamma 
\footnotesize
=
\Bigl( \left(\begin{array}{cccc}
0\\
0\\
0\\
0
\end{array}\right)
\left(\begin{array}{cccc}
1& 0 & 0 & 0\\
1& 1 & 0 & 0\\
0& 0 & 1 & 0\\
0& 0 & 0 & 1
\end{array}\right)
\Bigr)
\normalsize
\in \mathbb{A}(n)$ 
s.t. $\gamma \Gamma_{12}\gamma ^{-1}=\Gamma_{32}$.\\

\noindent{\bf (10).} Similar to (4).\\

\noindent{\bf (11).} We know that  \\
$\Phi_{a15}=\sz \left<I,\left(\begin{array}{cccc}
1& 0 & 0 & 0\\
0& -1 & 0 & 0\\
0& 0 & 1 & 0\\
0& 0 & 0 & -1
\end{array}\right)=P,\left(\begin{array}{cccc}
1& 0 & 0 & 0\\
0& 1 & 0 & 0\\
0& 0 & -1 & 0\\
0& 0 & 0 & -1
\end{array}\right)=Q \right>$ \normalsize 
and \sz
$\Phi_{a3}=\left<I,\left(\begin{array}{cccc}
1& 0 & 0 & 0\\
0& -1 & 0 & 0\\
0& 0 & 1 & 0\\
0& 0 & 0 & 1
\end{array}\right)=R, \left(\begin{array}{cccc}
1& 0 & 0 & 0\\
0& 1 & 0 & 0\\
0& 0 & 1 & 0\\
0& 0 & 0 & -1
\end{array}\right)=S\right>$. \normalsize If $M(A_{a15})$ is diffeomorphic to $M(A_{a3})$ then 
$\exists $ $B\in GL(4,\mathbb{R})$ such that 
\[
BPB^{-1}=
\begin{cases}
R  \hspace{1cm} (i)\\
S \hspace{1cm} (ii)\\
RS  \hspace{.71cm} (iii)
\end{cases}
\] 
and
\[
BQB^{-1}=
\begin{cases}
R  \hspace{1cm} (iv)\\
S \hspace{1cm} (v)\\
RS  \hspace{.71cm} (vi).
\end{cases}
\] 
By calculating the eigenvalues of cases (i), (ii),(iv) and (v), then we get a contradiction,
and from the cases (iii) and (vi) we get $P=Q$ which is also a contradiction. 
\\

\noindent {\bf (12)}.\\
\noindent $\bullet$ $M(A_{a4})\approx  M(A_{a8})$.\\
For
\footnotesize
\begin{align*}
&A_{a4}=
\left(\begin{array}{cccc}
1& 1 & 0 & 0 \\
0& 1 & 0 & 1\\
0& 0 & 1 & 1\\
0& 0 & 0 & 1
\end{array}\right)
\hspace{.1cm}
&A_{a8}=
\left(\begin{array}{cccc}
1& 1 & 0 & 1 \\
0& 1 & 0 & 1\\
0& 0 & 1 & 1\\
0& 0 & 0 & 1
\end{array}\right)
\\
&g_1(z_1,z_2,z_3,z_4)=(-z_1,\bar z_2,z_3,z_4).\hspace{.1cm}   &h_1h_2(z_1,z_2,z_3,z_4)=(-z_1,-\bar z_2,z_3,z_4)\\
&g_2(z_1,z_2,z_3,z_4)=(z_1,-z_2,z_3,\bar z_4).                &h_2(z_1,z_2,z_3,z_4)=(z_1,-z_2,z_3,\bar z_4)\\
&g_3(z_1,z_2,z_3,z_4)=(z_1,z_2,-z_3,\bar z_4).                &h_3(z_1,z_2,z_3,z_4)=(z_1, z_2,-z_3,\bar z_4)\\
&g_4(z_1,z_2,z_3,z_4)=(z_1,z_2,z_3,-z_4).                     &h_4(z_1,z_2,z_3,z_4)=(z_1, z_2,z_3,-z_4)
\end{align*}
\normalsize then \footnotesize
\begin{align*}
\Gamma_{a4}=
\Bigl < &\left(\begin{array}{cccc}
\frac{1}{2}\\
0\\
0\\
0
\end{array}\right)
\left(\begin{array}{cccc}
1& 0 & 0 & 0\\
0& -1 & 0 & 0\\
0& 0 & 1 & 0\\
0& 0 & 0 & 1
\end{array}\right),
\left(\begin{array}{cccc}
0\\
\frac{1}{2}\\
0\\
0
\end{array}\right)
\left(\begin{array}{cccc}
1& 0 & 0 & 0\\
0& 1 & 0 & 0\\
0& 0 & 1 & 0\\
0& 0 & 0 & -1
\end{array}\right),
\\
&\left(\begin{array}{cccc}
0\\
0\\
\frac{1}{2}\\
0
\end{array}\right)
\left(\begin{array}{cccc}
1& 0 & 0 & 0\\
0& 1 & 0 & 0\\
0& 0 & 1 & 0\\
0& 0 & 0 & -1
\end{array}\right),
\left(\begin{array}{cccc}
0\\
0\\
0\\
\frac{1}{2}
\end{array}\right)
\left(\begin{array}{cccc}
1& 0 & 0 & 0\\
0& 1 & 0 & 0\\
0& 0 & 1 & 0\\
0& 0 & 0 & 1
\end{array}\right)
\Bigr >
\end{align*}

\begin{align*}
\Gamma_{a8}= 
\Bigl < & \left(\begin{array}{cccc}
\frac{1}{2}\\
\frac{1}{2}\\
0\\
0
\end{array}\right)
\left(\begin{array}{cccc}
1& 0 & 0 & 0\\
0& -1 & 0 & 0\\
0& 0 & 1 & 0\\
0& 0 & 0 & 1
\end{array}\right),
\left(\begin{array}{cccc}
0\\
\frac{1}{2}\\
0\\
0
\end{array}\right)
\left(\begin{array}{cccc}
1& 0 & 0 & 0\\
0& 1 & 0 & 0\\
0& 0 & 1 & 0\\
0& 0 & 0 & -1
\end{array}\right),
\\
&\left(\begin{array}{cccc}
0\\
0\\
\frac{1}{2}\\
0
\end{array}\right)
\left(\begin{array}{cccc}
1& 0 & 0 & 0\\
0& 1 & 0 & 0\\
0& 0 & 1 & 0\\
0& 0 & 0 & -1
\end{array}\right),
\left(\begin{array}{cccc}
0\\
0\\
0\\
\frac{1}{2}
\end{array}\right)
\left(\begin{array}{cccc}
1& 0 & 0 & 0\\
0& 1 & 0 & 0\\
0& 0 & 1 & 0\\
0& 0 & 0 & 1
\end{array}\right)
\Bigr >.
\end{align*}

\normalsize
\noindent
Let $\varphi(z_1,z_2,z_3,z_4)=(z_1,iz_2,z_3,z_4)$, we get these commutative diagrams

\footnotesize
\[
\begin{CD}
(z_1,z_2,z_3,z_4) @>\varphi>>(z_1,iz_2,z_3,z_4)\\
@Vg_1VV @Vh_1h_2 VV\\
(-z_1,\bar z_2,z_3,z_4) @>\varphi>> (-z_1,-\bar {iz}_2,z_3,z_4)
\end{CD}
\hspace{.1cm}
\begin{CD}
(z_1,z_2,z_3,z_4) @>\varphi>>(z_1,iz_2,z_3,z_4)\\
@Vg_3VV @Vh_3 VV\\
(z_1,z_2,-z_3,\bar z_4) @>\varphi>> (z_1,iz_2,-z_3,\bar z_4)
\end{CD}
\]
\[ 
\begin{CD}
(z_1,z_2,z_3,z_4) @>\varphi>>(z_1,iz_2,z_3,z_4)\\
@Vg_2VV @Vh_2 VV\\
(z_1,-z_2,z_3,\bar z_4) @>\varphi>> (z_1,-iz_2,z_3,\bar z_4)
\end{CD}
\hspace{.1cm}
\begin{CD}
(z_1,z_2,z_3,z_4) @>\varphi>>(z_1,iz_2,z_3,z_4)\\
@Vg_4VV @Vh_4 VV\\
(z_1,z_2,z_3,-z_4) @>\varphi>> (z_1,iz_2,z_3,-z_4).
\end{CD}
\]

\normalsize
\noindent 
Therefore $\exists$ 
$\gamma 
\footnotesize
=
\Bigl( \left(\begin{array}{cccc}
0\\
\frac{1}{4}\\
0\\
0
\end{array}\right)
\left(\begin{array}{cccc}
1& 0 & 0 & 0\\
0& 1 & 0 & 0\\
0& 0 & 1 & 0\\
0& 0 & 0 & 1
\end{array}\right)
\Bigr)
\normalsize
\in \mathbb{A}(n)$ 
s.t. $\gamma \Gamma_{a4}\gamma ^{-1}=\Gamma_{a8}.$\\

\noindent 
$\bullet$ $M(A_{a4})\approx M(A_{14})$.\\
For 
\footnotesize
\begin{align*}
&A_{a4}=
\left(\begin{array}{cccc}
1& 1 & 0 & 0 \\
0& 1 & 0 & 1\\
0& 0 & 1 & 1\\
0& 0 & 0 & 1
\end{array}\right)
\hspace{.1cm}
&A_{14}=
\left(\begin{array}{cccc}
1& 0 & 0 & 1 \\
0& 1 & 1 & 0\\
0& 0 & 1 & 1\\
0& 0 & 0 & 1
\end{array}\right)
\\
&g_1(z_1,z_2,z_3,z_4)=(-z_1,\bar z_2,z_3,z_4).\hspace{.1cm}   &h_1(z_1,z_2,z_3,z_4)=(-z_1,z_2,z_3,\bar z_4)\\
&g_2(z_1,z_2,z_3,z_4)=(z_1,-z_2,z_3,\bar z_4).                &h_2(z_1,z_2,z_3,z_4)=(z_1,-z_2,\bar z_3,z_4)\\
&g_3(z_1,z_2,z_3,z_4)=(z_1,z_2,-z_3,\bar z_4).                &h_3(z_1,z_2,z_3,z_4)=(z_1, z_2,-z_3,\bar z_4)\\
&g_4(z_1,z_2,z_3,z_4)=(z_1,z_2,z_3,-z_4).                     &h_4(z_1,z_2,z_3,z_4)=(z_1, z_2,z_3,-z_4)
\end{align*}
\normalsize then \footnotesize
\begin{align*}
\Gamma_{a4}=
\Bigl < &\left(\begin{array}{cccc}
\frac{1}{2}\\
0\\
0\\
0
\end{array}\right)
\left(\begin{array}{cccc}
1& 0 & 0 & 0\\
0& -1 & 0 & 0\\
0& 0 & 1 & 0\\
0& 0 & 0 & 1
\end{array}\right),
\left(\begin{array}{cccc}
0\\
\frac{1}{2}\\
0\\
0
\end{array}\right)
\left(\begin{array}{cccc}
1& 0 & 0 & 0\\
0& 1 & 0 & 0\\
0& 0 & 1 & 0\\
0& 0 & 0 & -1
\end{array}\right),
\\
&\left(\begin{array}{cccc}
0\\
0\\
\frac{1}{2}\\
0
\end{array}\right)
\left(\begin{array}{cccc}
1& 0 & 0 & 0\\
0& 1 & 0 & 0\\
0& 0 & 1 & 0\\
0& 0 & 0 & -1
\end{array}\right),
\left(\begin{array}{cccc}
0\\
0\\
0\\
\frac{1}{2}
\end{array}\right)
\left(\begin{array}{cccc}
1& 0 & 0 & 0\\
0& 1 & 0 & 0\\
0& 0 & 1 & 0\\
0& 0 & 0 & 1
\end{array}\right)
\Bigr >
\end{align*}

\begin{align*}
\Gamma_{14}= 
\Bigl < & \left(\begin{array}{cccc}
\frac{1}{2}\\
0\\
0\\
0
\end{array}\right)
\left(\begin{array}{cccc}
1& 0 & 0 & 0\\
0& 1 & 0 & 0\\
0& 0 & 1 & 0\\
0& 0 & 0 & -1
\end{array}\right),
\left(\begin{array}{cccc}
0\\
\frac{1}{2}\\
0\\
0
\end{array}\right)
\left(\begin{array}{cccc}
1& 0 & 0 & 0\\
0& 1 & 0 & 0\\
0& 0 & -1 & 0\\
0& 0 & 0 & 1
\end{array}\right),
\\
&\left(\begin{array}{cccc}
0\\
0\\
\frac{1}{2}\\
0
\end{array}\right)
\left(\begin{array}{cccc}
1& 0 & 0 & 0\\
0& 1 & 0 & 0\\
0& 0 & 1 & 0\\
0& 0 & 0 & -1
\end{array}\right),
\left(\begin{array}{cccc}
0\\
0\\
0\\
\frac{1}{2}
\end{array}\right)
\left(\begin{array}{cccc}
1& 0 & 0 & 0\\
0& 1 & 0 & 0\\
0& 0 & 1 & 0\\
0& 0 & 0 & 1
\end{array}\right)
\Bigr >.
\end{align*}

\normalsize
\noindent
Let $\varphi(z_1,z_2,z_3,z_4)=(z_3,z_1,z_2,z_4)$, we get these commutative diagrams

\footnotesize
\[
\begin{CD}
(z_1,z_2,z_3,z_4) @>\varphi>>(z_3,z_1,z_2,z_4)\\
@Vg_1VV @Vh_2 VV\\
(-z_1,\bar z_2,z_3,z_4) @>\varphi>> (z_3,-z_1,\bar z_2,z_4)
\end{CD}
\hspace{.1cm}
\begin{CD}
(z_1,z_2,z_3,z_4) @>\varphi>>(z_3,z_1,z_2,z_4)\\
@Vg_3VV @Vh_1 VV\\
(z_1,z_2,-z_3,\bar z_4) @>\varphi>> (-z_3,z_1,z_2,\bar z_4)
\end{CD}
\]
\[ 
\begin{CD}
(z_1,z_2,z_3,z_4) @>\varphi>>(z_3,z_1,z_2,z_4)\\
@Vg_2VV @Vh_3 VV\\
(z_1,-z_2,z_3,\bar z_4) @>\varphi>> (z_3,z_1,-z_2,\bar z_4)
\end{CD}
\hspace{.1cm}
\begin{CD}
(z_1,z_2,z_3,z_4) @>\varphi>>(z_3,z_1,z_2,z_4)\\
@Vg_4VV @Vh_4 VV\\
(z_1,z_2,z_3,-z_4) @>\varphi>> (z_3,z_1,z_2,-z_4).
\end{CD}
\]

\normalsize
\noindent 
Therefore $\exists$ 
$\gamma 
\footnotesize
=
\Bigl( \left(\begin{array}{cccc}
0\\
0\\
0\\
0
\end{array}\right)
\left(\begin{array}{cccc}
0& 0 & 1 & 0\\
1& 0 & 0 & 0\\
0& 1 & 0 & 0\\
0& 0 & 0 & 1
\end{array}\right)
\Bigr)
\normalsize
\in \mathbb{A}(n)$ 
s.t. $\gamma \Gamma_{a4}\gamma ^{-1}=\Gamma_{14}$.\\

\noindent 
$\bullet$ $M(A_{14})\approx  M(A_{16})$.\\
For
\footnotesize
\begin{align*}
&A_{14}=
\left(\begin{array}{cccc}
1& 0 & 0 & 1 \\
0& 1 & 1 & 0\\
0& 0 & 1 & 1\\
0& 0 & 0 & 1
\end{array}\right)
\hspace{.1cm}
&A_{16}=
\left(\begin{array}{cccc}
1& 0 & 0 & 1 \\
0& 1 & 1 & 1\\
0& 0 & 1 & 1\\
0& 0 & 0 & 1
\end{array}\right)
\\
&g_1(z_1,z_2,z_3,z_4)=(-z_1,z_2,z_3,\bar z_4).\hspace{.1cm}   &h_1(z_1,z_2,z_3,z_4)=(-z_1,z_2,z_3,\bar z_4)\\
&g_2(z_1,z_2,z_3,z_4)=(z_1,-z_2,\bar z_3,z_4).            &h_2h_3(z_1,z_2,z_3,z_4)=(z_1,-z_2,-\bar z_3,z_4)\\
&g_3(z_1,z_2,z_3,z_4)=(z_1,z_2,-z_3,\bar z_4).           &h_3(z_1,z_2,z_3,z_4)=(z_1,z_2,-z_3,\bar z_4)\\
&g_4(z_1,z_2,z_3,z_4)=(z_1,z_2,z_3,-z_4).                &h_4(z_1,z_2,z_3,z_4)=(z_1,z_2,z_3,-z_4)
\end{align*}
\normalsize then \footnotesize
\begin{align*}
\Gamma_{14}= 
\Bigl < & \left(\begin{array}{cccc}
\frac{1}{2}\\
0\\
0\\
0
\end{array}\right)
\left(\begin{array}{cccc}
1& 0 & 0 & 0\\
0& 1 & 0 & 0\\
0& 0 & 1 & 0\\
0& 0 & 0 & -1
\end{array}\right),
\left(\begin{array}{cccc}
0\\
\frac{1}{2}\\
0\\
0
\end{array}\right)
\left(\begin{array}{cccc}
1& 0 & 0 & 0\\
0& 1 & 0 & 0\\
0& 0 & -1 & 0\\
0& 0 & 0 & 1
\end{array}\right),
\\
&\left(\begin{array}{cccc}
0\\
0\\
\frac{1}{2}\\
0
\end{array}\right)
\left(\begin{array}{cccc}
1& 0 & 0 & 0\\
0& 1 & 0 & 0\\
0& 0 & 1 & 0\\
0& 0 & 0 & -1
\end{array}\right),
\left(\begin{array}{cccc}
0\\
0\\
0\\
\frac{1}{2}
\end{array}\right)
\left(\begin{array}{cccc}
1& 0 & 0 & 0\\
0& 1 & 0 & 0\\
0& 0 & 1 & 0\\
0& 0 & 0 & 1
\end{array}\right)
\Bigr >
\end{align*}

\begin{align*}
\Gamma_{16}=
\Bigl < &\left(\begin{array}{cccc}
\frac{1}{2}\\
0\\
0\\
0
\end{array}\right)
\left(\begin{array}{cccc}
1& 0 & 0 & 0\\
0& 1 & 0 & 0\\
0& 0 & 1 & 0\\
0& 0 & 0 & -1
\end{array}\right),
\left(\begin{array}{cccc}
0\\
\frac{1}{2}\\
\frac{1}{2}\\
0
\end{array}\right)
\left(\begin{array}{cccc}
1& 0 & 0 & 0\\
0& 1 & 0 & 0\\
0& 0 & -1 & 0\\
0& 0 & 0 & 1
\end{array}\right),
\\
&\left(\begin{array}{cccc}
0\\
0\\
\frac{1}{2}\\
0
\end{array}\right)
\left(\begin{array}{cccc}
1& 0 & 0 & 0\\
0& 1 & 0 & 0\\
0& 0 & 1 & 0\\
0& 0 & 0 & -1
\end{array}\right),
\left(\begin{array}{cccc}
0\\
0\\
0\\
\frac{1}{2}
\end{array}\right)
\left(\begin{array}{cccc}
1& 0 & 0 & 0\\
0& 1 & 0 & 0\\
0& 0 & 1 & 0\\
0& 0 & 0 & 1
\end{array}\right)
\Bigr >.
\end{align*}

\normalsize
\noindent
Let $\varphi(z_1,z_2,z_3,z_4)=(z_1,z_2,iz_3,z_4)$, we get these commutative diagrams

\footnotesize
\[
\begin{CD}
(z_1,z_2,z_3,z_4) @>\varphi>>(z_1,z_2,iz_3,z_4)\\
@Vg_1VV @Vh_1 VV\\
(-z_1,z_2,z_3,\bar z_4) @>\varphi>> (-z_1,z_2,iz_3,\bar z_4)
\end{CD}
\hspace{.1cm}
\begin{CD}
(z_1,z_2,z_3,z_4) @>\varphi>>(z_1,z_2,iz_3,z_4)\\
@Vg_3VV @Vh_3 VV\\
(z_1,z_2,-z_3,\bar z_4) @>\varphi>> (z_1,z_2,-iz_3,\bar z_4)
\end{CD}
\]
\[ 
\begin{CD}
(z_1,z_2,z_3,z_4) @>\varphi>>(z_1,z_2,iz_3,z_4)\\
@Vg_2VV @Vh_2h_3 VV\\
(z_1,-z_2,\bar z_3,z_4) @>\varphi>> (z_1,-z_2,-\bar {iz}_3,z_4)
\end{CD}
\hspace{.1cm}
\begin{CD}
(z_1,z_2,z_3,z_4) @>\varphi>>(z_1,z_2,iz_3,z_4)\\
@Vg_4VV @Vh_4 VV\\
(z_1,z_2,z_3,-z_4) @>\varphi>> (z_1,z_2,iz_3,-z_4).
\end{CD}
\]

\normalsize
\noindent 
Therefore $\exists$ 
$\gamma 
\footnotesize
=
\Bigl( \left(\begin{array}{cccc}
0\\
0\\
\frac{1}{4}\\
0
\end{array}\right)
\left(\begin{array}{cccc}
1& 0 & 0 & 0\\
0& 1 & 0 & 0\\
0& 0 & 1 & 0\\
0& 0 & 0 & 1
\end{array}\right)
\Bigr)
\normalsize
\in \mathbb{A}(n)$ 
s.t. $\gamma \Gamma_{14}\gamma ^{-1}=\Gamma_{16}$.\\

\noindent 
$\bullet$ $M(A_{14})\approx  M(A_{24})$.\\
For 
\footnotesize
\begin{align*}
&A_{14}=
\left(\begin{array}{cccc}
1& 0 & 0 & 1 \\
0& 1 & 1 & 0\\
0& 0 & 1 & 1\\
0& 0 & 0 & 1
\end{array}\right)
\hspace{.1cm}
&A_{24}=
\left(\begin{array}{cccc}
1& 0 & 1 & 1 \\
0& 1 & 0 & 1\\
0& 0 & 1 & 1\\
0& 0 & 0 & 1
\end{array}\right)
\\
&g_1(z_1,z_2,z_3,z_4)=(-z_1,z_2,z_3,\bar z_4).\hspace{.1cm}   &h_1h_2(z_1,z_2,z_3,z_4)=(-z_1,-z_2,\bar z_3,z_4)\\
&g_2(z_1,z_2,z_3,z_4)=(z_1,-z_2,\bar z_3,z_4).            &h_2(z_1,z_2,z_3,z_4)=(z_1,-z_2,z_3,\bar z_4)\\
&g_3(z_1,z_2,z_3,z_4)=(z_1,z_2,-z_3,\bar z_4).           &h_3(z_1,z_2,z_3,z_4)=(z_1,z_2,-z_3,\bar z_4)\\
&g_4(z_1,z_2,z_3,z_4)=(z_1,z_2,z_3,-z_4).                &h_4(z_1,z_2,z_3,z_4)=(z_1,z_2,z_3,-z_4)
\end{align*}
\normalsize then \footnotesize
\begin{align*}
\Gamma_{14}= 
\Bigl < & \left(\begin{array}{cccc}
\frac{1}{2}\\
0\\
0\\
0
\end{array}\right)
\left(\begin{array}{cccc}
1& 0 & 0 & 0\\
0& 1 & 0 & 0\\
0& 0 & 1 & 0\\
0& 0 & 0 & -1
\end{array}\right),
\left(\begin{array}{cccc}
0\\
\frac{1}{2}\\
0\\
0
\end{array}\right)
\left(\begin{array}{cccc}
1& 0 & 0 & 0\\
0& 1 & 0 & 0\\
0& 0 & -1 & 0\\
0& 0 & 0 & 1
\end{array}\right),
\\
&\left(\begin{array}{cccc}
0\\
0\\
\frac{1}{2}\\
0
\end{array}\right)
\left(\begin{array}{cccc}
1& 0 & 0 & 0\\
0& 1 & 0 & 0\\
0& 0 & 1 & 0\\
0& 0 & 0 & -1
\end{array}\right),
\left(\begin{array}{cccc}
0\\
0\\
0\\
\frac{1}{2}
\end{array}\right)
\left(\begin{array}{cccc}
1& 0 & 0 & 0\\
0& 1 & 0 & 0\\
0& 0 & 1 & 0\\
0& 0 & 0 & 1
\end{array}\right)
\Bigr >
\end{align*}

\begin{align*}
\Gamma_{24}=
\Bigl < &\left(\begin{array}{cccc}
\frac{1}{2}\\
\frac{1}{2}\\
0\\
0
\end{array}\right)
\left(\begin{array}{cccc}
1& 0 & 0 & 0\\
0& 1 & 0 & 0\\
0& 0 & -1 & 0\\
0& 0 & 0 & 1
\end{array}\right),
\left(\begin{array}{cccc}
0\\
\frac{1}{2}\\
0\\
0
\end{array}\right)
\left(\begin{array}{cccc}
1& 0 & 0 & 0\\
0& 1 & 0 & 0\\
0& 0 & 1 & 0\\
0& 0 & 0 & -1
\end{array}\right),
\\
&\left(\begin{array}{cccc}
0\\
0\\
\frac{1}{2}\\
0
\end{array}\right)
\left(\begin{array}{cccc}
1& 0 & 0 & 0\\
0& 1 & 0 & 0\\
0& 0 & 1 & 0\\
0& 0 & 0 & -1
\end{array}\right),
\left(\begin{array}{cccc}
0\\
0\\
0\\
\frac{1}{2}
\end{array}\right)
\left(\begin{array}{cccc}
1& 0 & 0 & 0\\
0& 1 & 0 & 0\\
0& 0 & 1 & 0\\
0& 0 & 0 & 1
\end{array}\right)
\Bigr >.
\end{align*}

\normalsize
\noindent
Let $\varphi(z_1,z_2,z_3,z_4)=(z_2,z_1z_2,z_3,z_4)$, we get these commutative diagrams

\footnotesize
\[
\begin{CD}
(z_1,z_2,z_3,z_4) @>\varphi>>(z_2,z_1z_2,z_3,z_4)\\
@Vg_1VV @Vh_2 VV\\
(-z_1,z_2,z_3,\bar z_4) @>\varphi>> (z_2,-z_1z_2,z_3,\bar z_4)
\end{CD}
\hspace{.1cm}
\begin{CD}
(z_1,z_2,z_3,z_4) @>\varphi>>(z_2,z_1z_2,z_3,z_4)\\
@Vg_3VV @Vh_3 VV\\
(z_1,z_2,-z_3,\bar z_4) @>\varphi>> (z_2,z_1z_2,-z_3,\bar z_4)
\end{CD}
\]
\[ 
\begin{CD}
(z_1,z_2,z_3,z_4) @>\varphi>>(z_2,z_1z_2,z_3,z_4)\\
@Vg_2VV @Vh_1h_2 VV\\
(z_1,-z_2,\bar z_3,z_4) @>\varphi>> (-z_2,-z_1z_2,\bar z_3,z_4)
\end{CD}
\hspace{.1cm}
\begin{CD}
(z_1,z_2,z_3,z_4) @>\varphi>>(z_2,z_1z_2,z_3,z_4)\\
@Vg_4VV @Vh_4 VV\\
(z_1,z_2,z_3,-z_4) @>\varphi>> (z_2,z_1z_2,z_3,-z_4).
\end{CD}
\]

\normalsize
\noindent 
Therefore $\exists$ 
$\gamma 
\footnotesize
=
\Bigl( \left(\begin{array}{cccc}
0\\
0\\
0\\
0
\end{array}\right)
\left(\begin{array}{cccc}
0& 1 & 0 & 0\\
1& 1 & 0 & 0\\
0& 0 & 1 & 0\\
0& 0 & 0 & 1
\end{array}\right)
\Bigr)
\normalsize
\in \mathbb{A}(n)$ 
s.t. $\gamma \Gamma_{14}\gamma ^{-1}=\Gamma_{24}$.\\

\noindent 
$\bullet$ $M(A_{20})\approx  M(A_{24})$.\\
For
\footnotesize
\begin{align*}
&A_{20}=
\left(\begin{array}{cccc}
1& 0 & 1 & 0 \\
0& 1 & 0 & 1\\
0& 0 & 1 & 1\\
0& 0 & 0 & 1
\end{array}\right)
\hspace{.1cm}
&A_{24}=
\left(\begin{array}{cccc}
1& 0 & 1 & 1 \\
0& 1 & 0 & 1\\
0& 0 & 1 & 1\\
0& 0 & 0 & 1
\end{array}\right)
\\
&g_1(z_1,z_2,z_3,z_4)=(-z_1,z_2,\bar z_3,z_4).\hspace{.1cm}   &h_1h_3(z_1,z_2,z_3,z_4)=(-z_1,z_2,-\bar z_3,z_4)\\
&g_2(z_1,z_2,z_3,z_4)=(z_1,-z_2,z_3,\bar z_4).            &h_2(z_1,z_2,z_3,z_4)=(z_1,-z_2,z_3,\bar z_4)\\
&g_3(z_1,z_2,z_3,z_4)=(z_1,z_2,-z_3,\bar z_4).           &h_3(z_1,z_2,z_3,z_4)=(z_1,z_2,-z_3,\bar z_4)\\
&g_4(z_1,z_2,z_3,z_4)=(z_1,z_2,z_3,-z_4).                &h_4(z_1,z_2,z_3,z_4)=(z_1,z_2,z_3,-z_4)
\end{align*}
\normalsize then \footnotesize
\begin{align*}
\Gamma_{20}= 
\Bigl < & \left(\begin{array}{cccc}
\frac{1}{2}\\
0\\
0\\
0
\end{array}\right)
\left(\begin{array}{cccc}
1& 0 & 0 & 0\\
0& 1 & 0 & 0\\
0& 0 & -1 & 0\\
0& 0 & 0 & 1
\end{array}\right),
\left(\begin{array}{cccc}
0\\
\frac{1}{2}\\
0\\
0
\end{array}\right)
\left(\begin{array}{cccc}
1& 0 & 0 & 0\\
0& 1 & 0 & 0\\
0& 0 & 1 & 0\\
0& 0 & 0 & -1
\end{array}\right),
\\
&\left(\begin{array}{cccc}
0\\
0\\
\frac{1}{2}\\
0
\end{array}\right)
\left(\begin{array}{cccc}
1& 0 & 0 & 0\\
0& 1 & 0 & 0\\
0& 0 & 1 & 0\\
0& 0 & 0 & -1
\end{array}\right),
\left(\begin{array}{cccc}
0\\
0\\
0\\
\frac{1}{2}
\end{array}\right)
\left(\begin{array}{cccc}
1& 0 & 0 & 0\\
0& 1 & 0 & 0\\
0& 0 & 1 & 0\\
0& 0 & 0 & 1
\end{array}\right)
\Bigr >
\end{align*}

\begin{align*}
\Gamma_{24}=
\Bigl < &\left(\begin{array}{cccc}
\frac{1}{2}\\
0\\
\frac{1}{2}\\
0
\end{array}\right)
\left(\begin{array}{cccc}
1& 0 & 0 & 0\\
0& 1 & 0 & 0\\
0& 0 & -1 & 0\\
0& 0 & 0 & 1
\end{array}\right),
\left(\begin{array}{cccc}
0\\
\frac{1}{2}\\
0\\
0
\end{array}\right)
\left(\begin{array}{cccc}
1& 0 & 0 & 0\\
0& 1 & 0 & 0\\
0& 0 & 1 & 0\\
0& 0 & 0 & -1
\end{array}\right),
\\
&\left(\begin{array}{cccc}
0\\
0\\
\frac{1}{2}\\
0
\end{array}\right)
\left(\begin{array}{cccc}
1& 0 & 0 & 0\\
0& 1 & 0 & 0\\
0& 0 & 1 & 0\\
0& 0 & 0 & -1
\end{array}\right),
\left(\begin{array}{cccc}
0\\
0\\
0\\
\frac{1}{2}
\end{array}\right)
\left(\begin{array}{cccc}
1& 0 & 0 & 0\\
0& 1 & 0 & 0\\
0& 0 & 1 & 0\\
0& 0 & 0 & 1
\end{array}\right)
\Bigr >.
\end{align*}

\normalsize
\noindent
Let $\varphi(z_1,z_2,z_3,z_4)=(z_1,z_2,iz_3,z_4)$, we get these commutative diagrams

\footnotesize
\[
\begin{CD}
(z_1,z_2,z_3,z_4) @>\varphi>>(z_1,z_2,iz_3,z_4)\\
@Vg_1VV @Vh_1h_3 VV\\
(-z_1,z_2,\bar z_3,z_4) @>\varphi>> (-z_1,z_2,-\bar {iz}_3,z_4)
\end{CD}
\hspace{.1cm}
\begin{CD}
(z_1,z_2,z_3,z_4) @>\varphi>>(z_1,z_2,iz_3,z_4)\\
@Vg_3VV @Vh_3 VV\\
(z_1,z_2,-z_3,\bar z_4) @>\varphi>> (z_1,z_2,-iz_3,\bar z_4)
\end{CD}
\]
\[ 
\begin{CD}
(z_1,z_2,z_3,z_4) @>\varphi>>(z_1,z_2,iz_3,z_4)\\
@Vg_2VV @Vh_2 VV\\
(z_1,-z_2,z_3,\bar z_4) @>\varphi>> (z_1,-z_2,iz_3,\bar z_4)
\end{CD}
\hspace{.1cm}
\begin{CD}
(z_1,z_2,z_3,z_4) @>\varphi>>(z_1,z_2,iz_3,z_4)\\
@Vg_4VV @Vh_4 VV\\
(z_1,z_2,z_3,-z_4) @>\varphi>> (z_1,z_2,iz_3,-z_4).
\end{CD}
\]

\normalsize
\noindent 
Therefore $\exists$ 
$\gamma 
\footnotesize
=
\Bigl( \left(\begin{array}{cccc}
0\\
0\\
\frac{1}{4}\\
0
\end{array}\right)
\left(\begin{array}{cccc}
1& 0 & 0 & 0\\
0& 1 & 0 & 0\\
0& 0 & 1 & 0\\
0& 0 & 0 & 1
\end{array}\right)
\Bigr)
\normalsize
\in \mathbb{A}(n)$ 
s.t. $\gamma \Gamma_{20}\gamma ^{-1}=\Gamma_{24}$.\\

\noindent 
$\bullet$ $M(A_{14})\approx  M(A_{30})$.\\
For
\footnotesize
\begin{align*}
&A_{14}=
\left(\begin{array}{cccc}
1& 0 & 0 & 1 \\
0& 1 & 1 & 0\\
0& 0 & 1 & 1\\
0& 0 & 0 & 1
\end{array}\right)
\hspace{.1cm}
&A_{30}=
\left(\begin{array}{cccc}
1& 0 & 1 & 1 \\
0& 1 & 1 & 0\\
0& 0 & 1 & 1\\
0& 0 & 0 & 1
\end{array}\right)
\\
&g_1(z_1,z_2,z_3,z_4)=(-z_1,z_2,z_3,\bar z_4).\hspace{.1cm}   &h_1h_2(z_1,z_2,z_3,z_4)=(-z_1,-z_2,z_3,\bar z_4)\\
&g_2(z_1,z_2,z_3,z_4)=(z_1,-z_2,\bar z_3,z_4).            &h_2(z_1,z_2,z_3,z_4)=(z_1,-z_2,\bar z_3,z_4)\\
&g_3(z_1,z_2,z_3,z_4)=(z_1,z_2,-z_3,\bar z_4).           &h_3(z_1,z_2,z_3,z_4)=(z_1,z_2,-z_3,\bar z_4)\\
&g_4(z_1,z_2,z_3,z_4)=(z_1,z_2,z_3,-z_4).                &h_4(z_1,z_2,z_3,z_4)=(z_1,z_2,z_3,-z_4)
\end{align*}
\normalsize then \footnotesize
\begin{align*}
\Gamma_{14}= 
\Bigl < & \left(\begin{array}{cccc}
\frac{1}{2}\\
0\\
0\\
0
\end{array}\right)
\left(\begin{array}{cccc}
1& 0 & 0 & 0\\
0& 1 & 0 & 0\\
0& 0 & 1 & 0\\
0& 0 & 0 & -1
\end{array}\right),
\left(\begin{array}{cccc}
0\\
\frac{1}{2}\\
0\\
0
\end{array}\right)
\left(\begin{array}{cccc}
1& 0 & 0 & 0\\
0& 1 & 0 & 0\\
0& 0 & -1 & 0\\
0& 0 & 0 & 1
\end{array}\right),
\\
&\left(\begin{array}{cccc}
0\\
0\\
\frac{1}{2}\\
0
\end{array}\right)
\left(\begin{array}{cccc}
1& 0 & 0 & 0\\
0& 1 & 0 & 0\\
0& 0 & 1 & 0\\
0& 0 & 0 & -1
\end{array}\right),
\left(\begin{array}{cccc}
0\\
0\\
0\\
\frac{1}{2}
\end{array}\right)
\left(\begin{array}{cccc}
1& 0 & 0 & 0\\
0& 1 & 0 & 0\\
0& 0 & 1 & 0\\
0& 0 & 0 & 1
\end{array}\right)
\Bigr >
\end{align*}

\begin{align*}
\Gamma_{30}=
\Bigl < &\left(\begin{array}{cccc}
\frac{1}{2}\\
\frac{1}{2}\\
0\\
0
\end{array}\right)
\left(\begin{array}{cccc}
1& 0 & 0 & 0\\
0& 1 & 0 & 0\\
0& 0 & 1 & 0\\
0& 0 & 0 & -1
\end{array}\right),
\left(\begin{array}{cccc}
0\\
\frac{1}{2}\\
0\\
0
\end{array}\right)
\left(\begin{array}{cccc}
1& 0 & 0 & 0\\
0& 1 & 0 & 0\\
0& 0 & -1 & 0\\
0& 0 & 0 & 1
\end{array}\right),
\\
&\left(\begin{array}{cccc}
0\\
0\\
\frac{1}{2}\\
0
\end{array}\right)
\left(\begin{array}{cccc}
1& 0 & 0 & 0\\
0& 1 & 0 & 0\\
0& 0 & 1 & 0\\
0& 0 & 0 & -1
\end{array}\right),
\left(\begin{array}{cccc}
0\\
0\\
0\\
\frac{1}{2}
\end{array}\right)
\left(\begin{array}{cccc}
1& 0 & 0 & 0\\
0& 1 & 0 & 0\\
0& 0 & 1 & 0\\
0& 0 & 0 & 1
\end{array}\right)
\Bigr >.
\end{align*}

\normalsize
\noindent
Let $\varphi(z_1,z_2,z_3,z_4)=(z_1,z_1z_2,z_3,z_4)$, we get these commutative diagrams

\footnotesize
\[
\begin{CD}
(z_1,z_2,z_3,z_4) @>\varphi>>(z_1,z_1z_2,z_3,z_4)\\
@Vg_1VV @Vh_1h_2 VV\\
(-z_1,z_2,z_3,\bar z_4) @>\varphi>> (-z_1,-z_1z_2,z_3,\bar z_4)
\end{CD}
\hspace{.1cm}
\begin{CD}
(z_1,z_2,z_3,z_4) @>\varphi>>(z_1,z_1z_2,z_3,z_4)\\
@Vg_3VV @Vh_3 VV\\
(z_1,z_2,-z_3,\bar z_4) @>\varphi>> (z_1,z_1z_2,-z_3,\bar z_4)
\end{CD}
\]
\[ 
\begin{CD}
(z_1,z_2,z_3,z_4) @>\varphi>>(z_1,z_1z_2,z_3,z_4)\\
@Vg_2VV @Vh_2 VV\\
(z_1,-z_2,\bar z_3,z_4) @>\varphi>> (z_1,-z_1z_2,\bar z_3,z_4)
\end{CD}
\hspace{.1cm}
\begin{CD}
(z_1,z_2,z_3,z_4) @>\varphi>>(z_1,z_1z_2,z_3,z_4)\\
@Vg_4VV @Vh_4 VV\\
(z_1,z_2,z_3,-z_4) @>\varphi>> (z_1,z_1z_2,z_3,-z_4).
\end{CD}
\]

\normalsize
\noindent 
Therefore $\exists$ 
$\gamma 
\footnotesize
=
\Bigl( \left(\begin{array}{cccc}
0\\
0\\
0\\
0
\end{array}\right)
\left(\begin{array}{cccc}
1& 0 & 0 & 0\\
1& 1 & 0 & 0\\
0& 0 & 1 & 0\\
0& 0 & 0 & 1
\end{array}\right)
\Bigr)
\normalsize
\in \mathbb{A}(n)$ 
s.t. $\gamma \Gamma_{14}\gamma ^{-1}=\Gamma_{30}$.\\

\noindent 
$\bullet$ $M(A_{30})\approx  M(A_{28})$.\\
For
\footnotesize
\begin{align*}
&A_{30}=
\left(\begin{array}{cccc}
1& 0 & 1 & 1 \\
0& 1 & 1 & 0\\
0& 0 & 1 & 1\\
0& 0 & 0 & 1
\end{array}\right)
\hspace{.1cm}
&A_{28}=
\left(\begin{array}{cccc}
1& 0 & 1 & 0 \\
0& 1 & 1 & 1\\
0& 0 & 1 & 1\\
0& 0 & 0 & 1
\end{array}\right)
\\
&g_1(z_1,z_2,z_3,z_4)=(-z_1,z_2,\bar z_3,\bar z_4).\hspace{.1cm}   &h_1(z_1,z_2,z_3,z_4)=(-z_1,z_2,\bar z_3,z_4)\\
&g_2(z_1,z_2,z_3,z_4)=(z_1,-z_2,\bar z_3,z_4).            &h_2(z_1,z_2,z_3,z_4)=(z_1,-z_2,\bar z_3,\bar z_4)\\
&g_3(z_1,z_2,z_3,z_4)=(z_1,z_2,-z_3,\bar z_4).           &h_3(z_1,z_2,z_3,z_4)=(z_1,z_2,-z_3,\bar z_4)\\
&g_4(z_1,z_2,z_3,z_4)=(z_1,z_2,z_3,-z_4).                &h_4(z_1,z_2,z_3,z_4)=(z_1,z_2,z_3,-z_4)
\end{align*}
\normalsize then \footnotesize
\begin{align*}
\Gamma_{30}=
\Bigl < &\left(\begin{array}{cccc}
\frac{1}{2}\\
0\\
0\\
0
\end{array}\right)
\left(\begin{array}{cccc}
1& 0 & 0 & 0\\
0& 1 & 0 & 0\\
0& 0 & -1 & 0\\
0& 0 & 0 & -1
\end{array}\right),
\left(\begin{array}{cccc}
0\\
\frac{1}{2}\\
0\\
0
\end{array}\right)
\left(\begin{array}{cccc}
1& 0 & 0 & 0\\
0& 1 & 0 & 0\\
0& 0 & -1 & 0\\
0& 0 & 0 & 1
\end{array}\right),
\\
&\left(\begin{array}{cccc}
0\\
0\\
\frac{1}{2}\\
0
\end{array}\right)
\left(\begin{array}{cccc}
1& 0 & 0 & 0\\
0& 1 & 0 & 0\\
0& 0 & 1 & 0\\
0& 0 & 0 & -1
\end{array}\right),
\left(\begin{array}{cccc}
0\\
0\\
0\\
\frac{1}{2}
\end{array}\right)
\left(\begin{array}{cccc}
1& 0 & 0 & 0\\
0& 1 & 0 & 0\\
0& 0 & 1 & 0\\
0& 0 & 0 & 1
\end{array}\right)
\Bigr >
\end{align*}

\begin{align*}
\Gamma_{28}= 
\Bigl < & \left(\begin{array}{cccc}
\frac{1}{2}\\
0\\
0\\
0
\end{array}\right)
\left(\begin{array}{cccc}
1& 0 & 0 & 0\\
0& 1 & 0 & 0\\
0& 0 & -1 & 0\\
0& 0 & 0 & 1
\end{array}\right),
\left(\begin{array}{cccc}
0\\
\frac{1}{2}\\
0\\
0
\end{array}\right)
\left(\begin{array}{cccc}
1& 0 & 0 & 0\\
0& 1 & 0 & 0\\
0& 0 & -1 & 0\\
0& 0 & 0 & -1
\end{array}\right),
\\
&\left(\begin{array}{cccc}
0\\
0\\
\frac{1}{2}\\
0
\end{array}\right)
\left(\begin{array}{cccc}
1& 0 & 0 & 0\\
0& 1 & 0 & 0\\
0& 0 & 1 & 0\\
0& 0 & 0 & -1
\end{array}\right),
\left(\begin{array}{cccc}
0\\
0\\
0\\
\frac{1}{2}
\end{array}\right)
\left(\begin{array}{cccc}
1& 0 & 0 & 0\\
0& 1 & 0 & 0\\
0& 0 & 1 & 0\\
0& 0 & 0 & 1
\end{array}\right)
\Bigr >.
\end{align*}

\normalsize
\noindent
Let $\varphi(z_1,z_2,z_3,z_4)=(z_2,z_1,z_3,z_4)$, we get these commutative diagrams

\footnotesize
\[
\begin{CD}
(z_1,z_2,z_3,z_4) @>\varphi>>(z_2,z_1,z_3,z_4)\\
@Vg_1VV @Vh_2 VV\\
(-z_1,z_2,\bar z_3,\bar z_4) @>\varphi>> (z_2,-z_1,\bar z_3,\bar z_4)
\end{CD}
\hspace{.1cm}
\begin{CD}
(z_1,z_2,z_3,z_4) @>\varphi>>(z_2,z_1,z_3,z_4)\\
@Vg_3VV @Vh_3 VV\\
(z_1,z_2,-z_3,\bar z_4) @>\varphi>> (z_2,z_1,-z_3,\bar z_4)
\end{CD}
\]
\[ 
\begin{CD}
(z_1,z_2,z_3,z_4) @>\varphi>>(z_2,z_1,z_3,z_4)\\
@Vg_2VV @Vh_1 VV\\
(z_1,-z_2,\bar z_3,z_4) @>\varphi>> (-z_2,z_1,\bar z_3,z_4)
\end{CD}
\hspace{.1cm}
\begin{CD}
(z_1,z_2,z_3,z_4) @>\varphi>>(z_2,z_1,z_3,z_4)\\
@Vg_4VV @Vh_4 VV\\
(z_1,z_2,z_3,-z_4) @>\varphi>> (z_2,z_1,z_3,-z_4).
\end{CD}
\]

\normalsize
\noindent 
Therefore $\exists$ 
$\gamma 
\footnotesize
=
\Bigl( \left(\begin{array}{cccc}
0\\
0\\
0\\
0
\end{array}\right)
\left(\begin{array}{cccc}
0& 1 & 0 & 0\\
1& 0 & 0 & 0\\
0& 0 & 1 & 0\\
0& 0 & 0 & 1
\end{array}\right)
\Bigr)
\normalsize
\in \mathbb{A}(n)$ 
s.t. $\gamma \Gamma_{30}\gamma ^{-1}=\Gamma_{28}$.\\

\noindent {\bf (13)}. Similar to (4)\\

\noindent {\bf (14)}. Similar to (11)\\

\noindent {\bf (15)}. 
\noindent
$M(A_{14})$ is not diffeomorphic to $M(A_{a3})$ .\\

Let \footnotesize
\begin{align*}
\Gamma_{a3}= <&\tilde {g}_1,\tilde {g}_2,t_3,t_4>\\
=\Bigl < &\left(\begin{array}{cccc}
\frac{1}{2}\\
0\\
0\\
0
\end{array}\right)
\left(\begin{array}{cccc}
1& 0 & 0 & 0\\
0& -1 & 0 & 0\\
0& 0 & 1 & 0\\
0& 0 & 0 & 1
\end{array}\right),
\left(\begin{array}{cccc}
0\\
\frac{1}{2}\\
0\\
0
\end{array}\right)
\left(\begin{array}{cccc}
1& 0 & 0 & 0\\
0& 1 & 0 & 0\\
0& 0 & 1 & 0\\
0& 0 & 0 & -1
\end{array}\right),
\\
&\left(\begin{array}{cccc}
0\\
0\\
\frac{1}{2}\\
0
\end{array}\right)
\left(\begin{array}{cccc}
1& 0 & 0 & 0\\
0& 1 & 0 & 0\\
0& 0 & 1 & 0\\
0& 0 & 0 & 1
\end{array}\right),
\left(\begin{array}{cccc}
0\\
0\\
0\\
\frac{1}{2}
\end{array}\right)
\left(\begin{array}{cccc}
1& 0 & 0 & 0\\
0& 1 & 0 & 0\\
0& 0 & 1 & 0\\
0& 0 & 0 & 1
\end{array}\right)
\Bigr >
\\
\Gamma_{14}= <&\tilde {h}_1,\tilde {h}_2,\tilde {h}_3,t_4>\\
=\Bigl < & \left(\begin{array}{cccc}
\frac{1}{2}\\
0\\
0\\
0
\end{array}\right)
\left(\begin{array}{cccc}
1& 0 & 0 & 0\\
0& 1 & 0 & 0\\
0& 0 & 1 & 0\\
0& 0 & 0 & -1
\end{array}\right),
\left(\begin{array}{cccc}
0\\
\frac{1}{2}\\
0\\
0
\end{array}\right)
\left(\begin{array}{cccc}
1& 0 & 0 & 0\\
0& 1 & 0 & 0\\
0& 0 & -1 & 0\\
0& 0 & 0 & 1
\end{array}\right),
\\
&\left(\begin{array}{cccc}
0\\
0\\
\frac{1}{2}\\
0
\end{array}\right)
\left(\begin{array}{cccc}
1& 0 & 0 & 0\\
0& 1 & 0 & 0\\
0& 0 & 1 & 0\\
0& 0 & 0 & -1
\end{array}\right),
\left(\begin{array}{cccc}
0\\
0\\
0\\
\frac{1}{2}
\end{array}\right)
\left(\begin{array}{cccc}
1& 0 & 0 & 0\\
0& 1 & 0 & 0\\
0& 0 & 1 & 0\\
0& 0 & 0 & 1
\end{array}\right)
\Bigr >. 
\end{align*}
\normalsize

Since $\tilde {g}_1\tilde {g}_2\tilde {g}_1^{-1}=\tilde {g}_2^{-1}$,
 $\tilde {g}_1t_4\tilde {g}_1^{-1}=t_4$,
$t_3\tilde {g}_2t_3^{-1}=\tilde {g}_2$, and $t_3t_4t_3^{-1}=t_4$,
then $\Gamma _{a3}=\langle \tilde {g}_2,t_4 \rangle \rtimes \langle \tilde {g}_1,t_3 \rangle$.
Then the center $\mathcal C(\Gamma_{a3})=\langle t_1, t_3\rangle$
where $t_1=\tilde g_1^2=\footnotesize \left(\begin{array}{c}
1\\
0\\
0\\
0
\end{array}\right), t_3=\tilde g_3=\left(\begin{array}{c}
0\\
0\\
\frac{1}{2}\\
0
\end{array}\right)$. \normalsize
There is an central group extension:
\begin{align*}
1\rightarrow \langle t_1, t_3 \rangle \rightarrow  \Gamma_{a3}\rightarrow  \Delta_{a3}\rightarrow  1,
\end{align*}
where 
\begin{align*}
\Delta_{a3}&= \Bigl<\gamma_1=\left(\begin{array}{c}
\frac{1}{2}\\
0
\end{array}\right)
\left(\begin{array}{cc}
1& 0 \\
0& -1
\end{array}\right), \gamma _2=\left(\begin{array}{c}
0\\
\frac{1}{2}
\end{array}\right)
\left(\begin{array}{cc}
1& 0 \\
0& 1
\end{array}\right)\Bigr >\rtimes
\langle \beta \rangle 
\end{align*}
with $\beta =\Bigl(\left(\begin{array}{c}
0\\
0
\end{array}\right)
\left(\begin{array}{cc}
-1& 0\\
0& 1
\end{array}\right)\Bigr)$.
\\

For $\Gamma_{14}= \langle \tilde {h}_1,\tilde {h}_2, \tilde{h}_3,t_4\rangle$, 
since 
 $\tilde {h}_1\tilde {h}_3\tilde {h}_1^{-1}=\tilde {h}_3$,
$\tilde {h}_1t_4\tilde {h}_1^{-1}=t_4^{-1}$,
 $\tilde {h}_2\tilde {h}_3\tilde {h}_2^{-1}=\tilde {h}_3^{-1}$, and 
$\tilde {h}_2t_4\tilde {h}_2^{-1}=t_4$
then $\Gamma _{14}=\langle \tilde {h}_3,t_4 \rangle \rtimes \langle \tilde {h}_1,\tilde {h}_2 \rangle$.
The center $\mathcal
C(\Gamma_{14})=\langle t_1, t_2\rangle$ where $t_1=\tilde h_1^2$,
$t_2=\tilde h_2^2= \footnotesize
\left(\begin{array}{c}
0\\
1\\
0\\
0\end{array}\right)$. \normalsize
It induces an extension
\begin{align*}
1\rightarrow  \langle t_1,t_2\rangle \rightarrow  \Gamma_{14}\rightarrow  \Delta_{14}\rightarrow  1,
\end{align*}
where 
\begin{align*}
\Delta_{14}=\Bigl <s_1=\left(\begin{array}{c}
\frac 12\\
0
\end{array}\right)
\left(\begin{array}{cc}
1& 0  \\
0& -1
\end{array}\right),s_2=\left(\begin{array}{c}
0\\
\frac{1}{2}
\end{array}\right)
\left(\begin{array}{ccc}
1& 0  \\
0& 1
\end{array}\right)\Bigr >\rtimes
\langle \alpha ,\beta  \rangle
\end{align*}
with
$\alpha =\left(\begin{array}{c}
0\\
0
\end{array}\right)
\left(\begin{array}{cc}
1& 0\\
0& -1\end{array}\right), \beta =\left(\begin{array}{c}
0\\
0
\end{array}\right) \left(\begin{array}{cc}
-1& 0\\
0& 1
\end{array}\right)$.

Suppose that $\Gamma_{a3}$ is isomorphic to $\Gamma_{14}$
 with isomorphism $\varphi:\Gamma_{a3}\rightarrow  \Gamma_{14}$.
Then it induces an isomorphism $\hat\varphi:\Delta_{a3}\rightarrow 
\Delta_{14}$
\begin{align*}
\begin{CD}
1 @>>>  \langle t_1,t_3 \rangle @>>> \Gamma_{a3}@>>>  \Delta_{a3}@>>>  1\\
  @.                @.                @V \varphi VV           @V \hat \varphi VV\\
1 @>>>  \langle t_1,t_2 \rangle @>>> \Gamma_{14}@>>>  \Delta_{14}@>>>  1.
\end{CD}
\end{align*}
\\
Consider $\hat \varphi (\gamma_1^{a_1} \gamma_2^{a_2} \beta )=\alpha $ and $\hat \varphi (\gamma_1^{b_1} \gamma_2^{b_2} \beta )=\beta  $ 
where $a_1, a_2, b_1, b_2 \in \mathbb{Z}$. 
Then 
\begin{align*}
\alpha\beta=&\hat \varphi (\gamma_1^{a_1} \gamma_2^{a_2} \beta  \gamma_1^{b_1} \gamma_2^{b_2} \beta)  \\
      =&\hat \varphi (\gamma_1^{a_1} \gamma_2^{a_2} \gamma_1^{-b_1} \beta  \gamma_2^{b_2} \beta)  \hspace{.5cm} (\beta \gamma_1 \beta^{-1}=\gamma_1^{-1})\\
      =&\hat \varphi (\gamma_1^{a_1} \gamma_2^{a_2} \gamma_1^{-b_1} \gamma_2^{b_2}) \hspace{.9cm} (\beta \gamma_2 \beta^{-1}=\gamma_2).
\end{align*}
Since $\alpha \beta $ is a torsion element and $\hat \varphi $ is an isomorphism, then 
$\gamma_1^{a_1} \gamma_2^{a_2} \gamma_1^{-b_1} \gamma_2^{b_2}$ is also a torsion element.
Let
\begin{align*}
g=&\gamma_1^{a_1} \gamma_2^{a_2} \gamma_1^{-b_1} \gamma_2^{b_2} \\
 =&
\Bigl ( \left(\begin{array}{c}
\frac{1}{2}a_1\\
(-1)^{a_1}\frac{1}{2}a_2
\end{array}\right)
\left(\begin{array}{cc}
1& 0\\
0& (-1)^{a_1}\end{array}\right)\Bigr)
\Bigl ( \left(\begin{array}{c}
-\frac{1}{2}b_1\\
(-1)^{-b_1}\frac{1}{2}b_2
\end{array}\right)
\left(\begin{array}{cc}
1& 0\\
0& (-1)^{-b_1}\end{array}\right)\Bigr)\\
=&
\Bigl ( \left(\begin{array}{c}
\frac{1}{2}(a_1 - b_1)\\
\frac{1}{2}((-1)^{a_1}a_2+(-1)^{(a_1-b_1)}b_2)
\end{array}\right)
\left(\begin{array}{cc}
1& 0\\
0& (-1)^{a_1-b_1}\end{array}\right)\Bigr).
\end{align*}
We have
\begin{align*}
g^2=&\Bigl ( \left(\begin{array}{c}
a_1 - b_1\\
(1+(-1)^{(a_1-b_1)})\frac{1}{2}((-1)^{a_1}a_2+(-1)^{(a_1-b_1)}b_2
\end{array}\right)
\left(\begin{array}{cc}
1& 0\\
0& 1 \end{array}\right)\Bigr)\\
=&
\Bigl ( \left(\begin{array}{c}
0\\
0
\end{array}\right)
\left(\begin{array}{cc}
1& 0\\
0& 1 \end{array}\right)\Bigr).
\end{align*}
\\
Therefore we get $a_1=b_1$ and $(-1)^{a_1}a_2+b_2=0$.
Hence $\hat \varphi (g)=\hat \varphi 
\Bigl ( \left(\begin{array}{c}
0\\
0
\end{array}\right)
\left(\begin{array}{cc}
1& 0\\
0& 1 \end{array}\right)\Bigr)=
\Bigl ( \left(\begin{array}{c}
0\\
0
\end{array}\right)
\left(\begin{array}{cc}
-1& 0\\
0& -1 \end{array}\right)\Bigr)$.
This yields a contradiction.\\

\noindent {\bf (16)}.\\
\noindent 
$\bullet$ $M(A_{a13})\approx M(A_{a29})$.\\
For
\footnotesize
\begin{align*}
&A_{a13}=
\left(\begin{array}{cccc}
1& 1 & 0 & 1 \\
0& 1 & 1 & 0\\
0& 0 & 1 & 0\\
0& 0 & 0 & 1
\end{array}\right)
\hspace{.1cm}
&A_{a29}=
\left(\begin{array}{cccc}
1& 1 & 1 & 1 \\
0& 1 & 1 & 0\\
0& 0 & 1 & 0\\
0& 0 & 0 & 1
\end{array}\right)
\\
&g_1(z_1,z_2,z_3,z_4)=(-z_1,\bar z_2,z_3,\bar z_4).\hspace{.1cm}   &h_1h_2(z_1,z_2,z_3,z_4)=(-z_1,-\bar z_2,z_3,\bar z_4)\\
&g_2(z_1,z_2,z_3,z_4)=(z_1,-z_2,\bar z_3,z_4).                     &h_2(z_1,z_2,z_3,z_4)=(z_1,-z_2,\bar z_3,z_4)\\
&g_3(z_1,z_2,z_3,z_4)=(z_1,z_2,-z_3,z_4).                          &h_3(z_1,z_2,z_3,z_4)=(z_1, z_2,-z_3,z_4)\\
&g_4(z_1,z_2,z_3,z_4)=(z_1,z_2,z_3,-z_4).                          &h_4(z_1,z_2,z_3,z_4)=(z_1, z_2,z_3,-z_4)
\end{align*}
\normalsize then \footnotesize
\begin{align*}
\Gamma_{a13}=
\Bigl < &\left(\begin{array}{cccc}
\frac{1}{2}\\
0\\
0\\
0
\end{array}\right)
\left(\begin{array}{cccc}
1& 0 & 0 & 0\\
0& -1 & 0 & 0\\
0& 0 & 1 & 0\\
0& 0 & 0 & -1
\end{array}\right),
\left(\begin{array}{cccc}
0\\
\frac{1}{2}\\
0\\
0
\end{array}\right)
\left(\begin{array}{cccc}
1& 0 & 0 & 0\\
0& 1 & 0 & 0\\
0& 0 & -1 & 0\\
0& 0 & 0 & 1
\end{array}\right),
\\
&\left(\begin{array}{cccc}
0\\
0\\
\frac{1}{2}\\
0
\end{array}\right)
\left(\begin{array}{cccc}
1& 0 & 0 & 0\\
0& 1 & 0 & 0\\
0& 0 & 1 & 0\\
0& 0 & 0 & 1
\end{array}\right),
\left(\begin{array}{cccc}
0\\
0\\
0\\
\frac{1}{2}
\end{array}\right)
\left(\begin{array}{cccc}
1& 0 & 0 & 0\\
0& 1 & 0 & 0\\
0& 0 & 1 & 0\\
0& 0 & 0 & 1
\end{array}\right)
\Bigr >
\end{align*}

\begin{align*}
\Gamma_{a29}= 
\Bigl < & \left(\begin{array}{cccc}
\frac{1}{2}\\
\frac{1}{2}\\
0\\
0
\end{array}\right)
\left(\begin{array}{cccc}
1& 0 & 0 & 0\\
0& -1 & 0 & 0\\
0& 0 & 1 & 0\\
0& 0 & 0 & -1
\end{array}\right),
\left(\begin{array}{cccc}
0\\
\frac{1}{2}\\
0\\
0
\end{array}\right)
\left(\begin{array}{cccc}
1& 0 & 0 & 0\\
0& 1 & 0 & 0\\
0& 0 & -1 & 0\\
0& 0 & 0 & 1
\end{array}\right),
\\
&\left(\begin{array}{cccc}
0\\
0\\
\frac{1}{2}\\
0
\end{array}\right)
\left(\begin{array}{cccc}
1& 0 & 0 & 0\\
0& 1 & 0 & 0\\
0& 0 & 1 & 0\\
0& 0 & 0 & 1
\end{array}\right),
\left(\begin{array}{cccc}
0\\
0\\
0\\
\frac{1}{2}
\end{array}\right)
\left(\begin{array}{cccc}
1& 0 & 0 & 0\\
0& 1 & 0 & 0\\
0& 0 & 1 & 0\\
0& 0 & 0 & 1
\end{array}\right)
\Bigr >.
\end{align*}

\normalsize
\noindent
Let $\varphi(z_1,z_2,z_3,z_4)=(z_1,iz_2,z_3,z_4)$, we get these commutative diagrams

\footnotesize
\[
\begin{CD}
(z_1,z_2,z_3,z_4) @>\varphi>>(z_1,iz_2,z_3,z_4)\\
@Vg_1VV @Vh_1h_2 VV\\
(-z_1,\bar z_2,z_3,\bar z_4) @>\varphi>> (-z_1,-\bar {iz}_2,z_3,\bar z_4)
\end{CD}
\hspace{.1cm}
\begin{CD}
(z_1,z_2,z_3,z_4) @>\varphi>>(z_1,iz_2,z_3,z_4)\\
@Vg_3VV @Vh_3 VV\\
(z_1,z_2,-z_3,z_4) @>\varphi>> (z_1,iz_2,-z_3,z_4)
\end{CD}
\]
\[ 
\begin{CD}
(z_1,z_2,z_3,z_4) @>\varphi>>(z_1,iz_2,z_3,z_4)\\
@Vg_2VV @Vh_2 VV\\
(z_1,-z_2,\bar z_3,z_4) @>\varphi>> (z_1,-iz_2,\bar z_3,z_4)
\end{CD}
\hspace{.1cm}
\begin{CD}
(z_1,z_2,z_3,z_4) @>\varphi>>(z_1,iz_2,z_3,z_4)\\
@Vg_4VV @Vh_4 VV\\
(z_1,z_2,z_3,-z_4) @>\varphi>> (z_1,iz_2,z_3,-z_4).
\end{CD}
\]

\normalsize
\noindent 
Therefore $\exists$ 
$\gamma 
\footnotesize
=
\Bigl( \left(\begin{array}{cccc}
0\\
\frac{1}{4}\\
0\\
0
\end{array}\right)
\left(\begin{array}{cccc}
1& 0 & 0 & 0\\
0& 1 & 0 & 0\\
0& 0 & 1 & 0\\
0& 0 & 0 & 1
\end{array}\right)
\Bigr)
\normalsize
\in \mathbb{A}(n)$ 
s.t. $\gamma \Gamma_{a13}\gamma ^{-1}=\Gamma_{a29}.$\\

\noindent 
$\bullet$ $M(A_{a13})\approx  M(A_{a18})$.\\
For 
\footnotesize
\begin{align*}
&A_{a13}=
\left(\begin{array}{cccc}
1& 1 & 0 & 1 \\
0& 1 & 1 & 0\\
0& 0 & 1 & 0\\
0& 0 & 0 & 1
\end{array}\right)
\hspace{.1cm}
&A_{a18}=
\left(\begin{array}{cccc}
1& 1 & 1 & 0 \\
0& 1 & 0 & 0\\
0& 0 & 1 & 1\\
0& 0 & 0 & 1
\end{array}\right)
\\
&g_1(z_1,z_2,z_3,z_4)=(-z_1,\bar z_2,z_3,\bar z_4).\hspace{.1cm}   &h_1(z_1,z_2,z_3,z_4)=(-z_1,\bar z_2,\bar z_3,z_4)\\
&g_2(z_1,z_2,z_3,z_4)=(z_1,-z_2,\bar z_3,z_4).                     &h_2(z_1,z_2,z_3,z_4)=(z_1,-z_2,z_3,z_4)\\
&g_3(z_1,z_2,z_3,z_4)=(z_1,z_2,-z_3,z_4).                          &h_3(z_1,z_2,z_3,z_4)=(z_1, z_2,-z_3,\bar z_4)\\
&g_4(z_1,z_2,z_3,z_4)=(z_1,z_2,z_3,-z_4).                          &h_4(z_1,z_2,z_3,z_4)=(z_1, z_2,z_3,-z_4)
\end{align*}
\normalsize then \footnotesize
\begin{align*}
\Gamma_{a13}=
\Bigl < &\left(\begin{array}{cccc}
\frac{1}{2}\\
0\\
0\\
0
\end{array}\right)
\left(\begin{array}{cccc}
1& 0 & 0 & 0\\
0& -1 & 0 & 0\\
0& 0 & 1 & 0\\
0& 0 & 0 & -1
\end{array}\right),
\left(\begin{array}{cccc}
0\\
\frac{1}{2}\\
0\\
0
\end{array}\right)
\left(\begin{array}{cccc}
1& 0 & 0 & 0\\
0& 1 & 0 & 0\\
0& 0 & -1 & 0\\
0& 0 & 0 & 1
\end{array}\right),
\\
&\left(\begin{array}{cccc}
0\\
0\\
\frac{1}{2}\\
0
\end{array}\right)
\left(\begin{array}{cccc}
1& 0 & 0 & 0\\
0& 1 & 0 & 0\\
0& 0 & 1 & 0\\
0& 0 & 0 & 1
\end{array}\right),
\left(\begin{array}{cccc}
0\\
0\\
0\\
\frac{1}{2}
\end{array}\right)
\left(\begin{array}{cccc}
1& 0 & 0 & 0\\
0& 1 & 0 & 0\\
0& 0 & 1 & 0\\
0& 0 & 0 & 1
\end{array}\right)
\Bigr >
\end{align*}

\begin{align*}
\Gamma_{a18}= 
\Bigl < & \left(\begin{array}{cccc}
\frac{1}{2}\\
0\\
0\\
0
\end{array}\right)
\left(\begin{array}{cccc}
1& 0 & 0 & 0\\
0& -1 & 0 & 0\\
0& 0 & -1 & 0\\
0& 0 & 0 & 1
\end{array}\right),
\left(\begin{array}{cccc}
0\\
\frac{1}{2}\\
0\\
0
\end{array}\right)
\left(\begin{array}{cccc}
1& 0 & 0 & 0\\
0& 1 & 0 & 0\\
0& 0 & 1 & 0\\
0& 0 & 0 & 1
\end{array}\right),
\\
&\left(\begin{array}{cccc}
0\\
0\\
\frac{1}{2}\\
0
\end{array}\right)
\left(\begin{array}{cccc}
1& 0 & 0 & 0\\
0& 1 & 0 & 0\\
0& 0 & 1 & 0\\
0& 0 & 0 & -1
\end{array}\right),
\left(\begin{array}{cccc}
0\\
0\\
0\\
\frac{1}{2}
\end{array}\right)
\left(\begin{array}{cccc}
1& 0 & 0 & 0\\
0& 1 & 0 & 0\\
0& 0 & 1 & 0\\
0& 0 & 0 & 1
\end{array}\right)
\Bigr >.
\end{align*}

\normalsize
\noindent
Let $\varphi(z_1,z_2,z_3,z_4)=(z_1,z_4,z_2,z_3)$, we get these commutative diagrams

\footnotesize
\[
\begin{CD}
(z_1,z_2,z_3,z_4) @>\varphi>>(z_1,z_4,z_2,z_3)\\
@Vg_1VV @Vh_1 VV\\
(-z_1,\bar z_2,z_3,\bar z_4) @>\varphi>> (-z_1,\bar z_4,\bar z_2,z_3)
\end{CD}
\hspace{.1cm}
\begin{CD}
(z_1,z_2,z_3,z_4) @>\varphi>>(z_1,z_4,z_2,z_3)\\
@Vg_3VV @Vh_4 VV\\
(z_1,z_2,-z_3,z_4) @>\varphi>> (z_1,z_4,z_2,-z_3)
\end{CD}
\]
\[ 
\begin{CD}
(z_1,z_2,z_3,z_4) @>\varphi>>(z_1,z_4,z_2,z_3)\\
@Vg_2VV @Vh_3 VV\\
(z_1,-z_2,\bar z_3,z_4) @>\varphi>> (z_1,z_4,-z_2,\bar z_3)
\end{CD}
\hspace{.1cm}
\begin{CD}
(z_1,z_2,z_3,z_4) @>\varphi>>(z_1,z_4,z_2,z_3)\\
@Vg_4VV @Vh_2 VV\\
(z_1,z_2,z_3,-z_4) @>\varphi>> (z_1,-z_4,z_2,z_3).
\end{CD}
\]

\normalsize
\noindent 
Therefore $\exists$ 
$\gamma 
\footnotesize
=
\Bigl( \left(\begin{array}{cccc}
0\\
0\\
0\\
0
\end{array}\right)
\left(\begin{array}{cccc}
1& 0 & 0 & 0\\
0& 0 & 0 & 1\\
0& 1 & 0 & 0\\
0& 0 & 1 & 0
\end{array}\right)
\Bigr)
\normalsize
\in \mathbb{A}(n)$ 
s.t. $\gamma \Gamma_{a13}\gamma ^{-1}=\Gamma_{a18}$.\\

\noindent 
$\bullet$ $M(A_{a13})\approx  M(A_{a19})$.\\
For
\footnotesize
\begin{align*}
&A_{a13}=
\left(\begin{array}{cccc}
1& 1 & 0 & 1 \\
0& 1 & 1 & 0\\
0& 0 & 1 & 0\\
0& 0 & 0 & 1
\end{array}\right)
\hspace{.1cm}
&A_{a19}=
\left(\begin{array}{cccc}
1& 1 & 1 & 0 \\
0& 1 & 0 & 1\\
0& 0 & 1 & 0\\
0& 0 & 0 & 1
\end{array}\right)
\\
&g_1(z_1,z_2,z_3,z_4)=(-z_1,\bar z_2,z_3,\bar z_4).\hspace{.1cm}   &h_1(z_1,z_2,z_3,z_4)=(-z_1,\bar z_2,\bar z_3,z_4)\\
&g_2(z_1,z_2,z_3,z_4)=(z_1,-z_2,\bar z_3,z_4).                     &h_2(z_1,z_2,z_3,z_4)=(z_1,-z_2,z_3,\bar z_4)\\
&g_3(z_1,z_2,z_3,z_4)=(z_1,z_2,-z_3,z_4).                          &h_3(z_1,z_2,z_3,z_4)=(z_1, z_2,-z_3,z_4)\\
&g_4(z_1,z_2,z_3,z_4)=(z_1,z_2,z_3,-z_4).                          &h_4(z_1,z_2,z_3,z_4)=(z_1, z_2,z_3,-z_4)
\end{align*}
\normalsize then \footnotesize
\begin{align*}
\Gamma_{a13}=
\Bigl < &\left(\begin{array}{cccc}
\frac{1}{2}\\
0\\
0\\
0
\end{array}\right)
\left(\begin{array}{cccc}
1& 0 & 0 & 0\\
0& -1 & 0 & 0\\
0& 0 & 1 & 0\\
0& 0 & 0 & -1
\end{array}\right),
\left(\begin{array}{cccc}
0\\
\frac{1}{2}\\
0\\
0
\end{array}\right)
\left(\begin{array}{cccc}
1& 0 & 0 & 0\\
0& 1 & 0 & 0\\
0& 0 & -1 & 0\\
0& 0 & 0 & 1
\end{array}\right),
\\
&\left(\begin{array}{cccc}
0\\
0\\
\frac{1}{2}\\
0
\end{array}\right)
\left(\begin{array}{cccc}
1& 0 & 0 & 0\\
0& 1 & 0 & 0\\
0& 0 & 1 & 0\\
0& 0 & 0 & 1
\end{array}\right),
\left(\begin{array}{cccc}
0\\
0\\
0\\
\frac{1}{2}
\end{array}\right)
\left(\begin{array}{cccc}
1& 0 & 0 & 0\\
0& 1 & 0 & 0\\
0& 0 & 1 & 0\\
0& 0 & 0 & 1
\end{array}\right)
\Bigr >
\end{align*}

\begin{align*}
\Gamma_{a19}= 
\Bigl < & \left(\begin{array}{cccc}
\frac{1}{2}\\
0\\
0\\
0
\end{array}\right)
\left(\begin{array}{cccc}
1& 0 & 0 & 0\\
0& -1 & 0 & 0\\
0& 0 & -1 & 0\\
0& 0 & 0 & 1
\end{array}\right),
\left(\begin{array}{cccc}
0\\
\frac{1}{2}\\
0\\
0
\end{array}\right)
\left(\begin{array}{cccc}
1& 0 & 0 & 0\\
0& 1 & 0 & 0\\
0& 0 & 1 & 0\\
0& 0 & 0 & -1
\end{array}\right),
\\
&\left(\begin{array}{cccc}
0\\
0\\
\frac{1}{2}\\
0
\end{array}\right)
\left(\begin{array}{cccc}
1& 0 & 0 & 0\\
0& 1 & 0 & 0\\
0& 0 & 1 & 0\\
0& 0 & 0 & 1
\end{array}\right),
\left(\begin{array}{cccc}
0\\
0\\
0\\
\frac{1}{2}
\end{array}\right)
\left(\begin{array}{cccc}
1& 0 & 0 & 0\\
0& 1 & 0 & 0\\
0& 0 & 1 & 0\\
0& 0 & 0 & 1
\end{array}\right)
\Bigr >.
\end{align*}

\normalsize
\noindent
Let $\varphi(z_1,z_2,z_3,z_4)=(z_1,z_2,z_4,z_3)$, we get these commutative diagrams

\footnotesize
\[
\begin{CD}
(z_1,z_2,z_3,z_4) @>\varphi>>(z_1,z_2,z_4,z_3)\\
@Vg_1VV @Vh_1 VV\\
(-z_1,\bar z_2,z_3,\bar z_4) @>\varphi>> (-z_1,\bar z_2,\bar z_4,z_3)
\end{CD}
\hspace{.1cm}
\begin{CD}
(z_1,z_2,z_3,z_4) @>\varphi>>(z_1,z_2,z_4,z_3)\\
@Vg_3VV @Vh_4 VV\\
(z_1,z_2,-z_3,z_4) @>\varphi>> (z_1,z_2,z_4,-z_3)
\end{CD}
\]
\[ 
\begin{CD}
(z_1,z_2,z_3,z_4) @>\varphi>>(z_1,z_2,z_4,z_3)\\
@Vg_2VV @Vh_2 VV\\
(z_1,-z_2,\bar z_3,z_4) @>\varphi>> (z_1,-z_2,z_4,\bar z_3)
\end{CD}
\hspace{.1cm}
\begin{CD}
(z_1,z_2,z_3,z_4) @>\varphi>>(z_1,z_2,z_4,z_3)\\
@Vg_4VV @Vh_3 VV\\
(z_1,z_2,z_3,-z_4) @>\varphi>> (z_1,z_2,-z_4,z_3).
\end{CD}
\]

\normalsize
\noindent 
Therefore $\exists$ 
$\gamma 
\footnotesize
=
\Bigl( \left(\begin{array}{cccc}
0\\
0\\
0\\
0
\end{array}\right)
\left(\begin{array}{cccc}
1& 0 & 0 & 0\\
0& 1 & 0 & 0\\
0& 0 & 0 & 1\\
0& 0 & 1 & 0
\end{array}\right)
\Bigr)
\normalsize
\in \mathbb{A}(n)$ 
s.t. $\gamma \Gamma_{a13}\gamma ^{-1}=\Gamma_{a19}$.\\

\noindent 
$\bullet$ $M(A_{a18})\approx  M(A_{a20})$.\\
For
\footnotesize
\begin{align*}
&A_{a18}=
\left(\begin{array}{cccc}
1& 1 & 1 & 0 \\
0& 1 & 0 & 0\\
0& 0 & 1 & 1\\
0& 0 & 0 & 1
\end{array}\right)
\hspace{.1cm}
&A_{a20}=
\left(\begin{array}{cccc}
1& 1 & 1 & 0 \\
0& 1 & 0 & 1\\
0& 0 & 1 & 1\\
0& 0 & 0 & 1
\end{array}\right)
\\
&g_1(z_1,z_2,z_3,z_4)=(-z_1,\bar z_2,\bar z_3,z_4)\hspace{.1cm}   &h_1(z_1,z_2,z_3,z_4)=(-z_1,\bar z_2,\bar z_3,z_4)\\
&g_2(z_1,z_2,z_3,z_4)=(z_1,-z_2,z_3,z_4)                          &h_2h_3(z_1,z_2,z_3,z_4)=(z_1,-z_2,-z_3,z_4)\\
&g_3(z_1,z_2,z_3,z_4)=(z_1,z_2,-z_3,\bar z_4)                     &h_3(z_1,z_2,z_3,z_4)=(z_1, z_2,-z_3,\bar z_4)\\
&g_4(z_1,z_2,z_3,z_4)=(z_1,z_2,z_3,-z_4)                          &h_4(z_1,z_2,z_3,z_4)=(z_1, z_2,z_3,-z_4)
\end{align*}
\normalsize then \footnotesize
\begin{align*}
\Gamma_{a18}= 
\Bigl < & \left(\begin{array}{cccc}
\frac{1}{2}\\
0\\
0\\
0
\end{array}\right)
\left(\begin{array}{cccc}
1& 0 & 0 & 0\\
0& -1 & 0 & 0\\
0& 0 & -1 & 0\\
0& 0 & 0 & 1
\end{array}\right),
\left(\begin{array}{cccc}
0\\
\frac{1}{2}\\
0\\
0
\end{array}\right)
\left(\begin{array}{cccc}
1& 0 & 0 & 0\\
0& 1 & 0 & 0\\
0& 0 & 1 & 0\\
0& 0 & 0 & 1
\end{array}\right),
\\
&\left(\begin{array}{cccc}
0\\
0\\
\frac{1}{2}\\
0
\end{array}\right)
\left(\begin{array}{cccc}
1& 0 & 0 & 0\\
0& 1 & 0 & 0\\
0& 0 & 1 & 0\\
0& 0 & 0 & -1
\end{array}\right),
\left(\begin{array}{cccc}
0\\
0\\
0\\
\frac{1}{2}
\end{array}\right)
\left(\begin{array}{cccc}
1& 0 & 0 & 0\\
0& 1 & 0 & 0\\
0& 0 & 1 & 0\\
0& 0 & 0 & 1
\end{array}\right)
\Bigr >
\end{align*}

\begin{align*}
\Gamma_{a20}=
\Bigl < &\left(\begin{array}{cccc}
\frac{1}{2}\\
0\\
0\\
0
\end{array}\right)
\left(\begin{array}{cccc}
1& 0 & 0 & 0\\
0& -1 & 0 & 0\\
0& 0 & -1 & 0\\
0& 0 & 0 & 1
\end{array}\right),
\left(\begin{array}{cccc}
0\\
\frac{1}{2}\\
\frac{1}{2}\\
0
\end{array}\right)
\left(\begin{array}{cccc}
1& 0 & 0 & 0\\
0& 1 & 0 & 0\\
0& 0 & 1 & 0\\
0& 0 & 0 & 1
\end{array}\right),
\\
&\left(\begin{array}{cccc}
0\\
0\\
\frac{1}{2}\\
0
\end{array}\right)
\left(\begin{array}{cccc}
1& 0 & 0 & 0\\
0& 1 & 0 & 0\\
0& 0 & 1 & 0\\
0& 0 & 0 & -1
\end{array}\right),
\left(\begin{array}{cccc}
0\\
0\\
0\\
\frac{1}{2}
\end{array}\right)
\left(\begin{array}{cccc}
1& 0 & 0 & 0\\
0& 1 & 0 & 0\\
0& 0 & 1 & 0\\
0& 0 & 0 & 1
\end{array}\right)
\Bigr >.
\end{align*}

\normalsize
\noindent
Let $\varphi(z_1,z_2,z_3,z_4)=(z_1,z_2,z_2z_3,z_4)$, we get these commutative diagrams

\footnotesize
\[
\begin{CD}
(z_1,z_2,z_3,z_4) @>\varphi>>(z_1,z_2,z_2z_3,z_4)\\
@Vg_1VV @Vh_1 VV\\
(-z_1,\bar z_2,\bar z_3,z_4) @>\varphi>> (-z_1,\bar z_2,\bar z_2\bar z_3,z_4)
\end{CD}
\hspace{.1cm}
\begin{CD}
(z_1,z_2,z_3,z_4) @>\varphi>>(z_1,z_2,z_2z_3,z_4)\\
@Vg_3VV @Vh_3 VV\\
(z_1,z_2,-z_3,\bar z_4) @>\varphi>> (z_1,z_2,-z_2z_3,\bar z_4)
\end{CD}
\]
\[ 
\begin{CD}
(z_1,z_2,z_3,z_4) @>\varphi>>(z_1,z_2,z_2z_3,z_4)\\
@Vg_2VV @Vh_2h_3 VV\\
(z_1,-z_2,z_3,z_4) @>\varphi>> (z_1,-z_2,-z_2z_3,z_4)
\end{CD}
\hspace{.1cm}
\begin{CD}
(z_1,z_2,z_3,z_4) @>\varphi>>(z_1,z_2,z_2z_3,z_4)\\
@Vg_4VV @Vh_4 VV\\
(z_1,z_2,z_3,-z_4) @>\varphi>> (z_1,z_2,z_2z_3,-z_4).
\end{CD}
\]

\normalsize
\noindent 
Therefore $\exists$ 
$\gamma 
\footnotesize
=
\Bigl( \left(\begin{array}{cccc}
0\\
0\\
0\\
0
\end{array}\right)
\left(\begin{array}{cccc}
1& 0 & 0 & 0\\
0& 1 & 0 & 0\\
0& 1 & 1 & 0\\
0& 0 & 0 & 1
\end{array}\right)
\Bigr)
\normalsize
\in \mathbb{A}(n)$ 
s.t. $\gamma \Gamma_{a18}\gamma ^{-1}=\Gamma_{a20}$.\\

\noindent 
$\bullet$ $M(A_{a22})\approx  M(A_{a23})$.\\
For
\footnotesize
\begin{align*}
&A_{a22}=
\left(\begin{array}{cccc}
1& 1 & 1 & 1 \\
0& 1 & 0 & 0\\
0& 0 & 1 & 1\\
0& 0 & 0 & 1
\end{array}\right)
\hspace{.1cm}
&A_{a23}=
\left(\begin{array}{cccc}
1& 1 & 1 & 1 \\
0& 1 & 0 & 1\\
0& 0 & 1 & 0\\
0& 0 & 0 & 1
\end{array}\right)
\\
&g_1(z_1,z_2,z_3,z_4)=(-z_1,\bar z_2,\bar z_3,\bar z_4)\hspace{.1cm}   &h_1(z_1,z_2,z_3,z_4)=(-z_1,\bar z_2,\bar z_3,\bar z_4)\\
&g_2(z_1,z_2,z_3,z_4)=(z_1,-z_2,z_3,z_4)                          &h_2(z_1,z_2,z_3,z_4)=(z_1,-z_2,z_3,\bar z_4)\\
&g_3(z_1,z_2,z_3,z_4)=(z_1,z_2,-z_3,\bar z_4)                     &h_3(z_1,z_2,z_3,z_4)=(z_1, z_2,-z_3,z_4)\\
&g_4(z_1,z_2,z_3,z_4)=(z_1,z_2,z_3,-z_4)                          &h_4(z_1,z_2,z_3,z_4)=(z_1, z_2,z_3,-z_4)
\end{align*}
\normalsize then \footnotesize
\begin{align*}
\Gamma_{a22}= 
\Bigl < & \left(\begin{array}{cccc}
\frac{1}{2}\\
0\\
0\\
0
\end{array}\right)
\left(\begin{array}{cccc}
1& 0 & 0 & 0\\
0& -1 & 0 & 0\\
0& 0 & -1 & 0\\
0& 0 & 0 & -1
\end{array}\right),
\left(\begin{array}{cccc}
0\\
\frac{1}{2}\\
0\\
0
\end{array}\right)
\left(\begin{array}{cccc}
1& 0 & 0 & 0\\
0& 1 & 0 & 0\\
0& 0 & 1 & 0\\
0& 0 & 0 & 1
\end{array}\right),
\\
&\left(\begin{array}{cccc}
0\\
0\\
\frac{1}{2}\\
0
\end{array}\right)
\left(\begin{array}{cccc}
1& 0 & 0 & 0\\
0& 1 & 0 & 0\\
0& 0 & 1 & 0\\
0& 0 & 0 & -1
\end{array}\right),
\left(\begin{array}{cccc}
0\\
0\\
0\\
\frac{1}{2}
\end{array}\right)
\left(\begin{array}{cccc}
1& 0 & 0 & 0\\
0& 1 & 0 & 0\\
0& 0 & 1 & 0\\
0& 0 & 0 & 1
\end{array}\right)
\Bigr >
\end{align*}

\begin{align*}
\Gamma_{a23}=
\Bigl < &\left(\begin{array}{cccc}
\frac{1}{2}\\
0\\
0\\
0
\end{array}\right)
\left(\begin{array}{cccc}
1& 0 & 0 & 0\\
0& -1 & 0 & 0\\
0& 0 & -1 & 0\\
0& 0 & 0 & -1
\end{array}\right),
\left(\begin{array}{cccc}
0\\
\frac{1}{2}\\
0\\
0
\end{array}\right)
\left(\begin{array}{cccc}
1& 0 & 0 & 0\\
0& 1 & 0 & 0\\
0& 0 & 1 & 0\\
0& 0 & 0 & -1
\end{array}\right),
\\
&\left(\begin{array}{cccc}
0\\
0\\
\frac{1}{2}\\
0
\end{array}\right)
\left(\begin{array}{cccc}
1& 0 & 0 & 0\\
0& 1 & 0 & 0\\
0& 0 & 1 & 0\\
0& 0 & 0 & 1
\end{array}\right),
\left(\begin{array}{cccc}
0\\
0\\
0\\
\frac{1}{2}
\end{array}\right)
\left(\begin{array}{cccc}
1& 0 & 0 & 0\\
0& 1 & 0 & 0\\
0& 0 & 1 & 0\\
0& 0 & 0 & 1
\end{array}\right)
\Bigr >.
\end{align*}

\normalsize
\noindent
Let $\varphi(z_1,z_2,z_3,z_4)=(z_1,z_3,z_2,z_4)$, we get these commutative diagrams

\footnotesize
\[
\begin{CD}
(z_1,z_2,z_3,z_4) @>\varphi>>(z_1,z_3,z_2,z_4)\\
@Vg_1VV @Vh_1 VV\\
(-z_1,\bar z_2,\bar z_3,\bar z_4) @>\varphi>> (-z_1,\bar z_3,\bar z_2,\bar z_4)
\end{CD}
\hspace{.1cm}
\begin{CD}
(z_1,z_2,z_3,z_4) @>\varphi>>(z_1,z_3,z_2,z_4)\\
@Vg_3VV @Vh_2 VV\\
(z_1,z_2,-z_3,\bar z_4) @>\varphi>> (z_1,-z_3,z_2,\bar z_4)
\end{CD}
\]
\[ 
\begin{CD}
(z_1,z_2,z_3,z_4) @>\varphi>>(z_1,z_3,z_2,z_4)\\
@Vg_2VV @Vh_3 VV\\
(z_1,-z_2,z_3,z_4) @>\varphi>> (z_1,z_3,-z_2,z_4)
\end{CD}
\hspace{.1cm}
\begin{CD}
(z_1,z_2,z_3,z_4) @>\varphi>>(z_1,z_3,z_2,z_4)\\
@Vg_4VV @Vh_4 VV\\
(z_1,z_2,z_3,-z_4) @>\varphi>> (z_1,z_3,z_2,-z_4).
\end{CD}
\]

\normalsize
\noindent 
Therefore $\exists$ 
$\gamma 
\footnotesize
=
\Bigl( \left(\begin{array}{cccc}
0\\
0\\
0\\
0
\end{array}\right)
\left(\begin{array}{cccc}
1& 0 & 0 & 0\\
0& 0 & 1 & 0\\
0& 1 & 0 & 0\\
0& 0 & 0 & 1
\end{array}\right)
\Bigr)
\normalsize
\in \mathbb{A}(n)$ 
s.t. $\gamma \Gamma_{a22}\gamma ^{-1}=\Gamma_{a23}$.\\

\noindent 
$\bullet$ $M(A_{a23})\approx  M(A_{a24})$.\\
For 
\footnotesize
\begin{align*}
&A_{a23}=
\left(\begin{array}{cccc}
1& 1 & 1 & 1 \\
0& 1 & 0 & 1\\
0& 0 & 1 & 0\\
0& 0 & 0 & 1
\end{array}\right)
\hspace{.1cm}
&A_{a24}=
\left(\begin{array}{cccc}
1& 1 & 1 & 1 \\
0& 1 & 0 & 1\\
0& 0 & 1 & 1\\
0& 0 & 0 & 1
\end{array}\right)
\\
&g_1(z_1,z_2,z_3,z_4)=(-z_1,\bar z_2,\bar z_3,\bar z_4)\hspace{.1cm}   &h_1(z_1,z_2,z_3,z_4)=(-z_1,\bar z_2,\bar z_3,\bar z_4)\\
&g_2(z_1,z_2,z_3,z_4)=(z_1,-z_2,z_3,\bar z_4)                          &h_2h_3(z_1,z_2,z_3,z_4)=(z_1,-z_2,-z_3,z_4)\\
&g_3(z_1,z_2,z_3,z_4)=(z_1,z_2,-z_3,z_4)                               &h_3(z_1,z_2,z_3,z_4)=(z_1, z_2,-z_3,\bar z_4)\\
&g_4(z_1,z_2,z_3,z_4)=(z_1,z_2,z_3,-z_4)                               &h_4(z_1,z_2,z_3,z_4)=(z_1, z_2,z_3,-z_4)
\end{align*}
\normalsize then \footnotesize
\begin{align*}
\Gamma_{a23}=
\Bigl < &\left(\begin{array}{cccc}
\frac{1}{2}\\
0\\
0\\
0
\end{array}\right)
\left(\begin{array}{cccc}
1& 0 & 0 & 0\\
0& -1 & 0 & 0\\
0& 0 & -1 & 0\\
0& 0 & 0 & -1
\end{array}\right),
\left(\begin{array}{cccc}
0\\
\frac{1}{2}\\
0\\
0
\end{array}\right)
\left(\begin{array}{cccc}
1& 0 & 0 & 0\\
0& 1 & 0 & 0\\
0& 0 & 1 & 0\\
0& 0 & 0 & -1
\end{array}\right),
\\
&\left(\begin{array}{cccc}
0\\
0\\
\frac{1}{2}\\
0
\end{array}\right)
\left(\begin{array}{cccc}
1& 0 & 0 & 0\\
0& 1 & 0 & 0\\
0& 0 & 1 & 0\\
0& 0 & 0 & 1
\end{array}\right),
\left(\begin{array}{cccc}
0\\
0\\
0\\
\frac{1}{2}
\end{array}\right)
\left(\begin{array}{cccc}
1& 0 & 0 & 0\\
0& 1 & 0 & 0\\
0& 0 & 1 & 0\\
0& 0 & 0 & 1
\end{array}\right)
\Bigr >
\end{align*}

\begin{align*}
\Gamma_{a24}= 
\Bigl < & \left(\begin{array}{cccc}
\frac{1}{2}\\
0\\
0\\
0
\end{array}\right)
\left(\begin{array}{cccc}
1& 0 & 0 & 0\\
0& -1 & 0 & 0\\
0& 0 & -1 & 0\\
0& 0 & 0 & -1
\end{array}\right),
\left(\begin{array}{cccc}
0\\
\frac{1}{2}\\
\frac{1}{2}\\
0
\end{array}\right)
\left(\begin{array}{cccc}
1& 0 & 0 & 0\\
0& 1 & 0 & 0\\
0& 0 & 1 & 0\\
0& 0 & 0 & 1
\end{array}\right),
\\
&\left(\begin{array}{cccc}
0\\
0\\
\frac{1}{2}\\
0
\end{array}\right)
\left(\begin{array}{cccc}
1& 0 & 0 & 0\\
0& 1 & 0 & 0\\
0& 0 & 1 & 0\\
0& 0 & 0 & -1
\end{array}\right),
\left(\begin{array}{cccc}
0\\
0\\
0\\
\frac{1}{2}
\end{array}\right)
\left(\begin{array}{cccc}
1& 0 & 0 & 0\\
0& 1 & 0 & 0\\
0& 0 & 1 & 0\\
0& 0 & 0 & 1
\end{array}\right)
\Bigr >.
\end{align*}

\normalsize
\noindent
Let $\varphi(z_1,z_2,z_3,z_4)=(z_1,z_3,z_2z_3,z_4)$, we get these commutative diagrams

\footnotesize
\[
\begin{CD}
(z_1,z_2,z_3,z_4) @>\varphi>>(z_1,z_3,z_2z_3,z_4)\\
@Vg_1VV @Vh_1 VV\\
(-z_1,\bar z_2,\bar z_3,\bar z_4) @>\varphi>> (-z_1,\bar z_3,\bar z_2\bar z_3,\bar z_4)
\end{CD}
\hspace{.1cm}
\begin{CD}
(z_1,z_2,z_3,z_4) @>\varphi>>(z_1,z_3,z_2z_3,z_4)\\
@Vg_3VV @Vh_2h_3 VV\\
(z_1,z_2,-z_3,z_4) @>\varphi>> (z_1,-z_3,-z_2z_3,z_4)
\end{CD}
\]
\[ 
\begin{CD}
(z_1,z_2,z_3,z_4) @>\varphi>>(z_1,z_3,z_2z_3,z_4)\\
@Vg_2VV @Vh_3 VV\\
(z_1,-z_2,z_3,\bar z_4) @>\varphi>> (z_1,z_3,-z_2z_3,\bar z_4)
\end{CD}
\hspace{.1cm}
\begin{CD}
(z_1,z_2,z_3,z_4) @>\varphi>>(z_1,z_3,z_2z_3,z_4)\\
@Vg_4VV @Vh_4 VV\\
(z_1,z_2,z_3,-z_4) @>\varphi>> (z_1,z_3,z_2,-z_4).
\end{CD}
\]

\normalsize
\noindent 
Therefore $\exists$ 
$\gamma 
\footnotesize
=
\Bigl( \left(\begin{array}{cccc}
0\\
0\\
0\\
0
\end{array}\right)
\left(\begin{array}{cccc}
1& 0 & 0 & 0\\
0& 0 & 1 & 0\\
0& 1 & 1 & 0\\
0& 0 & 0 & 1
\end{array}\right)
\Bigr)
\normalsize
\in \mathbb{A}(n)$ 
s.t. $\gamma \Gamma_{a23}\gamma ^{-1}=\Gamma_{a24}$.\\

\noindent 
$\bullet$ $M(A_{a23})\approx M(A_{a29})$.\\
For
\footnotesize
\begin{align*}
&A_{a23}=
\left(\begin{array}{cccc}
1& 1 & 1 & 1 \\
0& 1 & 0 & 1\\
0& 0 & 1 & 0\\
0& 0 & 0 & 1
\end{array}\right)
\hspace{.1cm}
&A_{a29}=
\left(\begin{array}{cccc}
1& 1 & 1 & 1 \\
0& 1 & 1 & 0\\
0& 0 & 1 & 0\\
0& 0 & 0 & 1
\end{array}\right)
\\
&g_1(z_1,z_2,z_3,z_4)=(-z_1,\bar z_2,\bar z_3,\bar z_4)\hspace{.1cm}   &h_1(z_1,z_2,z_3,z_4)=(-z_1,\bar z_2,\bar z_3,\bar z_4)\\
&g_2(z_1,z_2,z_3,z_4)=(z_1,-z_2,z_3,\bar z_4)                          &h_2(z_1,z_2,z_3,z_4)=(z_1,-z_2,\bar z_3,z_4)\\
&g_3(z_1,z_2,z_3,z_4)=(z_1,z_2,-z_3,z_4)                               &h_3(z_1,z_2,z_3,z_4)=(z_1, z_2,-z_3,z_4)\\
&g_4(z_1,z_2,z_3,z_4)=(z_1,z_2,z_3,-z_4)                               &h_4(z_1,z_2,z_3,z_4)=(z_1, z_2,z_3,-z_4)
\end{align*}
\normalsize then \footnotesize
\begin{align*}
\Gamma_{a23}=
\Bigl < &\left(\begin{array}{cccc}
\frac{1}{2}\\
0\\
0\\
0
\end{array}\right)
\left(\begin{array}{cccc}
1& 0 & 0 & 0\\
0& -1 & 0 & 0\\
0& 0 & -1 & 0\\
0& 0 & 0 & -1
\end{array}\right),
\left(\begin{array}{cccc}
0\\
\frac{1}{2}\\
0\\
0
\end{array}\right)
\left(\begin{array}{cccc}
1& 0 & 0 & 0\\
0& 1 & 0 & 0\\
0& 0 & 1 & 0\\
0& 0 & 0 & -1
\end{array}\right),
\\
&\left(\begin{array}{cccc}
0\\
0\\
\frac{1}{2}\\
0
\end{array}\right)
\left(\begin{array}{cccc}
1& 0 & 0 & 0\\
0& 1 & 0 & 0\\
0& 0 & 1 & 0\\
0& 0 & 0 & 1
\end{array}\right),
\left(\begin{array}{cccc}
0\\
0\\
0\\
\frac{1}{2}
\end{array}\right)
\left(\begin{array}{cccc}
1& 0 & 0 & 0\\
0& 1 & 0 & 0\\
0& 0 & 1 & 0\\
0& 0 & 0 & 1
\end{array}\right)
\Bigr >
\end{align*}

\begin{align*}
\Gamma_{a29}= 
\Bigl < & \left(\begin{array}{cccc}
\frac{1}{2}\\
0\\
0\\
0
\end{array}\right)
\left(\begin{array}{cccc}
1& 0 & 0 & 0\\
0& -1 & 0 & 0\\
0& 0 & -1 & 0\\
0& 0 & 0 & -1
\end{array}\right),
\left(\begin{array}{cccc}
0\\
\frac{1}{2}\\
0\\
0
\end{array}\right)
\left(\begin{array}{cccc}
1& 0 & 0 & 0\\
0& 1 & 0 & 0\\
0& 0 & -1 & 0\\
0& 0 & 0 & 1
\end{array}\right),
\\
&\left(\begin{array}{cccc}
0\\
0\\
\frac{1}{2}\\
0
\end{array}\right)
\left(\begin{array}{cccc}
1& 0 & 0 & 0\\
0& 1 & 0 & 0\\
0& 0 & 1 & 0\\
0& 0 & 0 & 1
\end{array}\right),
\left(\begin{array}{cccc}
0\\
0\\
0\\
\frac{1}{2}
\end{array}\right)
\left(\begin{array}{cccc}
1& 0 & 0 & 0\\
0& 1 & 0 & 0\\
0& 0 & 1 & 0\\
0& 0 & 0 & 1
\end{array}\right)
\Bigr >.
\end{align*}

\normalsize
\noindent
Let $\varphi(z_1,z_2,z_3,z_4)=(z_1,z_2,z_4,z_3)$, we get these commutative diagrams

\footnotesize
\[
\begin{CD}
(z_1,z_2,z_3,z_4) @>\varphi>>(z_1,z_2,z_4,z_3)\\
@Vg_1VV @Vh_1 VV\\
(-z_1,\bar z_2,\bar z_3,\bar z_4) @>\varphi>> (-z_1,\bar z_2,\bar z_4,\bar z_3)
\end{CD}
\hspace{.1cm}
\begin{CD}
(z_1,z_2,z_3,z_4) @>\varphi>>(z_1,z_2,z_4,z_3)\\
@Vg_3VV @Vh_4 VV\\
(z_1,z_2,-z_3,z_4) @>\varphi>> (z_1,z_2,z_4,-z_3)
\end{CD}
\]
\[ 
\begin{CD}
(z_1,z_2,z_3,z_4) @>\varphi>>(z_1,z_2,z_4,z_3)\\
@Vg_2VV @Vh_2 VV\\
(z_1,-z_2,z_3,\bar z_4) @>\varphi>> (z_1,-z_2,\bar z_4,z_3)
\end{CD}
\hspace{.1cm}
\begin{CD}
(z_1,z_2,z_3,z_4) @>\varphi>>(z_1,z_2,z_4,z_3)\\
@Vg_4VV @Vh_3 VV\\
(z_1,z_2,z_3,-z_4) @>\varphi>> (z_1,z_2,-z_4,z_3).
\end{CD}
\]

\normalsize
\noindent 
Therefore $\exists$ 
$\gamma 
\footnotesize
=
\Bigl( \left(\begin{array}{cccc}
0\\
0\\
0\\
0
\end{array}\right)
\left(\begin{array}{cccc}
1& 0 & 0 & 0\\
0& 1 & 0 & 0\\
0& 0 & 0 & 1\\
0& 0 & 1 & 0
\end{array}\right)
\Bigr)
\normalsize
\in \mathbb{A}(n)$ 
s.t. $\gamma \Gamma_{a23}\gamma ^{-1}=\Gamma_{a29}$.\\

\noindent {\bf (17)}. Similar to (4).\\

\noindent {\bf (18)}. Similar to (11).\\

\noindent {\bf (19)}.\\
\noindent 
$\bullet$ $M(A_{a11})\approx  M(A_{a31})$.\\
For
\footnotesize
\begin{align*}
&A_{a11}=
\left(\begin{array}{cccc}
1& 1 & 0 & 0 \\
0& 1 & 1 & 1\\
0& 0 & 1 & 0\\
0& 0 & 0 & 1
\end{array}\right)
\hspace{.1cm}
&A_{a31}=
\left(\begin{array}{cccc}
1& 1 & 1 & 1 \\
0& 1 & 1 & 1\\
0& 0 & 1 & 0\\
0& 0 & 0 & 1
\end{array}\right)
\\
&g_1(z_1,z_2,z_3,z_4)=(-z_1,\bar z_2,z_3,z_4).\hspace{.1cm}   &h_1h_2(z_1,z_2,z_3,z_4)=(-z_1,-\bar z_2,z_3,z_4)\\
&g_2(z_1,z_2,z_3,z_4)=(z_1,-z_2,\bar z_3,\bar z_4).                &h_2(z_1,z_2,z_3,z_4)=(z_1,-z_2,\bar z_3,\bar z_4)\\
&g_3(z_1,z_2,z_3,z_4)=(z_1,z_2,-z_3,z_4).                     &h_3(z_1,z_2,z_3,z_4)=(z_1, z_2,-z_3,z_4)\\
&g_4(z_1,z_2,z_3,z_4)=(z_1,z_2,z_3,-z_4).                     &h_4(z_1,z_2,z_3,z_4)=(z_1, z_2,z_3,-z_4)
\end{align*}
\normalsize then \footnotesize
\begin{align*}
\Gamma_{a11}=
\Bigl < &\left(\begin{array}{cccc}
\frac{1}{2}\\
0\\
0\\
0
\end{array}\right)
\left(\begin{array}{cccc}
1& 0 & 0 & 0\\
0& -1 & 0 & 0\\
0& 0 & 1 & 0\\
0& 0 & 0 & 1
\end{array}\right),
\left(\begin{array}{cccc}
0\\
\frac{1}{2}\\
0\\
0
\end{array}\right)
\left(\begin{array}{cccc}
1& 0 & 0 & 0\\
0& 1 & 0 & 0\\
0& 0 & -1 & 0\\
0& 0 & 0 & -1
\end{array}\right),
\\
&\left(\begin{array}{cccc}
0\\
0\\
\frac{1}{2}\\
0
\end{array}\right)
\left(\begin{array}{cccc}
1& 0 & 0 & 0\\
0& 1 & 0 & 0\\
0& 0 & 1 & 0\\
0& 0 & 0 & 1
\end{array}\right),
\left(\begin{array}{cccc}
0\\
0\\
0\\
\frac{1}{2}
\end{array}\right)
\left(\begin{array}{cccc}
1& 0 & 0 & 0\\
0& 1 & 0 & 0\\
0& 0 & 1 & 0\\
0& 0 & 0 & 1
\end{array}\right)
\Bigr >
\end{align*}

\begin{align*}
\Gamma_{a31}= 
\Bigl < & \left(\begin{array}{cccc}
\frac{1}{2}\\
\frac{1}{2}\\
0\\
0
\end{array}\right)
\left(\begin{array}{cccc}
1& 0 & 0 & 0\\
0& -1 & 0 & 0\\
0& 0 & 1 & 0\\
0& 0 & 0 & 1
\end{array}\right),
\left(\begin{array}{cccc}
0\\
\frac{1}{2}\\
0\\
0
\end{array}\right)
\left(\begin{array}{cccc}
1& 0 & 0 & 0\\
0& 1 & 0 & 0\\
0& 0 & -1 & 0\\
0& 0 & 0 & -1
\end{array}\right),
\\
&\left(\begin{array}{cccc}
0\\
0\\
\frac{1}{2}\\
0
\end{array}\right)
\left(\begin{array}{cccc}
1& 0 & 0 & 0\\
0& 1 & 0 & 0\\
0& 0 & 1 & 0\\
0& 0 & 0 & 1
\end{array}\right),
\left(\begin{array}{cccc}
0\\
0\\
0\\
\frac{1}{2}
\end{array}\right)
\left(\begin{array}{cccc}
1& 0 & 0 & 0\\
0& 1 & 0 & 0\\
0& 0 & 1 & 0\\
0& 0 & 0 & 1
\end{array}\right)
\Bigr >.
\end{align*}

\normalsize
\noindent
Let $\varphi(z_1,z_2,z_3,z_4)=(z_1,iz_2,z_3,z_4)$, we get these commutative diagrams

\footnotesize
\[
\begin{CD}
(z_1,z_2,z_3,z_4) @>\varphi>>(z_1,iz_2,z_3,z_4)\\
@Vg_1VV @Vh_1h_2 VV\\
(-z_1,\bar z_2,z_3,z_4) @>\varphi>> (-z_1,-\bar {iz}_2,z_3,z_4)
\end{CD}
\hspace{.1cm}
\begin{CD}
(z_1,z_2,z_3,z_4) @>\varphi>>(z_1,iz_2,z_3,z_4)\\
@Vg_3VV @Vh_3 VV\\
(z_1,z_2,-z_3,z_4) @>\varphi>> (z_1,iz_2,-z_3,z_4)
\end{CD}
\]
\[ 
\begin{CD}
(z_1,z_2,z_3,z_4) @>\varphi>>(z_1,iz_2,z_3,z_4)\\
@Vg_2VV @Vh_2 VV\\
(z_1,-z_2,\bar z_3,\bar z_4) @>\varphi>> (z_1,-iz_2,\bar z_3,\bar z_4)
\end{CD}
\hspace{.1cm}
\begin{CD}
(z_1,z_2,z_3,z_4) @>\varphi>>(z_1,iz_2,z_3,z_4)\\
@Vg_4VV @Vh_4 VV\\
(z_1,z_2,z_3,-z_4) @>\varphi>> (z_1,iz_2,z_3,-z_4).
\end{CD}
\]

\normalsize
\noindent 
Therefore $\exists$ 
$\gamma 
\footnotesize
=
\Bigl( \left(\begin{array}{cccc}
0\\
\frac{1}{4}\\
0\\
0
\end{array}\right)
\left(\begin{array}{cccc}
1& 0 & 0 & 0\\
0& 1 & 0 & 0\\
0& 0 & 1 & 0\\
0& 0 & 0 & 1
\end{array}\right)
\Bigr)
\normalsize
\in \mathbb{A}(n)$ 
s.t. $\gamma \Gamma_{a11}\gamma ^{-1}=\Gamma_{a31}.$\\

\noindent {\bf (20)}. Similar to (4).\\

\noindent {\bf (21)}. Similar to (11).\\

\noindent {\bf (22)}. $M(A_{a11})$ is not diffeomorphic to $M(A_{a13})$.\\

Let \footnotesize
\begin{align*}
\Gamma_{a13}= \langle &\tilde {g}_1,\tilde {g}_2,t_3,t_4\rangle\\
            = \Bigl <  &\left(\begin{array}{cccc}
\frac{1}{2}\\
0\\
0\\
0
\end{array}\right)
\left(\begin{array}{cccc}
1& 0 & 0 & 0\\
0& -1 & 0 & 0\\
0& 0 & 1 & 0\\
0& 0 & 0 & -1
\end{array}\right),
\left(\begin{array}{cccc}
0\\
\frac{1}{2}\\
0\\
0
\end{array}\right)
\left(\begin{array}{cccc}
1& 0 & 0 & 0\\
0& 1 & 0 & 0\\
0& 0 & -1 & 0\\
0& 0 & 0 & 1
\end{array}\right),
\\
&\left(\begin{array}{cccc}
0\\
0\\
\frac{1}{2}\\
0
\end{array}\right)
\left(\begin{array}{cccc}
1& 0 & 0 & 0\\
0& 1 & 0 & 0\\
0& 0 & 1 & 0\\
0& 0 & 0 & 1
\end{array}\right),
\left(\begin{array}{cccc}
0\\
0\\
0\\
\frac{1}{2}
\end{array}\right)
\left(\begin{array}{cccc}
1& 0 & 0 & 0\\
0& 1 & 0 & 0\\
0& 0 & 1 & 0\\
0& 0 & 0 & 1
\end{array}\right)
\Bigr >
\\ 
\Gamma_{a11}= \langle &\tilde {h}_1,\tilde {h}_2,t_3,t_4\rangle\\
            = \Bigl < & \left(\begin{array}{cccc}
\frac{1}{2}\\
0\\
0\\
0
\end{array}\right)
\left(\begin{array}{cccc}
1& 0 & 0 & 0\\
0& -1 & 0 & 0\\
0& 0 & 1 & 0\\
0& 0 & 0 & 1
\end{array}\right),
\left(\begin{array}{cccc}
0\\
\frac{1}{2}\\
0\\
0
\end{array}\right)
\left(\begin{array}{cccc}
1& 0 & 0 & 0\\
0& 1 & 0 & 0\\
0& 0 & -1 & 0\\
0& 0 & 0 & -1
\end{array}\right),
\\
&\left(\begin{array}{cccc}
0\\
0\\
\frac{1}{2}\\
0
\end{array}\right)
\left(\begin{array}{cccc}
1& 0 & 0 & 0\\
0& 1 & 0 & 0\\
0& 0 & 1 & 0\\
0& 0 & 0 & 1
\end{array}\right),
\left(\begin{array}{cccc}
0\\
0\\
0\\
\frac{1}{2}
\end{array}\right)
\left(\begin{array}{cccc}
1& 0 & 0 & 0\\
0& 1 & 0 & 0\\
0& 0 & 1 & 0\\
0& 0 & 0 & 1
\end{array}\right)
\Bigr >.
\end{align*}

\normalsize
Since $\tilde {g}_1\tilde {g}_2\tilde {g}_1^{-1}=\tilde {g}_2^{-1},\tilde {g}_1 t_3\tilde {g}_1^{-1}=t_3,$ and
$\tilde {g}_1 t_4\tilde {g}_1^{-1}=t_4$
then $\Gamma_{a13}= \langle \tilde {g}_2,t_3,t_4\rangle \rtimes \langle\tilde {g}_1\rangle$
and the center $\displaystyle \mathcal C(\Gamma_{a13})= \langle \tilde {g}_1^2 \rangle 
                                                       = \langle t_1 \rangle$,
where $t_1$= \footnotesize
$\left(\begin{array}{c}
1\\
0\\
0\\
0\end{array}\right)$. 
\normalsize
It induces an extension
\begin{align*}
1\rightarrow  \langle t_1\rangle \rightarrow  \Gamma_{a13}\longrightarrow  \Delta_{a13}\rightarrow  1,
\end{align*}
where $\Delta_{a13}$ is isomorphic to \footnotesize
\begin{align*}
\Bigl <&\left(\begin{array}{c}
\frac{1}{2}\\
0\\
0
\end{array}\right)
\left(\begin{array}{ccc}
1& 0 & 0 \\
0& -1 & 0\\
0& 0 & 1
\end{array}\right),
\left(\begin{array}{c}
0\\
\frac{1}{2}\\
0
\end{array}\right)
\left(\begin{array}{ccc}
1& 0 & 0\\
0& 1 & 0\\
0& 0 & 1
\end{array}\right),
\\
&\left(\begin{array}{c}
0\\
0\\
\frac{1}{2}
\end{array}\right)
\left(\begin{array}{ccc}
1& 0 & 0\\
0& 1 & 0\\
0& 0 & 1
\end{array}\right)
\Bigr >\rtimes \Bigl< \left(\begin{array}{c}
0\\
0\\
0
\end{array}\right)
\left(\begin{array}{ccc}
-1& 0 & 0 \\
0& 1 & 0\\
0& 0 & -1
\end{array}\right)\Bigr>.
\end{align*}
\normalsize
Put
\begin{align*}
\Delta_{a13}^f=&\langle p,q,r \rangle\\
=& \footnotesize \Bigl <\left(\begin{array}{c} 
\frac{1}{2}\\
0\\
0
\end{array}\right)
\left(\begin{array}{ccc}
1& 0 & 0 \\
0& -1 & 0\\
0& 0 & 1
\end{array}\right),
\left(\begin{array}{c}
0\\
\frac{1}{2}\\
0
\end{array}\right)
\left(\begin{array}{ccc}
1& 0 & 0\\
0& 1 & 0\\
0& 0 & 1
\end{array}\right),
\left(\begin{array}{c}
0\\
0\\
\frac{1}{2}
\end{array}\right)
\left(\begin{array}{ccc}
1& 0 & 0\\
0& 1 & 0\\
0& 0 & 1
\end{array}\right)
\Bigr >
\end{align*}
\normalsize
and 
\begin{align*}
\beta = \footnotesize \Bigl< \left(\begin{array}{c}
0\\
0\\
0
\end{array}\right)
\left(\begin{array}{ccc}
-1& 0 & 0 \\
0& 1 & 0\\
0& 0 & -1
\end{array}\right)\Bigr>
\end{align*}
\normalsize 
so that $\Delta_{a13}=\Delta_{a13}^f \rtimes \beta$. 

\normalsize
Next, since $\tilde {h}_1\tilde {h}_2\tilde {h}_1^{-1}=\tilde {h}_2^{-1},\tilde {h}_1 t_3\tilde {h}_1^{-1}=t_3,$ and
$\tilde {h}_1 t_4\tilde {h}_1^{-1}=t_4$ then 
 $\Gamma_{a11}= \langle \tilde{h}_2,t_3,t_4\rangle \rtimes \langle\tilde {h}_1\rangle$ and the center
$\displaystyle \mathcal C(\Gamma_{a11})= \langle \tilde {h}_1^2 \rangle= \langle t_1 \rangle$. 
It induces an extension
\begin{align*}
1\rightarrow  \langle t_1 \rangle \rightarrow \Gamma_{a11}\longrightarrow  \Delta_{a11}\rightarrow  1,
\end{align*}
where $\Delta_{a11}$ is isomorphic to \footnotesize
\begin{align*}
\Bigl <&\left(\begin{array}{c}
\frac{1}{2}\\
0\\
0
\end{array}\right)
\left(\begin{array}{ccc}
1& 0 & 0 \\
0& -1 & 0\\
0& 0 & -1
\end{array}\right),
\left(\begin{array}{c}
0\\
\frac{1}{2}\\
0
\end{array}\right)
\left(\begin{array}{ccc}
1& 0 & 0\\
0& 1 & 0\\
0& 0 & 1
\end{array}\right),
\\
&\left(\begin{array}{c}
0\\
0\\
\frac{1}{2}
\end{array}\right)
\left(\begin{array}{ccc}
1& 0 & 0\\
0& 1 & 0\\
0& 0 & 1
\end{array}\right)
\Bigr >\rtimes \langle \left(\begin{array}{c}
0\\
0\\
0
\end{array}\right)
\left(\begin{array}{ccc}
-1& 0 & 0 \\
0& 1 & 0\\
0& 0 & 1
\end{array}\right)\rangle.
\end{align*}
\normalsize
Put 
\begin{align*}
\Delta_{a11}^f= \footnotesize
\Bigl <\left(\begin{array}{c}
\frac{1}{2}\\
0\\
0
\end{array}\right)
\left(\begin{array}{ccc}
1& 0 & 0 \\
0& -1 & 0\\
0& 0 & -1
\end{array}\right),
\left(\begin{array}{c}
0\\
\frac{1}{2}\\
0
\end{array}\right)
\left(\begin{array}{ccc}
1& 0 & 0\\
0& 1 & 0\\
0& 0 & 1
\end{array}\right),
\left(\begin{array}{c}
0\\
0\\
\frac{1}{2}
\end{array}\right)
\left(\begin{array}{ccc}
1& 0 & 0\\
0& 1 & 0\\
0& 0 & 1
\end{array}\right)
\Bigr >
\end{align*}
\normalsize
and
\begin{align*}
\alpha = \footnotesize
\langle \left(\begin{array}{c}
0\\
0\\
0
\end{array}\right)
\left(\begin{array}{ccc}
-1& 0 & 0 \\
0& 1 & 0\\
0& 0 & 1
\end{array}\right)\rangle
\end{align*}
\normalsize
so that
$\Delta_{a11}=\Delta_{a11}^f\rtimes \alpha $. 
 
Suppose there is an isomorphism $\varphi:\Gamma_{a13}\rightarrow 
\Gamma_{a11}$ which induces an isomorphism
$\hat\varphi:\Delta_{a13}\rightarrow  \Delta_{a11}$
\begin{align*}
\begin{CD}
1 @>>>  \langle t_1 \rangle @>>> \Gamma_{a13}@>>>  \Delta_{a13}@>>>  1\\
  @.                @.                @V \varphi VV               @V \hat \varphi  VV\\
1 @>>>  \langle t_1 \rangle @>>> \Gamma_{a11}@>>>  \Delta_{a11}@>>>  1.
\end{CD}
\end{align*}
Consider $\alpha=\hat \varphi (p^{a_1} q^{a_2} r^{a_3}\beta )=\footnotesize 
\hat \varphi  \bigg (\left(\begin{array}{c}
\frac{1}{2}a_1\\
(-1)^{a_1}\frac{1}{2}a_2\\
\frac{1}{2}a_3
\end{array}\right)
\left(\begin{array}{ccc}
-1& 0 & 0 \\
0& (-1)^{a_1} & 0\\
0& 0 & -1
\end{array}\right)\bigg )$. \normalsize
Since $\alpha$ is a torsion element and $\hat \varphi $ is an isomorphism then
$p^{a_1} q^{a_2} r^{a_3}\beta$ is also a torsion element.
Therefore 
\begin{align*}
(p^{a_1} q^{a_2} r^{a_3}\beta)^2=&\footnotesize
\bigg (\left(\begin{array}{c}
0\\
(1+(-1)^{a_1})(-1)^{a_1}\frac{1}{2}a_2\\
0
\end{array}\right)
\left(\begin{array}{ccc}
1& 0 & 0 \\
0& 1 & 0\\
0& 0 & 1
\end{array}\right)\bigg )\\
=&
\bigg (\left(\begin{array}{c}
0\\
0\\
0
\end{array}\right)
\left(\begin{array}{ccc}
1& 0 & 0 \\
0& 1 & 0\\
0& 0 & 1
\end{array}\right)\bigg ).
\end{align*} \normalsize
Hence $a_1=odd$ or $a_2=0$. 
If $a_1=odd$ then 
\begin{align*}
p^{a_1} q^{a_2} r^{a_3}\beta=
\footnotesize 
\bigg (\left(\begin{array}{c}
\frac{1}{2}(2n+1)\\
-\frac{1}{2}a_2\\
\frac{1}{2}a_3
\end{array}\right)
\left(\begin{array}{ccc}
-1& 0 & 0 \\
0& -1 & 0\\
0& 0 & -1
\end{array}\right)\bigg ).
\end{align*}
By Bieberbach theorem, $\exists \gamma=(b,B) \in \mathbb{A}(4) $ such that 
$BL(p^{a_1} q^{a_2} r^{a_3}\beta)B^{-1}=L(\hat\varphi (p^{a_1} q^{a_2} r^{a_3}\beta))$ or 
\begin{align*}
B\footnotesize \left(\begin{array}{ccc}
-1& 0 & 0 \\
0& -1 & 0\\
0& 0 & -1
\end{array}\right)\normalsize B^{-1}=\footnotesize 
\left(\begin{array}{ccc}
-1& 0 & 0 \\
0& 1 & 0\\
0& 0 & 1
\end{array}\right),
\end{align*} which is impossible. Also if $a_2=0$
then 
\begin{align*}
p^{a_1} q^{a_2} r^{a_3}\beta=
\footnotesize 
\bigg (\left(\begin{array}{c}
\frac{1}{2}a_1\\
0\\
\frac{1}{2}a_3
\end{array}\right)
\left(\begin{array}{ccc}
-1& 0 & 0 \\
0& (-1)^{a_1} & 0\\
0& 0 & -1
\end{array}\right)\bigg ).
\end{align*}
By Bieberbach theorem, $\exists \gamma=(b,B) \in \mathbb{A}(4) $ such that 
$BL(p^{a_1} q^{a_2} r^{a_3}\beta)B^{-1}=L(\hat\varphi (p^{a_1} q^{a_2} r^{a_3}\beta))$ or 
\begin{align*}
B\footnotesize \left(\begin{array}{ccc}
-1& 0 & 0 \\
0& (-1)^{a_1} & 0\\
0& 0 & -1
\end{array}\right)\normalsize B^{-1}=\footnotesize 
\left(\begin{array}{ccc}
-1& 0 & 0 \\
0& 1 & 0\\
0& 0 & 1
\end{array}\right),
\end{align*} which is also impossible.
Therefore $\Gamma_{a11}$ is not isomorphic to $\Gamma_{a13}$.\\

\noindent {\bf (23)}.\\
\noindent $\bullet$ $M(A_{13})\approx  M(A_{15})$.\\
For
\footnotesize
\begin{align*}
&A_{13}=
\left(\begin{array}{cccc}
1& 0 & 0 & 1 \\
0& 1 & 1 & 0\\
0& 0 & 1 & 0\\
0& 0 & 0 & 1
\end{array}\right)
\hspace{.1cm}
&A_{15}=
\left(\begin{array}{cccc}
1& 0 & 0 & 1 \\
0& 1 & 1 & 1\\
0& 0 & 1 & 0\\
0& 0 & 0 & 1
\end{array}\right)
\\
&g_1(z_1,z_2,z_3,z_4)=(-z_1,z_2,z_3,\bar z_4).\hspace{.1cm}   &h_1(z_1,z_2,z_3,z_4)=(-z_1,z_2,z_3,\bar z_4)\\
&g_2(z_1,z_2,z_3,z_4)=(z_1,-z_2,\bar z_3,z_4).            &h_2h_1(z_1,z_2,z_3,z_4)=(-z_1,-z_2,\bar z_3,z_4)\\
&g_3(z_1,z_2,z_3,z_4)=(z_1,z_2,-z_3,z_4).           &h_3(z_1,z_2,z_3,z_4)=(z_1,z_2,-z_3,z_4)\\
&g_4(z_1,z_2,z_3,z_4)=(z_1,z_2,z_3,-z_4).                &h_4(z_1,z_2,z_3,z_4)=(z_1,z_2,z_3,-z_4)
\end{align*}
\normalsize then \footnotesize
\begin{align*}
\Gamma_{13}=
\Bigl < &\left(\begin{array}{cccc}
\frac{1}{2}\\
0\\
0\\
0
\end{array}\right)
\left(\begin{array}{cccc}
1& 0 & 0 & 0\\
0& 1 & 0 & 0\\
0& 0 & 1 & 0\\
0& 0 & 0 & -1
\end{array}\right),
\left(\begin{array}{cccc}
0\\
\frac{1}{2}\\
0\\
0
\end{array}\right)
\left(\begin{array}{cccc}
1& 0 & 0 & 0\\
0& 1 & 0 & 0\\
0& 0 & -1 & 0\\
0& 0 & 0 & 1
\end{array}\right),
\\
&\left(\begin{array}{cccc}
0\\
0\\
\frac{1}{2}\\
0
\end{array}\right)
\left(\begin{array}{cccc}
1& 0 & 0 & 0\\
0& 1 & 0 & 0\\
0& 0 & 1 & 0\\
0& 0 & 0 & 1
\end{array}\right),
\left(\begin{array}{cccc}
0\\
0\\
0\\
\frac{1}{2}
\end{array}\right)
\left(\begin{array}{cccc}
1& 0 & 0 & 0\\
0& 1 & 0 & 0\\
0& 0 & 1 & 0\\
0& 0 & 0 & 1
\end{array}\right)
\Bigr >
\end{align*}

\begin{align*}
\Gamma_{15}= 
\Bigl < & \left(\begin{array}{cccc}
\frac{1}{2}\\
0\\
0\\
0
\end{array}\right)
\left(\begin{array}{cccc}
1& 0 & 0 & 0\\
0& 1 & 0 & 0\\
0& 0 & 1 & 0\\
0& 0 & 0 & -1
\end{array}\right),
\left(\begin{array}{cccc}
\frac{1}{2}\\
\frac{1}{2}\\
0\\
0
\end{array}\right)
\left(\begin{array}{cccc}
1& 0 & 0 & 0\\
0& 1 & 0 & 0\\
0& 0 & -1 & 0\\
0& 0 & 0 & 1
\end{array}\right),
\\
&\left(\begin{array}{cccc}
0\\
0\\
\frac{1}{2}\\
0
\end{array}\right)
\left(\begin{array}{cccc}
1& 0 & 0 & 0\\
0& 1 & 0 & 0\\
0& 0 & 1 & 0\\
0& 0 & 0 & 1
\end{array}\right),
\left(\begin{array}{cccc}
0\\
0\\
0\\
\frac{1}{2}
\end{array}\right)
\left(\begin{array}{cccc}
1& 0 & 0 & 0\\
0& 1 & 0 & 0\\
0& 0 & 1 & 0\\
0& 0 & 0 & 1
\end{array}\right)
\Bigr >.
\end{align*}

\normalsize
\noindent
Let $\varphi(z_1,z_2,z_3,z_4)=(z_1z_2,z_2,z_3,z_4)$, we get these commutative diagrams

\footnotesize
\[
\begin{CD}
(z_1,z_2,z_3,z_4) @>\varphi>>(z_1z_2,z_2,z_3,z_4)\\
@Vg_1VV @Vh_1 VV\\
(-z_1,z_2,z_3,\bar z_4) @>\varphi>> (-z_1z_2,z_2,z_3,\bar z_4)
\end{CD}
\hspace{.1cm}
\begin{CD}
(z_1,z_2,z_3,z_4) @>\varphi>>(z_1z_2,z_2,z_3,z_4)\\
@Vg_3VV @Vh_3 VV\\
(z_1,z_2,-z_3,z_4) @>\varphi>> (z_1z_2,z_2,-z_3,z_4)
\end{CD}
\]
\[ 
\begin{CD}
(z_1,z_2,z_3,z_4) @>\varphi>>(z_1z_2,z_2,z_3,z_4)\\
@Vg_2VV @Vh_2h_1 VV\\
(z_1,-z_2,\bar z_3,z_4) @>\varphi>> (-z_1z_2,-z_2,\bar z_3,z_4)
\end{CD}
\hspace{.1cm}
\begin{CD}
(z_1,z_2,z_3,z_4) @>\varphi>>(z_1z_2,z_2,z_3,z_4)\\
@Vg_4VV @Vh_4 VV\\
(z_1,z_2,z_3,-z_4) @>\varphi>> (z_1z_2,z_2,z_3,-z_4).
\end{CD}
\]

\normalsize
\noindent 
Therefore $\exists$ 
$\gamma 
\footnotesize
=
\Bigl( \left(\begin{array}{cccc}
0\\
0\\
0\\
0
\end{array}\right)
\left(\begin{array}{cccc}
1& 1 & 0 & 0\\
0& 1 & 0 & 0\\
0& 0 & 1 & 0\\
0& 0 & 0 & 1
\end{array}\right)
\Bigr)
\normalsize
\in \mathbb{A}(n)$ 
s.t. $\gamma \Gamma_{13}\gamma ^{-1}=\Gamma_{15}$.\\ 

\noindent 
$\bullet$ $M(A_{13})\approx  M(A_{19})$.\\
For
\footnotesize
\begin{align*}
&A_{13}=
\left(\begin{array}{cccc}
1& 0 & 0 & 1 \\
0& 1 & 1 & 0\\
0& 0 & 1 & 0\\
0& 0 & 0 & 1
\end{array}\right)
\hspace{.1cm}
&A_{19}=
\left(\begin{array}{cccc}
1& 0 & 1 & 0 \\
0& 1 & 0 & 1\\
0& 0 & 1 & 0\\
0& 0 & 0 & 1
\end{array}\right)
\\
&g_1(z_1,z_2,z_3,z_4)=(-z_1,z_2,z_3,\bar z_4).\hspace{.1cm}   &h_1(z_1,z_2,z_3,z_4)=(-z_1,z_2,\bar z_3,z_4)\\
&g_2(z_1,z_2,z_3,z_4)=(z_1,-z_2,\bar z_3,z_4).            &h_2(z_1,z_2,z_3,z_4)=(z_1,-z_2,z_3,\bar z_4)\\
&g_3(z_1,z_2,z_3,z_4)=(z_1,z_2,-z_3,z_4).           &h_3(z_1,z_2,z_3,z_4)=(z_1,z_2,-z_3,z_4)\\
&g_4(z_1,z_2,z_3,z_4)=(z_1,z_2,z_3,-z_4).                &h_4(z_1,z_2,z_3,z_4)=(z_1,z_2,z_3,-z_4)
\end{align*}
\normalsize then \footnotesize
\begin{align*}
\Gamma_{13}=
\Bigl < &\left(\begin{array}{cccc}
\frac{1}{2}\\
0\\
0\\
0
\end{array}\right)
\left(\begin{array}{cccc}
1& 0 & 0 & 0\\
0& 1 & 0 & 0\\
0& 0 & 1 & 0\\
0& 0 & 0 & -1
\end{array}\right),
\left(\begin{array}{cccc}
0\\
\frac{1}{2}\\
0\\
0
\end{array}\right)
\left(\begin{array}{cccc}
1& 0 & 0 & 0\\
0& 1 & 0 & 0\\
0& 0 & -1 & 0\\
0& 0 & 0 & 1
\end{array}\right),
\\
&\left(\begin{array}{cccc}
0\\
0\\
\frac{1}{2}\\
0
\end{array}\right)
\left(\begin{array}{cccc}
1& 0 & 0 & 0\\
0& 1 & 0 & 0\\
0& 0 & 1 & 0\\
0& 0 & 0 & 1
\end{array}\right),
\left(\begin{array}{cccc}
0\\
0\\
0\\
\frac{1}{2}
\end{array}\right)
\left(\begin{array}{cccc}
1& 0 & 0 & 0\\
0& 1 & 0 & 0\\
0& 0 & 1 & 0\\
0& 0 & 0 & 1
\end{array}\right)
\Bigr >
\end{align*}

\begin{align*}
\Gamma_{19}= 
\Bigl < & \left(\begin{array}{cccc}
\frac{1}{2}\\
0\\
0\\
0
\end{array}\right)
\left(\begin{array}{cccc}
1& 0 & 0 & 0\\
0& 1 & 0 & 0\\
0& 0 & -1 & 0\\
0& 0 & 0 & 1
\end{array}\right),
\left(\begin{array}{cccc}
0\\
\frac{1}{2}\\
0\\
0
\end{array}\right)
\left(\begin{array}{cccc}
1& 0 & 0 & 0\\
0& 1 & 0 & 0\\
0& 0 & 1 & 0\\
0& 0 & 0 & -1
\end{array}\right),
\\
&\left(\begin{array}{cccc}
0\\
0\\
\frac{1}{2}\\
0
\end{array}\right)
\left(\begin{array}{cccc}
1& 0 & 0 & 0\\
0& 1 & 0 & 0\\
0& 0 & 1 & 0\\
0& 0 & 0 & 1
\end{array}\right),
\left(\begin{array}{cccc}
0\\
0\\
0\\
\frac{1}{2}
\end{array}\right)
\left(\begin{array}{cccc}
1& 0 & 0 & 0\\
0& 1 & 0 & 0\\
0& 0 & 1 & 0\\
0& 0 & 0 & 1
\end{array}\right)
\Bigr >.
\end{align*}

\normalsize
\noindent
Let $\varphi(z_1,z_2,z_3,z_4)=(z_1,z_2,z_4,z_3)$, we get these commutative diagrams

\footnotesize
\[
\begin{CD}
(z_1,z_2,z_3,z_4) @>\varphi>>(z_1,z_2,z_4,z_3)\\
@Vg_1VV @Vh_1 VV\\
(-z_1,z_2,z_3,\bar z_4) @>\varphi>> (-z_1,z_2,\bar z_4,z_3)
\end{CD}
\hspace{.1cm}
\begin{CD}
(z_1,z_2,z_3,z_4) @>\varphi>>(z_1,z_2,z_4,z_3)\\
@Vg_3VV @Vh_4 VV\\
(z_1,z_2,-z_3,z_4) @>\varphi>> (z_1,z_2,z_4,-z_3)
\end{CD}
\]
\[ 
\begin{CD}
(z_1,z_2,z_3,z_4) @>\varphi>>(z_1,z_2,z_4,z_3)\\
@Vg_2VV @Vh_2 VV\\
(z_1,-z_2,\bar z_3,z_4) @>\varphi>> (z_1,-z_2,z_4,\bar z_3)
\end{CD}
\hspace{.1cm}
\begin{CD}
(z_1,z_2,z_3,z_4) @>\varphi>>(z_1,z_2,z_4,z_3)\\
@Vg_4VV @Vh_3 VV\\
(z_1,z_2,z_3,-z_4) @>\varphi>> (z_1,z_2,-z_4,z_3).
\end{CD}
\]

\normalsize
\noindent 
Therefore $\exists$ 
$\gamma 
\footnotesize
=
\Bigl( \left(\begin{array}{cccc}
0\\
0\\
0\\
0
\end{array}\right)
\left(\begin{array}{cccc}
1& 0 & 0 & 0\\
0& 1 & 0 & 0\\
0& 0 & 0 & 1\\
0& 0 & 1 & 0
\end{array}\right)
\Bigr)
\normalsize
\in \mathbb{A}(n)$ 
s.t. $\gamma \Gamma_{13}\gamma ^{-1}=\Gamma_{19}$.\\ 

\noindent 
$\bullet$ $M(A_{13})\approx  M(A_{23})$.\\
For
\footnotesize
\begin{align*}
&A_{13}=
\left(\begin{array}{cccc}
1& 0 & 0 & 1 \\
0& 1 & 1 & 0\\
0& 0 & 1 & 0\\
0& 0 & 0 & 1
\end{array}\right)
\hspace{.1cm}
&A_{23}=
\left(\begin{array}{cccc}
1& 0 & 1 & 1 \\
0& 1 & 0 & 1\\
0& 0 & 1 & 0\\
0& 0 & 0 & 1
\end{array}\right)
\\
&g_1(z_1,z_2,z_3,z_4)=(-z_1,z_2,z_3,\bar z_4).\hspace{.1cm}   &h_1h_2(z_1,z_2,z_3,z_4)=(-z_1,-z_2,\bar z_3,z_4)\\
&g_2(z_1,z_2,z_3,z_4)=(z_1,-z_2,\bar z_3,z_4).            &h_2(z_1,z_2,z_3,z_4)=(z_1,-z_2,z_3,\bar z_4)\\
&g_3(z_1,z_2,z_3,z_4)=(z_1,z_2,-z_3,z_4).           &h_3(z_1,z_2,z_3,z_4)=(z_1,z_2,-z_3,z_4)\\
&g_4(z_1,z_2,z_3,z_4)=(z_1,z_2,z_3,-z_4).                &h_4(z_1,z_2,z_3,z_4)=(z_1,z_2,z_3,-z_4)
\end{align*}
\normalsize then \footnotesize
\begin{align*}
\Gamma_{13}=
\Bigl < &\left(\begin{array}{cccc}
\frac{1}{2}\\
0\\
0\\
0
\end{array}\right)
\left(\begin{array}{cccc}
1& 0 & 0 & 0\\
0& 1 & 0 & 0\\
0& 0 & 1 & 0\\
0& 0 & 0 & -1
\end{array}\right),
\left(\begin{array}{cccc}
0\\
\frac{1}{2}\\
0\\
0
\end{array}\right)
\left(\begin{array}{cccc}
1& 0 & 0 & 0\\
0& 1 & 0 & 0\\
0& 0 & -1 & 0\\
0& 0 & 0 & 1
\end{array}\right),
\\
&\left(\begin{array}{cccc}
0\\
0\\
\frac{1}{2}\\
0
\end{array}\right)
\left(\begin{array}{cccc}
1& 0 & 0 & 0\\
0& 1 & 0 & 0\\
0& 0 & 1 & 0\\
0& 0 & 0 & 1
\end{array}\right),
\left(\begin{array}{cccc}
0\\
0\\
0\\
\frac{1}{2}
\end{array}\right)
\left(\begin{array}{cccc}
1& 0 & 0 & 0\\
0& 1 & 0 & 0\\
0& 0 & 1 & 0\\
0& 0 & 0 & 1
\end{array}\right)
\Bigr >
\end{align*}

\begin{align*}
\Gamma_{23}= 
\Bigl < & \left(\begin{array}{cccc}
\frac{1}{2}\\
\frac{1}{2}\\
0\\
0
\end{array}\right)
\left(\begin{array}{cccc}
1& 0 & 0 & 0\\
0& 1 & 0 & 0\\
0& 0 & -1 & 0\\
0& 0 & 0 & 1
\end{array}\right),
\left(\begin{array}{cccc}
0\\
\frac{1}{2}\\
0\\
0
\end{array}\right)
\left(\begin{array}{cccc}
1& 0 & 0 & 0\\
0& 1 & 0 & 0\\
0& 0 & 1 & 0\\
0& 0 & 0 & -1
\end{array}\right),
\\
&\left(\begin{array}{cccc}
0\\
0\\
\frac{1}{2}\\
0
\end{array}\right)
\left(\begin{array}{cccc}
1& 0 & 0 & 0\\
0& 1 & 0 & 0\\
0& 0 & 1 & 0\\
0& 0 & 0 & 1
\end{array}\right),
\left(\begin{array}{cccc}
0\\
0\\
0\\
\frac{1}{2}
\end{array}\right)
\left(\begin{array}{cccc}
1& 0 & 0 & 0\\
0& 1 & 0 & 0\\
0& 0 & 1 & 0\\
0& 0 & 0 & 1
\end{array}\right)
\Bigr >.
\end{align*}

\normalsize
\noindent
Let $\varphi(z_1,z_2,z_3,z_4)=(z_1,z_1z_2,z_4,z_3)$, we get these commutative diagrams

\footnotesize
\[
\begin{CD}
(z_1,z_2,z_3,z_4) @>\varphi>>(z_1,z_1z_2,z_4,z_3)\\
@Vg_1VV @Vh_1h_2 VV\\
(-z_1,z_2,z_3,\bar z_4) @>\varphi>> (-z_1,-z_1z_2,\bar z_4,z_3)
\end{CD}
\hspace{.1cm}
\begin{CD}
(z_1,z_2,z_3,z_4) @>\varphi>>(z_1,z_1z_2,z_4,z_3)\\
@Vg_3VV @Vh_4 VV\\
(z_1,z_2,-z_3,z_4) @>\varphi>> (z_1,z_1z_2,z_4,-z_3)
\end{CD}
\]
\[ 
\begin{CD}
(z_1,z_2,z_3,z_4) @>\varphi>>(z_1,z_1z_2,z_4,z_3)\\
@Vg_2VV @Vh_2 VV\\
(z_1,-z_2,\bar z_3,z_4) @>\varphi>> (z_1,-z_1z_2,z_4,\bar z_3)
\end{CD}
\hspace{.1cm}
\begin{CD}
(z_1,z_2,z_3,z_4) @>\varphi>>(z_1,z_1z_2,z_4,z_3)\\
@Vg_4VV @Vh_3 VV\\
(z_1,z_2,z_3,-z_4) @>\varphi>> (z_1,z_2,-z_4,z_3).
\end{CD}
\]

\normalsize
\noindent 
Therefore $\exists$ 
$\gamma 
\footnotesize
=
\Bigl( \left(\begin{array}{cccc}
0\\
0\\
0\\
0
\end{array}\right)
\left(\begin{array}{cccc}
1& 0 & 0 & 0\\
1& 1 & 0 & 0\\
0& 0 & 0 & 1\\
0& 0 & 1 & 0
\end{array}\right)
\Bigr)
\normalsize
\in \mathbb{A}(n)$ 
s.t. $\gamma \Gamma_{13}\gamma ^{-1}=\Gamma_{23}$.\\ 

\noindent 
$\bullet$ $M(A_{13})\approx  M(A_{27})$.\\
For
\footnotesize
\begin{align*}
&A_{13}=
\left(\begin{array}{cccc}
1& 0 & 0 & 1 \\
0& 1 & 1 & 0\\
0& 0 & 1 & 0\\
0& 0 & 0 & 1
\end{array}\right)
\hspace{.1cm}
&A_{27}=
\left(\begin{array}{cccc}
1& 0 & 1 & 0 \\
0& 1 & 1 & 1\\
0& 0 & 1 & 0\\
0& 0 & 0 & 1
\end{array}\right)
\\
&g_1(z_1,z_2,z_3,z_4)=(-z_1,z_2,z_3,\bar z_4).\hspace{.1cm}   &h_1(z_1,z_2,z_3,z_4)=(-z_1,z_2,\bar z_3,z_4)\\
&g_2(z_1,z_2,z_3,z_4)=(z_1,-z_2,\bar z_3,z_4).            &h_2h_1(z_1,z_2,z_3,z_4)=(-z_1,-z_2,z_3,\bar z_4)\\
&g_3(z_1,z_2,z_3,z_4)=(z_1,z_2,-z_3,z_4).           &h_3(z_1,z_2,z_3,z_4)=(z_1,z_2,-z_3,z_4)\\
&g_4(z_1,z_2,z_3,z_4)=(z_1,z_2,z_3,-z_4).                &h_4(z_1,z_2,z_3,z_4)=(z_1,z_2,z_3,-z_4)
\end{align*}
\normalsize then \footnotesize
\begin{align*}
\Gamma_{13}=
\Bigl < &\left(\begin{array}{cccc}
\frac{1}{2}\\
0\\
0\\
0
\end{array}\right)
\left(\begin{array}{cccc}
1& 0 & 0 & 0\\
0& 1 & 0 & 0\\
0& 0 & 1 & 0\\
0& 0 & 0 & -1
\end{array}\right),
\left(\begin{array}{cccc}
0\\
\frac{1}{2}\\
0\\
0
\end{array}\right)
\left(\begin{array}{cccc}
1& 0 & 0 & 0\\
0& 1 & 0 & 0\\
0& 0 & -1 & 0\\
0& 0 & 0 & 1
\end{array}\right),
\\
&\left(\begin{array}{cccc}
0\\
0\\
\frac{1}{2}\\
0
\end{array}\right)
\left(\begin{array}{cccc}
1& 0 & 0 & 0\\
0& 1 & 0 & 0\\
0& 0 & 1 & 0\\
0& 0 & 0 & 1
\end{array}\right),
\left(\begin{array}{cccc}
0\\
0\\
0\\
\frac{1}{2}
\end{array}\right)
\left(\begin{array}{cccc}
1& 0 & 0 & 0\\
0& 1 & 0 & 0\\
0& 0 & 1 & 0\\
0& 0 & 0 & 1
\end{array}\right)
\Bigr >
\end{align*}

\begin{align*}
\Gamma_{27}= 
\Bigl < & \left(\begin{array}{cccc}
\frac{1}{2}\\
0\\
0\\
0
\end{array}\right)
\left(\begin{array}{cccc}
1& 0 & 0 & 0\\
0& 1 & 0 & 0\\
0& 0 & -1 & 0\\
0& 0 & 0 & 1
\end{array}\right),
\left(\begin{array}{cccc}
\frac{1}{2}\\
\frac{1}{2}\\
0\\
0
\end{array}\right)
\left(\begin{array}{cccc}
1& 0 & 0 & 0\\
0& 1 & 0 & 0\\
0& 0 & 1 & 0\\
0& 0 & 0 & -1
\end{array}\right),
\\
&\left(\begin{array}{cccc}
0\\
0\\
\frac{1}{2}\\
0
\end{array}\right)
\left(\begin{array}{cccc}
1& 0 & 0 & 0\\
0& 1 & 0 & 0\\
0& 0 & 1 & 0\\
0& 0 & 0 & 1
\end{array}\right),
\left(\begin{array}{cccc}
0\\
0\\
0\\
\frac{1}{2}
\end{array}\right)
\left(\begin{array}{cccc}
1& 0 & 0 & 0\\
0& 1 & 0 & 0\\
0& 0 & 1 & 0\\
0& 0 & 0 & 1
\end{array}\right)
\Bigr >.
\end{align*}

\normalsize
\noindent
Let $\varphi(z_1,z_2,z_3,z_4)=(z_1z_2,z_2,z_4,z_3)$, we get these commutative diagrams

\footnotesize
\[
\begin{CD}
(z_1,z_2,z_3,z_4) @>\varphi>>(z_1z_2,z_2,z_4,z_3)\\
@Vg_1VV @Vh_1 VV\\
(-z_1,z_2,z_3,\bar z_4) @>\varphi>> (-z_1z_2,z_2,\bar z_4,z_3)
\end{CD}
\hspace{.1cm}
\begin{CD}
(z_1,z_2,z_3,z_4) @>\varphi>>(z_1z_2,z_2,z_4,z_3)\\
@Vg_3VV @Vh_4 VV\\
(z_1,z_2,-z_3,z_4) @>\varphi>> (z_1z_2,z_2,z_4,-z_3)
\end{CD}
\]
\[ 
\begin{CD}
(z_1,z_2,z_3,z_4) @>\varphi>>(z_1z_2,z_2,z_4,z_3)\\
@Vg_2VV @Vh_2h_1 VV\\
(z_1,-z_2,\bar z_3,z_4) @>\varphi>> (-z_1z_2,-z_2,z_4,\bar z_3)
\end{CD}
\hspace{.1cm}
\begin{CD}
(z_1,z_2,z_3,z_4) @>\varphi>>(z_1z_2,z_2,z_4,z_3)\\
@Vg_4VV @Vh_3 VV\\
(z_1,z_2,z_3,-z_4) @>\varphi>> (z_1z_2,z_2,-z_4,z_3).
\end{CD}
\]

\normalsize
\noindent 
Therefore $\exists$ 
$\gamma 
\footnotesize
=
\Bigl( \left(\begin{array}{cccc}
0\\
0\\
0\\
0
\end{array}\right)
\left(\begin{array}{cccc}
1& 1 & 0 & 0\\
0& 1 & 0 & 0\\
0& 0 & 0 & 1\\
0& 0 & 1 & 0
\end{array}\right)
\Bigr)
\normalsize
\in \mathbb{A}(n)$ 
s.t. $\gamma \Gamma_{13}\gamma ^{-1}=\Gamma_{27}$.\\ 

\noindent 
$\bullet$ $M(A_{13})\approx  M(A_{29})$.\\
For 
\footnotesize
\begin{align*}
&A_{13}=
\left(\begin{array}{cccc}
1& 0 & 0 & 1 \\
0& 1 & 1 & 0\\
0& 0 & 1 & 0\\
0& 0 & 0 & 1
\end{array}\right)
\hspace{.1cm}
&A_{29}=
\left(\begin{array}{cccc}
1& 0 & 1 & 1 \\
0& 1 & 1 & 0\\
0& 0 & 1 & 0\\
0& 0 & 0 & 1
\end{array}\right)
\\
&g_1(z_1,z_2,z_3,z_4)=(-z_1,z_2,z_3,\bar z_4).\hspace{.1cm}   &h_1h_2(z_1,z_2,z_3,z_4)=(-z_1,-z_2,z_3,\bar z_4)\\
&g_2(z_1,z_2,z_3,z_4)=(z_1,-z_2,\bar z_3,z_4).            &h_2(z_1,z_2,z_3,z_4)=(z_1,-z_2,\bar z_3,z_4)\\
&g_3(z_1,z_2,z_3,z_4)=(z_1,z_2,-z_3,z_4).           &h_3(z_1,z_2,z_3,z_4)=(z_1,z_2,-z_3,z_4)\\
&g_4(z_1,z_2,z_3,z_4)=(z_1,z_2,z_3,-z_4).                &h_4(z_1,z_2,z_3,z_4)=(z_1,z_2,z_3,-z_4)
\end{align*}
\normalsize then \footnotesize
\begin{align*}
\Gamma_{13}=
\Bigl < &\left(\begin{array}{cccc}
\frac{1}{2}\\
0\\
0\\
0
\end{array}\right)
\left(\begin{array}{cccc}
1& 0 & 0 & 0\\
0& 1 & 0 & 0\\
0& 0 & 1 & 0\\
0& 0 & 0 & -1
\end{array}\right),
\left(\begin{array}{cccc}
0\\
\frac{1}{2}\\
0\\
0
\end{array}\right)
\left(\begin{array}{cccc}
1& 0 & 0 & 0\\
0& 1 & 0 & 0\\
0& 0 & -1 & 0\\
0& 0 & 0 & 1
\end{array}\right),
\\
&\left(\begin{array}{cccc}
0\\
0\\
\frac{1}{2}\\
0
\end{array}\right)
\left(\begin{array}{cccc}
1& 0 & 0 & 0\\
0& 1 & 0 & 0\\
0& 0 & 1 & 0\\
0& 0 & 0 & 1
\end{array}\right),
\left(\begin{array}{cccc}
0\\
0\\
0\\
\frac{1}{2}
\end{array}\right)
\left(\begin{array}{cccc}
1& 0 & 0 & 0\\
0& 1 & 0 & 0\\
0& 0 & 1 & 0\\
0& 0 & 0 & 1
\end{array}\right)
\Bigr >
\end{align*}

\begin{align*}
\Gamma_{29}= 
\Bigl < & \left(\begin{array}{cccc}
\frac{1}{2}\\
\frac{1}{2}\\
0\\
0
\end{array}\right)
\left(\begin{array}{cccc}
1& 0 & 0 & 0\\
0& 1 & 0 & 0\\
0& 0 & 1 & 0\\
0& 0 & 0 & -1
\end{array}\right),
\left(\begin{array}{cccc}
0\\
\frac{1}{2}\\
0\\
0
\end{array}\right)
\left(\begin{array}{cccc}
1& 0 & 0 & 0\\
0& 1 & 0 & 0\\
0& 0 & -1 & 0\\
0& 0 & 0 & 1
\end{array}\right),
\\
&\left(\begin{array}{cccc}
0\\
0\\
\frac{1}{2}\\
0
\end{array}\right)
\left(\begin{array}{cccc}
1& 0 & 0 & 0\\
0& 1 & 0 & 0\\
0& 0 & 1 & 0\\
0& 0 & 0 & 1
\end{array}\right),
\left(\begin{array}{cccc}
0\\
0\\
0\\
\frac{1}{2}
\end{array}\right)
\left(\begin{array}{cccc}
1& 0 & 0 & 0\\
0& 1 & 0 & 0\\
0& 0 & 1 & 0\\
0& 0 & 0 & 1
\end{array}\right)
\Bigr >.
\end{align*}

\normalsize
\noindent
Let $\varphi(z_1,z_2,z_3,z_4)=(z_1,z_1z_2,z_3,z_4)$, we get these commutative diagrams

\footnotesize
\[
\begin{CD}
(z_1,z_2,z_3,z_4) @>\varphi>>(z_1,z_1z_2,z_3,z_4)\\
@Vg_1VV @Vh_1h_2 VV\\
(-z_1,z_2,z_3,\bar z_4) @>\varphi>> (-z_1,-z_1z_2,z_3,\bar z_4)
\end{CD}
\hspace{.1cm}
\begin{CD}
(z_1,z_2,z_3,z_4) @>\varphi>>(z_1,z_1z_2,z_3,z_4)\\
@Vg_3VV @Vh_3 VV\\
(z_1,z_2,-z_3,z_4) @>\varphi>> (z_1,z_1z_2,-z_3,z_4)
\end{CD}
\]
\[ 
\begin{CD}
(z_1,z_2,z_3,z_4) @>\varphi>>(z_1,z_1z_2,z_3,z_4)\\
@Vg_2VV @Vh_2 VV\\
(z_1,-z_2,\bar z_3,z_4) @>\varphi>> (z_1,-z_1z_2,\bar z_3,z_4)
\end{CD}
\hspace{.1cm}
\begin{CD}
(z_1,z_2,z_3,z_4) @>\varphi>>(z_1,z_1z_2,z_3,z_4)\\
@Vg_4VV @Vh_4 VV\\
(z_1,z_2,z_3,-z_4) @>\varphi>> (z_1,z_1z_2,z_3,-z_4).
\end{CD}
\]

\normalsize
\noindent 
Therefore $\exists$ 
$\gamma 
\footnotesize
=
\Bigl( \left(\begin{array}{cccc}
0\\
0\\
0\\
0
\end{array}\right)
\left(\begin{array}{cccc}
1& 0 & 0 & 0\\
1& 1 & 0 & 0\\
0& 0 & 1 & 0\\
0& 0 & 0 & 1
\end{array}\right)
\Bigr)
\normalsize
\in \mathbb{A}(n)$ 
s.t. $\gamma \Gamma_{13}\gamma ^{-1}=\Gamma_{29}$.\\ 

\noindent 
$\bullet$ $M(A_{13})\approx  M(A_{a2})$.\\
For 
\footnotesize
\begin{align*}
&A_{13}=
\left(\begin{array}{cccc}
1& 0 & 0 & 1 \\
0& 1 & 1 & 0\\
0& 0 & 1 & 0\\
0& 0 & 0 & 1
\end{array}\right)
\hspace{.1cm}
&A_{a2}=
\left(\begin{array}{cccc}
1& 1 & 0 & 0 \\
0& 1 & 0 & 0\\
0& 0 & 1 & 1\\
0& 0 & 0 & 1
\end{array}\right)
\\
&g_1(z_1,z_2,z_3,z_4)=(-z_1,z_2,z_3,\bar z_4).\hspace{.1cm}   &h_1(z_1,z_2,z_3,z_4)=(-z_1,\bar z_2,z_3,z_4)\\
&g_2(z_1,z_2,z_3,z_4)=(z_1,-z_2,\bar z_3,z_4).            &h_2(z_1,z_2,z_3,z_4)=(z_1,-z_2,z_3,z_4)\\
&g_3(z_1,z_2,z_3,z_4)=(z_1,z_2,-z_3,z_4).           &h_3(z_1,z_2,z_3,z_4)=(z_1,z_2,-z_3,\bar z_4)\\
&g_4(z_1,z_2,z_3,z_4)=(z_1,z_2,z_3,-z_4).                &h_4(z_1,z_2,z_3,z_4)=(z_1,z_2,z_3,-z_4)
\end{align*}
\normalsize then \footnotesize
\begin{align*}
\Gamma_{13}=
\Bigl < &\left(\begin{array}{cccc}
\frac{1}{2}\\
0\\
0\\
0
\end{array}\right)
\left(\begin{array}{cccc}
1& 0 & 0 & 0\\
0& 1 & 0 & 0\\
0& 0 & 1 & 0\\
0& 0 & 0 & -1
\end{array}\right),
\left(\begin{array}{cccc}
0\\
\frac{1}{2}\\
0\\
0
\end{array}\right)
\left(\begin{array}{cccc}
1& 0 & 0 & 0\\
0& 1 & 0 & 0\\
0& 0 & -1 & 0\\
0& 0 & 0 & 1
\end{array}\right),
\\
&\left(\begin{array}{cccc}
0\\
0\\
\frac{1}{2}\\
0
\end{array}\right)
\left(\begin{array}{cccc}
1& 0 & 0 & 0\\
0& 1 & 0 & 0\\
0& 0 & 1 & 0\\
0& 0 & 0 & 1
\end{array}\right),
\left(\begin{array}{cccc}
0\\
0\\
0\\
\frac{1}{2}
\end{array}\right)
\left(\begin{array}{cccc}
1& 0 & 0 & 0\\
0& 1 & 0 & 0\\
0& 0 & 1 & 0\\
0& 0 & 0 & 1
\end{array}\right)
\Bigr >
\end{align*}

\begin{align*}
\Gamma_{a2}= 
\Bigl < & \left(\begin{array}{cccc}
\frac{1}{2}\\
0\\
0\\
0
\end{array}\right)
\left(\begin{array}{cccc}
1& 0 & 0 & 0\\
0& -1 & 0 & 0\\
0& 0 & 1 & 0\\
0& 0 & 0 & 1
\end{array}\right),
\left(\begin{array}{cccc}
0\\
\frac{1}{2}\\
0\\
0
\end{array}\right)
\left(\begin{array}{cccc}
1& 0 & 0 & 0\\
0& 1 & 0 & 0\\
0& 0 & 1 & 0\\
0& 0 & 0 & 1
\end{array}\right),
\\
&\left(\begin{array}{cccc}
0\\
0\\
\frac{1}{2}\\
0
\end{array}\right)
\left(\begin{array}{cccc}
1& 0 & 0 & 0\\
0& 1 & 0 & 0\\
0& 0 & 1 & 0\\
0& 0 & 0 & -1
\end{array}\right),
\left(\begin{array}{cccc}
0\\
0\\
0\\
\frac{1}{2}
\end{array}\right)
\left(\begin{array}{cccc}
1& 0 & 0 & 0\\
0& 1 & 0 & 0\\
0& 0 & 1 & 0\\
0& 0 & 0 & 1
\end{array}\right)
\Bigr >.
\end{align*}

\normalsize
\noindent
Let $\varphi(z_1,z_2,z_3,z_4)=(z_2,z_3,z_1,z_4)$, we get these commutative diagrams

\footnotesize
\[
\begin{CD}
(z_1,z_2,z_3,z_4) @>\varphi>>(z_2,z_3,z_1,z_4)\\
@Vg_1VV @Vh_3 VV\\
(-z_1,z_2,z_3,\bar z_4) @>\varphi>> (z_2,z_3,-z_1,\bar z_4)
\end{CD}
\hspace{.1cm}
\begin{CD}
(z_1,z_2,z_3,z_4) @>\varphi>>(z_2,z_3,z_1,z_4)\\
@Vg_3VV @Vh_2 VV\\
(z_1,z_2,-z_3,z_4) @>\varphi>> (z_2,-z_3,z_1,z_4)
\end{CD}
\]
\[ 
\begin{CD}
(z_1,z_2,z_3,z_4) @>\varphi>>(z_2,z_3,z_1,z_4)\\
@Vg_2VV @Vh_1 VV\\
(z_1,-z_2,\bar z_3,z_4) @>\varphi>> (-z_2,\bar z_3,z_1,z_4)
\end{CD}
\hspace{.1cm}
\begin{CD}
(z_1,z_2,z_3,z_4) @>\varphi>>(z_2,z_3,z_1,z_4)\\
@Vg_4VV @Vh_4 VV\\
(z_1,z_2,z_3,-z_4) @>\varphi>> (z_2,z_3,z_1,-z_4).
\end{CD}
\]

\normalsize
\noindent 
Therefore $\exists$ 
$\gamma 
\footnotesize
=
\Bigl( \left(\begin{array}{cccc}
0\\
0\\
0\\
0
\end{array}\right)
\left(\begin{array}{cccc}
0& 1 & 0 & 0\\
0& 0 & 1 & 0\\
1& 0 & 0 & 0\\
0& 0 & 0 & 1
\end{array}\right)
\Bigr)
\normalsize
\in \mathbb{A}(n)$ 
s.t. $\gamma \Gamma_{13}\gamma ^{-1}=\Gamma_{a2}$.\\ 

\noindent 
$\bullet$ $M(A_{a2})\approx  M(A_{a6})$.\\
For
\footnotesize
\begin{align*}
&A_{a2}=
\left(\begin{array}{cccc}
1& 1 & 0 & 0 \\
0& 1 & 0 & 0\\
0& 0 & 1 & 1\\
0& 0 & 0 & 1
\end{array}\right)
\hspace{.1cm}
&A_{a6}=
\left(\begin{array}{cccc}
1& 1 & 0 & 1 \\
0& 1 & 0 & 0\\
0& 0 & 1 & 1\\
0& 0 & 0 & 1
\end{array}\right)
\\
&g_1(z_1,z_2,z_3,z_4)=(-z_1,\bar z_2,z_3,z_4).\hspace{.1cm}   &h_1h_3(z_1,z_2,z_3,z_4)=(-z_1,\bar z_2,-z_3,z_4)\\
&g_2(z_1,z_2,z_3,z_4)=(z_1,-z_2,z_3,z_4).            &h_2(z_1,z_2,z_3,z_4)=(z_1,-z_2,z_3,z_4)\\
&g_3(z_1,z_2,z_3,z_4)=(z_1,z_2,-z_3,\bar z_4).           &h_3(z_1,z_2,z_3,z_4)=(z_1,z_2,-z_3,\bar z_4)\\
&g_4(z_1,z_2,z_3,z_4)=(z_1,z_2,z_3,-z_4).                &h_4(z_1,z_2,z_3,z_4)=(z_1,z_2,z_3,-z_4)
\end{align*}
\normalsize then \footnotesize
\begin{align*}
\Gamma_{a2}= 
\Bigl < & \left(\begin{array}{cccc}
\frac{1}{2}\\
0\\
0\\
0
\end{array}\right)
\left(\begin{array}{cccc}
1& 0 & 0 & 0\\
0& -1 & 0 & 0\\
0& 0 & 1 & 0\\
0& 0 & 0 & 1
\end{array}\right),
\left(\begin{array}{cccc}
0\\
\frac{1}{2}\\
0\\
0
\end{array}\right)
\left(\begin{array}{cccc}
1& 0 & 0 & 0\\
0& 1 & 0 & 0\\
0& 0 & 1 & 0\\
0& 0 & 0 & 1
\end{array}\right),
\\
&\left(\begin{array}{cccc}
0\\
0\\
\frac{1}{2}\\
0
\end{array}\right)
\left(\begin{array}{cccc}
1& 0 & 0 & 0\\
0& 1 & 0 & 0\\
0& 0 & 1 & 0\\
0& 0 & 0 & -1
\end{array}\right),
\left(\begin{array}{cccc}
0\\
0\\
0\\
\frac{1}{2}
\end{array}\right)
\left(\begin{array}{cccc}
1& 0 & 0 & 0\\
0& 1 & 0 & 0\\
0& 0 & 1 & 0\\
0& 0 & 0 & 1
\end{array}\right)
\Bigr >
\end{align*}

\begin{align*}
\Gamma_{a6}=
\Bigl < &\left(\begin{array}{cccc}
\frac{1}{2}\\
0\\
\frac{1}{2}\\
0
\end{array}\right)
\left(\begin{array}{cccc}
1& 0 & 0 & 0\\
0& -1 & 0 & 0\\
0& 0 & 1 & 0\\
0& 0 & 0 & 1
\end{array}\right),
\left(\begin{array}{cccc}
0\\
\frac{1}{2}\\
0\\
0
\end{array}\right)
\left(\begin{array}{cccc}
1& 0 & 0 & 0\\
0& 1 & 0 & 0\\
0& 0 & 1 & 0\\
0& 0 & 0 & 1
\end{array}\right),
\\
&\left(\begin{array}{cccc}
0\\
0\\
\frac{1}{2}\\
0
\end{array}\right)
\left(\begin{array}{cccc}
1& 0 & 0 & 0\\
0& 1 & 0 & 0\\
0& 0 & 1 & 0\\
0& 0 & 0 & -1
\end{array}\right),
\left(\begin{array}{cccc}
0\\
0\\
0\\
\frac{1}{2}
\end{array}\right)
\left(\begin{array}{cccc}
1& 0 & 0 & 0\\
0& 1 & 0 & 0\\
0& 0 & 1 & 0\\
0& 0 & 0 & 1
\end{array}\right)
\Bigr >.
\end{align*}

\normalsize
\noindent
Let $\varphi(z_1,z_2,z_3,z_4)=(z_1,z_2,z_1z_3,z_4)$, we get these commutative diagrams

\footnotesize
\[
\begin{CD}
(z_1,z_2,z_3,z_4) @>\varphi>>(z_1,z_2,z_1z_3,z_4)\\
@Vg_1VV @Vh_1h_3 VV\\
(-z_1,\bar z_2,z_3,z_4) @>\varphi>> (-z_1,\bar z_2,-z_1z_3,z_4)
\end{CD}
\hspace{.1cm}
\begin{CD}
(z_1,z_2,z_3,z_4) @>\varphi>>(z_1,z_2,z_1z_3,z_4)\\
@Vg_3VV @Vh_3 VV\\
(z_1,z_2,-z_3,\bar z_4) @>\varphi>> (z_1,z_2,-z_1z_3,\bar z_4)
\end{CD}
\]
\[ 
\begin{CD}
(z_1,z_2,z_3,z_4) @>\varphi>>(z_1,z_2,z_1z_3,z_4)\\
@Vg_2VV @Vh_2 VV\\
(z_1,-z_2,z_3,z_4) @>\varphi>> (z_1,-z_2,z_1z_3,z_4)
\end{CD}
\hspace{.1cm}
\begin{CD}
(z_1,z_2,z_3,z_4) @>\varphi>>(z_1,z_2,z_1z_3,z_4)\\
@Vg_4VV @Vh_4 VV\\
(z_1,z_2,z_3,-z_4) @>\varphi>> (z_1,z_2,z_1z_3,-z_4).
\end{CD}
\]

\normalsize
\noindent 
Therefore $\exists$ 
$\gamma 
\footnotesize
=
\Bigl( \left(\begin{array}{cccc}
0\\
0\\
0\\
0
\end{array}\right)
\left(\begin{array}{cccc}
1& 0 & 0 & 0\\
0& 1 & 0 & 0\\
1& 0 & 1 & 0\\
0& 0 & 0 & 1
\end{array}\right)
\Bigr)
\normalsize
\in \mathbb{A}(n)$ 
s.t. $\gamma \Gamma_{a2}\gamma ^{-1}=\Gamma_{a6}$.\\ 

\noindent {\bf (24)}. Similar to (4).\\

\noindent {\bf (25)}. Similar to (11).\\

\noindent {\bf (26)}. $M(A_{13})$ is not diffeomorphic to $M(A_{a3})$.\\

Let \footnotesize
\begin{align*}
\Gamma_{a3}= <&\tilde {g}_1,\tilde {g}_2,t_3,t_4>\\
=\Bigl < &\left(\begin{array}{cccc}
\frac{1}{2}\\
0\\
0\\
0
\end{array}\right)
\left(\begin{array}{cccc}
1& 0 & 0 & 0\\
0& -1 & 0 & 0\\
0& 0 & 1 & 0\\
0& 0 & 0 & 1
\end{array}\right),
\left(\begin{array}{cccc}
0\\
\frac{1}{2}\\
0\\
0
\end{array}\right)
\left(\begin{array}{cccc}
1& 0 & 0 & 0\\
0& 1 & 0 & 0\\
0& 0 & 1 & 0\\
0& 0 & 0 & -1
\end{array}\right),
\\
&\left(\begin{array}{cccc}
0\\
0\\
\frac{1}{2}\\
0
\end{array}\right)
\left(\begin{array}{cccc}
1& 0 & 0 & 0\\
0& 1 & 0 & 0\\
0& 0 & 1 & 0\\
0& 0 & 0 & 1
\end{array}\right),
\left(\begin{array}{cccc}
0\\
0\\
0\\
\frac{1}{2}
\end{array}\right)
\left(\begin{array}{cccc}
1& 0 & 0 & 0\\
0& 1 & 0 & 0\\
0& 0 & 1 & 0\\
0& 0 & 0 & 1
\end{array}\right)
\Bigr >
\\
\Gamma_{13}= <&\tilde {h}_1,\tilde {h}_2,t_3,t_4>\\
=\Bigl < & \left(\begin{array}{cccc}
\frac{1}{2}\\
0\\
0\\
0
\end{array}\right)
\left(\begin{array}{cccc}
1& 0 & 0 & 0\\
0& 1 & 0 & 0\\
0& 0 & 1 & 0\\
0& 0 & 0 & -1
\end{array}\right),
\left(\begin{array}{cccc}
0\\
\frac{1}{2}\\
0\\
0
\end{array}\right)
\left(\begin{array}{cccc}
1& 0 & 0 & 0\\
0& 1 & 0 & 0\\
0& 0 & -1 & 0\\
0& 0 & 0 & 1
\end{array}\right),
\\
&\left(\begin{array}{cccc}
0\\
0\\
\frac{1}{2}\\
0
\end{array}\right)
\left(\begin{array}{cccc}
1& 0 & 0 & 0\\
0& 1 & 0 & 0\\
0& 0 & 1 & 0\\
0& 0 & 0 & 1
\end{array}\right),
\left(\begin{array}{cccc}
0\\
0\\
0\\
\frac{1}{2}
\end{array}\right)
\left(\begin{array}{cccc}
1& 0 & 0 & 0\\
0& 1 & 0 & 0\\
0& 0 & 1 & 0\\
0& 0 & 0 & 1
\end{array}\right)
\Bigr >.
\end{align*}

\normalsize
We know from (15) above, that the center $\displaystyle \mathcal C(\Gamma_{a3})= \langle t_1,
t_3\rangle$ where $t_1=\tilde {g}_1^2$. There is an extension
\begin{align*}
1\rightarrow  \langle t_1,t_3\rangle \rightarrow  \Gamma_{a3}\rightarrow  \Delta_{a3}\rightarrow  1,
\end{align*}
where $\Delta_{a3}$
is isomorphic to
\begin{align*}
\Bigl <&\gamma_1=\left(\begin{array}{c}
\frac{1}{2}\\
0
\end{array}\right)
\left(\begin{array}{cc}
1& 0 \\
0& -1
\end{array}\right), \gamma_2=\left(\begin{array}{c}
0\\
\frac{1}{2}
\end{array}\right) \left(\begin{array}{cc}
1& 0 \\
0& 1
\end{array}\right)
\Bigr >\rtimes \langle \beta \rangle
\end{align*}
where
$\beta =\Bigl(\left(\begin{array}{c}
0\\
0
\end{array}\right)
\left(\begin{array}{ccc}
-1& 0 \\
0& 1
\end{array}\right)\Bigr)$.

For $\Gamma_{13}$, since 
 $\tilde {h}_1 t_3\tilde {h}_1^{-1}=t_3$, 
$\tilde {h}_1 t_4\tilde {h}_1^{-1}=t_4^{-1}$, 
 $h_2 t_3 h_2^{-1}=t_3^{-1}$ and
$h_2 t_4 h_2^{-1}=t_4$, then $\Gamma_{13}=\langle t_3,t_4 \rangle \rtimes \langle \tilde {h}_1,\tilde {h}_2 \rangle$.
Then center $\mathcal C(\Gamma_{13})= \langle t_1,t_2\rangle$.
It induces an extension
\begin{align*}
1\rightarrow \langle t_1,t_2\rangle \rightarrow  \Gamma_{13}\rightarrow  \Delta_{13}\rightarrow  1,
\end{align*}
where $\Delta_{13}$ is isomorphic to
\begin{align*}
\Bigl <& s_1=\left(\begin{array}{c}
\frac{1}{2}\\
0
\end{array}\right)
\left(\begin{array}{ccc}
1& 0 \\
0& 1
\end{array}\right), s_2=\left(\begin{array}{c}
0\\
\frac{1}{2}
\end{array}\right)
\left(\begin{array}{ccc}
1& 0\\
0& 1
\end{array}\right)
\Bigr >\rtimes \langle \alpha ,\beta \rangle
\end{align*}
with $\alpha =\Bigr(\left(\begin{array}{c}
0\\
0
\end{array}\right)
\left(\begin{array}{ccc}
1& 0 \\
0 & -1
\end{array}\right)\Bigr),\beta =\Bigl(
\left(\begin{array}{c}
0\\
0
\end{array}\right)
\left(\begin{array}{ccc}
-1& 0 \\
0 &1
\end{array}\right)\Bigr)$.
\\

Suppose that $\Gamma_{a3}$ is isomorphic to $\Gamma_{13}$ with isomorphism  $\varphi :\Gamma_{a3}\rightarrow \Gamma_{13}$
which induces an isomorphism $\hat \varphi : \Delta _{a3}\rightarrow \Delta _{13}$
\begin{align*}
\begin{CD}
1 @>>>  \langle t_1,t_3 \rangle @>>> \Gamma_{a3}@>>>  \Delta_{a3}@>>>  1\\
  @.                @.                @V \varphi VV           @V \hat \varphi VV\\
1 @>>>  \langle t_1,t_2 \rangle @>>> \Gamma_{13}@>>>  \Delta_{13}@>>>  1.
\end{CD}
\end{align*}
By the same argument as (15) above, 
consider $\hat \varphi (\gamma_1^{a_1} \gamma_2^{a_2} \beta )=\alpha $ and $\hat \varphi (\gamma_1^{b_1} \gamma_2^{b_2} \beta )=\beta  $ 
where $a_1, a_2, b_1, b_2 \in \mathbb{Z}$. 
Then 
\begin{align*}
\alpha\beta=&\hat \varphi (\gamma_1^{a_1} \gamma_2^{a_2} \beta  \gamma_1^{b_1} \gamma_2^{b_2} \beta)  \\
      =&\hat \varphi (\gamma_1^{a_1} \gamma_2^{a_2} \gamma_1^{-b_1} \beta  \gamma_2^{b_2} \beta)  \hspace{.5cm} (\beta \gamma_1 \beta^{-1}=\gamma_1^{-1})\\
      =&\hat \varphi (\gamma_1^{a_1} \gamma_2^{a_2} \gamma_1^{-b_1} \gamma_2^{b_2}) \hspace{.9cm} (\beta \gamma_2 \beta^{-1}=\gamma_2).
\end{align*}
Since $\alpha \beta $ is a torsion element and $\hat \varphi $ is an isomorphism, then 
$\gamma_1^{a_1} \gamma_2^{a_2} \gamma_1^{-b_1} \gamma_2^{b_2}$ is also a torsion element.
Let
\begin{align*}
g=&\gamma_1^{a_1} \gamma_2^{a_2} \gamma_1^{-b_1} \gamma_2^{b_2} \\
 =&
\Bigl ( \left(\begin{array}{c}
\frac{1}{2}a_1\\
(-1)^{a_1}\frac{1}{2}a_2
\end{array}\right)
\left(\begin{array}{cc}
1& 0\\
0& (-1)^{a_1}\end{array}\right)\Bigr)
\Bigl ( \left(\begin{array}{c}
-\frac{1}{2}b_1\\
(-1)^{-b_1}\frac{1}{2}b_2
\end{array}\right)
\left(\begin{array}{cc}
1& 0\\
0& (-1)^{-b_1}\end{array}\right)\Bigr)\\
=&
\Bigl ( \left(\begin{array}{c}
\frac{1}{2}(a_1 - b_1)\\
\frac{1}{2}((-1)^{a_1}a_2+(-1)^{(a_1-b_1)}b_2)
\end{array}\right)
\left(\begin{array}{cc}
1& 0\\
0& (-1)^{a_1-b_1}\end{array}\right)\Bigr).
\end{align*}
We have
\begin{align*}
g^2=&\Bigl ( \left(\begin{array}{c}
a_1 - b_1\\
(1+(-1)^{(a_1-b_1)})\frac{1}{2}((-1)^{a_1}a_2+(-1)^{(a_1-b_1)}b_2
\end{array}\right)
\left(\begin{array}{cc}
1& 0\\
0& 1 \end{array}\right)\Bigr)\\
=&
\Bigl ( \left(\begin{array}{c}
0\\
0
\end{array}\right)
\left(\begin{array}{cc}
1& 0\\
0& 1 \end{array}\right)\Bigr).
\end{align*}
Therefore we get $a_1=b_1$ and $(-1)^{a_1}a_2+b_2=0$.
Hence $\hat \varphi (g)=\hat \varphi 
\Bigl ( \left(\begin{array}{c}
0\\
0
\end{array}\right)
\left(\begin{array}{cc}
1& 0\\
0& 1 \end{array}\right)\Bigr)=
\Bigl ( \left(\begin{array}{c}
0\\
0
\end{array}\right)
\left(\begin{array}{cc}
-1& 0\\
0& -1 \end{array}\right)\Bigr)$.
This yields a contradiction.
\\

\noindent {\bf (27)}. $M(A_{13})$ is not diffeomorphic to $M(A_{a4})$.\\

Let \footnotesize
\begin{align*}
\Gamma_{a4}= <&\tilde {g}_1,\tilde {g}_2,\tilde {g}_3,t_4>\\
=\Bigl < &\left(\begin{array}{cccc}
\frac{1}{2}\\
0\\
0\\
0
\end{array}\right)
\left(\begin{array}{cccc}
1& 0 & 0 & 0\\
0& -1 & 0 & 0\\
0& 0 & 1 & 0\\
0& 0 & 0 & 1
\end{array}\right),
\left(\begin{array}{cccc}
0\\
\frac{1}{2}\\
0\\
0
\end{array}\right)
\left(\begin{array}{cccc}
1& 0 & 0 & 0\\
0& 1 & 0 & 0\\
0& 0 & 1 & 0\\
0& 0 & 0 & -1
\end{array}\right),
\\
&\left(\begin{array}{cccc}
0\\
0\\
\frac{1}{2}\\
0
\end{array}\right)
\left(\begin{array}{cccc}
1& 0 & 0 & 0\\
0& 1 & 0 & 0\\
0& 0 & 1 & 0\\
0& 0 & 0 & -1
\end{array}\right),
\left(\begin{array}{cccc}
0\\
0\\
0\\
\frac{1}{2}
\end{array}\right)
\left(\begin{array}{cccc}
1& 0 & 0 & 0\\
0& 1 & 0 & 0\\
0& 0 & 1 & 0\\
0& 0 & 0 & 1
\end{array}\right)
\Bigr >\\
\\
\Gamma_{13}= <&\tilde {h}_1,\tilde {h}_2,t_3,t_4>\\
=\Bigl < & \left(\begin{array}{cccc}
\frac{1}{2}\\
0\\
0\\
0
\end{array}\right)
\left(\begin{array}{cccc}
1& 0 & 0 & 0\\
0& 1 & 0 & 0\\
0& 0 & 1 & 0\\
0& 0 & 0 & -1
\end{array}\right),
\left(\begin{array}{cccc}
0\\
\frac{1}{2}\\
0\\
0
\end{array}\right)
\left(\begin{array}{cccc}
1& 0 & 0 & 0\\
0& 1 & 0 & 0\\
0& 0 & -1 & 0\\
0& 0 & 0 & 1
\end{array}\right),
\\
&\left(\begin{array}{cccc}
0\\
0\\
\frac{1}{2}\\
0
\end{array}\right)
\left(\begin{array}{cccc}
1& 0 & 0 & 0\\
0& 1 & 0 & 0\\
0& 0 & 1 & 0\\
0& 0 & 0 & 1
\end{array}\right),
\left(\begin{array}{cccc}
0\\
0\\
0\\
\frac{1}{2}
\end{array}\right)
\left(\begin{array}{cccc}
1& 0 & 0 & 0\\
0& 1 & 0 & 0\\
0& 0 & 1 & 0\\
0& 0 & 0 & 1
\end{array}\right)
\Bigr >.
\end{align*}

\normalsize
Since $\tilde {g}_1\tilde {g}_2\tilde {g}_1^{-1}=\tilde {g}_2^{-1}$, 
$\tilde {g}_1 t_4\tilde {g}_1^{-1}=t_4$, 
$\tilde {g}_3 \tilde {g}_2\tilde {g}_3^{-1}=\tilde {g}_2$, 
$\tilde {g}_3 t_4\tilde {g}_3^{-1}=t_4^{-1}$, then $\Gamma_{a4}=\langle \tilde {g}_2,t_4 \rangle \rtimes \langle \tilde {g}_1,\tilde {g}_3 \rangle$.
Then the center $\displaystyle \mathcal C(\Gamma_{a4})= \langle t_1,
t_3\rangle$ where $t_1=\tilde {g}_1^2$ and $t_3=\tilde {g}_3^2$. There is an extension
\begin{align*}
1\rightarrow  \langle t_1,t_3\rangle \rightarrow  \Gamma_{a4}\rightarrow  \Delta_{a4}\rightarrow  1,
\end{align*}
where $\Delta_{a4}$ is isomorphic to
\begin{align*}
\Bigl <&\gamma _1=\left(\begin{array}{c}
\frac{1}{2}\\
0
\end{array}\right)
\left(\begin{array}{cc}
1& 0 \\
0& -1
\end{array}\right), \gamma _2=\left(\begin{array}{c}
0\\
\frac{1}{2}
\end{array}\right) \left(\begin{array}{cc}
1& 0 \\
0& 1
\end{array}\right)
\Bigr > \rtimes \langle \alpha_1,\alpha_2 \rangle
\end{align*}
with
$\alpha_1=\Bigl(\left(\begin{array}{c}
0\\
0
\end{array}\right)
\left(\begin{array}{ccc}
-1& 0 \\
0& 1
\end{array}\right)\Bigr)$ and
$\alpha_2=\Bigl(\left(\begin{array}{c}
0\\
0
\end{array}\right)
\left(\begin{array}{ccc}
1& 0 \\
0& -1
\end{array}\right)\Bigr)$.

For $\Gamma_{13}$, the center $\mathcal C(\Gamma_{13})= \langle t_1,t_2\rangle$.
It induces an extension
\begin{align*}
1\rightarrow \langle t_1,t_2\rangle \rightarrow  \Gamma_{13}\rightarrow  \Delta_{13}\rightarrow  1,
\end{align*}
where $\Delta_{13}$ is isomorphic to
\begin{align*}
\Bigl <& s_1=\left(\begin{array}{c}
\frac{1}{2}\\
0
\end{array}\right)
\left(\begin{array}{ccc}
1& 0 \\
0& 1
\end{array}\right), s_2=\left(\begin{array}{c}
0\\
\frac{1}{2}
\end{array}\right)
\left(\begin{array}{ccc}
1& 0\\
0& 1
\end{array}\right)
\Bigr >\rtimes \langle \alpha ,\beta \rangle
\end{align*}
with $\alpha=\alpha_1$ and $\beta=\alpha_2$.

Suppose that $\Gamma_{13}$ is isomorphic to $\Gamma_{a4}$ with isomorphism  $\varphi :\Gamma_{13}\rightarrow \Gamma_{a4}$
which induces an isomorphism $\hat \varphi : \Delta _{13}\rightarrow \Delta _{a4}$
\begin{align*}
\begin{CD}
1 @>>>  \langle t_1,t_3 \rangle @>>> \Gamma_{a4}@>>>  \Delta_{a4}@>>>  1\\
  @.                @.                @A \varphi AA           @A \hat \varphi AA\\
1 @>>>  \langle t_1,t_2 \rangle @>>> \Gamma_{13}@>>>  \Delta_{13}@>>>  1.
\end{CD}
\end{align*}

Since $\ti g_2 \ti g_3=\ti g_3 \ti g_2$ then
\begin{align*}
\ti g_2^{-1} \ti g_3 &=\ti g_3^{-1} \ti g_3 \ti g_2^{-1} \ti g_3\\
			   &=\ti g_3^{-1} \ti g_2^{-1} \ti g_3^{2}\\
			   &=(\ti g_2 \ti g_3)^{-1}\ti g_3^{2}\\
\ti g_2^{-1}\ti g_1 \ti g_3 \ti g_1^{-1} &=(\ti g_2 \ti g_3)^{-1}\ti g_3^{2}.
\end{align*}
We get 
\begin{equation}\label{Eq1}
\tilde g_1(\tilde g_2\tilde g_3)\tilde {g_1}^{-1}= (\tilde g_2\tilde g_3)^{-1} \tilde g_3^{2}.
\end{equation}

Now we want to find the elements $h,h'\in \GA_{13}$ such that $\f(h)=\ti g_1$ and $\f(h')=\tilde g_2\tilde g_3$.
According to the above diagram, let us consider the following diagram
\[
\begin{CD}
\footnotesize \biggl ( \left(\begin{array}{cccc}
0\\
\frac{1}{2}\\
\frac{1}{2}\\
0
\end{array}\right),I \biggr)= \tilde g_2\tilde g_3 @>>> 
\gamma_1\alpha_2=
\biggl ( \left(\begin{array}{cc}
\frac{1}{2}\\
0
\end{array}\right),I \biggr)\\
@A \varphi AA           @A \hat \varphi AA\\
\biggl ( \left(\begin{array}{cccc}
c\\
d\\
\frac{1}{2}a\\
\frac{1}{2}b
\end{array}\right),I \biggr)= {t_3}^a{t_4}^b{t_1}^c{t_2}^d @>>> 
{s_1}^a {s_2}^b=
\biggl ( \left(\begin{array}{cc}
\frac{1}{2}a\\
\frac{1}{2}b
\end{array}\right),I \biggr)
\end{CD}
\]
so,
\begin{equation}\label{Eq2}
\tilde g_2\tilde g_3=\varphi ({t_3}^a{t_4}^b{t_1}^c{t_2}^d ),
\end{equation}
and from the diagrams
\[
\begin{CD}
\footnotesize \biggl ( \left(\begin{array}{cccc}
\frac{1}{2}\\
0\\
0\\
0
\end{array}\right),\left(\begin{array}{cccc}
1& 0 & 0 & 0\\
0& -1 & 0 & 0\\
0& 0 & 1 & 0\\
0& 0 & 0 & 1
\end{array}\right) \biggr)= \tilde g_1 @>>> 
\alpha_1=
\biggl ( \left(\begin{array}{cc}
0\\
0
\end{array}\right),\left(\begin{array}{cc}
-1& 0 \\
0& 1  
\end{array}\right) \biggr)\\
@A \varphi AA           @A \hat \varphi AA\\
{\tilde h_1}{t_3}^{a'}{t_4}^{b'}{t_1}^{c'}{t_2}^{d'}= @>>> \beta{s_1}^{a'} {s_2}^{b'}=\\
\biggl ( \left(\begin{array}{cccc}
c'+\frac{1}{2}\\
d'\\
\frac{1}{2}{a'}\\
-\frac{1}{2}{b'}
\end{array}\right),\left(\begin{array}{cccc}
1& 0 & 0 & 0\\
0& 1 & 0 & 0\\
0& 0 & 1 & 0\\
0& 0 & 0 & -1
\end{array}\right) \biggr) @.
\biggl ( \left(\begin{array}{cc}
\frac{1}{2}a'\\
-\frac{1}{2}b'
\end{array}\right),\left(\begin{array}{cc}
1& 0 \\
0& -1  
\end{array}\right) \biggr)
\end{CD}
\] 
or
\[
\begin{CD}
\footnotesize \biggl ( \left(\begin{array}{cccc}
\frac{1}{2}\\
0\\
0\\
0
\end{array}\right),\left(\begin{array}{cccc}
1& 0 & 0 & 0\\
0& -1 & 0 & 0\\
0& 0 & 1 & 0\\
0& 0 & 0 & 1
\end{array}\right) \biggr)= \tilde g_1 @>>> 
\alpha_1=
\biggl ( \left(\begin{array}{cc}
0\\
0
\end{array}\right),\left(\begin{array}{cc}
-1& 0 \\
0& 1  
\end{array}\right) \biggr)\\
@A \varphi AA           @A \hat \varphi AA\\
{\tilde h_2}{t_3}^{a'}{t_4}^{b'}{t_1}^{c'}{t_2}^{d'}= @>>> \al {s_1}^{a'} {s_2}^{b'}=\\
\biggl ( \left(\begin{array}{cccc}
c'\\
d'+\frac{1}{2}\\
-\frac{1}{2}{a'}\\
\frac{1}{2}{b'}
\end{array}\right),\left(\begin{array}{cccc}
1& 0 & 0 & 0\\
0& 1 & 0 & 0\\
0& 0 & -1 & 0\\
0& 0 & 0 & 1
\end{array}\right) \biggr) @.
\biggl ( \left(\begin{array}{cc}
-\frac{1}{2}a'\\
\frac{1}{2}b'
\end{array}\right),\left(\begin{array}{cc}
-1& 0 \\
0& 1  
\end{array}\right) \biggr)
\end{CD}
\]
we get
\begin{equation}\label{Eq3}
\tilde {g_1}=\varphi (\tilde h_1{t_3}^{a'}{t_4}^{b'}{t_1}^{c'}{t_2}^{d'} )=\varphi (\tilde h_1t),
\end{equation}
or
\begin{equation}\label{Eq3b}
\tilde {g_1}=\varphi (\tilde h_2{t_3}^{a'}{t_4}^{b'}{t_1}^{c'}{t_2}^{d'} )=\varphi (\tilde h_2t),
\end{equation}
where $t={t_3}^{a'}{t_4}^{b'}{t_1}^{c'}{t_2}^{d'}$.

Next we get
\begin{align*}
\tilde g_1 (\tilde g_2\tilde g_3)\tilde {g_1}^{-1}=&\varphi (\tilde h_1 t)\varphi ({t_3}^{a}{t_4}^{b}{t_1}^{c}{t_2}^{d} )\varphi (\tilde h_1 t)^{-1}
									 \hspace{.5cm} (\text {by} \, \eqref{Eq2},\, \eqref{Eq3})\\
(\tilde g_2\tilde g_3)^{-1}t_3=&\varphi (\tilde h_1 t ({t_3}^{a}{t_4}^{b}{t_1}^{c}{t_2}^{d})t^{-1}\tilde h_1^{-1})
									 \hspace{1.1cm} (\text {by} \, \eqref{Eq1})\\
					=&\varphi (\tilde h_1 ({t_3}^{a}{t_4}^{b}{t_1}^{c}{t_2}^{d})\tilde h_1^{-1})\\
					=&\varphi (\tilde h_1 {t_3}^{a}{t_4}^{b}\tilde h_1^{-1}{t_1}^{c}{t_2}^{d})\\
					=&\varphi (({t_3}^{a}{t_4}^{b})^{-1}t_3^{2a}{t_1}^{c}{t_2}^{d}) 
									\hspace{1.5cm} (h_1 {t_3}^{a}{t_4}^{b}h_1^{-1}={t_3}^{a}{t_4}^{-b})\\
					=&\varphi (({t_3}^{a}{t_4}^{b})^{-1}({t_1}^{c}{t_2}^{d})t_3^{2a})\\
					=&\varphi (({t_3}^{a}{t_4}^{b})^{-1}({t_1}^{c}{t_2}^{d})^{-1}(({t_1}^{c}{t_2}^{d})t_3^{a})^2)\\
					=&\varphi (({t_3}^{a}{t_4}^{b}{t_1}^{c}{t_2}^{d})^{-1}({t_1}^{c}{t_2}^{d}t_3^{a})^2)\\
					=&(\tilde g_2\tilde g_3)^{-1}\varphi (({t_1}^{c}{t_2}^{d}t_3^{a})^2),
\end{align*}
or
\begin{align*}
\tilde g_1 (\tilde g_2\tilde g_3)\tilde {g_1}^{-1}=&\varphi (\tilde h_2 t)\varphi ({t_3}^{a}{t_4}^{b}{t_1}^{c}{t_2}^{d} )\varphi (\tilde h_2 t)^{-1}
									 \hspace{.5cm} (\text {by} \, \eqref{Eq2},\, \eqref{Eq3b})\\
(\tilde g_2\tilde g_3)^{-1}t_3=&\varphi (\tilde h_2 t ({t_3}^{a}{t_4}^{b}{t_1}^{c}{t_2}^{d})t^{-1}\tilde h_2^{-1})
									 \hspace{1.1cm} (\text {by} \, \eqref{Eq1})\\
					=&\varphi (\tilde h_2 ({t_3}^{a}{t_4}^{b}{t_1}^{c}{t_2}^{d})\tilde h_2^{-1})\\
					=&\varphi (\tilde h_2 {t_3}^{a}{t_4}^{b}\tilde h_1^{-1}{t_1}^{c}{t_2}^{d})\\
					=&\varphi (({t_3}^{a}{t_4}^{b})^{-1}t_4^{2b}{t_1}^{c}{t_2}^{d}) 
									\hspace{1.5cm} (h_2 {t_3}^{a}{t_4}^{b}h_2^{-1}={t_3}^{-a}{t_4}^{b})\\
					=&\varphi (({t_3}^{a}{t_4}^{b})^{-1}({t_1}^{c}{t_2}^{d})t_4^{2b})\\
					=&\varphi (({t_3}^{a}{t_4}^{b})^{-1}({t_1}^{c}{t_2}^{d})^{-1}(({t_1}^{c}{t_2}^{d})t_4^{b})^2)\\
					=&\varphi (({t_3}^{a}{t_4}^{b}{t_1}^{c}{t_2}^{d})^{-1}({t_1}^{c}{t_2}^{d}t_4^{b})^2)\\
					=&(\tilde g_2\tilde g_3)^{-1}\varphi (({t_1}^{c}{t_2}^{d}t_4^{b})^2).
\end{align*}
Hence
\begin{equation}\label{Eq4}
	t_3=\varphi (({t_1}^{c}{t_2}^{d}t_3^{a})^2),
\end{equation}
or
\begin{equation}\label{Eq4b}
	t_3=\varphi (({t_1}^{c}{t_2}^{d}t_4^{b})^2).
\end{equation}
Let us consider the diagrams below
\[
\begin{CD}
\footnotesize \biggl ( \left(\begin{array}{cccc}
p\\
\frac{1}{2}m\\
\frac{1}{2}m+l\\
\frac{1}{2}q
\end{array}\right),I \biggr)= t_3^l(\tilde g_2\tilde g_3)^{m}(t_1^pt_4^q) @>>> 
(\gamma_1\alpha_2)^m\gamma_2^q=
\biggl ( \left(\begin{array}{cc}
\frac{1}{2}m\\
\frac{1}{2}q
\end{array}\right),I \biggr)\\
@A \varphi AA           @A \hat \varphi AA\\
\biggl ( \left(\begin{array}{cccc}
c\\
d\\
\frac{1}{2}{a}\\
0
\end{array}\right),I \biggr) ={t_1}^{c}{t_2}^{d}t_3^a @>>> {s_1}^{a}=
\biggl ( \left(\begin{array}{cc}
\frac{1}{2}a\\
0
\end{array}\right),I \biggr)
\end{CD}
\]
or
\[
\begin{CD}
\footnotesize \biggl ( \left(\begin{array}{cccc}
p\\
\frac{1}{2}m\\
\frac{1}{2}m+l\\
\frac{1}{2}q
\end{array}\right),I \biggr)= t_3^l(\tilde g_2\tilde g_3)^{m}(t_1^pt_4^q) @>>> 
(\gamma_1\alpha_2)^m\gamma_2^q=
\biggl ( \left(\begin{array}{cc}
\frac{1}{2}m\\
\frac{1}{2}q
\end{array}\right),I \biggr)\\
@A \varphi AA           @A \hat \varphi AA\\
\biggl ( \left(\begin{array}{cccc}
c\\
d\\
0\\
\frac{1}{2}{b}
\end{array}\right),I \biggr) ={t_1}^{c}{t_2}^{d}t_4^b @>>> {s_2}^{b}=
\biggl ( \left(\begin{array}{cc}
0\\
\frac{1}{2}b
\end{array}\right),I \biggr),
\end{CD}
\]
 \normalsize
we have
\begin{align*}
\varphi ({t_1}^{c}{t_2}^{d}t_3^a)=&t_3^l(\tilde g_2\tilde g_3)^{m}(t_1^pt_4^q)\\
\varphi (({t_1}^{c}{t_2}^{d}t_3^a)^2)=&t_3^{2l}(\tilde g_2\tilde g_3)^{2m}(t_1^{2p}t_4^{2q})\\
						 =&t_3^{2l}(t_2t_3)^{m}(t_1^{2p}t_4^{2q}) \hspace{1cm}  ((\tilde g_2\tilde g_3)^{2}=t_2t_3)\\
						 =&t_1^{2p}t_2^m t_3^{m+2l}t_4^{2q},
\end{align*}
or
\begin{align*}
\varphi ({t_1}^{c}{t_2}^{d}t_4^b)=&t_3^l(\tilde g_2\tilde g_3)^{m}(t_1^pt_4^q)\\
\varphi (({t_1}^{c}{t_2}^{d}t_4^b)^2)=&t_3^{2l}(\tilde g_2\tilde g_3)^{2m}(t_1^{2p}t_4^{2q})\\
						 =&t_3^{2l}(t_2t_3)^{m}(t_1^{2p}t_4^{2q}) \hspace{1cm}  ((\tilde g_2\tilde g_3)^{2}=t_2t_3)\\
						 =&t_1^{2p}t_2^m t_3^{m+2l}t_4^{2q}.
\end{align*}
Therefore from \eqref{Eq4} or \eqref{Eq4b} $t_3=t_1^{2p}t_2^m t_3^{m+2l}t_4^{2q}$. 
Hence $p=m=q=0$ and $m+2l=1$ or $2l=1$. This yields a contradiction.\\

\noindent {\bf (28)}. \\
\noindent 
$\bullet$ $M(A_{a10})\approx  M(A_{a12})$.\\
For 
\footnotesize
\begin{align*}
&A_{a10}=
\left(\begin{array}{cccc}
1& 1 & 0 & 0 \\
0& 1 & 1 & 0\\
0& 0 & 1 & 1\\
0& 0 & 0 & 1
\end{array}\right)
\hspace{.1cm}
&A_{a12}=
\left(\begin{array}{cccc}
1& 1 & 0 & 0 \\
0& 1 & 1 & 1\\
0& 0 & 1 & 1\\
0& 0 & 0 & 1
\end{array}\right)
\\
&g_1(z_1,z_2,z_3,z_4)=(-z_1,\bar z_2,z_3,z_4).\hspace{.1cm}   &h_1(z_1,z_2,z_3,z_4)=(-z_1,\bar z_2,z_3,z_4)\\
&g_2(z_1,z_2,z_3,z_4)=(z_1,-z_2,\bar z_3,z_4).            &h_2h_3(z_1,z_2,z_3,z_4)=(z_1,-z_2,-\bar z_3,z_4)\\
&g_3(z_1,z_2,z_3,z_4)=(z_1,z_2,-z_3,\bar z_4).           &h_3(z_1,z_2,z_3,z_4)=(z_1,z_2,-z_3,\bar z_4)\\
&g_4(z_1,z_2,z_3,z_4)=(z_1,z_2,z_3,-z_4).                &h_4(z_1,z_2,z_3,z_4)=(z_1,z_2,z_3,-z_4)
\end{align*}
\normalsize then \footnotesize
\begin{align*}
\Gamma_{a10}=
\Bigl < &\left(\begin{array}{cccc}
\frac{1}{2}\\
0\\
0\\
0
\end{array}\right)
\left(\begin{array}{cccc}
1& 0 & 0 & 0\\
0& -1 & 0 & 0\\
0& 0 & 1 & 0\\
0& 0 & 0 & 1
\end{array}\right),
\left(\begin{array}{cccc}
0\\
\frac{1}{2}\\
0\\
0
\end{array}\right)
\left(\begin{array}{cccc}
1& 0 & 0 & 0\\
0& 1 & 0 & 0\\
0& 0 & -1 & 0\\
0& 0 & 0 & 1
\end{array}\right),
\\
&\left(\begin{array}{cccc}
0\\
0\\
\frac{1}{2}\\
0
\end{array}\right)
\left(\begin{array}{cccc}
1& 0 & 0 & 0\\
0& 1 & 0 & 0\\
0& 0 & 1 & 0\\
0& 0 & 0 & -1
\end{array}\right),
\left(\begin{array}{cccc}
0\\
0\\
0\\
\frac{1}{2}
\end{array}\right)
\left(\begin{array}{cccc}
1& 0 & 0 & 0\\
0& 1 & 0 & 0\\
0& 0 & 1 & 0\\
0& 0 & 0 & 1
\end{array}\right)
\Bigr >
\end{align*}

\begin{align*}
\Gamma_{a12}= 
\Bigl < & \left(\begin{array}{cccc}
\frac{1}{2}\\
0\\
0\\
0
\end{array}\right)
\left(\begin{array}{cccc}
1& 0 & 0 & 0\\
0& -1 & 0 & 0\\
0& 0 & 1 & 0\\
0& 0 & 0 & 1
\end{array}\right),
\left(\begin{array}{cccc}
0\\
\frac{1}{2}\\
\frac{1}{2}\\
0
\end{array}\right)
\left(\begin{array}{cccc}
1& 0 & 0 & 0\\
0& 1 & 0 & 0\\
0& 0 & -1 & 0\\
0& 0 & 0 & 1
\end{array}\right),
\\
&\left(\begin{array}{cccc}
0\\
0\\
\frac{1}{2}\\
0
\end{array}\right)
\left(\begin{array}{cccc}
1& 0 & 0 & 0\\
0& 1 & 0 & 0\\
0& 0 & 1 & 0\\
0& 0 & 0 & -1
\end{array}\right),
\left(\begin{array}{cccc}
0\\
0\\
0\\
\frac{1}{2}
\end{array}\right)
\left(\begin{array}{cccc}
1& 0 & 0 & 0\\
0& 1 & 0 & 0\\
0& 0 & 1 & 0\\
0& 0 & 0 & 1
\end{array}\right)
\Bigr >.
\end{align*}

\normalsize
\noindent
Let $\varphi(z_1,z_2,z_3,z_4)=(z_1,z_2,iz_3,z_4)$, we get these commutative diagrams

\footnotesize
\[
\begin{CD}
(z_1,z_2,z_3,z_4) @>\varphi>>(z_1,z_2,iz_3,z_4)\\
@Vg_1VV @Vh_1 VV\\
(-z_1,\bar z_2,z_3,z_4) @>\varphi>> (-z_1,\bar z_2,iz_3,z_4)
\end{CD}
\hspace{.1cm}
\begin{CD}
(z_1,z_2,z_3,z_4) @>\varphi>>(z_1,z_2,iz_3,z_4)\\
@Vg_3VV @Vh_3 VV\\
(z_1,z_2,-z_3,\bar z_4) @>\varphi>> (z_1,z_2,-iz_3,\bar z_4)
\end{CD}
\]
\[ 
\begin{CD}
(z_1,z_2,z_3,z_4) @>\varphi>>(z_1,z_2,iz_3,z_4)\\
@Vg_2VV @Vh_2h_3 VV\\
(z_1,-z_2,\bar z_3,z_4) @>\varphi>> (z_1,-z_2,-\bar{iz}_3,z_4)
\end{CD}
\hspace{.1cm}
\begin{CD}
(z_1,z_2,z_3,z_4) @>\varphi>>(z_1,z_2,iz_3,z_4)\\
@Vg_4VV @Vh_4 VV\\
(z_1,z_2,z_3,-z_4) @>\varphi>> (z_1,z_2,iz_3,-z_4).
\end{CD}
\]

\normalsize
\noindent 
Therefore $\exists$ 
$\gamma 
\footnotesize
=
\Bigl( \left(\begin{array}{cccc}
0\\
0\\
\frac{1}{4}\\
0
\end{array}\right)
\left(\begin{array}{cccc}
1& 0 & 0 & 0\\
0& 1 & 0 & 0\\
0& 0 & 1 & 0\\
0& 0 & 0 & 1
\end{array}\right)
\Bigr)
\normalsize
\in \mathbb{A}(n)$ 
s.t. $\gamma \Gamma_{a10}\gamma ^{-1}=\Gamma_{a12}$.\\

\noindent 
$\bullet$ $M(A_{a10})\approx  M(A_{a26})$.\\
For 
\footnotesize
\begin{align*}
&A_{a10}=
\left(\begin{array}{cccc}
1& 1 & 0 & 0 \\
0& 1 & 1 & 0\\
0& 0 & 1 & 1\\
0& 0 & 0 & 1
\end{array}\right)
\hspace{1cm}
&A_{a26}=
\left(\begin{array}{cccc}
1& 1 & 1 & 0 \\
0& 1 & 1 & 0\\
0& 0 & 1 & 1\\
0& 0 & 0 & 1
\end{array}\right)
\\
&g_1(z_1,z_2,z_3,z_4)=(-z_1,\bar z_2,z_3,z_4).\hspace{.1cm}   &h_1h_2(z_1,z_2,z_3,z_4)=(-z_1,-\bar z_2,z_3,z_4)\\
&g_2(z_1,z_2,z_3,z_4)=(z_1,-z_2,\bar z_3,z_4).            &h_2(z_1,z_2,z_3,z_4)=(z_1,-z_2,\bar z_3,z_4)\\
&g_3(z_1,z_2,z_3,z_4)=(z_1,z_2,-z_3,\bar z_4).           &h_3(z_1,z_2,z_3,z_4)=(z_1,z_2,-z_3,\bar z_4)\\
&g_4(z_1,z_2,z_3,z_4)=(z_1,z_2,z_3,-z_4).                &h_4(z_1,z_2,z_3,z_4)=(z_1,z_2,z_3,-z_4)
\end{align*}
\normalsize then \footnotesize
\begin{align*}
\Gamma_{a10}=
\Bigl < &\left(\begin{array}{cccc}
\frac{1}{2}\\
0\\
0\\
0
\end{array}\right)
\left(\begin{array}{cccc}
1& 0 & 0 & 0\\
0& -1 & 0 & 0\\
0& 0 & 1 & 0\\
0& 0 & 0 & 1
\end{array}\right),
\left(\begin{array}{cccc}
0\\
\frac{1}{2}\\
0\\
0
\end{array}\right)
\left(\begin{array}{cccc}
1& 0 & 0 & 0\\
0& 1 & 0 & 0\\
0& 0 & -1 & 0\\
0& 0 & 0 & 1
\end{array}\right),
\\
&\left(\begin{array}{cccc}
0\\
0\\
\frac{1}{2}\\
0
\end{array}\right)
\left(\begin{array}{cccc}
1& 0 & 0 & 0\\
0& 1 & 0 & 0\\
0& 0 & 1 & 0\\
0& 0 & 0 & -1
\end{array}\right),
\left(\begin{array}{cccc}
0\\
0\\
0\\
\frac{1}{2}
\end{array}\right)
\left(\begin{array}{cccc}
1& 0 & 0 & 0\\
0& 1 & 0 & 0\\
0& 0 & 1 & 0\\
0& 0 & 0 & 1
\end{array}\right)
\Bigr >
\end{align*}

\begin{align*}
\Gamma_{a26}= 
\Bigl < & \left(\begin{array}{cccc}
\frac{1}{2}\\
\frac{1}{2}\\
0\\
0
\end{array}\right)
\left(\begin{array}{cccc}
1& 0 & 0 & 0\\
0& -1 & 0 & 0\\
0& 0 & 1 & 0\\
0& 0 & 0 & 1
\end{array}\right),
\left(\begin{array}{cccc}
0\\
\frac{1}{2}\\
0\\
0
\end{array}\right)
\left(\begin{array}{cccc}
1& 0 & 0 & 0\\
0& 1 & 0 & 0\\
0& 0 & -1 & 0\\
0& 0 & 0 & 1
\end{array}\right),
\\
&\left(\begin{array}{cccc}
0\\
0\\
\frac{1}{2}\\
0
\end{array}\right)
\left(\begin{array}{cccc}
1& 0 & 0 & 0\\
0& 1 & 0 & 0\\
0& 0 & 1 & 0\\
0& 0 & 0 & -1
\end{array}\right),
\left(\begin{array}{cccc}
0\\
0\\
0\\
\frac{1}{2}
\end{array}\right)
\left(\begin{array}{cccc}
1& 0 & 0 & 0\\
0& 1 & 0 & 0\\
0& 0 & 1 & 0\\
0& 0 & 0 & 1
\end{array}\right)
\Bigr >.
\end{align*}

\normalsize
\noindent
Let $\varphi(z_1,z_2,z_3,z_4)=(z_1,iz_2,z_3,z_4)$, we get these commutative diagrams

\footnotesize
\[
\begin{CD}
(z_1,z_2,z_3,z_4) @>\varphi>>(z_1,iz_2,z_3,z_4)\\
@Vg_1VV @Vh_1h_2 VV\\
(-z_1,\bar z_2,z_3,z_4) @>\varphi>> (-z_1,-\bar {iz}_2,z_3,z_4)
\end{CD}
\hspace{.1cm}
\begin{CD}
(z_1,z_2,z_3,z_4) @>\varphi>>(z_1,iz_2,z_3,z_4)\\
@Vg_3VV @Vh_3 VV\\
(z_1,z_2,-z_3,\bar z_4) @>\varphi>> (z_1,iz_2,-z_3,\bar z_4)
\end{CD}
\]
\[ 
\begin{CD}
(z_1,z_2,z_3,z_4) @>\varphi>>(z_1,iz_2,z_3,z_4)\\
@Vg_2VV @Vh_2 VV\\
(z_1,-z_2,\bar z_3,z_4) @>\varphi>> (z_1,-iz_2,\bar z_3,z_4)
\end{CD}
\hspace{.1cm}
\begin{CD}
(z_1,z_2,z_3,z_4) @>\varphi>>(z_1,iz_2,z_3,z_4)\\
@Vg_4VV @Vh_4 VV\\
(z_1,z_2,z_3,-z_4) @>\varphi>> (z_1,iz_2,z_3,-z_4).
\end{CD}
\]

\normalsize
\noindent 
Therefore $\exists$ 
$\gamma 
\footnotesize
=
\Bigl( \left(\begin{array}{cccc}
0\\
\frac{1}{4}\\
0\\
0
\end{array}\right)
\left(\begin{array}{cccc}
1& 0 & 0 & 0\\
0& 1 & 0 & 0\\
0& 0 & 1 & 0\\
0& 0 & 0 & 1
\end{array}\right)
\Bigr)
\normalsize
\in \mathbb{A}(n)$ 
s.t. $\gamma \Gamma_{a10}\gamma ^{-1}=\Gamma_{a26}$.\\

\noindent 
$\bullet$ $M(A_{a12})\approx  M(A_{a32})$.\\
For
\footnotesize
\begin{align*}
&A_{a12}=
\left(\begin{array}{cccc}
1& 1 & 0 & 0 \\
0& 1 & 1 & 1\\
0& 0 & 1 & 1\\
0& 0 & 0 & 1
\end{array}\right)
\hspace{.1cm}
&A_{a32}=
\left(\begin{array}{cccc}
1& 1 & 1 & 1 \\
0& 1 & 1 & 1\\
0& 0 & 1 & 1\\
0& 0 & 0 & 1
\end{array}\right)
\\
&g_1(z_1,z_2,z_3,z_4)=(-z_1,\bar z_2,z_3,z_4).\hspace{.1cm}   &h_1h_2(z_1,z_2,z_3,z_4)=(-z_1,-\bar z_2,z_3,z_4)\\
&g_2(z_1,z_2,z_3,z_4)=(z_1,-z_2,\bar z_3,\bar z_4).            &h_2(z_1,z_2,z_3,z_4)=(z_1,-z_2,\bar z_3,\bar z_4)\\
&g_3(z_1,z_2,z_3,z_4)=(z_1,z_2,-z_3,\bar z_4).           &h_3(z_1,z_2,z_3,z_4)=(z_1,z_2,-z_3,\bar z_4)\\
&g_4(z_1,z_2,z_3,z_4)=(z_1,z_2,z_3,-z_4).                &h_4(z_1,z_2,z_3,z_4)=(z_1,z_2,z_3,-z_4)
\end{align*}
\normalsize then \footnotesize
\begin{align*}
\Gamma_{a12}=
\Bigl < &\left(\begin{array}{cccc}
\frac{1}{2}\\
0\\
0\\
0
\end{array}\right)
\left(\begin{array}{cccc}
1& 0 & 0 & 0\\
0& -1 & 0 & 0\\
0& 0 & 1 & 0\\
0& 0 & 0 & 1
\end{array}\right),
\left(\begin{array}{cccc}
0\\
\frac{1}{2}\\
0\\
0
\end{array}\right)
\left(\begin{array}{cccc}
1& 0 & 0 & 0\\
0& 1 & 0 & 0\\
0& 0 & -1 & 0\\
0& 0 & 0 & -1
\end{array}\right),
\\
&\left(\begin{array}{cccc}
0\\
0\\
\frac{1}{2}\\
0
\end{array}\right)
\left(\begin{array}{cccc}
1& 0 & 0 & 0\\
0& 1 & 0 & 0\\
0& 0 & 1 & 0\\
0& 0 & 0 & -1
\end{array}\right),
\left(\begin{array}{cccc}
0\\
0\\
0\\
\frac{1}{2}
\end{array}\right)
\left(\begin{array}{cccc}
1& 0 & 0 & 0\\
0& 1 & 0 & 0\\
0& 0 & 1 & 0\\
0& 0 & 0 & 1
\end{array}\right)
\Bigr >
\end{align*}

\begin{align*}
\Gamma_{a32}= 
\Bigl < & \left(\begin{array}{cccc}
\frac{1}{2}\\
\frac{1}{2}\\
0\\
0
\end{array}\right)
\left(\begin{array}{cccc}
1& 0 & 0 & 0\\
0& -1 & 0 & 0\\
0& 0 & 1 & 0\\
0& 0 & 0 & 1
\end{array}\right),
\left(\begin{array}{cccc}
0\\
\frac{1}{2}\\
0\\
0
\end{array}\right)
\left(\begin{array}{cccc}
1& 0 & 0 & 0\\
0& 1 & 0 & 0\\
0& 0 & -1 & 0\\
0& 0 & 0 & -1
\end{array}\right),
\\
&\left(\begin{array}{cccc}
0\\
0\\
\frac{1}{2}\\
0
\end{array}\right)
\left(\begin{array}{cccc}
1& 0 & 0 & 0\\
0& 1 & 0 & 0\\
0& 0 & 1 & 0\\
0& 0 & 0 & -1
\end{array}\right),
\left(\begin{array}{cccc}
0\\
0\\
0\\
\frac{1}{2}
\end{array}\right)
\left(\begin{array}{cccc}
1& 0 & 0 & 0\\
0& 1 & 0 & 0\\
0& 0 & 1 & 0\\
0& 0 & 0 & 1
\end{array}\right)
\Bigr >.
\end{align*}

\normalsize
\noindent
Let $\varphi(z_1,z_2,z_3,z_4)=(z_1,iz_2,z_3,z_4)$, we get these commutative diagrams

\footnotesize
\[
\begin{CD}
(z_1,z_2,z_3,z_4) @>\varphi>>(z_1,iz_2,z_3,z_4)\\
@Vg_1VV @Vh_1h_2 VV\\
(-z_1,\bar z_2,z_3,z_4) @>\varphi>> (-z_1,-\bar {iz}_2,z_3,z_4)
\end{CD}
\hspace{.1cm}
\begin{CD}
(z_1,z_2,z_3,z_4) @>\varphi>>(z_1,iz_2,z_3,z_4)\\
@Vg_3VV @Vh_3 VV\\
(z_1,z_2,-z_3,\bar z_4) @>\varphi>> (z_1,iz_2,-z_3,\bar z_4)
\end{CD}
\]
\[ 
\begin{CD}
(z_1,z_2,z_3,z_4) @>\varphi>>(z_1,iz_2,z_3,z_4)\\
@Vg_2VV @Vh_2 VV\\
(z_1,-z_2,\bar z_3,\bar z_4) @>\varphi>> (z_1,-iz_2,\bar{z}_3,\bar z_4)
\end{CD}
\hspace{.1cm}
\begin{CD}
(z_1,z_2,z_3,z_4) @>\varphi>>(z_1,iz_2,z_3,z_4)\\
@Vg_4VV @Vh_4 VV\\
(z_1,z_2,z_3,-z_4) @>\varphi>> (z_1,iz_2,z_3,-z_4).
\end{CD}
\]

\normalsize
\noindent 
Therefore $\exists$ 
$\gamma 
\footnotesize
=
\Bigl( \left(\begin{array}{cccc}
0\\
\frac{1}{4}\\
0\\
0
\end{array}\right)
\left(\begin{array}{cccc}
1& 0 & 0 & 0\\
0& 1 & 0 & 0\\
0& 0 & 1 & 0\\
0& 0 & 0 & 1
\end{array}\right)
\Bigr)
\normalsize
\in \mathbb{A}(n)$ 
s.t. $\gamma \Gamma_{a12}\gamma ^{-1}=\Gamma_{a32}$.\\

\noindent 
$\bullet$ $M(A_{a14})\approx  M(A_{a16})$.\\
For
\footnotesize
\begin{align*}
&A_{a14}=
\left(\begin{array}{cccc}
1& 1 & 0 & 1 \\
0& 1 & 1 & 0\\
0& 0 & 1 & 1\\
0& 0 & 0 & 1
\end{array}\right)
\hspace{.1cm}
&A_{a16}=
\left(\begin{array}{cccc}
1& 1 & 0 & 1 \\
0& 1 & 1 & 1\\
0& 0 & 1 & 1\\
0& 0 & 0 & 1
\end{array}\right)
\\
&g_1(z_1,z_2,z_3,z_4)=(-z_1,\bar z_2,z_3,\bar z_4).\hspace{.1cm}   &h_1(z_1,z_2,z_3,z_4)=(-z_1,\bar z_2,z_3,\bar z_4)\\
&g_2(z_1,z_2,z_3,z_4)=(z_1,-z_2,\bar z_3,z_4).            &h_2h_3(z_1,z_2,z_3,z_4)=(z_1,-z_2,-\bar z_3,z_4)\\
&g_3(z_1,z_2,z_3,z_4)=(z_1,z_2,-z_3,\bar z_4).           &h_3(z_1,z_2,z_3,z_4)=(z_1,z_2,-z_3,\bar z_4)\\
&g_4(z_1,z_2,z_3,z_4)=(z_1,z_2,z_3,-z_4).                &h_4(z_1,z_2,z_3,z_4)=(z_1,z_2,z_3,-z_4)
\end{align*}
\normalsize then \footnotesize
\begin{align*}
\Gamma_{a14}=
\Bigl < &\left(\begin{array}{cccc}
\frac{1}{2}\\
0\\
0\\
0
\end{array}\right)
\left(\begin{array}{cccc}
1& 0 & 0 & 0\\
0& -1 & 0 & 0\\
0& 0 & 1 & 0\\
0& 0 & 0 & -1
\end{array}\right),
\left(\begin{array}{cccc}
0\\
\frac{1}{2}\\
0\\
0
\end{array}\right)
\left(\begin{array}{cccc}
1& 0 & 0 & 0\\
0& 1 & 0 & 0\\
0& 0 & -1 & 0\\
0& 0 & 0 & 1
\end{array}\right),
\\
&\left(\begin{array}{cccc}
0\\
0\\
\frac{1}{2}\\
0
\end{array}\right)
\left(\begin{array}{cccc}
1& 0 & 0 & 0\\
0& 1 & 0 & 0\\
0& 0 & 1 & 0\\
0& 0 & 0 & -1
\end{array}\right),
\left(\begin{array}{cccc}
0\\
0\\
0\\
\frac{1}{2}
\end{array}\right)
\left(\begin{array}{cccc}
1& 0 & 0 & 0\\
0& 1 & 0 & 0\\
0& 0 & 1 & 0\\
0& 0 & 0 & 1
\end{array}\right)
\Bigr >
\end{align*}

\begin{align*}
\Gamma_{a16}= 
\Bigl < & \left(\begin{array}{cccc}
\frac{1}{2}\\
0\\
0\\
0
\end{array}\right)
\left(\begin{array}{cccc}
1& 0 & 0 & 0\\
0& -1 & 0 & 0\\
0& 0 & 1 & 0\\
0& 0 & 0 & -1
\end{array}\right),
\left(\begin{array}{cccc}
0\\
\frac{1}{2}\\
\frac{1}{2}\\
0
\end{array}\right)
\left(\begin{array}{cccc}
1& 0 & 0 & 0\\
0& 1 & 0 & 0\\
0& 0 & -1 & 0\\
0& 0 & 0 & 1
\end{array}\right),
\\
&\left(\begin{array}{cccc}
0\\
0\\
\frac{1}{2}\\
0
\end{array}\right)
\left(\begin{array}{cccc}
1& 0 & 0 & 0\\
0& 1 & 0 & 0\\
0& 0 & 1 & 0\\
0& 0 & 0 & -1
\end{array}\right),
\left(\begin{array}{cccc}
0\\
0\\
0\\
\frac{1}{2}
\end{array}\right)
\left(\begin{array}{cccc}
1& 0 & 0 & 0\\
0& 1 & 0 & 0\\
0& 0 & 1 & 0\\
0& 0 & 0 & 1
\end{array}\right)
\Bigr >.
\end{align*}

\normalsize
\noindent
Let $\varphi(z_1,z_2,z_3,z_4)=(z_1,z_2,iz_3,z_4)$, we get these commutative diagrams

\footnotesize
\[
\begin{CD}
(z_1,z_2,z_3,z_4) @>\varphi>>(z_1,z_2,iz_3,z_4)\\
@Vg_1VV @Vh_1 VV\\
(-z_1,\bar z_2,z_3,\bar z_4) @>\varphi>> (-z_1,\bar {z}_2,iz_3,\bar z_4)
\end{CD}
\hspace{.1cm}
\begin{CD}
(z_1,z_2,z_3,z_4) @>\varphi>>(z_1,z_2,iz_3,z_4)\\
@Vg_3VV @Vh_3 VV\\
(z_1,z_2,-z_3,\bar z_4) @>\varphi>> (z_1,z_2,-iz_3,\bar z_4)
\end{CD}
\]
\[ 
\begin{CD}
(z_1,z_2,z_3,z_4) @>\varphi>>(z_1,z_2,iz_3,z_4)\\
@Vg_2VV @Vh_2h_3 VV\\
(z_1,-z_2,\bar z_3,z_4) @>\varphi>> (z_1,-z_2,-\bar{iz}_3,z_4)
\end{CD}
\hspace{.1cm}
\begin{CD}
(z_1,z_2,z_3,z_4) @>\varphi>>(z_1,z_2,iz_3,z_4)\\
@Vg_4VV @Vh_4 VV\\
(z_1,z_2,z_3,-z_4) @>\varphi>> (z_1,z_2,iz_3,-z_4).
\end{CD}
\]

\normalsize
\noindent 
Therefore $\exists$ 
$\gamma 
\footnotesize
=
\Bigl( \left(\begin{array}{cccc}
0\\
0\\
\frac{1}{4}\\
0
\end{array}\right)
\left(\begin{array}{cccc}
1& 0 & 0 & 0\\
0& 1 & 0 & 0\\
0& 0 & 1 & 0\\
0& 0 & 0 & 1
\end{array}\right)
\Bigr)
\normalsize
\in \mathbb{A}(n)$ 
s.t. $\gamma \Gamma_{a14}\gamma ^{-1}=\Gamma_{a16}$.\\

\noindent 
$\bullet$ $M(A_{a14})\approx  M(A_{a30})$.\\
For
\footnotesize
\begin{align*}
&A_{a14}=
\left(\begin{array}{cccc}
1& 1 & 0 & 1 \\
0& 1 & 1 & 0\\
0& 0 & 1 & 1\\
0& 0 & 0 & 1
\end{array}\right)
\hspace{.1cm}
&A_{a30}=
\left(\begin{array}{cccc}
1& 1 & 1 & 1 \\
0& 1 & 1 & 0\\
0& 0 & 1 & 1\\
0& 0 & 0 & 1
\end{array}\right)
\\
&g_1(z_1,z_2,z_3,z_4)=(-z_1,\bar z_2,z_3,\bar z_4).\hspace{.1cm}   &h_1h_2(z_1,z_2,z_3,z_4)=(-z_1,-\bar z_2,z_3,\bar z_4)\\
&g_2(z_1,z_2,z_3,z_4)=(z_1,-z_2,\bar z_3,z_4).            &h_2(z_1,z_2,z_3,z_4)=(z_1,-z_2,\bar z_3,z_4)\\
&g_3(z_1,z_2,z_3,z_4)=(z_1,z_2,-z_3,\bar z_4).           &h_3(z_1,z_2,z_3,z_4)=(z_1,z_2,-z_3,\bar z_4)\\
&g_4(z_1,z_2,z_3,z_4)=(z_1,z_2,z_3,-z_4).                &h_4(z_1,z_2,z_3,z_4)=(z_1,z_2,z_3,-z_4)
\end{align*}
\normalsize then \footnotesize
\begin{align*}
\Gamma_{a14}=
\Bigl < &\left(\begin{array}{cccc}
\frac{1}{2}\\
0\\
0\\
0
\end{array}\right)
\left(\begin{array}{cccc}
1& 0 & 0 & 0\\
0& -1 & 0 & 0\\
0& 0 & 1 & 0\\
0& 0 & 0 & -1
\end{array}\right),
\left(\begin{array}{cccc}
0\\
\frac{1}{2}\\
0\\
0
\end{array}\right)
\left(\begin{array}{cccc}
1& 0 & 0 & 0\\
0& 1 & 0 & 0\\
0& 0 & -1 & 0\\
0& 0 & 0 & 1
\end{array}\right),
\\
&\left(\begin{array}{cccc}
0\\
0\\
\frac{1}{2}\\
0
\end{array}\right)
\left(\begin{array}{cccc}
1& 0 & 0 & 0\\
0& 1 & 0 & 0\\
0& 0 & 1 & 0\\
0& 0 & 0 & -1
\end{array}\right),
\left(\begin{array}{cccc}
0\\
0\\
0\\
\frac{1}{2}
\end{array}\right)
\left(\begin{array}{cccc}
1& 0 & 0 & 0\\
0& 1 & 0 & 0\\
0& 0 & 1 & 0\\
0& 0 & 0 & 1
\end{array}\right)
\Bigr >
\end{align*}

\begin{align*}
\Gamma_{a30}= 
\Bigl < & \left(\begin{array}{cccc}
\frac{1}{2}\\
\frac{1}{2}\\
0\\
0
\end{array}\right)
\left(\begin{array}{cccc}
1& 0 & 0 & 0\\
0& -1 & 0 & 0\\
0& 0 & 1 & 0\\
0& 0 & 0 & -1
\end{array}\right),
\left(\begin{array}{cccc}
0\\
\frac{1}{2}\\
0\\
0
\end{array}\right)
\left(\begin{array}{cccc}
1& 0 & 0 & 0\\
0& 1 & 0 & 0\\
0& 0 & -1 & 0\\
0& 0 & 0 & 1
\end{array}\right),
\\
&\left(\begin{array}{cccc}
0\\
0\\
\frac{1}{2}\\
0
\end{array}\right)
\left(\begin{array}{cccc}
1& 0 & 0 & 0\\
0& 1 & 0 & 0\\
0& 0 & 1 & 0\\
0& 0 & 0 & -1
\end{array}\right),
\left(\begin{array}{cccc}
0\\
0\\
0\\
\frac{1}{2}
\end{array}\right)
\left(\begin{array}{cccc}
1& 0 & 0 & 0\\
0& 1 & 0 & 0\\
0& 0 & 1 & 0\\
0& 0 & 0 & 1
\end{array}\right)
\Bigr >.
\end{align*}

\normalsize
\noindent
Let $\varphi(z_1,z_2,z_3,z_4)=(z_1,iz_2,z_3,z_4)$, we get these commutative diagrams

\footnotesize
\[
\begin{CD}
(z_1,z_2,z_3,z_4) @>\varphi>>(z_1,iz_2,z_3,z_4)\\
@Vg_1VV @Vh_1h_2 VV\\
(-z_1,\bar z_2,z_3,\bar z_4) @>\varphi>> (-z_1,-\bar {iz}_2,z_3,\bar z_4)
\end{CD}
\hspace{.1cm}
\begin{CD}
(z_1,z_2,z_3,z_4) @>\varphi>>(z_1,iz_2,z_3,z_4)\\
@Vg_3VV @Vh_3 VV\\
(z_1,z_2,-z_3,\bar z_4) @>\varphi>> (z_1,iz_2,-z_3,\bar z_4)
\end{CD}
\]
\[ 
\begin{CD}
(z_1,z_2,z_3,z_4) @>\varphi>>(z_1,iz_2,z_3,z_4)\\
@Vg_2VV @Vh_2 VV\\
(z_1,-z_2,\bar z_3,z_4) @>\varphi>> (z_1,-iz_2,\bar{z}_3,z_4)
\end{CD}
\hspace{.1cm}
\begin{CD}
(z_1,z_2,z_3,z_4) @>\varphi>>(z_1,iz_2,z_3,z_4)\\
@Vg_4VV @Vh_4 VV\\
(z_1,z_2,z_3,-z_4) @>\varphi>> (z_1,iz_2,z_3,-z_4).
\end{CD}
\]

\normalsize
\noindent 
Therefore $\exists$ 
$\gamma 
\footnotesize
=
\Bigl( \left(\begin{array}{cccc}
0\\
\frac{1}{4}\\
0\\
0
\end{array}\right)
\left(\begin{array}{cccc}
1& 0 & 0 & 0\\
0& 1 & 0 & 0\\
0& 0 & 1 & 0\\
0& 0 & 0 & 1
\end{array}\right)
\Bigr)
\normalsize
\in \mathbb{A}(n)$ 
s.t. $\gamma \Gamma_{a14}\gamma ^{-1}=\Gamma_{a30}$.\\

\noindent 
$\bullet$ $M(A_{a28})\approx  M(A_{a30})$.\\
For
\footnotesize
\begin{align*}
&A_{a28}=
\left(\begin{array}{cccc}
1& 1 & 1 & 0 \\
0& 1 & 1 & 1\\
0& 0 & 1 & 1\\
0& 0 & 0 & 1
\end{array}\right)
\hspace{.1cm}
&A_{a30}=
\left(\begin{array}{cccc}
1& 1 & 1 & 1 \\
0& 1 & 1 & 0\\
0& 0 & 1 & 1\\
0& 0 & 0 & 1
\end{array}\right)
\\
&g_1(z_1,z_2,z_3,z_4)=(-z_1,\bar z_2,\bar z_3,z_4).\hspace{.1cm}   &h_1h_3(z_1,z_2,z_3,z_4)=(-z_1,\bar z_2,-\bar z_3,z_4)\\
&g_2(z_1,z_2,z_3,z_4)=(z_1,-z_2,\bar z_3,\bar z_4).            &h_2h_3(z_1,z_2,z_3,z_4)=(z_1,-z_2,-\bar z_3,\bar z_4)\\
&g_3(z_1,z_2,z_3,z_4)=(z_1,z_2,-z_3,\bar z_4).           &h_3(z_1,z_2,z_3,z_4)=(z_1,z_2,-z_3,\bar z_4)\\
&g_4(z_1,z_2,z_3,z_4)=(z_1,z_2,z_3,-z_4).                &h_4(z_1,z_2,z_3,z_4)=(z_1,z_2,z_3,-z_4)
\end{align*}
\normalsize then \footnotesize
\begin{align*}
\Gamma_{a28}=
\Bigl < &\left(\begin{array}{cccc}
\frac{1}{2}\\
0\\
0\\
0
\end{array}\right)
\left(\begin{array}{cccc}
1& 0 & 0 & 0\\
0& -1 & 0 & 0\\
0& 0 & -1 & 0\\
0& 0 & 0 & 1
\end{array}\right),
\left(\begin{array}{cccc}
0\\
\frac{1}{2}\\
0\\
0
\end{array}\right)
\left(\begin{array}{cccc}
1& 0 & 0 & 0\\
0& 1 & 0 & 0\\
0& 0 & -1 & 0\\
0& 0 & 0 & -1
\end{array}\right),
\\
&\left(\begin{array}{cccc}
0\\
0\\
\frac{1}{2}\\
0
\end{array}\right)
\left(\begin{array}{cccc}
1& 0 & 0 & 0\\
0& 1 & 0 & 0\\
0& 0 & 1 & 0\\
0& 0 & 0 & -1
\end{array}\right),
\left(\begin{array}{cccc}
0\\
0\\
0\\
\frac{1}{2}
\end{array}\right)
\left(\begin{array}{cccc}
1& 0 & 0 & 0\\
0& 1 & 0 & 0\\
0& 0 & 1 & 0\\
0& 0 & 0 & 1
\end{array}\right)
\Bigr >
\end{align*}

\begin{align*}
\Gamma_{a30}= 
\Bigl < & \left(\begin{array}{cccc}
\frac{1}{2}\\
0\\
\frac{1}{2}\\
0
\end{array}\right)
\left(\begin{array}{cccc}
1& 0 & 0 & 0\\
0& -1 & 0 & 0\\
0& 0 & -1 & 0\\
0& 0 & 0 & 1
\end{array}\right),
\left(\begin{array}{cccc}
0\\
\frac{1}{2}\\
\frac{1}{2}\\
0
\end{array}\right)
\left(\begin{array}{cccc}
1& 0 & 0 & 0\\
0& 1 & 0 & 0\\
0& 0 & -1 & 0\\
0& 0 & 0 & -1
\end{array}\right),
\\
&\left(\begin{array}{cccc}
0\\
0\\
\frac{1}{2}\\
0
\end{array}\right)
\left(\begin{array}{cccc}
1& 0 & 0 & 0\\
0& 1 & 0 & 0\\
0& 0 & 1 & 0\\
0& 0 & 0 & -1
\end{array}\right),
\left(\begin{array}{cccc}
0\\
0\\
0\\
\frac{1}{2}
\end{array}\right)
\left(\begin{array}{cccc}
1& 0 & 0 & 0\\
0& 1 & 0 & 0\\
0& 0 & 1 & 0\\
0& 0 & 0 & 1
\end{array}\right)
\Bigr >.
\end{align*}

\normalsize
\noindent
Let $\varphi(z_1,z_2,z_3,z_4)=(z_1,z_2,iz_3,z_4)$, we get these commutative diagrams

\footnotesize
\[
\begin{CD}
(z_1,z_2,z_3,z_4) @>\varphi>>(z_1,z_2,iz_3,z_4)\\
@Vg_1VV @Vh_1h_3 VV\\
(-z_1,\bar z_2,\bar z_3,z_4) @>\varphi>> (-z_1,\bar {z}_2,-\bar {iz}_3,z_4)
\end{CD}
\hspace{.1cm}
\begin{CD}
(z_1,z_2,z_3,z_4) @>\varphi>>(z_1,z_2,iz_3,z_4)\\
@Vg_3VV @Vh_3 VV\\
(z_1,z_2,-z_3,\bar z_4) @>\varphi>> (z_1,z_2,-iz_3,\bar z_4)
\end{CD}
\]
\[ 
\begin{CD}
(z_1,z_2,z_3,z_4) @>\varphi>>(z_1,z_2,iz_3,z_4)\\
@Vg_2VV @Vh_2h_3 VV\\
(z_1,-z_2,\bar z_3,\bar z_4) @>\varphi>> (z_1,-z_2,-\bar{iz}_3,\bar z_4)
\end{CD}
\hspace{.1cm}
\begin{CD}
(z_1,z_2,z_3,z_4) @>\varphi>>(z_1,z_2,iz_3,z_4)\\
@Vg_4VV @Vh_4 VV\\
(z_1,z_2,z_3,-z_4) @>\varphi>> (z_1,z_2,iz_3,-z_4).
\end{CD}
\]

\normalsize
\noindent 
Therefore $\exists$ 
$\gamma 
\footnotesize
=
\Bigl( \left(\begin{array}{cccc}
0\\
0\\
\frac{1}{4}\\
0
\end{array}\right)
\left(\begin{array}{cccc}
1& 0 & 0 & 0\\
0& 1 & 0 & 0\\
0& 0 & 1 & 0\\
0& 0 & 0 & 1
\end{array}\right)
\Bigr)
\normalsize
\in \mathbb{A}(n)$ 
s.t. $\gamma \Gamma_{a28}\gamma ^{-1}=\Gamma_{a30}$.\\

\noindent {\bf (29)}. Similar to (4), because $\Phi_{a10}=\Phi_{a14}=\mathbb{Z}_2 \times \mathbb{Z}_2 \times \mathbb{Z}_2 $.
\\

\noindent
{\bf (30).} $M(A_{a10})$ is not diffeomorphic to $M(A_{a14})$.\\
For 
\footnotesize
\begin{align*}
&A_{a10}=
\left(\begin{array}{cccc}
1& 1 & 0 & 0 \\
0& 1 & 1 & 0\\
0& 0 & 1 & 1\\
0& 0 & 0 & 1
\end{array}\right)
\hspace{.1cm}
&A_{a14}=
\left(\begin{array}{cccc}
1& 1 & 0 & 1 \\
0& 1 & 1 & 0\\
0& 0 & 1 & 1\\
0& 0 & 0 & 1
\end{array}\right)
\\
&g_1(z_1,z_2,z_3,z_4)=(-z_1,\bar z_2,z_3,z_4).\hspace{.1cm}   &h_1(z_1,z_2,z_3,z_4)=(-z_1,\bar z_2,z_3,\bar z_4)\\
&g_2(z_1,z_2,z_3,z_4)=(z_1,-z_2,\bar z_3,z_4).            &h_2(z_1,z_2,z_3,z_4)=(z_1,-z_2,\bar z_3,z_4)\\
&g_3(z_1,z_2,z_3,z_4)=(z_1,z_2,-z_3,\bar z_4).           &h_3(z_1,z_2,z_3,z_4)=(z_1,z_2,-z_3,\bar z_4)\\
&g_4(z_1,z_2,z_3,z_4)=(z_1,z_2,z_3,-z_4).                &h_4(z_1,z_2,z_3,z_4)=(z_1,z_2,z_3,-z_4)
\end{align*}
\normalsize then \footnotesize
\begin{align*}
\Gamma_{a10}=&\langle \tilde {g}_1, \tilde {g}_2, \tilde {g}_3, t_4 \rangle\\
\Bigl < &\left(\begin{array}{cccc}
\frac{1}{2}\\
0\\
0\\
0
\end{array}\right)
\left(\begin{array}{cccc}
1& 0 & 0 & 0\\
0& -1 & 0 & 0\\
0& 0 & 1 & 0\\
0& 0 & 0 & 1
\end{array}\right),
\left(\begin{array}{cccc}
0\\
\frac{1}{2}\\
0\\
0
\end{array}\right)
\left(\begin{array}{cccc}
1& 0 & 0 & 0\\
0& 1 & 0 & 0\\
0& 0 & -1 & 0\\
0& 0 & 0 & 1
\end{array}\right),
\\
&\left(\begin{array}{cccc}
0\\
0\\
\frac{1}{2}\\
0
\end{array}\right)
\left(\begin{array}{cccc}
1& 0 & 0 & 0\\
0& 1 & 0 & 0\\
0& 0 & 1 & 0\\
0& 0 & 0 & -1
\end{array}\right),
\left(\begin{array}{cccc}
0\\
0\\
0\\
\frac{1}{2}
\end{array}\right)
\left(\begin{array}{cccc}
1& 0 & 0 & 0\\
0& 1 & 0 & 0\\
0& 0 & 1 & 0\\
0& 0 & 0 & 1
\end{array}\right)
\Bigr >
\end{align*}

\begin{align*}
\Gamma_{a14}=&\langle \tilde {h}_1, \tilde {h}_2, \tilde {h}_3, t_4 \rangle\\
\Bigl < &\left(\begin{array}{cccc}
\frac{1}{2}\\
0\\
0\\
0
\end{array}\right)
\left(\begin{array}{cccc}
1& 0 & 0 & 0\\
0& -1 & 0 & 0\\
0& 0 & 1 & 0\\
0& 0 & 0 & -1
\end{array}\right),
\left(\begin{array}{cccc}
0\\
\frac{1}{2}\\
0\\
0
\end{array}\right)
\left(\begin{array}{cccc}
1& 0 & 0 & 0\\
0& 1 & 0 & 0\\
0& 0 & -1 & 0\\
0& 0 & 0 & 1
\end{array}\right),
\\
&\left(\begin{array}{cccc}
0\\
0\\
\frac{1}{2}\\
0
\end{array}\right)
\left(\begin{array}{cccc}
1& 0 & 0 & 0\\
0& 1 & 0 & 0\\
0& 0 & 1 & 0\\
0& 0 & 0 & -1
\end{array}\right),
\left(\begin{array}{cccc}
0\\
0\\
0\\
\frac{1}{2}
\end{array}\right)
\left(\begin{array}{cccc}
1& 0 & 0 & 0\\
0& 1 & 0 & 0\\
0& 0 & 1 & 0\\
0& 0 & 0 & 1
\end{array}\right)
\Bigr >.
\end{align*}
\normalsize
Since $\tilde {g}_1\tilde {g}_2\tilde {g}_1^{-1}=\tilde {g}_2^{-1}$, $\tilde {g}_1 \tilde {g}_3\tilde {g}_1^{-1}=\tilde {g}_3$, and
$\tilde {g}_1 t_4\tilde {g}_1^{-1}=t_4$, 
then $\Gamma_{a10}=\langle \tilde {g}_2, \tilde {g}_3, t_4 \rangle \rtimes \langle \tilde {g}_1 \rangle$.
Then the center $\displaystyle \mathcal C(\Gamma_{a10})= \langle t_1 \rangle$ where $t_1=\tilde {g}_1^2$.
There is an extension
\begin{align*}
1\rightarrow  \langle t_1 \rangle \rightarrow  \Gamma_{a10}\rightarrow  \Delta_{a10}\rightarrow  1,
\end{align*}
where $\Delta_{a10}=\Delta^f_{a10}\rtimes \langle \alpha   \rangle
                   =\langle p,q,r \rangle \rtimes \langle \alpha   \rangle$ is isomorphic to \footnotesize
\begin{align*}
\Bigl <\left(\begin{array}{c}
\frac{1}{2}\\
0\\
0
\end{array}\right)
\left(\begin{array}{ccc}
1& 0 & 0\\
0& -1 & 0 \\
0& 0 & 1
\end{array}\right), \left(\begin{array}{c}
0\\
\frac{1}{2}\\
0
\end{array}\right) \left(\begin{array}{ccc}
1& 0 & 0\\
0& 1 & 0 \\
0& 0 & -1
\end{array}\right),
\left(\begin{array}{c}
0\\
0\\
\frac{1}{2}
\end{array}\right) \left(\begin{array}{ccc}
1& 0 & 0\\
0& 1 & 0 \\
0& 0 & 1
\end{array}\right)
\Bigr > \rtimes \\
\Big < \left(\begin{array}{c}
0\\
0\\
0
\end{array}\right) \left(\begin{array}{ccc}
-1& 0 & 0\\
0& 1 & 0 \\
0& 0 & 1
\end{array}\right)
\Bigr >.
\end{align*}
\normalsize
Then, since $\tilde {h}_1\tilde {h}_2\tilde {h}_1^{-1}=\tilde {h}_2^{-1}$, $\tilde {h}_1 \tilde {h}_3\tilde {h}_1^{-1}=\tilde {h}_3$, and
$\tilde {h}_1 t_4\tilde {h}_1^{-1}=t_4^{-1}$, 
then $\Gamma_{a14}=\langle \tilde {h}_2, \tilde {h}_3, t_4 \rangle \rtimes \langle \tilde {h}_1 \rangle$.
Then the center $\displaystyle \mathcal C(\Gamma_{a14})= \langle t_1 \rangle$ where $t_1=\tilde {h}_1^2$.
There is an extension
\begin{align*}
1\rightarrow  \langle t_1 \rangle \rightarrow  \Gamma_{a14}\rightarrow  \Delta_{a14}\rightarrow  1,
\end{align*}
where $\Delta_{a14}=\Delta^f_{a14}\rtimes \langle \beta   \rangle=\langle s,t,u \rangle \rtimes \langle \beta  \rangle$ is isomorphic to \footnotesize
\begin{align*}
\Bigl <\left(\begin{array}{c}
\frac{1}{2}\\
0\\
0
\end{array}\right)
\left(\begin{array}{ccc}
1& 0 & 0\\
0& -1 & 0 \\
0& 0 & 1
\end{array}\right), \left(\begin{array}{c}
0\\
\frac{1}{2}\\
0
\end{array}\right) \left(\begin{array}{ccc}
1& 0 & 0\\
0& 1 & 0 \\
0& 0 & -1
\end{array}\right),
\left(\begin{array}{c}
0\\
0\\
\frac{1}{2}
\end{array}\right) \left(\begin{array}{ccc}
1& 0 & 0\\
0& 1 & 0 \\
0& 0 & 1
\end{array}\right)
\Bigr > \rtimes \\
\Big < \left(\begin{array}{c}
0\\
0\\
0
\end{array}\right) \left(\begin{array}{ccc}
-1& 0 & 0\\
0& 1 & 0 \\
0& 0 & -1
\end{array}\right)
\Bigr > .
\end{align*}
\normalsize
Suppose that $\Gamma_{a10}$ is isomorphic to $\Gamma_{a14}$
 with isomorphism $\varphi:\Gamma_{a10}\rightarrow  \Gamma_{a14}$.
Then it induces an isomorphism $\hat\varphi:\Delta_{a10}\rightarrow 
\Delta_{a14}$ 
\begin{align*}
\begin{CD}
1 @>>>  \langle t_1 \rangle @>>> \Gamma_{a10}@>>>  \Delta_{a10}@>>>  1\\
  @.                @.                @V \varphi VV           @V \hat \varphi VV\\
1 @>>>  \langle t_1 \rangle @>>> \Gamma_{a14}@>>>  \Delta_{a14}@>>>  1.
\end{CD}
\end{align*}
{\bf Claim 1:}\\
Given an isomorphism $\hat \varphi:\Delta_{a10}\rightarrow \Delta_{a14}$, then  $\hat \varphi (\Delta_{a10}^f)=\Delta_{a14}^f$, 
that is, $\hat \varphi $ maps isomorphically.
\\

Since $\hat \varphi (\alpha )$ is a torsion element, we may write $\hat \varphi (\alpha )=h\beta $ 
for some $h\in \Delta_{a14}^f$. Since $\hat \varphi $ is an isomorphism, there exist $g\in \Delta_{a10}^f$
such that $\hat \varphi (g\alpha )=\beta $. Hence $\beta =\hat \varphi (g)\hat \varphi (\alpha )=\hat \varphi (g)h\beta $.
So $\hat \varphi (g)h=1$ or $h =\hat \varphi (g)^{-1}=\hat \varphi (g^{-1})$. Hence 
\begin{equation}\label{E1}
\hat \varphi (\alpha )=\hat \varphi (g^{-1})\beta.
\end{equation}
By using (\ref{E1}), 
\begin{align*}
\hat \varphi (\Delta_{a10})=&\hat \varphi (\Delta_{a10}^f) \rtimes \hat \varphi (\alpha )\\
                           =&\hat \varphi (\Delta_{a10}^f) \rtimes \hat \varphi (g^{-1})\beta \\
				   =&\hat \varphi (\Delta_{a10}^f) \rtimes \beta  \hspace{1cm} (\hat \varphi (g^{-1})\in \hat \varphi (\Delta_{a10}^f))
\end{align*}
while $\hat \varphi (\Delta_{a10})=\Delta_{a14}$. Therefore
\begin{equation}\label{E2}
\hat \varphi (\Delta_{a10}^f) \rtimes \beta =\Delta_{a14}^f \rtimes \beta.
\end{equation}

Let $s\in \Delta_{a14}^f$, then $s\beta =
\footnotesize
\Big ( \left(\begin{array}{c}
\frac{1}{2}\\
0\\
0
\end{array}\right) \left(\begin{array}{ccc}
-1& 0 & 0\\
0& -1 & 0 \\
0& 0 & -1
\end{array}\right)
\Bigr) $ \normalsize
which is a torsion element. By using (\ref{E2}) $\hat \varphi (g'_2)\beta =s\beta $
for some $g'_2\in \Delta_{a10}^f$. Hence 
\begin{equation}\label{E3}
\hat \varphi (g'_2)=s.
\end{equation}
Let $t\in \Delta_{a14}^f$, then $st\beta =
\footnotesize
\Big ( \left(\begin{array}{c}
\frac{1}{2}\\
-\frac{1}{2}\\
0
\end{array}\right) \left(\begin{array}{ccc}
-1& 0 & 0\\
0& -1 & 0 \\
0& 0 & 1
\end{array}\right)
\Bigr) $ \normalsize
which is a torsion element. By using (\ref{E2}) there exist $g'_3\in \Delta_{a10}^f$ such that
$\hat \varphi (g'_3)\beta =st\beta $. So, $\hat \varphi (g'_3)=st$ or $s^{-1}\hat \varphi (g'_3) =t$, and by (\ref{E3})
\begin{equation}\label{E4}
\hat \varphi ((g'_2)^{-1} g'_3)=t
\end{equation}
where $(g'_2)^{-1}g'_3\in \Delta_{a10}^f$.

As $u\beta =
\footnotesize
\Big ( \left(\begin{array}{c}
0\\
0\\
\frac{1}{2}
\end{array}\right) \left(\begin{array}{ccc}
-1& 0 & 0\\
0& 1 & 0 \\
0& 0 & -1
\end{array}\right)
\Bigr) $ \normalsize
is a torsion element. By using (\ref{E2}) there exist $g''\in \Delta_{a10}^f$ such that
$\hat \varphi (g'')\beta =u\beta $. Hence
\begin{equation}\label{E5}
\hat \varphi (g'')=u.
\end{equation}

If $g\in \Delta_{a10}^f$, we have $\hat \varphi (g)=h\in \Delta_{a14}^f$. If this is not true, then $\hat \varphi (g)=h\beta $.
But $h=\hat \varphi (g')$ for some $g'\in \Delta_{a10}^f$. So, the above implies $h^{-1}\hat \varphi (g)=\beta $ or 
$\hat \varphi (g'^{-1}g)=\beta $ where $g'^{-1}g\in \Delta_{a10}^f$.
Since $\Delta_{a10}^f$ is torsion free, it is impossible. Therefore,
since for any $g\in \Delta_{a10}^f$ $\hat \varphi (g)=h\in\Delta_{a14}^f $ then $\hat \varphi (\Delta_{a10}^f)\subset \Delta_{a14}^f$,
and as (\ref{E3}) (\ref{E4}) and (\ref{E5}) imply surjectivity, then $\hat \varphi (\Delta_{a10}^f)= \Delta_{a14}^f$.
\\
\\
{\bf Claim 2:}\\
Suppose $\Delta_{a10}$ and $\Delta_{a14}$ are isomorphic by $\hat \varphi$. Since  
$\Delta_{a10}$ and $\Delta_{a14}$ are crystallographics groups (rigid motion of $\mathbb{R}^{3}$),
there are affinely conjugate, i.e. there exist an equivariant diffeomorphism (=affine transformation)
\begin{equation}\label{E6}
(\hat \varphi , S):(\Delta_{a10}, \mathbb{R}^{3})\longrightarrow (\Delta_{a14}, \mathbb{R}^{3})
\end{equation}
such that $S(\gamma v)=\hat \varphi (\gamma )S(v)$ or equivalently 
$S\gamma =\hat \varphi (\gamma )S$, $S\gamma S^{-1}=\hat \varphi (\gamma )$.
\\

First we note that $\hat \varphi $ induces an isomorphism
\begin{align*}
\bar \varphi : \Delta_{a10}/{\Delta_{a10}^f}=\langle \bar \alpha  \rangle \longrightarrow 
\Delta_{a14}/{\Delta_{a14}^f}=\langle \bar \beta  \rangle
\end{align*}
by $\bar \varphi (\bar \gamma )=\overline {\hat {\varphi} (\gamma )}$. 
For $\bar \gamma =\bar \gamma'$ $(\gamma ,\gamma'\in \Delta_{a10})$, then $\gamma =r\gamma'$ $(r\in \Delta_{a10}^f)$.
Then $\hat \varphi (\gamma) =\hat \varphi (r) \hat \varphi (\gamma')=r'\hat \varphi (\gamma')$ for
some $r'\in \Delta_{a14}^f(\because \hat \varphi (\Delta_{a10}^f)= \Delta_{a14}^f)$. Hence 
$\overline {\hat {\varphi} (\gamma )}=\overline {\hat {\varphi} (\gamma' )}$ which show that it is well defined.

Note that, if $\gamma=\tilde g \alpha$, $\tilde g\in \Delta _{a10}^f$, then $\bar \gamma =\overline{\tilde g \alpha }=\bar \alpha $
and 
\begin{align*}
\hat \varphi (\gamma)=&\hat \varphi (\tilde g) \hat \varphi  (\alpha)\\
                   =&\hat \varphi (\tilde g) \hat \varphi  (g^{-1}) \beta    \hspace{1cm} \text {by} \hspace{0.2cm}\eqref{E1}\\
                   =&\hat \varphi (\tilde g g^{-1})\beta.
\end{align*}
Since $g,\tilde g\in \Delta _{a10}^f$ then $\hat \varphi (\tilde g g^{-1})\in \Delta _{a14}^f$ 
and $\hat \varphi (\gamma) \in \Delta _{a14}$ by assumption. Hence $\overline {\hat \varphi (\gamma)  }=\bar \beta $, 
and by definition
\begin{equation}\label{E7}
\bar  \varphi {(\bar \alpha) }=\bar \beta.
\end{equation} 

Note also, $S$ induces a diffeomorphism
\begin{align*}
\bar S: M(\Delta _{a10}^f)=\mathbb{R}^{3}/\Delta _{a10}^f \longrightarrow M(\Delta _{a14}^f)=\mathbb{R}^{3}/\Delta _{a14}^f
\end{align*}
by $\bar S(\bar v)=\overline {S(v)}$.
For this, if $\bar v=\bar v'$ then $v'=rv$ $(r\in\Delta _{a10}^f )$. $S(v')=S(rv)=\hat \varphi (r)S(v)$ by \eqref{E6}
where $\hat \varphi (r)\in \Delta _{a14}^f$ (because of Claim 1). Thus $\overline {S(v')}=\overline {S(v)}$.

Moreover $\bar S$ is an equivariant diffeomorphism
\begin{align*}
(\bar \varphi ,\bar S): (\langle \bar \alpha \rangle, M(\Delta _{a10}^f))\longrightarrow (\langle \bar \beta  \rangle,M(\Delta _{a14}^f)).
\end{align*}
For this, 
\begin{align*}
\bar S (\bar \alpha \bar v)=&\bar S (\overline{\alpha v})=\overline {S(\alpha v)}\\
                           =&\overline {\hat \varphi (\alpha ) S(v)}  \hspace {0.9cm} (\text {by} \quad \eqref{E6})\\
				   =&\overline {\hat {\varphi} (\alpha )} \quad \overline {S(v)}  \hspace {0.5cm} (\text {by \,\,($\ast$)\,\, below})\\
				   =&\bar \beta \bar S(\bar v)
\end{align*}
$(\ast )$\\
The action of $\Delta _{a10}/\Delta _{a10}^f=\langle \bar \alpha \rangle$ on $M(\Delta _{a10}^f)$ is obtained: 
$ \bar \gamma \bar v =\overline {\gamma v}$  where $\bar \gamma \in \Delta _{a10}/\Delta _{a10}^f$ and
$\bar v \in \mathbb{R}^3/\Delta _{a10}^f$. If $\bar\gamma=\bar {\gamma'}$ then
$\gamma'=g \gamma$, where $\gamma', \gamma \in \Delta _{a10}$ and $g\in \Delta _{a10}^f$. Therefore
$\overline{\gamma' v}=\overline {g\gamma v}=\overline{g(\gamma v)}=\overline{\gamma v}$.
If $\bar v=\bar {v'}$ then $v'=g'v$ where $g'\in \Delta _{a10}^f$. Therefore $\overline {\gamma' v'}=
\overline {\gamma' g' v}=\overline {(\gamma'g' \gamma'^{-1}) \gamma'v}=\overline {g'' \gamma' v}=\overline {\gamma' v}$,
where $g''\in \Delta _{a10}^f$ normal subgroup.
So, when $\bar\gamma=\bar {\gamma'}$, $\bar v=\bar {v'}$ then $\bar \gamma' \bar {v'}=
\overline {\gamma' v'}=\overline {\gamma' v}=\overline {\gamma v}=\bar \gamma \bar v$.

We have this diagram:\\
\[
\begin{CD}
\Delta _{a10}^f @>\varphi>>\Delta _{a14}^f\\
@VVV @VVV\\
(\Delta _{a10},\mathbb{R}^{3}) @>(\varphi,S)>> (\Delta _{a14},\mathbb{R}^{3})\\
@VVV @VVV\\
(\Delta _{a10}/\Delta _{a10}^f,\mathbb{R}^{3}/\Delta _{a10}^f) @>(\bar \varphi,\bar S)>> (\Delta _{a14}/\Delta _{a14}^f,\mathbb{R}^{3}/\Delta _{a14}^f)\\
\end{CD}
\]
where $\bar S (\bar \alpha \bar v)=\bar \beta \bar S (\bar v)$ as $\bar S$ is an equivariant diffeomorphism.
\\

We note that, the fixed point set of $\bar \alpha$, $Fix(\bar \alpha)=\{ \bar v\in M(\Delta _{a10}^f)|\bar \alpha \bar v=\bar v\}$
mapped diffeomorphically onto $Fix(\bar \beta)$.
\\

Now, to get $Fix(\bar \al)$ and $Fix(\bar \be)$, the argument is as follows;
By considering the exact sequence 
\[
\ZZ^3\lra \Delta_{a10}^f \lra \ZZ_2 \times \ZZ_2,
\]
the action of $\ZZ_2 \times \ZZ_2=<\tau , \mu >$ on $T^3$ by
$\tau \footnotesize \overline {\left(\begin{array}{ccc}
x_1\\
x_2\\
x_3
\end{array}\right)}=\overline {\left(\begin{array}{ccc}
\frac{1}{2}+x_1\\
-x_2\\
x_3
\end{array}\right)}={\left(\begin{array}{ccc}
-z_1\\
\overline {z_2}\\
z_3
\end{array}\right)},$
\normalsize
$\mu \footnotesize \overline {\left(\begin{array}{ccc}
x_1\\
x_2\\
x_3
\end{array}\right)}=\overline {\left(\begin{array}{ccc}
x_1\\
\frac{1}{2}+x_2\\
-x_3
\end{array}\right)}={\left(\begin{array}{ccc}
z_1\\
-z_2\\
\overline {z_3}
\end{array}\right)}$,
\normalsize 
and the action of $< \bar\al>$ on $\RR^3/\Delta_{a10}^f$ by $\bar \al 
\footnotesize \overline {\left(\begin{array}{ccc}
x_1\\
x_2\\
x_3
\end{array}\right)}=\footnotesize \overline {\left(\begin{array}{ccc}
-x_1\\
x_2\\
x_3
\end{array}\right)}$, \normalsize we have the commutative diagram
\[
\begin{CD}
\ZZ_2 \times \ZZ_2 @. \ZZ_2 \times \ZZ_2\\
@VVV @VVV\\
T^3=\RR^3/\ZZ^3 @>\tilde \al>> \RR^3/\ZZ^3 \\
@VpVV @VpVV\\
\RR^3/\Delta_{a10}^f @>\bar \al>> \RR^3/\Delta_{a10}^f\\
\end{CD}
\]
where $\tilde \al$ is a lift of $\bar \al$ and 
$p$ is defined by $p(\footnotesize \left(\begin{array}{ccc}
{z_1}\\
z_2\\
z_3
\end{array}\right))=\overline {\left(\begin{array}{ccc}
x_1\\
x_2\\
x_3
\end{array}\right)}$ \nz 
such that $p\circ \tilde \al=\bar \al \circ p$.

Note that $\tilde \al \footnotesize \left(\begin{array}{ccc}
{z_1}\\
z_2\\
z_3
\end{array}\right)=\footnotesize \left(\begin{array}{ccc}
{\bar z_1}\\
z_2\\
z_3
\end{array}\right)\in T^3$, \nz since $\tilde \al \footnotesize \overline {\left(\begin{array}{ccc}
{x_1}\\
x_2\\
x_3
\end{array}\right)}=\footnotesize \overline {\left(\begin{array}{ccc}
{-x_1}\\
x_2\\
x_3
\end{array}\right)}= \left(\begin{array}{ccc}
{\bar z_1}\\
z_2\\
z_3
\end{array}\right)\in T^3$. \nz

Let $x=\fz \left(\begin{array}{ccc}
{x_1}\\
x_2\\
x_3
\end{array}\right)$ \nz and  $z=\fz \left(\begin{array}{ccc}
{z_1}\\
z_2\\
z_3
\end{array}\right)$ \nz. 
If $z\in Fix(\tilde \al)$, $\ti \al z=z$, then $p(\ti \al z)=p(z)$. This  implies \fz
\[
\left(\begin{array}{ccc}
{\bar z_1}\\
z_2\\
z_3
\end{array}\right)=\ti \al \left(\begin{array}{ccc}
{z_1}\\
z_2\\
z_3
\end{array}\right)=\tau^m \mu^n
\left(\begin{array}{ccc}
{z_1}\\
z_2\\
z_3
\end{array}\right) \,\,\,\, m,n \in \{0,1\}.
\]
\\ \nz
So, the possibilities of $Fix(\ti \al)$ is obtained from
\begin{equation}\label{FixAl}
\fz \left(\begin{array}{ccc}
{\bar z_1}\\
z_2\\
z_3
\end{array}\right)= \nz
\begin{cases}
z\\
\tau z\\
\mu z\\
\tau \mu z.
\end{cases}
\end{equation} 

Before determining $Fix(\ti \al)$, note that $\ti \al(\tau z)=\tau (\ti \al z)$ and 
$\ti \al(\mu z)=\mu (\ti \al z)$, since $-\bar z_i=\overline {-z_i}$. Hence if $z\in Fix(\ti \al)$, $\tau z\in Fix(\ti \al)$ and
$\mu z\in Fix(\ti \al)$.

Now we calculate $Fix(\ti \al)$ for any case in \eqref{FixAl}.\\
If \fz $\left(\begin{array}{ccc}
{\bar z_1}\\
z_2\\
z_3
\end{array}\right)=\left(\begin{array}{ccc}
{z_1}\\
z_2\\
z_3
\end{array}\right)$,
$\left(\begin{array}{ccc}
{z_1}\\
z_2\\
z_3
\end{array}\right)=\left(\begin{array}{ccc}
{\pm 1}\\
z_2\\
z_3
\end{array}\right)$. \nz Since $\tau \fz \left(\begin{array}{ccc}
{1}\\
z_2\\
z_3
\end{array}\right)=\left(\begin{array}{ccc}
{-1}\\
\bar z_2\\
z_3
\end{array}\right)$, \nz so $Fix(\ti \al)=\fz \Bigl \{ \left(\begin{array}{ccc}
{1}\\
z_2\\
z_3
\end{array}\right)\Bigr \}$. \\ \nz
If
\fz $\left(\begin{array}{ccc}
{\bar z_1}\\
z_2\\
z_3
\end{array}\right)=\nz \tau \fz \left(\begin{array}{ccc}
{z_1}\\
z_2\\
z_3
\end{array}\right)= \left(\begin{array}{ccc}
{-z_1}\\
\bar z_2\\
z_3
\end{array}\right)$,
$\left(\begin{array}{ccc}
{z_1}\\
z_2\\
z_3
\end{array}\right)=\left(\begin{array}{ccc}
{\pm i}\\
\pm 1\\
z_3
\end{array}\right)$. \nz 
Since $\tau \fz \left(\begin{array}{ccc}
{i}\\
\pm 1\\
z_3
\end{array}\right)=\left(\begin{array}{ccc}
{-i}\\
\pm 1\\
z_3
\end{array}\right)$, \nz 
and 
$\mu \fz \left(\begin{array}{ccc}
{i}\\
1\\
z_3
\end{array}\right)=\left(\begin{array}{ccc}
{i}\\
-1\\
\bar z_3
\end{array}\right)$
\nz so $Fix(\ti \al)=\fz \Bigl \{ \left(\begin{array}{ccc}
{i}\\
1\\
z_3
\end{array}\right)\Bigr \}$. \\ \nz
If
\fz $\left(\begin{array}{ccc}
{\bar z_1}\\
z_2\\
z_3
\end{array}\right)=\nz \mu \fz \left(\begin{array}{ccc}
{z_1}\\
z_2\\
z_3
\end{array}\right)= \left(\begin{array}{ccc}
{z_1}\\
-z_2\\
\bar z_3
\end{array}\right)$, there is no $z_2=-z_2$. \nz So $Fix(\ti \al)=\emptyset $.\\
If
\fz $\left(\begin{array}{ccc}
{\bar z_1}\\
z_2\\
z_3
\end{array}\right)=\nz \tau \mu \fz \left(\begin{array}{ccc}
{z_1}\\
z_2\\
z_3
\end{array}\right)= \left(\begin{array}{ccc}
{-z_1}\\
-\bar z_2\\
\bar z_3
\end{array}\right)$,
$\left(\begin{array}{ccc}
{z_1}\\
z_2\\
z_3
\end{array}\right)=\left(\begin{array}{ccc}
{\pm i}\\
\pm i\\
\pm 1
\end{array}\right)$. \nz 
Since 
\begin{align*}
\mu \fz \left(\begin{array}{ccc}
{i}\\
i\\
1
\end{array}\right)&=\left(\begin{array}{ccc}
{i}\\
-i\\
1
\end{array}\right),  
&\mu \fz \left(\begin{array}{ccc}
{i}\\
i\\
-1
\end{array}\right)=\left(\begin{array}{ccc}
{i}\\
-i\\
-1
\end{array}\right)\\
\mu \fz \left(\begin{array}{ccc}
{-i}\\
i\\
1
\end{array}\right)&=\left(\begin{array}{ccc}
{-i}\\
-i\\
1
\end{array}\right),  
&\mu \fz \left(\begin{array}{ccc}
{-i}\\
i\\
-1
\end{array}\right)=\left(\begin{array}{ccc}
{-i}\\
-i\\
-1
\end{array}\right)\\
\tau \mu \fz \left(\begin{array}{ccc}
{i}\\
-i\\
1
\end{array}\right)&=\left(\begin{array}{ccc}
{-i}\\
-i\\
1
\end{array}\right),  
&\tau \mu \fz \left(\begin{array}{ccc}
{i}\\
-i\\
-1
\end{array}\right)=\left(\begin{array}{ccc}
{-i}\\
-i\\
-1
\end{array}\right)
\end{align*}
\nz so $Fix(\ti \al)=\fz \Bigl \{ \left(\begin{array}{ccc}
{i}\\
-i\\
1
\end{array}\right), \left(\begin{array}{ccc}
{i}\\
-i\\
-1
\end{array}\right)\Bigr \}$. 

To determine $Fix(\bar \al)$, the argument is as follows.
If $z\in Fix(\ti \al)$, 
\begin{align*}
\bar \al \bar x &=\bar \al p(z)
                =p(\ti \al z) 
                =p(z)\\
	          &=\bar x,
\end{align*}
hence $p(Fix(\ti \al))=Fix(\bar \al)$.
Now we get 
\begin{align*}
Fix(\bar \al)=
\begin{cases} 
\Bigl \{ p \fz \left(\begin{array}{ccc}
{1}\\
z_2\\
z_3
\end{array}\right)
  \Bigr \}=
\Bigl \{ \fz \overline {\left(\begin{array}{ccc}
{1}\\
x_2\\
x_3
\end{array}\right)}
  \Bigr \}=T^2\\
\Bigl \{ p \fz \left(\begin{array}{ccc}
{i}\\
1\\
z_3
\end{array}\right)
  \Bigr \}=
\Bigl \{ \fz \overline {\left(\begin{array}{ccc}
\frac{1}{4}\\
1\\
x_3
\end{array}\right)}
  \Bigr \}=S^1\\
\Bigl \{ p \fz \left(\begin{array}{ccc}
{i}\\
i\\
1
\end{array}\right),
p \fz \left(\begin{array}{ccc}
{i}\\
i\\
-1
\end{array}\right)
  \Bigr \}=
\Bigl \{ \fz \overline {\left(\begin{array}{ccc}
\frac{1}{4}\\
\frac{1}{4}\\
1
\end{array}\right)},
\overline {\left(\begin{array}{ccc}
\frac{1}{4}\\
\frac{1}{4}\\
-1
\end{array}\right)}
  \Bigr \}.
\end{cases}
\end{align*}
\\

Similarly, we can find $Fix(\bar \be)$ as follows.
If $z\in Fix(\tilde \be)$, $\ti \be z=z$, then $p(\ti \be z)=p(z)$. This  implies \fz
\[
\left(\begin{array}{ccc}
{\bar z_1}\\
z_2\\
\bar z_3
\end{array}\right)=\ti \be \left(\begin{array}{ccc}
{z_1}\\
z_2\\
z_3
\end{array}\right)=\tau^m \mu^n
\left(\begin{array}{ccc}
{z_1}\\
z_2\\
z_3
\end{array}\right) \,\,\,\, m,n \in \{0,1\}.
\]
\\ \nz
So, the possibilities of $Fix(\ti \be)$ is obtained from
\begin{equation}\label{FixBe}
\fz \left(\begin{array}{ccc}
{\bar z_1}\\
z_2\\
\bar z_3
\end{array}\right)= \nz
\begin{cases}
z\\
\tau z\\
\mu z\\
\tau \mu z.
\end{cases}
\end{equation} 

Note that $\ti \be(\tau z)=\tau (\ti \be z)$ and 
$\ti \be(\mu z)=\mu (\ti \be z)$. Hence if $z\in Fix(\ti \be)$, $\tau z\in Fix(\ti \be)$ and
$\mu z\in Fix(\ti \be)$.

Now we calculate $Fix(\ti \be)$ for any case in \eqref{FixBe}.\\
If \fz $\left(\begin{array}{ccc}
{\bar z_1}\\
z_2\\
\bar z_3
\end{array}\right)=\left(\begin{array}{ccc}
{z_1}\\
z_2\\
z_3
\end{array}\right)$,
$\left(\begin{array}{ccc}
{z_1}\\
z_2\\
z_3
\end{array}\right)=\left(\begin{array}{ccc}
{\pm 1}\\
z_2\\
\pm 1
\end{array}\right)$. \nz Since $\tau \fz \left(\begin{array}{ccc}
{1}\\
z_2\\
1
\end{array}\right)=\left(\begin{array}{ccc}
{-1}\\
\bar z_2\\
1
\end{array}\right)$, 
$\tau \fz \left(\begin{array}{ccc}
{1}\\
z_2\\
-1
\end{array}\right)=\left(\begin{array}{ccc}
{-1}\\
\bar z_2\\
-1
\end{array}\right)$ \nz so $Fix(\ti \be)=\fz \Bigl \{ \left(\begin{array}{ccc}
{1}\\
z_2\\
1
\end{array}\right),
\left(\begin{array}{ccc}
{1}\\
z_2\\
-1
\end{array}\right)\Bigr \}$. \\ \nz
If
\fz $\left(\begin{array}{ccc}
{\bar z_1}\\
z_2\\
\bar z_3
\end{array}\right)=\nz \tau \fz \left(\begin{array}{ccc}
{z_1}\\
z_2\\
z_3
\end{array}\right)= \left(\begin{array}{ccc}
{-z_1}\\
\bar z_2\\
z_3
\end{array}\right)$,
$\left(\begin{array}{ccc}
{z_1}\\
z_2\\
z_3
\end{array}\right)=\left(\begin{array}{ccc}
{\pm i}\\
\pm 1\\
\pm 1
\end{array}\right)$. \nz 
Since 
\begin{align*}
\mu \fz \left(\begin{array}{ccc}
{i}\\
1\\
1
\end{array}\right)&=\left(\begin{array}{ccc}
{i}\\
-1\\
1
\end{array}\right),  
&\mu \fz \left(\begin{array}{ccc}
{i}\\
1\\
-1
\end{array}\right)=\left(\begin{array}{ccc}
{i}\\
-1\\
-1
\end{array}\right)\\
\mu \fz \left(\begin{array}{ccc}
{-i}\\
1\\
1
\end{array}\right)&=\left(\begin{array}{ccc}
{-i}\\
-1\\
1
\end{array}\right),  
&\mu \fz \left(\begin{array}{ccc}
{-i}\\
1\\
-1
\end{array}\right)=\left(\begin{array}{ccc}
{-i}\\
-1\\
-1
\end{array}\right)\\
\tau \mu \fz \left(\begin{array}{ccc}
{i}\\
-1\\
1
\end{array}\right)&=\left(\begin{array}{ccc}
{-i}\\
1\\
1
\end{array}\right),  
&\tau \mu \fz \left(\begin{array}{ccc}
{i}\\
-1\\
-1
\end{array}\right)=\left(\begin{array}{ccc}
{-i}\\
1\\
-1
\end{array}\right)
\end{align*}
\nz so $Fix(\ti \be)=\fz \Bigl \{ \left(\begin{array}{ccc}
{i}\\
1\\
1
\end{array}\right), \left(\begin{array}{ccc}
{i}\\
1\\
-1
\end{array}\right)\Bigr \}$.\\
If
\fz $\left(\begin{array}{ccc}
{\bar z_1}\\
z_2\\
\bar z_3
\end{array}\right)=\nz \mu \fz \left(\begin{array}{ccc}
{z_1}\\
z_2\\
z_3
\end{array}\right)= \left(\begin{array}{ccc}
{z_1}\\
-z_2\\
\bar z_3
\end{array}\right)$, there is no $z_2=-z_2$. \nz So $Fix(\ti \be)=\emptyset $.\\
If
\fz $\left(\begin{array}{ccc}
{\bar z_1}\\
z_2\\
\bar z_3
\end{array}\right)=\nz \tau \mu \fz \left(\begin{array}{ccc}
{z_1}\\
z_2\\
z_3
\end{array}\right)= \left(\begin{array}{ccc}
{-z_1}\\
-\bar z_2\\
\bar z_3
\end{array}\right)$,
$\left(\begin{array}{ccc}
{z_1}\\
z_2\\
z_3
\end{array}\right)=\left(\begin{array}{ccc}
{\pm i}\\
\pm i\\
z_3
\end{array}\right)$. \nz 
Since 
\begin{align*}
\mu \fz \left(\begin{array}{ccc}
{i}\\
i\\
z_3
\end{array}\right)&=\left(\begin{array}{ccc}
{i}\\
-i\\
z_3
\end{array}\right),  
&\mu \fz \left(\begin{array}{ccc}
{-i}\\
i\\
z_3
\end{array}\right)=\left(\begin{array}{ccc}
{-i}\\
-i\\
z_3
\end{array}\right)\\
\tau \fz \left(\begin{array}{ccc}
{i}\\
i\\
z_3
\end{array}\right)&=\left(\begin{array}{ccc}
{-i}\\
-i\\
z_3
\end{array}\right)
\end{align*}
\nz so $Fix(\ti \be)=\fz \Bigl \{ \left(\begin{array}{ccc}
{i}\\
i\\
z_3
\end{array}\right)\Bigr \}$. 

To determine $Fix(\bar \be)$, the argument is as follows.
If $z\in Fix(\ti \be)$, 
\begin{align*}
\bar \be \bar x &=\bar \be p(z)
                =p(\ti \be z) 
                =p(z)\\
	          &=\bar x,
\end{align*}
hence $p(Fix(\ti \be))=Fix(\bar \be)$.
Now we get 
\begin{align*}
Fix(\bar \be)=
\begin{cases} 
\Bigl \{ p \fz \left(\begin{array}{ccc}
{1}\\
z_2\\
-1
\end{array}\right),
p \fz \left(\begin{array}{ccc}
{1}\\
z_2\\
1
\end{array}\right)
  \Bigr \}=
\Bigl \{ \fz \overline {\left(\begin{array}{ccc}
{1}\\
x_2\\
-1
\end{array}\right)},
\overline {\left(\begin{array}{ccc}
{1}\\
x_2\\
1
\end{array}\right)}
  \Bigr \}=2S^1\\
\Bigl \{ p \fz \left(\begin{array}{ccc}
{i}\\
1\\
1
\end{array}\right),
p \fz \left(\begin{array}{ccc}
i\\
1\\
-1
\end{array}\right)
  \Bigr \}=
\Bigl \{ \fz \overline {\left(\begin{array}{ccc}
\frac{1}{4}\\
1\\
1
\end{array}\right)},
\overline {\left(\begin{array}{ccc}
\frac{1}{4}\\
1\\
-1
\end{array}\right)}
  \Bigr \}\\
\Bigl \{ p \fz \left(\begin{array}{ccc}
i\\
i\\
z_3
\end{array}\right)  \Bigr \}=
\Bigl \{ \fz \overline {\left(\begin{array}{ccc}
\frac{1}{4}\\
\frac{1}{4}\\
x_3
\end{array}\right)}\Bigr \}=S^1.
\end{cases}
\end{align*}
This yields a contradiction.\\

We observed that the sufficient condition of conjecture is true for $n=4$.

\end{proof}


\end{document}